\newtheorem{theorem}{Theorem}[section]
\newtheorem{proposition}[theorem]{Proposition}
\newtheorem{corollary}[theorem]{Corollary}
\newtheorem{lemma}[theorem]{Lemma}
\theoremstyle{definition}
\newtheorem{definition}[theorem]{Definition}
\newtheorem{example}[theorem]{Example}
\theoremstyle{remark}
\newtheorem{remark}[theorem]{Remark}
\newtheorem{problem}[theorem]{Problem}
\newtheorem{question}{Question}
\numberwithin{equation}{section}
\renewcommand{\t}{ \widetilde}
\renewcommand{\hat}{ \widehat}
\renewcommand{\b}{ \partial}
\newcommand{\Z}{\mathbb Z}
\newcommand{\R}{\mathbb R}
\newcommand{\N}{\mathbb N}
\newcommand{\C}{\mathbb C}
\newcommand{\Hi}{\bf H}
\newcommand{\RR}{{\mathbb R}}
\newcommand{\ZZ}{{\mathbb Z}}
\newcommand{\mobgp}{{\mathrm{PSL}_2(\mathbb{C})}}
\newcommand{\ud}{{\mathrm{d}}}
\renewcommand{\S}{\bf S}
\renewcommand{\l}{\langle}
\renewcommand{\r}{\rangle}
\newcommand{\z}[1]{{\Z}/#1{\Z}}
\renewcommand{\o}{\overline}
\newcommand{\co}{\colon\thinspace}
\renewcommand{\epsilon}{\varepsilon}
\renewcommand{\c}{\mathcal}
\begin{document}
\sloppy

\title[Chern--Simons, separability, and volumes]{Chern--Simons theory, surface separability,  \\and  
%(virtually positive?) 
volumes of 3-manifolds}

    %Information for first author
\author{Pierre Derbez}
    %Address of record for the research reported here
\address{LATP UMR 7353, 
39 rue  Joliot-Curie,
 13453 Marseille Cedex 13}
    %Current address
%\curraddr{}
\email{pderbez@gmail.com}
 %   \thanks will become a 1st page footnote.
%\thanks{Support information}

\author[Yi Liu]{Yi Liu}
\address{
    Mathematics 253-37\\
    California Institute of Technology\\
    Pasadena, CA 91125, U.S.A.}
\email{yliumath@caltech.edu}
%    Information for second author
\author{Shicheng Wang}
\address{Department of Mathematics, Peking University, Beijing, China}
\email{wangsc@math.pku.edu.cn}
%\thanks{Support information for the second author.}

%    General info

\subjclass{57M50, 51H20}
\keywords{volume of representations,   Chern-Simons invariants, surface separabilities}

\date{\today}

\begin{abstract}
We  study  the set ${\rm vol}\left(M,G\right)$  of  volumes  of all
representations $\rho\co\pi_1M\to G$, where $M$ is a closed oriented
$3$-manifold and $G$ is either ${\rm
Iso}_+{\Hi}^3$ or ${\rm Iso}_e\t{\rm SL_2(\R)}$.

By various methods, including relations between the
volume of representations and the Chern--Simons invariants of flat
connections, and recent results of surfaces in 3-manifolds, we prove that any 3-manifold $M$ with positive Gromov simplicial volume
has a finite cover  $\t M$ with ${\rm vol}(\t
M,{\rm
Iso}_+{\Hi}^3)\ne \{0\}$, and that any non-geometric 
3-manifold $M$ containing at least one  Seifert  piece has a finite cover  $\t M$ with ${\rm vol}(\t M,{\rm Iso}_e\t{\rm SL_2(\R)}) \ne \{0\}$.  

We also find 
3-manifolds  $M$ with positive simplicial volume but ${\rm
vol}(M,{\rm
Iso}_+{\Hi}^3)=\{0\}$,  and non-trivial graph manifolds $M$ with 
${\rm
vol}(M,{\rm Iso}_e\t{\rm SL_2(\R)})=\{0\}$,
 proving that it is in general necessary to pass to some finite covering to guarantee that ${\rm vol}(M,G)\not=\{0\}$. 

Besides we determine ${\rm vol}\left(M, G \right)$ when $M$ supports the
Seifert geometry.

\end{abstract}

\maketitle

\vspace{-.5cm}
\tableofcontents

%\newpage
\section{Introduction}
%\subsection{The content}

The volume of representations of 3-manifolds groups is a beautiful
theory which has  rich connections with many branches of
mathematics. However the behavior of those volume functions seems
still quite mysterious. To make our meaning more explicit, we first
give some basic notions (which will be defined later) and properties
of volume of representations. Let $N$ be  a closed orientable
$3$-manifold. Let $G$ be either ${\rm
Iso}_+{\Hi}^3\cong{\rm PSL}(2;{\C})$, the orientation preserving isometry group of the
hyperbolic 3-space, or ${\rm Iso}_e\t{\rm SL_2(\R)}\cong\R\times_\Z\t{\rm SL_2(\R)}$, the identity
component of the isometry group of  $\t{\rm SL_2(\R)}$. For each
representation $\rho\co\pi_1M\to G$, the volume of
 $\rho$ is denoted by ${\rm vol}_G(M,\rho)$. 
 
 Define
$${\rm vol}\left(M,G\right)\,=\,\left\{{\rm vol}_G(M,\rho) \textrm{when}\ \rho\ \textrm{runs over the representations} \pi_1M\to G\right\}$$
Suppose $M$ supports a hyperbolic, respectively an $\t{\rm SL_2(\R)}$-geometry.
Then $M$ naturally has its own hyperbolic volume ${\rm
vol}_{{\Hi}^3}(M)$, respectively Seifert volume ${\rm vol}_{\t{\rm
SL_2(\R)}}(M)$. We denote by $||M||$  the Gromov norm of $M$,
which measures, up to a multiplicative constant, the total hyperbolic
volume of the hyperbolic pieces of $M$ \cite{Gr,So}. The following
theorem contains some known basic results of the theory of volume
representations. For its development, see \cite{BG1,BG2,Re-rationality} and their references.

%close oriented
\begin{theorem}\label{basic property of volume of presentation}
	Let $N$ be a closed orientable $3$-manifold.
	\begin{enumerate}
		\item Both ${\rm vol}(N,{\rm PSL}(2;{\C}))$ and ${\rm vol}(N, {\rm
		Iso}_e\t{\rm SL_2(\R)})$ contain at most finitely many values. 
		Hence the supremums ${\mathrm{HV}}(N)$ and ${\mathrm{SV}}(N)$ of ${\rm vol}(N,{\rm PSL}(2;{\C}))$ and ${\rm vol}(N, {\rm
		Iso}_e\t{\rm SL_2(\R)})$ are reached.
		\item If $N$ admits a hyperbolic geometric structure, then ${\mathrm{HV}}(N)$ equals
		${\rm vol}_{{\Hi}^3}(N)$, reached by any discrete and faithful representation. 
		A similar statement holds when $N$ admits an $\t{\rm SL_2(\R)}$ geometric structure.
		\item ${\mathrm{HV}}(N)\leq \mu_3\,||N||$, 
		where $\mu_3$ denotes the volume of any ideal regular tetrahedron in ${\Hi}^3$.
		\item Let $f\co M\to N$ be a map between closed orientable $3$-manifolds 
		and let $\rho: \pi_1 N\to G$
		denote a representation. Then 
			$${\rm vol}_G(M,f^*\rho)= \mathrm{deg}(f)\,{\rm vol}_G(N,\rho).$$
		Hence, $${\mathrm{HV}}(M)\geq|{\rm deg}f|\,{\mathrm{HV}}(N)\textrm{ and } {\mathrm{SV}}(M)\geq|{\rm deg}f|\,{\mathrm{SV}}(N).$$
	\end{enumerate}
\end{theorem}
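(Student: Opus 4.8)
All four statements are standard pieces of the theory, and the plan is to assemble them around the description of ${\rm vol}_G(M,\rho)$ as a characteristic number of the associated flat bundle. Anticipating the definition given later, recall that ${\rm vol}_G(M,\rho)$ is computed by choosing a $\rho$-equivariant smooth map $\widetilde D\co\widetilde M\to X$, where $X$ is ${\Hi}^3$ or $\t{\rm SL_2(\R)}$ equipped with its $G$-invariant volume form $\omega_X$, pulling $\omega_X$ back to a closed $3$-form on $M$ (it descends by $G$-invariance of $\omega_X$ and $\rho$-equivariance of $\widetilde D$) and integrating over $M$; equivalently, it is the value on the fundamental class $[M]$ of the $\rho$-pullback of the volume class $\omega_G$ in the continuous cohomology of $G$, which has a bounded representative (of sup-norm $\mu_3$ when $G={\rm PSL}(2;\C)$). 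Independence of the choice of $\widetilde D$ follows from Stokes' theorem and the contractibility of $X$.

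I would dispatch (3) and (4) first, as these are the routine straightening and naturality arguments. For (3), combining $|{\rm vol}_G(M,\rho)|=|\langle\rho^*\omega_G,[M]\rangle|\le\|\omega_G\|_\infty\,\|M\|$ with Haagerup--Munkholm's computation $\|\omega_{{\rm PSL}(2;\C)}\|_\infty=\mu_3$ (the supremal volume of a geodesic simplex in ${\Hi}^3$) gives $\mathrm{HV}(M)\le\mu_3\|M\|$ after taking the supremum over $\rho$; concretely, straighten a fundamental cycle of $M$ through $\widetilde D$ into a cycle of geodesic simplices whose algebraic volume is ${\rm vol}_G(M,\rho)$ and whose total unsigned volume is at most $\mu_3$ times the $\ell^1$-norm of the cycle. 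For (4), note that $f^*\rho=\rho\circ f_\#$, that $\widetilde D\circ\widetilde f$ is a developing map for $f^*\rho$ when $\widetilde f\co\widetilde M\to\widetilde N$ covers $f$, and that $\int_M(\widetilde D\circ\widetilde f)^*\omega_X=\deg(f)\int_N\widetilde D^*\omega_X$ by the degree formula; equivalently $\langle(f^*\rho)^*\omega_G,[M]\rangle=\langle\rho^*\omega_G,f_*[M]\rangle=\deg(f)\langle\rho^*\omega_G,[N]\rangle$. The consequences for $\mathrm{HV}$ and $\mathrm{SV}$ follow by applying this to a representation of $\pi_1N$ attaining the supremum, which exists by (1); when $\deg(f)<0$ one first post-composes that representation with the orientation-reversing automorphism of $G$ (complex conjugation for ${\rm PSL}(2;\C)$), which negates the volume, and one uses that $\mathrm{HV}(N),\mathrm{SV}(N)\ge 0$ because the trivial representation has volume $0$.

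For (2), if $N$ is hyperbolic then a discrete faithful $\rho_0$ compatible with the orientation develops to an isometry, so ${\rm vol}_G(N,\rho_0)={\rm vol}_{{\Hi}^3}(N)$, and this is the common value of all discrete faithful representations by Mostow rigidity; combining (3) with Gromov's equality ${\rm vol}_{{\Hi}^3}(N)=\mu_3\|N\|$ for closed hyperbolic $N$ \cite{Gr} then yields ${\rm vol}_{{\Hi}^3}(N)={\rm vol}_G(N,\rho_0)\le\mathrm{HV}(N)\le\mu_3\|N\|={\rm vol}_{{\Hi}^3}(N)$, so $\mathrm{HV}(N)={\rm vol}_{{\Hi}^3}(N)$. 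The $\t{\rm SL_2(\R)}$-geometry case runs along the same lines, with the role of the $\mu_3\|N\|$-bound and of Gromov's equality played by the sharp Milnor--Wood-type estimate for the Seifert volume of Brooks and Goldman, which shows the Seifert volume of a geometric representation is maximal among all representations into ${\rm Iso}_e\t{\rm SL_2(\R)}$ \cite{BG1,BG2}.

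The genuine content is in (1), and here the plan has two ingredients. First, ${\rm vol}_G(M,\cdot)$ is \emph{locally constant} on the representation variety $\Hom(\pi_1M,G)$: up to normalization it is a component of a Chern--Simons-type secondary invariant of the associated flat $G$-connection (the imaginary part of the complexified Chern--Simons invariant when $G={\rm PSL}(2;\C)$, the Seifert-volume class of Brooks--Goldman when $G={\rm Iso}_e\t{\rm SL_2(\R)}$), and the Chern--Simons variational formula --- whose integrand over the closed manifold $M$ equals, up to a constant, $\tr(F_A\wedge\dot A)$ --- makes its differential vanish along any path of flat connections, since then $F_A=0$ \cite{Re-rationality}. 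Second, $\Hom(\pi_1M,G)$ has finitely many connected components: for $G={\rm PSL}(2;\C)$ because it is a complex affine algebraic variety; for $G={\rm Iso}_e\t{\rm SL_2(\R)}$ because it reduces, through the projection ${\rm Iso}_e\t{\rm SL_2(\R)}\to{\rm PSL}(2;\R)$ with one-dimensional kernel, to the finiteness of connected components of the real algebraic variety $\Hom(\pi_1M,{\rm PSL}(2;\R))$ (Whitney--{\L}ojasiewicz) together with a connected family of lifts, as worked out in \cite{BG1,BG2,Re-rationality}. A finite set of real numbers has a maximum, which is the last assertion of (1). I expect the main obstacle to be exactly this component-finiteness, and the attending local constancy, for the non-algebraic group ${\rm Iso}_e\t{\rm SL_2(\R)}$, where it has to be drawn from Brooks--Goldman's study of the Seifert volume rather than quoted from algebraic geometry; everything else reduces to the straightening and naturality bookkeeping indicated above.
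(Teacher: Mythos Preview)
The paper does not supply a proof of this theorem at all: it is stated in the introduction as a summary of known results, with the sentence ``The following theorem contains some known basic results of the theory of volume representations. For its development, see \cite{BG1,BG2,Re-rationality} and their references.'' So there is no paper proof to compare against; your sketch is essentially the argument one reconstructs from those references, and the outline for (1)--(4) is correct and standard.

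One small point worth tightening in (4): your reduction for $\deg f<0$ relies on an orientation-reversing automorphism of $G$ that negates the volume. This is fine for ${\rm PSL}(2;\C)$ via complex conjugation, but for ${\rm Iso}_e\t{\rm SL_2(\R)}$ the natural outer automorphism coming from the non-identity component of ${\rm Iso}\,\t{\rm SL_2(\R)}$ is orientation \emph{preserving} on $\t{\rm SL_2(\R)}$ (cf.\ Lemma~\ref{geometricClassInv}(2) in the paper), so it does not negate the volume form. The cleanest fix is to note that ${\rm vol}(N,G)$ is closed under negation for a different reason: composing $\rho$ with an orientation-reversing automorphism of the target symmetric space is unnecessary once one observes (in the $\t{\rm SL_2(\R)}$ case, from the Brooks--Goldman formula, or from the Godbillon--Vey interpretation) that the set of volumes is already symmetric, or alternatively to interpret $\mathrm{SV}$ as the supremum of $|{\rm vol}_G(N,\rho)|$, which is the convention implicit in the cited literature. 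Either way the conclusion stands, but the sentence as written does not quite cover the Seifert case.
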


We call ${\mathrm{HV}}(N)$ and ${\mathrm{SV}}(N)$ in the conclusion of Theorem \ref{basic property of volume of presentation}
(1) the \emph{hyperbolic volume} and the \emph{Seifert volume} of $N$, respectively.

\begin{remark} \label{relation with $D(M,N)$}  
Let $M$ and $N$ be two closed oriented $3$-dimensional
manifolds and let ${D}(M,N)$ be the set of degrees of maps from $M$
to $N$. Let 
${\mathcal D}$ be the set of all closed orientable 3-manifolds $N$ with $D(M,N)$
finite for any fixed $M$. 
By Theorem \ref{basic property of volume of
presentation} (4), ${\mathrm{SV}}(N)={\mathrm{HV}}(N)=0$ if $N\notin \mathcal D$. 
It is known that (see \cite{DSW} for example),  
$N\in \mathcal D$ 
if and only if $N$ contains a prime factor $Q$  with non-trivial geometric decomposition,
or supporting an $\t{\rm SL_2(\R)}$ or a  hyperbolic geometry.
This fact combined with  Theorem \ref{basic property of volume of
presentation} (2), (3), (4)   implies that if  ${\rm vol}\left(N,
\rm {Iso}_e\t{\rm SL_2(\R)}\right)\not=\{0\}$ then necessarily
a prime factor of $N$ has a non-trivial geometric decomposition, or  
supports an $\t{\rm SL_2(\R)}$  or a hyperbolic geometry and if ${\rm
vol}\left( N, \rm PSL(2;{\C}) \right)\not=\{0\}$ then necessarily
a prime factor of $N$ contains some hyperbolic JSJ pieces.
\end{remark}

Besides Theorem \ref{basic property of volume of presentation},
Thurston pointed out the relation between Chern--Simons invariants
and the hyperbolic volume of hyperbolic 3-manifolds for discrete
and faithful representations \cite{Th2}. Such a relation is
extended by Kirk--Klassen \cite{KK} for cusped hyperbolic 3-manifolds and discrete
and faithful representations into ${\rm PSL}(2;{\C})$, and by Khoi \cite{Kh} for closed manifolds
with the group $\t{{\rm SL}_2(\R)}$ (as a subgroup of ${\rm Iso}\t{\rm
SL_2(\R)}$).

Despite those significant results, the answer to the questions below, which is
a main motivation of  this paper, seems still remarkably unknown.
Recall that a non-negative invariant $\eta$ of 3-manifolds 
is said to satisfy the \emph{covering
property} in the sense of Thurston, 
if for any finite covering $p: \t N \to N$, we have
$\eta(\t N)= |\mathrm{deg}(p)|  \eta(N)$.

\begin{question}\label{Q1}
	Let $M$ be a closed $3$-manifold and let $G$ be either ${\rm PSL}(2,{\C})$ or ${\rm Iso}_e\t{{\rm SL}_2(\R)}$.
	\begin{enumerate}
		\item 
		\begin{enumerate}
			\item How to find  non-zero elements in ${\rm vol}(M, G)$?
			\item More weakly, how to find  non-zero elements in ${\rm vol}(\t M, G)$
			for some finite cover $\t M$ of $M$?
		\end{enumerate}
		\item Does ${\mathrm{HV}}$ or ${\mathrm{SV}}$ satisfy the covering property?
	\end{enumerate}

%(3) What kind of topological information of  $N$ is captured by the
%non-zero elements in ${\rm vol}\left(N, G \right)$?

%Namely if $f\co \t N\to N$ is a finite
%covering map, is it true that ${\mathrm{HV}}(\t N)=|{\rm deg}f|{\mathrm{HV}}(N)$ or ${\mathrm{SV}}(\t
%M)=|{\rm deg}f|{\mathrm{SV}}(N)$?
\end{question}

\begin{remark}
Three-manifold invariants with the
covering property was first addressed by Thurston in the 1970s \cite[Problem
3.16(A)]{Ki}.  The simplicial volume has the covering property
(See Gromov, Thurston, Soma \cite{Gr,Th1,So}, an early evidence  of such application appears in Milnor--Thurston \cite{MT}). 
Some papers define invariants with
the covering property for graph manifolds, say \cite{WW,LW,Ne}, but each one vanishes
for some graph manifolds.

So far it seems that we only know that ${\mathrm{HV}}$, respectively, ${\mathrm{SV}}$, satisfies
the covering property for the hyperbolic, respectively, Seifert manifolds.
In hyperbolic geometry this property comes from the relation between
the simplicial volume and ${\mathrm{HV}}$. In Seifert geometry one can compute
${\mathrm{SV}}$ in terms of the Euler
 classes of the Seifert manifold and the Euler characteristic of its orbifold   and these
invariants behave naturally under covering maps.
\end{remark}

%\subsection{Volumes of Seifert manifolds}

The results of Brooks--Goldman  \cite{BG2} and  Eisenbud--Hirsch--Neumann  \cite{EHN}  
% (extending and refining  early work of Milnor \cite{Mi} and Wood  \cite{Wo}) 
allow us to
describe the set ${\rm vol}\left(M, \rm {Iso}_e\t{\rm
SL_2(\R)}\right) $ for each closed 3-manifold $M$ supporting an $\t{\rm 
SL_2(\R)}$-geometry.
It is known that $M$ supports an $\t{\rm SL_2(\R)}$-geometry if and
only if  $M$ is a Seifert manifold with non-zero Euler number $e(M)$
over an  orbifold of negative Euler characteristic. We use
$\llcorner{a }\lrcorner$ and  $\ulcorner a\urcorner$ for $a\in{\R}$
to denote respectively, the greatest integer less than or equal to 
$a$ and the least integer greater than or equal to $a$.

\begin{proposition}\label{Seifert volumes of Seifert manifolds}
Suppose  $M$ supports an $\t{\rm SL_2(\R)}$-geometry and that its
base $2$-orbifold has a positive genus $g$. Then
\begin{eqnarray}
{\rm vol}\left(M, \rm {Iso}_e\t{\rm SL_2(\R)}
\right)=\left\{\frac{4\pi^2}{|e(M)|}\left(\sum_{i=1}^r\left(\frac{n_i}{a_i}\right)-n\right)^2\right\}
\end{eqnarray}
where $n_1,\ldots,n_r,n$ are integers such that $$\sum_{i=1}^r
\llcorner{ {n_i}/{a_i}}\lrcorner -n\le 2g-2, \,\,\, \sum_{i=1}^r
\ulcorner{n_i}/{a_i}\urcorner-n \ge 2-2g$$ and $a_1, \ldots, a_r$ are
 the indices of the singular points of the orbifold of $M$.
\end{proposition}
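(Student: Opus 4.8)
The plan is to identify $\mathrm{vol}(M,\mathrm{Iso}_e\widetilde{\mathrm{SL}_2(\mathbb{R})})$ with a set of Euler numbers of flat $\widetilde{\mathrm{SL}_2(\mathbb{R})}$-bundles over $M$, using the Brooks--Goldman formula that expresses the volume of a representation $\rho\colon\pi_1M\to\mathrm{Iso}_e\widetilde{\mathrm{SL}_2(\mathbb{R})}$ as a universal constant times the Euler number of the associated flat circle bundle, and then to run the Eisenbud--Hirsch--Neumann criterion to decide precisely which Euler numbers are realizable. First I would reduce to the case that $\rho$ actually lands in (a conjugate of) the subgroup $\widetilde{\mathrm{SL}_2(\mathbb{R})}\subset\mathrm{Iso}_e\widetilde{\mathrm{SL}_2(\mathbb{R})}$ acting on the fibers, so that $\rho$ is the same data as a representation $\pi_1 M\to\widetilde{\mathrm{Homeo}}_+(S^1)$; this is where the positivity of the genus $g$ enters, since it lets one absorb the rotational part and reduce to the fiberwise action. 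I would then recall Khoi's computation \cite{Kh}, which gives $\mathrm{vol}_{\widetilde{\mathrm{SL}_2(\mathbb{R})}}(M,\rho)$ as $4\pi^2$ times (a normalization of) the Euler number of the flat bundle obtained by pushing forward along $\mathrm{Iso}_e\widetilde{\mathrm{SL}_2(\mathbb{R})}\to\mathrm{PSL}_2(\mathbb{R})$, i.e.\ as $4\pi^2\,e(\rho)^2/|e(M)|$ after accounting for the Seifert Euler number $e(M)$ of $M$ itself.

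Next I would carry out the bundle-theoretic bookkeeping. Write $M$ as a Seifert manifold over a $2$-orbifold of genus $g$ with cone points of orders $a_1,\dots,a_r$ and Seifert invariants giving Euler number $e(M)$. A representation of $\pi_1 M$ that is nontrivial on the regular fiber must send the fiber class to a central element; the quotient representation of the orbifold fundamental group into $\mathrm{PSL}_2(\mathbb{R})$ (or its universal cover) is then governed by the lift of the $2g$ surface generators and the $r$ elliptic generators $x_i$, whose rotation numbers are forced to be of the form $n_i/a_i$ by the order-$a_i$ relation $x_i^{a_i}=$ (fiber)$^{c_i}$. The Euler number of the resulting flat bundle comes out, after the standard Milnor--Wood-type computation, to be proportional to $\sum_{i=1}^r (n_i/a_i) - n$, with $n\in\mathbb{Z}$ recording the contribution of the chosen lifts of the genus generators and the global twisting; squaring and dividing by $|e(M)|$ gives the asserted expression. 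The constraint that such a flat structure exists is exactly the Eisenbud--Hirsch--Neumann inequality: the sum of the floors of the fractional rotation numbers must be at most $2g-2$ and the sum of the ceilings at least $2-2g$, which is the Milnor--Wood inequality for horizontal foliations on Seifert manifolds over a surface of Euler characteristic $2-2g-r$, packaged with the cone-point data. Conversely, given integers $n_1,\dots,n_r,n$ satisfying those two inequalities, EHN produces a transverse foliation, hence a representation realizing the corresponding volume.

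The main obstacle I anticipate is getting the normalizations and signs exactly right: matching Khoi's analytic formula for the volume (which involves the Euler class of the $\mathrm{PSL}_2(\mathbb{R})$-bundle, normalized so a hyperbolic structure on a genus-$g$ surface gives Euler number $2g-2$) against the Brooks--Goldman description, against the Seifert-invariant conventions that produce the factor $1/|e(M)|$, and finally against the EHN inequalities as stated with floors and ceilings. A secondary point requiring care is verifying that \emph{every} representation contributing a nonzero volume is, up to conjugacy and up to the reduction above, of the Seifert/fiber-preserving type treated by EHN — one must argue that a representation with noncentral image of the fiber contributes zero volume, so it may be discarded. Once these normalization and reduction issues are settled, the two inclusions (realizability via EHN, and the volume formula via Khoi/Brooks--Goldman) fit together to give the stated equality.
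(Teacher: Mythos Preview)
Your high-level strategy---use Brooks--Goldman to relate the volume to an Euler-type invariant, and use Eisenbud--Hirsch--Neumann to characterize which values are realized---matches the paper's. The reduction ``noncentral image of the fiber implies zero volume'' is also the first step the paper takes. But the core computational step you propose has a genuine gap.

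You propose to reduce to the case where $\rho$ lands in $\widetilde{\mathrm{SL}_2(\mathbb{R})}\subset\mathrm{Iso}_e\widetilde{\mathrm{SL}_2(\mathbb{R})}$ by ``absorbing the rotational part'' using the positive genus. This does not work. Once one knows $\rho(h)=\overline{(\zeta,1)}$ with $\zeta=\tfrac{1}{e}\bigl(\sum_i n_i/a_i - n\bigr)$, the $\mathbb{R}$-component $\zeta$ is in general a noninteger rational, and it cannot be altered: conjugation in $\mathrm{Iso}_e\widetilde{\mathrm{SL}_2(\mathbb{R})}$ fixes the $\mathbb{R}$-part, and twisting $\rho$ by a homomorphism $\pi_1M\to\mathbb{R}$ (the only other move available) must factor through $H_1(M;\mathbb{Z})$, where the regular fiber $h$ is torsion because $e(M)\neq 0$. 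So $\rho$ need not be conjugate (or centrally equivalent) to a representation into $\widetilde{\mathrm{SL}_2(\mathbb{R})}$, and the positive genus does not rescue this---the genus is used only in the EHN step. Relatedly, the formula $4\pi^2 e(\rho)^2/|e(M)|$ is not something you can cite from Khoi; you have to derive it.

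What the paper actually does to compute the volume is a covering-tower argument. Starting from $\rho$, one passes to a finite cover $\tilde N\to N$ that is a genuine circle bundle with fiber degree $1$, then to a further fiberwise cover $\hat N\to\tilde N$ of degree $\tilde e=e(\tilde N)$ so that the pulled-back representation $\hat\rho$ now lands in $\widetilde{\mathrm{SL}_2(\mathbb{R})}$ (because $\tilde e\,\zeta\in\mathbb{Z}$), and finally $\hat\rho$ descends along a fiberwise quotient $\hat N\to N^*$ with $e(N^*)=\tilde n=\tilde e\,\zeta$ and $\rho^*(h^*)=\mathrm{sh}(1)$. At that last stage Brooks--Goldman's formula $\mathrm{vol}(N^*,\rho^*)=4\pi^2 e(N^*)$ applies directly, and one unwinds the covering degrees to obtain $\mathrm{vol}(N,\rho)=4\pi^2\zeta^2 e=4\pi^2\bigl(\sum_i n_i/a_i - n\bigr)^2/|e|$. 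Replacing your ``reduce to $\widetilde{\mathrm{SL}_2(\mathbb{R})}$'' step by this covering argument fixes the proof; everything else in your outline is correct.
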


\begin{remark}  In order to check Proposition \ref{Seifert volumes of Seifert
manifolds},  we will describe all representations with non-zero
volume. 
%They will be
% used in the  volume computations for 3-manifolds with
%non-trivial geometric decompositions.
Proposition \ref{Seifert volumes of Seifert manifolds} presents
explicitly the rationality of the elements in ${\rm vol}(M, \rm
{Iso}_e\t{\rm SL_2(\R)})$, which was proved by Reznikov \cite{Re-rationality}.
\end{remark}

%\subsection{Volumes of non-geometric manifolds}

As a partial answer to Question \ref{Q1} (1) for non-geometric manifolds,
it was known  recently that each non-trivial graph manifold $M$
has a finite cover $\t M$ such that ${\rm vol}(\t M, {\rm
Iso}_e\t{\rm SL_2(\R)})$ contains non-zero elements, see \cite{DW}.
Thus Question \ref{Q1} (1.a) is  reduced to the non-geometric
3-manifolds containing hyperbolic JSJ pieces (the so-called mixed 3-manifold). 
In view of Theorem
\ref{basic property of volume of presentation} (2) (3), as well as
the result of \cite{DW}, and in an attempt to seal a relation
between the \emph{Gromov simplicial volume} and the \emph{hyperbolic
volume}, M.~Boileau and several others wondered the
following more direct version of Question \ref{Q1} (1):

\begin{question}\label{Q2}
	Suppose that the Gromov norm $||M||$ is positive.
	\begin{enumerate}
		\item Is there a representation $\rho\co\pi_1  M\to{\rm PSL}(2;{\C})$
		with positive volume?
		\item More weakly is there a representation $\rho\co\pi_1 \t M\to{\rm
		PSL}(2;{\C})$ with positive volume for some finite covering $\t M$
		of $M$?
	\end{enumerate}
\end{question}

From now on $M$ will always be assumed to be a closed oriented irreducible
non-geometric 3-manifold.

The main results of this paper are the following two theorems,
which answer Questions \ref{Q1} and \ref{Q2} respectively.

\begin{theorem}\label{virt-non-zero}
	Suppose that $M$  is a closed oriented irreducible non-geometric 3-manifold.
	\begin{enumerate}
		\item If $M$ contains at least one hyperbolic geometric piece,
        then the hyperbolic volume of some finite cover $\tilde{M}$ is positive.
        \item If $M$ contains at least one Seifert geometric piece,
        then the Seifert volume of some finite cover $\tilde{M}$ is positive.
	\end{enumerate}
\end{theorem}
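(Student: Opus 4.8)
The plan is to build, for a suitable finite cover $\t M$, a representation $\rho\co\pi_1\t M\to G$ that restricts to a geometric (discrete faithful or, in the Seifert case, the canonical) representation on one JSJ piece and extends over the rest of the manifold in a controlled way, so that the additivity of volume over the JSJ decomposition forces $\mathrm{vol}_G(\t M,\rho)\neq 0$. The starting point is that volume of representations is additive along incompressible tori: if $\t M = \bigcup_i \t M_i$ is the JSJ decomposition of the cover and $\rho|_{\pi_1\t M_i}$ is a representation on each piece agreeing on the gluing tori, then $\mathrm{vol}_G(\t M,\rho)=\sum_i \mathrm{vol}_G(\t M_i,\rho|_{\pi_1\t M_i})$ — this is the Chern--Simons/bounded-cohomology gluing formula, and it is the reason the Chern--Simons viewpoint enters. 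So the real content is: (i) after passing to a finite cover, arrange that some piece $\t M_{i_0}$ carries a representation of positive volume (in case (1) the discrete faithful representation of a finite-volume hyperbolic piece, restricted to peripheral tori, has well-understood behaviour; in case (2) one uses Proposition \ref{Seifert volumes of Seifert manifolds} and the Brooks--Goldman/EHN machinery on a Seifert piece), and (ii) the peripheral data of that representation on the boundary tori can be matched by representations of the neighbouring pieces that contribute no \emph{cancelling} volume.

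For case (1), I would first pass to a finite cover so that a chosen hyperbolic JSJ piece $Y$ lifts to an embedded piece $\t Y$ whose boundary tori are ``geometrically simple.'' The key tool is Kirk--Klassen \cite{KK} together with surface-separability results for the hyperbolic pieces (here is where ``recent results of surfaces in 3-manifolds'' — Agol, Wise, Przytycki--Wise, etc. — are invoked): one wants, on each boundary torus $T$ of $\t Y$, the complete hyperbolic structure to have cusp shape such that the restriction of the holonomy to $\pi_1 T$ is a specified parabolic (or can be deformed to one). Then one must extend $\rho_{\mathrm{hyp}}|_{\pi_1\t Y}$ over each adjacent piece $\t M_j$ so that the extension's volume contribution is zero — e.g. by taking the adjacent Seifert or graph pieces to a representation into a solvable or elementary subgroup of $\mathrm{PSL}(2,\mathbb C)$ (which has zero volume), or by further finite covering making those pieces ``carry trivial volume.'' The additivity formula then gives $\mathrm{vol}(\t M,\rho)=\mathrm{vol}_{\Hyp}(\t Y)+0>0$, and by Theorem \ref{basic property of volume of presentation}(2)--(3) this is an honest positive hyperbolic volume.

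For case (2), the architecture is parallel but uses the Seifert side: choose a Seifert JSJ piece $W$; its base orbifold has some boundary, but after a finite cover one can assume (as in \cite{DW}) the base orbifold has positive genus and one applies Proposition \ref{Seifert volumes of Seifert manifolds} (suitably adapted to manifolds with boundary, i.e. the EHN inequalities for surfaces with boundary) to produce on $W$ a representation into $\mathrm{Iso}_e\t{\mathrm{SL}_2(\R)}$ of positive Seifert volume whose restriction to each boundary torus lands in an abelian (hence zero-volume-compatible) subgroup. One then extends over the adjacent hyperbolic and graph pieces by representations of zero volume — again abelian or elementary representations on those pieces suffice, and the passage to a finite cover is what guarantees these extensions exist with the right peripheral behaviour. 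Additivity then yields $\mathrm{SV}(\t M)\geq \mathrm{vol}(\t M,\rho)=\mathrm{vol}_{\t{\mathrm{SL}_2(\R)}}(W)>0$.

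The main obstacle, in both cases, is step (i)--(ii) \emph{simultaneously}: controlling the peripheral holonomy of the positive-volume piece while ensuring the neighbouring pieces can match that exact boundary data by zero-volume representations. Matching on a torus means matching a pair of commuting elements up to conjugacy; for a hyperbolic piece the natural peripheral elements are parabolic, and one needs the adjacent Seifert piece to admit a representation sending the corresponding torus to commuting parabolics — this typically fails on the nose and is precisely why one must pass to a finite cover and invoke separability of the boundary tori (to unwind the obstruction) rather than argue on $M$ itself. Making this matching argument precise — i.e. showing the deformation space of representations of each piece, with prescribed restriction to the incompressible boundary tori, is nonempty after a suitable cover — is the technical heart of the proof and is where the surface-separability input is essential.
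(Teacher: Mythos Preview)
Your overall architecture is right and matches the paper's: find a representation on one JSJ piece with positive volume, extend it over the rest of a finite cover by representations of zero volume, and invoke the additivity principle (the paper's Theorem \ref{additivity}). You also correctly flag that the hard part is matching peripheral data on the JSJ tori, and that separability results are what buy the finite cover.

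However, there is a genuine gap in your plan for case (1): you propose to use the \emph{complete} hyperbolic structure on the cusped piece $\t Y$, so the peripheral holonomy is parabolic, and then try to match those parabolics by representations of the adjacent pieces. The paper does \emph{not} do this, and for good reason --- forcing a neighbouring Seifert or graph piece to hit a prescribed rank-2 parabolic subgroup on its boundary is exactly the obstruction you yourself worry about in your last paragraph, and you have not indicated any mechanism that would resolve it. Instead, the paper first performs hyperbolic Dehn filling on $J_0$ along sufficiently long slopes $\zeta_i$ (Theorem \ref{surgered}) to obtain a \emph{closed} hyperbolic manifold $\bar J_0$, and takes $\rho_0$ to be the composition $\pi_1(J_0)\to\pi_1(\bar J_0)\hookrightarrow{\rm PSL}(2;\C)$. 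Now $\rho_0$ \emph{kills a slope} on each boundary torus, so the peripheral image is cyclic rather than free abelian of rank 2. This reduction is essential: it is precisely the hypothesis of the paper's virtual extension theorem (Theorem \ref{virtualExtensionRep}), and it is what makes the matching problem tractable.

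The second missing ingredient is the concrete mechanism for the extension. Your suggestion ``take the adjacent pieces to a solvable/elementary subgroup'' is not enough by itself: you must produce, on each adjacent chunk, a representation whose restriction to the shared torus is \emph{conjugate} to the given cyclic image $\rho_0(\pi_1T_i)$. The paper does this via the \emph{corridor surface} construction (Theorems \ref{virtualPartialPW} and \ref{corridorCover}): for each $(T_i,\zeta_i)$ one builds, in a finite cover, a properly embedded surface $R^*_i$ in a ``corridor chunk'' $X^*_i$ meeting each relevant elevation of $T_i$ in exactly one curve covering $\zeta_i$; the representation on $X^*_i$ is then the homological pairing $\pi_1(X^*_i)\to H_1(X^*_i)\xrightarrow{[R^*_i]}\Z\to\mathscr G$, calibrated so that on the shared torus it agrees (up to a class inversion, Lemma \ref{geometricClassInv}) with $\rho_0|_{\pi_1T_i}$. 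The separability input (Przytycki--Wise, Wise, Rubinstein--Wang) is used not to ``unwind parabolics'' but precisely to arrange that the corridor surface meets each boundary torus in a single component --- this is the content of Theorem \ref{corridorCover} and is where the real work lies. Case (2) follows the same template, with the Dehn filling chosen so that $\bar J_0$ is a closed $\t{\rm SL_2(\R)}$-manifold with nonzero Euler number.
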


\begin{theorem}\label{zero}
	\ 	%for line break
	\begin{enumerate}
		\item There are closed oriented non-trivial graph manifolds
		with vanishing Seifert volume.
		\item There are closed oriented irreducible 3-manifolds
		with non-vanishing Gromov norm but vanishing hyperbolic volume.
	\end{enumerate}
\end{theorem}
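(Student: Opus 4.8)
The plan is to exhibit explicit examples and to pin down an arbitrary representation using two structural inputs about the JSJ decomposition. First, the volume of a representation is additive along JSJ tori: since the torus subgroups are amenable, the bounded volume class of $G$ restricts to zero on each boundary torus, so for $M=\bigcup_v M_v$ one has ${\rm vol}_G(M,\rho)=\sum_v{\rm vol}_G(M_v,\rho|_{\pi_1M_v})$, the summands being the relative volumes defined through $H^3(M_v,\partial M_v)$. Second, the relative volume vanishes on a Seifert piece whose restricted image is ``small'': for $G={\rm PSL}(2;{\C})$ because $||M_v,\partial M_v||=0$ and, by the relative form of Theorem~\ref{basic property of volume of presentation}(3), $|{\rm vol}(M_v,\sigma)|\le\mu_3\,||M_v,\partial M_v||$; for $G={\rm Iso}_e\t{\rm SL_2(\R)}$ because, by the Brooks--Goldman and Eisenbud--Hirsch--Neumann description, a representation of a Seifert piece has non-zero relative volume only if it is non-elementary on the base $2$-orbifold, in which case the regular fibre goes to a central element and the volume is a fixed constant times (its rotation number)$\,\times\,$(the relative Euler number of the induced base representation). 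Elementary (hence amenable) restrictions, and restrictions killing the regular fibre, therefore contribute nothing.

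For part~(2) I would take $M=E_K\cup_T W$, where $E_K$ is the exterior of the figure-eight knot (any hyperbolic knot with an integral exceptional slope would do), $W=\Sigma_{1,1}\times S^1$ is a trivial circle bundle over a once-punctured torus, so $W$ is Seifert over a hyperbolic $2$-orbifold with a unique fibration, and the gluing $\partial W\to\partial E_K$ carries the regular fibre $h$ of $W$ to the longitude $\lambda$ of $K$ and the section curve $\partial\Sigma_{1,1}$ to a slope $\gamma$, chosen so that both fillings $E_K(\lambda)$ and $E_K(\gamma)$ are non-hyperbolic geometric manifolds; for the figure-eight knot $\lambda$ is the $0$-slope and one can take $\gamma$ a $(\pm1)$-slope. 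Then $M$ is closed, oriented, irreducible (an amalgam over $\Z^2$ of two Haken manifolds with incompressible boundary), non-geometric (it contains the essential torus $T$ and the hyperbolic piece $E_K$), and $||M||>0$. Given $\rho\co\pi_1M\to{\rm PSL}(2;{\C})$, additivity gives ${\rm vol}(M,\rho)={\rm vol}(E_K,\rho|_{\pi_1E_K})$. If $\rho|_{\pi_1W}$ has non-abelian image, then the central fibre $h$ lies in its centralizer, which is trivial in ${\rm PSL}(2;{\C})$, so $\rho(h)=\rho(\lambda)=1$ and $\rho|_{\pi_1E_K}$ factors through $\pi_1E_K(\lambda)$. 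If $\rho|_{\pi_1W}$ has abelian image, it factors through $H_1(W)$ and therefore kills the null-homologous curve $\partial\Sigma_{1,1}$, so $\rho(\gamma)=1$ and $\rho|_{\pi_1E_K}$ factors through $\pi_1E_K(\gamma)$. In either case $\rho|_{\pi_1E_K}$ factors through a filling of zero Gromov norm, so by additivity across the filling solid torus and the relative Gromov--Thurston inequality ${\rm vol}(E_K,\rho|_{\pi_1E_K})=0$; hence ${\mathrm{HV}}(M)=0$.

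For part~(1) I would take $M=(\Sigma_{1,1}\times S^1)\cup_T(\Sigma_{1,1}\times S^1)$, glued by a \emph{flip}: an orientation-reversing homeomorphism of the boundary tori sending the regular fibre of each side to the section curve $\partial\Sigma_{1,1}$ of the other; this $M$ is a non-trivial graph manifold (two Seifert pieces over hyperbolic $2$-orbifolds whose fibrations do not match on $T$). Let $\rho\co\pi_1M\to{\rm Iso}_e\t{\rm SL_2(\R)}$, write $W_1,W_2$ for the pieces with fibres $h_1,h_2$, and note that $\partial\Sigma_{1,1}$ is the commutator of the two free generators of its base group. If both $\rho|_{\pi_1W_i}$ are elementary, their images are amenable and both relative volumes vanish. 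If $\rho|_{\pi_1W_1}$ is non-elementary, then $h_1$ maps into the centre of ${\rm Iso}_e\t{\rm SL_2(\R)}$; via the flip this central element equals $\rho$ of the commutator on the $W_2$-side, so the two base generators of $W_2$ commute under $\rho$, and since commuting elements of ${\rm PSL}_2(\R)$ lift to commuting elements of ${\rm Iso}_e\t{\rm SL_2(\R)}$, the image of $\rho|_{\pi_1W_2}$ is abelian. It then factors through $H_1(W_2)$, kills the null-homologous section curve $\partial\Sigma_{1,1}$ of $W_2$, and hence kills $h_1$; so ${\rm vol}(W_1,\rho|_{\pi_1W_1})=0$ because $\rho|_{\pi_1W_1}$ now factors through the free base group, while ${\rm vol}(W_2,\rho|_{\pi_1W_2})=0$ because its image is amenable. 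The case where $\rho|_{\pi_1W_2}$ is non-elementary is symmetric. In all cases ${\rm vol}(M,\rho)=0$, so ${\mathrm{SV}}(M)=0$.

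The step I expect to cost the most work is the first paragraph: establishing the additivity of the relative volume of representations along JSJ tori and its vanishing on Seifert pieces at the needed level of generality --- for ${\rm PSL}(2;{\C})$ via the relative Gromov norm and bounded cohomology relative to amenable peripheral subgroups, and for $\t{\rm SL_2(\R)}$ via the bounded Euler class and the Brooks--Goldman/Eisenbud--Hirsch--Neumann machinery. Once that is in place the rest is a finite case analysis; its only delicate point is, for a piece on which $\rho$ is neither elementary nor abelian, to locate the homologically trivial peripheral curve that the gluing turns into a slope annihilated by $\rho$, and to check that the associated filling of the hyperbolic piece (part~2), respectively the once-punctured torus capped off to a torus (part~1), has zero simplicial volume, respectively forces the Euler number to vanish.
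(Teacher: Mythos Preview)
Your approach is largely correct and close to the paper's, but the examples and the additivity mechanism differ in instructive ways, and there is one soft spot you should be aware of.

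\textbf{Part (2).} Your construction and argument are essentially identical to the paper's Proposition~\ref{zero-2}: a product piece $W=\Sigma_{1,1}\times S^1$ glued to a one-cusped hyperbolic piece so that both the fibre and the section curve are sent to exceptional filling slopes. The paper quotes Hoffman--Matignon for infinitely many cusped hyperbolic manifolds with two exceptional slopes forming a basis; your figure-eight example is the same idea made explicit (slopes $0$ and $\pm1$ do form a basis and both give zero simplicial volume). The dichotomy ``$\rho|_{\pi_1W}$ abelian versus non-abelian'' and the conclusion that some peripheral slope is killed are exactly the paper's argument.

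\textbf{Part (1).} Here you take a genuinely different example. The paper invokes Motegi's gluing of two torus-knot exteriors and uses that \emph{every} representation of the resulting $\pi_1$ into ${\rm PSL}(2;\C)$ is abelian; Lemma~\ref{trivial} (your ``commuting elements of ${\rm PSL}_2(\R)$ lift to commuting elements of ${\rm Iso}_e\widetilde{{\rm SL}_2(\R)}$'') then forces the image in $G$ to be abelian, and finite since $H_1$ is finite, so Lemma~\ref{Almost trivial representation} finishes. Your flip-gluing of two copies of $\Sigma_{1,1}\times S^1$ is a valid non-trivial graph manifold, and your case analysis correctly shows that either $\rho(h_1)=1$ or $\rho(h_2)=1$ on the JSJ torus (indeed your ``non-elementary'' case gives $\rho(h_1)\in Z(G)$, then Lemma~\ref{trivial} forces $\rho(h_1)=[\rho(a_2),\rho(b_2)]=1$; and if $\bar\rho(h_1)\neq1$ the centralizer argument makes $\rho(\pi_1W_1)$ abelian so $\rho(s_1)=1$). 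The paper's construction has the advantage of producing \emph{infinitely many} examples with finite $H_1$ and needs no case split; yours is more hands-on but equally valid.

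\textbf{The soft spot.} Your general additivity ${\rm vol}_G(M,\rho)=\sum_v{\rm vol}_G(M_v,\rho|)$ via bounded cohomology is fine for $G={\rm PSL}(2;\C)$, but for $G={\rm Iso}_e\widetilde{{\rm SL}_2(\R)}$ the volume cocycle is \emph{not} bounded (Seifert volume is not controlled by $\|M\|$), so the amenability argument does not directly give a relative class. Your proposed detour through the bounded Euler class is plausible but not worked out. The paper sidesteps this entirely: its additivity principle (Theorem~\ref{additivity}) is proved via Chern--Simons theory and only requires that $\rho$ kill a slope on each JSJ torus, after which one compares volumes of the \emph{Dehn-filled closed} pieces rather than relative volumes. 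Since your own case analysis in both parts shows that a slope is always killed, you can cite Theorem~\ref{additivity} directly: the fillings in your Part~(1) are $T^3$ and $\#_2(S^1\times S^2)$, both with ${\mathrm{SV}}=0$ by Remark~\ref{relation with $D(M,N)$}, and in Part~(2) they are $T^3$ or $\#_2(S^1\times S^2)$ on the Seifert side and a Sol or small Seifert manifold on the hyperbolic side, all with ${\mathrm{HV}}=0$. So your proofs go through, but the honest route to additivity is the paper's Chern--Simons one, not bounded cohomology.
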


\begin{corollary}\label{non-cover}
Neither the hyperbolic volume nor the Seifert volume have the covering property.
 \end{corollary}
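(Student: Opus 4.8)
The plan is to derive Corollary \ref{non-cover} as a purely formal consequence of Theorems \ref{virt-non-zero} and \ref{zero}. Suppose first, for contradiction, that ${\mathrm{HV}}$ satisfies the covering property. By Theorem \ref{zero} (2) I would pick a closed oriented irreducible $3$-manifold $M$ with $\|M\|>0$ and ${\mathrm{HV}}(M)=0$. Since every hyperbolic closed $3$-manifold $N$ has ${\mathrm{HV}}(N)={\rm vol}_{{\Hi}^3}(N)>0$ by Theorem \ref{basic property of volume of presentation} (2), while every non-hyperbolic geometric closed $3$-manifold has vanishing simplicial volume, the manifold $M$ must be non-geometric; moreover, as $\|M\|$ is a positive multiple of the total hyperbolic volume of the hyperbolic pieces of the JSJ decomposition (Gromov, Soma \cite{Gr,So}), $M$ contains at least one hyperbolic geometric piece. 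Theorem \ref{virt-non-zero} (1) then produces a finite cover $p\co\t M\to M$ with ${\mathrm{HV}}(\t M)>0$, whereas the covering property forces ${\mathrm{HV}}(\t M)=|\mathrm{deg}(p)|\,{\mathrm{HV}}(M)=0$, a contradiction.

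The Seifert case runs in parallel. Assuming ${\mathrm{SV}}$ had the covering property, I would use Theorem \ref{zero} (1) to choose a closed oriented non-trivial graph manifold $M$ with ${\mathrm{SV}}(M)=0$. A non-trivial graph manifold is irreducible and non-geometric, and every piece of its JSJ decomposition is a Seifert manifold, so in particular $M$ contains a Seifert geometric piece. By Theorem \ref{virt-non-zero} (2) there is then a finite cover $p\co\t M\to M$ with ${\mathrm{SV}}(\t M)>0$, contradicting ${\mathrm{SV}}(\t M)=|\mathrm{deg}(p)|\,{\mathrm{SV}}(M)=0$. In both cases one concludes that the covering property fails.

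I do not expect any genuinely hard step here: all the analytic content is already packaged inside Theorems \ref{virt-non-zero} and \ref{zero}, and the only point requiring attention is that the manifolds supplied by Theorem \ref{zero} really satisfy the hypotheses of Theorem \ref{virt-non-zero} --- namely that positive Gromov norm together with vanishing hyperbolic volume forces a closed irreducible $3$-manifold to be non-geometric with at least one hyperbolic JSJ piece, and that a non-trivial graph manifold contains at least one Seifert piece. Both are standard consequences of the JSJ decomposition together with the additivity and the vanishing properties of the simplicial volume, so the argument amounts to little more than quoting the two theorems and observing their hypotheses match.
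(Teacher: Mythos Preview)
Your proposal is correct and matches the paper's approach: the paper simply states that Corollary \ref{non-cover} follows immediately from Theorems \ref{virt-non-zero} and \ref{zero}, and you have spelled out exactly this deduction, including the routine check that the examples from Theorem \ref{zero} meet the hypotheses of Theorem \ref{virt-non-zero}.
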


Now let us have a brief discussion of our proofs of the main results.

The difficulty of Question \ref{Q1} can more or less be seen from the
definition: to get a non-zero element in ${\rm vol}\left(M, G
\right)$ we need first to find an \emph{a priori} ``significant''
representation $\rho\co\pi_1M\to G$, and then to be able to compute
its volume. 
%Usually non of those steps are easy, especially when the
%manifold is not geometric. 
%Basically  
In the geometric case, there
is a natural significant representation given by the faithful and
discrete representation of its fundamental group in the Lie group of
its geometry.    
In the non-geometric case one might think to use the geometry
of its  pieces  to construct a
global significant representation. However in this new situation
many problems occur: First the geometric pieces have non-empty
boundary and the volume of representation is not easy to manipulate
 and moreover we must make sure that the local
representations are compatible in the toral boundaries in order to
be glued together.    
Then another problem arises when we want to
compute the volume of a global representation from the local
volumes.  Can we add the volumes of those pieces to get the volume of the presentation?

In order to prove Theorem \ref{virt-non-zero} and Theorem \ref{zero}, we will first consider the volume
of representations from the perspective of Chern--Simons theory and  prove the so-called additivity principle.

Denote by $G$ the semi-simple Lie group ${\rm Iso}_e\t{{\rm
SL}_2(\R)}$ or ${\rm PSL}(2,{\C})$ with the associated Riemannian
homogeneous spaces $X$ which  is $\t{{\rm SL}_2(\R)}$ or ${\Hi}^3$
endowed with the closed $G$-invariant volume form $\omega_X$.

Denote by $\mathfrak{g}$ the Lie algebra of $G$. We recall (see Section 3
for more details) that the Chern--Simons classes with structure group
${\rm PSL}(2,{\C})$ are based on the first Pontrjagin class and in
the same way we define the Chern--Simons classes with structure group
${\rm Iso}_e\t{{\rm SL}_2(\R)}$  based on the invariant polynomial
defined by ${\bf R}(A\otimes A)={\rm Tr}(X^2)+t^2$ where $A$ is an
element of the Lie algebra of ${\rm Iso}_e\t{{\rm SL}_2(\R)}$ which
decomposes into $X+t$ where $X$ is in the Lie algebra of $\t{{\rm
SL}_2(\R)}$ and $t\in{\R}$. Denote
the imaginary part of the complex number $z$ by $\Im(z)$.

\begin{proposition}\label{vol} 
	Let $\rho$ be a  representation of $\pi_1M$ into $G$
	and $A$ be a  corresponding flat $G$-connection in the
	principal bundle $P=M\times_\rho G$. Suppose that $P$ admits a section
	$\delta$ over $M$.
	
	\begin{enumerate}
		\item If $G$ equals ${\rm Iso}_e\t{{\rm SL}_2(\R)}$ then 
		\begin{equation}
			\mathfrak{cs}_M(A,\delta)=\int_M\delta^{\ast}{\bf R}\left(\ud A\wedge
			A+\frac{1}{3}A\wedge[A,A]\right)=\frac{2}{3}{\rm vol}_G(M,\rho).
		\end{equation}
		In particular, the Chern--Simons invariant of flat ${\rm Iso}_e\t{{\rm
		SL}_2(\R)}$-connections is gauge invariant.
		\item If $G$ equals ${\rm PSL}(2;{\C})$ then
		\begin{equation}
			\Im\left(\mathfrak{cs}_M(A,\delta)\right)=-\frac{1}{\pi^2}{\rm
			vol}_{G}(M,\rho).
		\end{equation}
	\end{enumerate}
\end{proposition}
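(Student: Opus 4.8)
The plan is to reduce the statement to the well-understood case where the flat connection is pulled back along a map to a geometric target, and then to use the Chern--Simons transgression formula together with the interpretation of $\mathrm{vol}_G(M,\rho)$ as the pairing of the pulled-back volume form with the fundamental class. First I would recall the precise definition of $\mathrm{vol}_G(M,\rho)$: given $\rho\co\pi_1M\to G$ and the homogeneous space $X$, one forms the flat $X$-bundle $E=M\times_\rho X$, chooses any section $s\co M\to E$ (which exists since $X$ is contractible), and sets $\mathrm{vol}_G(M,\rho)=\langle s^\ast\widehat{\omega_X},[M]\rangle$, where $\widehat{\omega_X}$ is the closed form on $E$ obtained by extending the $G$-invariant form $\omega_X$ along the fibers using the flat structure; this is independent of $s$ because any two sections are fiberwise homotopic and $\widehat{\omega_X}$ is closed. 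The existence of the section $\delta$ of the principal bundle $P=M\times_\rho G$ is exactly what makes the relative Chern--Simons invariant $\mathfrak{cs}_M(A,\delta)$ well defined as an honest real (resp. complex) number rather than an element of $\R/(\text{periods})$.

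Next I would carry out the local computation that relates the transgression form to the volume form. Write $\mathfrak g=\mathrm{Lie}(G)$, and consider the invariant polynomial $\mathbf R$ in case (1) (resp. the Pontryagin form in case (2)). On the flat bundle $P$, the curvature vanishes, so the Chern--Simons form $\mathrm{CS}(A)=\mathbf R\!\left(\ud A\wedge A+\tfrac13 A\wedge[A,A]\right)$ is closed, and $\mathfrak{cs}_M(A,\delta)=\int_M\delta^\ast\mathrm{CS}(A)$. The key identity is that, under the identification of a flat $G$-connection with the developing data of an $(X,G)$-structure "with holonomy $\rho$" locally, the Chern--Simons $3$-form built from $\mathbf R$ (resp. from $p_1$) equals a universal constant times the pullback of $\omega_X$ plus an exact form; concretely, one checks on the Lie-algebra level that $\mathbf R\!\left(\tfrac13\theta\wedge[\theta,\theta]\right)$ applied to the Maurer--Cartan form $\theta$ of $G$ is proportional to the bi-invariant volume form of $X$ after quotienting by the stabilizer, with proportionality constant $\tfrac23$ in case (1) and $-\tfrac1{\pi^2}\Im$ in case (2). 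This is the normalization of $\mathbf R$ and of the Pontryagin class that the authors fixed precisely so that these constants come out; so the computation amounts to unwinding those normalizations against the standard volume forms on $\t{\mathrm{SL}}_2(\R)$ and $\H^3$. The $\Im$ in case (2) appears because $p_1$ of the complex group is computed from $\mathrm{Tr}(F\wedge F)$ which is complex-valued, and only its imaginary part is the volume density (its real part being the Chern--Simons invariant proper).

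Finally I would assemble these: $\int_M\delta^\ast\mathrm{CS}(A)=c\cdot\langle s^\ast\widehat{\omega_X},[M]\rangle+\int_M\ud(\cdots)=c\cdot\mathrm{vol}_G(M,\rho)$, where the exact term integrates to zero over the closed manifold $M$ and $s$ is the section of $E$ induced from $\delta$ via the quotient $G\to G/\mathrm{Stab}=X$; here $c=\tfrac23$ in case (1) and the statement of case (2) is the corresponding identity for imaginary parts. For the "in particular" in case (1): since the left-hand side now equals $\tfrac23\mathrm{vol}_G(M,\rho)$, which is manifestly independent of $\delta$, the relative invariant $\mathfrak{cs}_M(A,\delta)$ does not depend on the chosen section, i.e. it is gauge invariant; equivalently, all periods of the relevant Chern--Simons class vanish for this structure group on $3$-manifolds, which is a reflection of $\pi_3(\mathrm{Iso}_e\t{\mathrm{SL}}_2(\R))$ and the integrality of $\mathbf R$ being arranged to kill the indeterminacy. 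I expect the main obstacle to be the normalization bookkeeping in the previous paragraph: matching the constant in front of $\omega_X$ coming from $\mathbf R\!\left(\tfrac13\theta\wedge[\theta,\theta]\right)$ (resp. from the second Chern/Pontryagin form) against the chosen normalization of the volume form on $X$, and making sure the exact remainder is genuinely the differential of a globally defined form on $M$ rather than just locally exact — the latter uses flatness in an essential way, together with the fact that the transgression is natural under the $G$-action.
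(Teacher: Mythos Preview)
Your outline is correct and matches the paper's approach closely: compute the Chern--Simons $3$-form of the Maurer--Cartan connection, identify it as a constant multiple of the pullback of $\omega_X$ plus an exact term, and then push everything through the commutative diagram relating $P=M\times_\rho G$ to the associated $X$-bundle, so that $\delta$ induces the section $s$ and Stokes kills the exact remainder.

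One point of emphasis. For part (1) the paper does exactly what you describe---an explicit computation in a well-chosen basis $\{X,Y,Z,W\}$ of $\mathfrak{g}$ yields $T{\bf R}(\omega_{\mathrm{M.C.}})=\tfrac{2}{3}\varphi_X\wedge\varphi_Y\wedge\varphi_Z+\tfrac{1}{3}\,\ud(\varphi_Y\wedge\varphi_W-\varphi_Z\wedge\varphi_W)$, and this really is just structure-constant bookkeeping. For part (2), however, the identity you invoke is not merely a normalization check: the decomposition $iTP_1(\omega_{\mathrm{M.C.}})=\tfrac{1}{\pi^2}p^\ast\omega_{{\Hi}^3}+i\,\mathfrak{cs}_{\mathrm{L.C.}}({\Hi}^3)+\ud\gamma$ is Yoshida's formula, and its proof is a genuine geometric computation (involving the Levi--Civita Chern--Simons form of ${\Hi}^3$ viewed inside its $\SO(3)$-frame bundle ${\rm PSL}(2;{\C})$). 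Your sentence ``its real part being the Chern--Simons invariant proper'' is exactly right---that real part is the Levi--Civita Chern--Simons term---but you should cite or reprove Yoshida's identity rather than treat it as a constant-matching exercise. With that acknowledged, your argument is the paper's argument.
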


\begin{remark} 
Assuming that $P=M\times_\rho G$ admits a section in Proposition \ref{vol} (1) means equivalently
that $\rho$ admits a lift into $\t{{\rm SL}_2(\R)}$ so that the
bundle admits a reduction to an $\t{{\rm SL}_2(\R)}$-bundle and we
reckon that the correspondence  for
$G=\t{{\rm SL}_2(\R)}$  is pointed in \cite{Re-rationality}, and verified in
\cite{Kh} by  a long and subtle computation. However for our own
understanding we reprove it in a very simple way  underscoring that
the correspondence is quite natural and comes directly from the
structural equations of the Lie group involved (Section 3.3).
The correspondence in Proposition \ref{vol}  (2) is derived from \cite{KK}, using a formula established by Yoshida in \cite{Yo}  (Section 3.4).
The imaginary part of the Chern--Simons invariants of flat ${\rm
PSL}(2;{\C})$-connections is gauge invariant from the formula for it
does not depend on the chosen section. 
\end{remark}

Then from the representations in normal form developed in \cite{KK}, we have the so-called 
additivity principle, that is,  we can add the volumes of those pieces to get the globe volumes.
We state it in the following  

\begin{theorem}\label{additivity}
            Let $M$ be an irreducible oriented closed $3$-manifold with JSJ tori $T_1,\cdots,T_r$ and JSJ pieces 
            $J_1,\cdots,J_k$,
            and let $\zeta_1,\cdots,\zeta_r$ be slopes on $T_1,\cdots T_r$, respectively.
           
            Suppose that $G$ is either ${\rm Iso}_e\t{{\rm SL}_2(\R)}$ or ${\rm PSL}(2;{\C})$, and that 
                $$\rho:\pi_1(M)\to G$$
            is a representation vanishing on the slopes $\zeta_i$,  and that 
                $\hat\rho_i:\pi_1(\hat{J_i})\to G$
            are the induced representations, where $\hat{J}_i$ is the Dehn filling of $J_i$ along
            slopes adjacent to its boundary, with the induced orientations. Then
                %$$\Vol_X(\rho)\,=\,\Vol_X(\hat\rho_1)+\cdots+\Vol_X(\hat\rho_s).$$

$${\rm
vol}_G(M,\rho)={\rm vol}_G({\hat J}_1,\hat{\rho}_1)+ {\rm
vol}_G({\hat J}_2,\hat{\rho}_2)+\ldots+ {\rm
vol}_G({\hat J}_k,\hat{\rho}_k).$$ 
\end{theorem}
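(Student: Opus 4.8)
The plan is to express every volume appearing in the identity as the integral of the pulled-back invariant volume form $\omega_X$ against one and the same section of the associated flat $X$-bundle, chosen so as to degenerate on the JSJ collars and on the filling solid tori; additivity of the integral then does the rest.

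First I would recall that for a closed oriented $3$-manifold $W$ and a representation $\sigma\co\pi_1W\to G$, the flat bundle $W\times_\sigma X$ (with $X=\Hyp$ or $\Sft$ according to $G$) admits a section $\delta$ because $X$ is contractible, that any two sections are homotopic, and that ${\rm vol}_G(W,\sigma)=\int_W\delta^{\ast}\omega_X$ independently of $\delta$ (as $\omega_X$ is closed); by Proposition \ref{vol} this quantity carries the same information as the (imaginary part of the) Chern--Simons invariant whenever $W\times_\sigma G$ itself admits a section, but the volume-form description needs no such hypothesis and is what I would use throughout. Next, invoking the normal form for $\rho$ near a torus on which it vanishes, due to Kirk--Klassen \cite{KK} (together with its adaptation to ${\rm Iso}_e\Sft$) --- equivalently, using the freedom to homotope $\delta$ --- I would fix disjoint product collars $N_i\cong T_i\times[-1,1]$ of the JSJ tori and, since $\rho(\zeta_i)=1$, choose $\delta$ over $M$ so that over each $N_i$ its equivariant lift $\widetilde{N_i}\to X$ is constant in the $\zeta_i$-coordinate: concretely one pulls $\delta$ back over $N_i$ from a $\langle\rho(\mu_i)\rangle$-equivariant map $\RR_{\mu_i}\times[-1,1]\to X$ (which exists because $X$ is contractible) and then extends $\delta$ over the complement of $\bigcup_iN_i$ without obstruction. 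Any such $\delta$ factors over each $N_i$ through a $2$-dimensional quotient, so $\delta^{\ast}\omega_X$ vanishes identically on $\bigcup_iN_i$.

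With this $\delta$, cutting $M$ along $\bigcup_i(T_i\times\{0\})$ produces pieces $J'_1,\ldots,J'_k$, each $J'_i\cong J_i$ carrying the orientation induced from $M$, and by additivity together with the vanishing on the collars one gets ${\rm vol}_G(M,\rho)=\sum_{i=1}^k\int_{J'_i}\delta^{\ast}\omega_X$. Now $\hat J_i$ is obtained from $J'_i$ by gluing a solid torus $V$ to each boundary torus, the meridian of $V$ going to the adjacent slope $\zeta_j$, and $\rho|_{\pi_1(J'_i)}$ extends to $\hat\rho_i$ precisely because $\rho(\zeta_j)=1$. Since the equivariant lift of $\delta$ along $\partial V$ is already constant in the meridian direction, I would extend $\delta$ over $V$ as the pullback of a $\langle\rho(\mu_j)\rangle$-equivariant map $\RR\times[0,1]\to X$, obtaining a section $\hat\delta_i$ of the flat $X$-bundle over $\hat J_i$ that agrees with $\delta$ on $J'_i$ and has $\hat\delta_i^{\ast}\omega_X\equiv0$ on every $V$. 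Hence $\int_{\hat J_i}\hat\delta_i^{\ast}\omega_X=\int_{J'_i}\delta^{\ast}\omega_X$, and since any section computes the volume the left-hand side equals ${\rm vol}_G(\hat J_i,\hat\rho_i)$; summing over $i$ gives the claimed identity, orientation-compatibility of the fillings (``with the induced orientations'') ensuring that all signs are $+1$.

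The step I expect to be the main obstacle is producing this degenerate section rigorously and \emph{compatibly}: arranging the $\zeta_i$-invariant normal form at all $r$ JSJ tori simultaneously (including a torus having the same piece on both sides) and matching it across each filling solid torus so that $\omega_X$ genuinely pulls back to zero there. This is precisely what the Kirk--Klassen normal form for representations vanishing on a slope is designed to supply, and it is the only non-formal ingredient; what remains is additivity of integrals, the bookkeeping of which slope fills which boundary component, and checking that nothing depends on whether $G={\rm PSL}(2,{\C})$ or $G={\rm Iso}_e\Sft$ --- which it does not, since in both cases $\delta^{\ast}\omega_X$ is the section-independent $3$-form whose support we have pushed entirely off the collars and the filling tori.
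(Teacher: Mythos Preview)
Your argument is correct and takes a genuinely different route from the paper's. The paper proves additivity through Chern--Simons invariants: it puts the flat connection $A$ into Kirk--Klassen normal form on a collar $W(T)$ of each JSJ torus, writes $\mathfrak{cs}_M(A)=\sum_i\mathfrak{cs}_{J_i}(A|J_i)$, extends each $A|J_i$ over the filling solid tori $V_c$, and then uses Proposition~\ref{KKh} (gauge invariance for connections equal near the boundary) together with the opposite orientations of the two $V_c$'s attached along a given $T$ to show that the solid-torus contributions \emph{cancel in pairs}; finally Proposition~\ref{vol} converts the Chern--Simons identity into the volume identity. By contrast you work directly with the section $\delta$ of the flat $X$-bundle and the form $\delta^\ast\omega_X$: since $\rho(\zeta_i)=1$ you arrange $\delta$ to factor through a two-dimensional quotient over every collar and every filling solid torus, so that $\delta^\ast\omega_X$ vanishes there \emph{individually}, not just in pairs. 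What this buys you is that you never invoke Chern--Simons theory, you do not need Proposition~\ref{KKh}, and you avoid the hypothesis (present in Proposition~\ref{vol}) that the principal $G$-bundle be trivial. What the paper's route buys is that it stays within the Chern--Simons framework that is the paper's organizing theme, and the normal-form and gauge-invariance statements it uses are black-boxed from \cite{KK} and \cite{Kh}, so no explicit section-building is needed. Your identification of the one nontrivial step---producing the $\zeta_i$-constant section compatibly at all tori and across the fillings---is accurate; it is elementary obstruction theory over a contractible fiber, and your sketch of it is sound (in fact over each $V$ you may simply take $\hat\delta_i$ constant on the $D^2$ factor, which is smooth and matches $\delta|_{\partial V}$ since the latter is already $\zeta_j$-constant).
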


With Proposition \ref{vol} and Theorem \ref{additivity}  at hands, for  a given 3-manifold $M$ satisfying the conditions of Theorem \ref{virt-non-zero},  how do we contstruct a finite cover $\tilde M$ of $M$  and a representation  $\rho : \pi_1(\tilde M)\to G$ with positive volume? 
Such a cover $\tilde M$ and a representation $\rho$ are not difficult to describe up to some finite cover and conjugation as we see below.

The prime picture of the cover $\tilde M$ is simple. Fix a JSJ piece $J$ of $M$ with the right geometry. 
The regular finite cover $\tilde{M}$ of $M$ can be cut along some of its JSJ tori into three parts: 
Part 1 are some disjoint preimage components (elevations) of $J$, Part 2 some ``corridors'',
and Part 3 the remaining part;
components in Part 1 and Part 3 are connected by those corridors in Part 2. 
The crucial property of corridors is that in each corridor $X$ there is a corridor surface $R$,
so that if $X$ is a corridor connecting some $\tilde J$ in Part 1 through a component $\tilde T$
of $\partial \tilde J$, then $\partial R$ has exactly one component (circular doorsill) in $\tilde T$. 

It seemed hopeless to find such corridors in general, until the recent striking results of surfaces in 3-manifolds 
due to Wise and his co-authors \cite{HW,Wise-long,PW-graph,PW-mixed,PW3}.
In this paper we will construct such corridors (Theorem \ref{corridorCover}) from what we call
parallel-cutting partial Przytycki--Wise subsurfaces (Theorem \ref{virtualPartialPW}), 
then we will merge the three parts above to provide the designed finite cover $\tilde M$ 
(Proposition \ref{mergeFiniteCovers}, Corollary \ref{virtualExtensionOfSemicovers}).
%As we will see, even our approach relies on 
%the results of Hunglund--Wise \cite{HW} and of Przytycki--Wise \cite{PW-graph,PW-mixed,PW3}, 
%our construction of corridors  and process of merging finite covers  are still quite  complicated and delicate.
To understand the key issue that we will address,
consider the following situation: If $S$ is a $\pi_1$-injectively immersed, virtually embedded,
connected, closed subsurface of $M$, and if $T$ is a JSJ torus of $M$, is there a regular finite
cover $\tilde{M}$ of $M$ such that any elevation of $S$ intersects any elevation of $T$ in at most
one connected component? Generally speaking, the answer seems to be negative when $S$ and $T$ intersects
in more than one component. In particular, the double separability
between $S$ and $T$ does not automatically lead to such a cover $\tilde{M}$. 
However, the main new input of Theorem \ref{corridorCover} is 
the parallel cutting condition, which morally assumes that $S$ intersects every JSJ torus of $M$ in virtually
parallel components (Definition \ref{parallelCutting}). Under this and other 
suitable working assumptions,
we will be able to resolve the key issue by applying the separability criterion of Rubinstein--Wang \cite{RW}
in graph submanifolds of $M$ and the relatively quasiconvex separability of Wise \cite{Wise-long} in hyperbolic pieces.

Thus the term ``surface separability'' in the title of this paper
mostly refers to virtual simplification of intersection
between an immersed subsurface and any JSJ torus, rather than intersection of an immersed subsurface with itself.

We describe the representation $\rho :\pi_1(\tilde M)\to G$ (Theorem \ref{virtualExtensionRep}) as follows: 
$\rho|$ on the group of each component $\tilde J$ of Part 1  
factors through a ``mighty and natural'' representation of the $\pi_1(\hat J)$, where $\hat J$ is  the closed 3-manifold obtained by 
Dehn filling of $\tilde J$ along its doorsills and the representation is discrete and faithful (the corridor surfaces can be chosen to be so);
$\rho|$ on  the group of each component  of Part 3 is trivial; and $\rho|$ on  the group of each corridor is
based on the crucial property of its corridor surface and   is given by the homological pairing
$\pi_1(X)\longrightarrow H_1(X;\,\ZZ)  \stackrel{[R]}{\longrightarrow} \ZZ{\longrightarrow} G,$ 
where everything is oriented, $[R]\in H_2(X,\partial X;\ZZ) \cong H^1(X;\ZZ)$.  
The global representation $\rho$ can be obtained by gluing together the local representations $\rho|$  provided they induce conjugate representations on each boundary components of the Parts 1, 2 and 3 (Lemma \ref{cut-paste} or Remark \ref{also-cut-paste}).
To make sure that the local representations are conjugate in the boundary, we need the concepts of
 colored chunks and colored merging, where  colored chunks can be merged with matching color in a further finite cover 
(Lemma \ref{coloredMerging})  and  also 
 certain so called "class invertible properties" of the groups ${\rm Iso}_e\t{\rm SL_2(\R)}$, 
and ${\rm PSL}(2;\C)$ (Lemma \ref{geometricClassInv}). Finally  the conjugacy issue above will be managed in Lemma \ref{glueRep}

To prove Theorem \ref{zero}, some arguments in \cite{EHN}, an example of Motegi \cite{Mo} and a result of Hoffman--Matignon \cite{HM}
are also used.

The organization of this paper is reflected by the table of contents.
Efforts have been made in organizing the materials 
so that our results can be verified smoothly, and readers
can access the topic more easily. 

\bigskip\noindent\textbf{Acknowledgement}. 
We thank Professor Michel Boileau and Professor Daniel Matignon for
helpful communications.
%The first author would like to thank the  Department  of Mathematics of the Peking University for its support and  its hospitality during the elaboration this paper. The second author is partially supported by grant
%No.10631060 of the National Natural Science Foundation of China. 

\section{Preliminaries}\label{Sec-preliminaries}
	In this section, we review the geometric decomposition of $3$-manifolds and volume of representations
	of closed manifolds.

	\subsection{Topology of 3-manifolds after Thurston}
		Let $N$ be a connected compact prime orientable 3-manifold with toral or empty boundary. 
	%Each closed orientable 3-manifold $N$ admits a unique prime decomposition
	%$N_1\#\ldots\#N_k$, the prime factors being unique up to the order and
	%up to homeomorphisms.

%		A connected, properly embedded, 
%		closed surface $S$ in $N$ is said to be 
%		\emph{essential} if $S$ is incompressible and is not parallel to any component of $\partial N$.

		\subsubsection{Geometric decomposition}
		As consequence of the geometrization of $3$-manifolds \cite{Th1,Th2} 
		achieved by G.~Perelman and W.~Thurston,
		exactly one of the following holds:
		\begin{itemize}
			\item Either $N$ is geometric, supporting one of the following eight geometries: ${\Hi}^3$,
			$\widetilde{{\rm SL}_2({\R})}$, ${\Hi}^2\times{\R}$, ${\rm Sol}$, ${\rm Nil}$, ${\R}^3$, ${\S}^3$ and
			${\S}^2\times {\R}$ (where ${\Hi}^n$, ${\R}^n$ and ${\S}^n$ are the $n$-dimensional
			hyperbolic space, Euclidean space, and spherical space respectively);
			\item or $N$ has a canonical nontrivial geometric decomposition.
			In other words, there is a nonempty minimal union $\c{T}_N\subset N$ of disjoint 
			essential tori and Klein bottles of $N$, unique up to isotopy, such
			that each component of $N\setminus\c{T}_N$ is either Seifert fibered or atoroidal.
			In the Seifert fibered case the piece supports  the ${\Hi}^2\times{\R}$ geometry and 
			the $\widetilde{{\rm SL}_2({\R})}$ geometry,
			 and
			in the atoroidal case the piece supports the ${\Hi}^3$ geometry. 
		\end{itemize}
		When $N$ has nontrivial geometric decomposition, we call the components of $N\setminus\c{T}_N$ 
		the \emph{geometric pieces} of $N$, or more specifically,
		\emph{Seifert pieces} or \emph{hyperbolic pieces} according to their geometry.
		We call $N$ a \emph{mixed 3-manifold} if $N$ contains at least one hyperbolic piece, 
		or a \emph{graph manifold} otherwise.
		
		{\it A chunk} of $N$ is a submanifold that is a union of a subset of geometric pieces, 
		glued up along the cut tori between them.
		A graph chunk is a chunk which is a graph manifold.
		
		Traditionally, there is another decomposition introduced
		by Jaco--Shalen \cite{JS} and Johannson \cite{Joh}, known as the \emph{JSJ decomposition}.
		When $N$ contains no essential Klein bottles and has a nontrivial geometric decomposition,
		the JSJ decomposition of $N$ coincides with its geometric decomposition,
		so the cut tori and the geometric pieces may be referred to as the \emph{JSJ tori} and 
		the \emph{JSJ pieces}, respectively.
		Possibly after passing to a double cover of $N$, we may assume that $N$ contains
		no essential Klein bottle. In fact, 
		the following lemma of virtual reduction is well known, (cf.~\cite[Lemma 3.1]{PW3}).
		
		\begin{lemma}\label{not-shared}
			If $N$ is a closed prime 3-manifold which is not geometric,
			then there exists a regular finite covering $\tilde N$ of $N$ satisfying the following:
			\begin{itemize}
				\item $\tilde N$ contains no essential Klein bottle;
				\item each JSJ torus of $\tilde N$ is adjacent to a pair of distinct JSJ pieces;
				\item each Seifert piece is a product of $S^1$ and a compact surface of positive genus.
			\end{itemize}
		\end{lemma}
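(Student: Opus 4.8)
The plan is to establish the three bullet points by combining standard virtual properties of $3$-manifold groups with an elementary covering space argument, handling each bullet point separately and then taking a common refinement.

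First, I would address the essential Klein bottle condition. An essential Klein bottle in $N$ gives a $\pi_1$-injective map $K\hookrightarrow N$; since $\pi_1 K$ has an index-two subgroup carrying the orientation character, I would pass to the cover $\tilde N_0\to N$ corresponding to the kernel of the composite $\pi_1 N\to H_1(N;\z2)\to\z2$ dual to the first Stiefel--Whitney classes of the (finitely many, up to isotopy) essential Klein bottles, or more robustly invoke the well-known fact (used routinely in the references, e.g.~\cite{PW3}) that a closed aspherical $3$-manifold is virtually without one-sided incompressible surfaces unless it is itself covered by a twisted $I$-bundle over a Klein bottle — which cannot happen for a non-geometric $N$. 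After this step every torus or Klein bottle in $\c T_N$ lifts to a torus, so the geometric decomposition of $\tilde N_0$ has the form of a JSJ decomposition along tori only.

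Second, for the condition that each JSJ torus is adjacent to two distinct pieces, I would use residual finiteness of $\pi_1 N$ together with the fact that the JSJ tori are $\pi_1$-injective. A JSJ torus $T$ bounding the same piece $J$ on both sides corresponds to two distinct $\pi_1$-conjugates of $\pi_1 T$ inside $\pi_1 J$ (coming from the two inclusions of $T$ into $J$); by LERF-type separability — or, more elementarily, by a standard argument with the Bass--Serre tree of the JSJ graph-of-groups and residual finiteness of the edge-to-vertex quotients — there is a finite cover in which the two sides of each elevation of $T$ land in distinct elevations of $J$. Concretely, I would take a finite cover whose induced covering of the JSJ graph (the graph of the decomposition) has no edge-loops; such a cover of a finite graph always exists (pass to a cover dual to a surjection of $\pi_1$ of the graph onto a large enough cyclic or symmetric group, or simply subdivide and use the bipartite double), and one checks this lifts to a genuine manifold cover because the JSJ graph-of-groups structure is inherited.

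Third, for the condition that each Seifert piece is a product $S^1\times\Sigma$ with $\Sigma$ of positive genus: a Seifert piece $P$ with boundary fibers over a $2$-orbifold with boundary, and its fundamental group is a central extension $1\to\ZZ\to\pi_1 P\to\pi_1^{orb}\to1$ which, since $\pi_1^{orb}$ is virtually a surface group with boundary (hence virtually free, and virtually a free product of $\ZZ$'s), admits a finite-index subgroup splitting the extension and killing the orbifold cone points; the corresponding cover of $P$ is $S^1\times\Sigma'$ for some surface $\Sigma'$ with nonempty boundary, and passing to a further cover of $\Sigma'$ (e.g.~the one dual to a surjection $\pi_1\Sigma'\to\z2$ nontrivial on each boundary circle, or any cover of degree $\ge 2$) makes the genus positive. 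Since there are finitely many Seifert pieces, I can do this simultaneously.

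Finally I would assemble the three covers. The main technical point — and the step I expect to be the real obstacle — is that these locally prescribed finite covers must be made \emph{compatible along the JSJ tori} so that they glue to a single connected regular cover $\tilde N\to N$; one cannot just take covers of the pieces independently. The standard remedy, which I would invoke, is that $\pi_1 N$ is residually finite (indeed, all these groups are virtually special by the cited work of Wise et al., hence LERF), so each of the three desired properties is detected by a finite quotient of $\pi_1 N$, and one takes $\tilde N$ to correspond to the intersection of the three finite-index normal subgroups thus obtained; regularity is automatic, and one checks that each defining property is preserved under passing to further covers (the Klein-bottle-free and edge-loop-free conditions clearly are; the $S^1\times\Sigma$ condition with positive genus is preserved because a further cover of $S^1\times\Sigma$ is again of product form with genus at least that of $\Sigma$, after possibly unwrapping the $S^1$ direction — and unwrapping only increases nothing problematic). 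Citing \cite[Lemma 3.1]{PW3} for the first two bullets and the elementary Seifert argument for the third, together with this common-refinement trick, completes the proof.
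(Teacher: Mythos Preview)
The paper does not prove this lemma: it is stated as well known, with a citation to \cite[Lemma 3.1]{PW3}, and no argument is given. Your sketch therefore goes beyond what the paper does, and your concluding citation of \cite[Lemma 3.1]{PW3} is exactly the reference the paper itself invokes.

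Your outline is broadly sound, with one small slip: the parenthetical claim that ``any cover of degree $\geq 2$'' of a bounded surface has positive genus is false (every connected double cover of a pair of pants is a four-holed sphere); a cyclic cover of odd degree, or similar, is what one actually needs. The assembly step you flag---making locally prescribed covers of the JSJ pieces compatible along the tori---is exactly what the paper later packages as Proposition~\ref{mergeFiniteCovers}, so your instinct that this is the real technical point is correct, though the paper treats the present lemma as a known input rather than deriving it from that later machinery.
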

		
		\subsubsection{Hyperbolic pieces}\label{bh}
		A hyperbolic piece $J$ can be realized
		as a complete hyperbolic $3$-manifold of finite volume,
		unique up to isometry by Mostow Rigidity. 
		With respect to the geometry, any properly $\pi_1$-injectively
		immersed connected subsurface $j:\,S\looparrowright J$ of finite type,
		is either \emph{geometrically finite} or \emph{geometrically infinite},
		unless $\pi_1(S)$ is elementary (trivial or infinite cyclic). 
		Geometrically infinite subsubsurfaces are virtual fibers;
		geometrically finite subsurfaces 
		are quasi-Fuchsian so any conjugate of $\pi_1(S)$ in $\pi_1(J)$ is
		quasiconvex relatively to the cusps. It follows from the work
		of Wise \cite{Wise-long} that $\pi_1(S)$ is separable in $\pi_1(J)$
		in the cusped case, and the closed case follows from
		Agol's proof of the Virtual Haken Conjecture \cite{AGM}.
		In particular, $S$ is virtually embedded in $J$. 
		
		We briefly recall the Thurston's Hyperbolic Dehn Filling Theorem.
		Let $J$ denote a compact, orientable $3$-manifold whose boundary
		consists of tori $T_1,\ldots,T_p$ and whose interior admits a complete
		hyperbolic metric. We denote by $J_{\mathrm{max}}$ the interior of $M$ with a
		system of maximal cusps removed. Identify $J$ with
		$J_{\mathrm{max}}$, then $\partial J$ has a Euclidean metric induced
		from the hyperbolic metric and each closed Euclidean geodesic in
		$\partial J$ has the induced length. 
		%Gromov's $2\pi$ Lemma
		%asserts that if the geodesic
		%corresponding to the slope $\zeta_i$ on $T_i$ has length greater than
		%$2\pi$,	then the 3-manifold obtained from $J$ by Dehn filling 
		%along $\zeta_i$ is still hyperbolic 
		%(see \cite{BH} for example). 
		The Hyperbolic Dehn Filling Theorem \cite[Theorem 5.8.2]{Th1}
		can be stated in the following form.
		
		\begin{theorem}\label{surgered}
			Let $J$ be a compact oriented $3$-manifold with toral boundary
			$T_1\cup\ldots\cup T_p$ whose interior admits a complete
			hyperbolic structure. Identify $J$ with $J_{max}$.
			Then there is a positive constant $C$ such that the
 			the closed 3-manifold  $J(\zeta_1,\ldots,\zeta_n)$ obtained by 
 			Dehn filling each $T_i$ along $\zeta_i$ admits a complete hyperbolic
			structure if each slope $\zeta_i$ has then length greater than $C$.
			Moreover, with suitably chosen base points, $J(\zeta_1,\ldots,\zeta_n)$
			converges to $J$ in the Gromov--Hausdorff sense as the minimal length
			of $\zeta_i$ tends to infinity.
		\end{theorem}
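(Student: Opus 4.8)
The plan is to carry out Thurston's deformation argument. Write $J_0$ for the complete finite-volume hyperbolic structure on $\mathrm{int}(J)$, identified with $J_{\mathrm{max}}$, and $\rho_0\co\pi_1 J\to\mobgp$ for its holonomy. The first step is to produce and parametrize a family of (generally incomplete) hyperbolic structures on $\mathrm{int}(J)$ near $J_0$. Using Calabi--Weil infinitesimal rigidity together with the Poincar\'e-duality dimension count (the ``half lives, half dies'' principle), one shows that the $\mobgp$-character variety of $\pi_1 J$ is, near $[\rho_0]$, a smooth complex manifold of dimension $p$, and that every nearby character is realized by an actual hyperbolic structure on $\mathrm{int}(J)$. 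Fix for each boundary torus $T_i$ a basis $(\mu_i,\lambda_i)$ of $\pi_1 T_i$, and let $u_i$ and $v_i$ denote the complex translation lengths of the holonomies of $\mu_i$ and $\lambda_i$ in a deformed structure. Then $(u_1,\dots,u_p)$ is a holomorphic coordinate on a neighborhood of $[\rho_0]$, which sits at $u=0$, and there are holomorphic functions $\tau_i$ with $v_i=\tau_i(u_i)\,u_i$, where $\tau_i(0)$ is the modulus of the Euclidean torus $T_i$; the complex length of any slope on $T_i$ is the corresponding integer combination of $u_i$ and $v_i$.

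The second step is to solve, for given coprime pairs $(p_i,q_i)$ with $\zeta_i=p_i\mu_i+q_i\lambda_i$, the hyperbolic Dehn surgery equations $p_i u_i+q_i v_i=2\pi\sqrt{-1}$ for $i=1,\dots,p$, which express that the holonomy of $\zeta_i$ is a rotation by $2\pi$ --- hence trivial in $\mobgp$ --- about the common axis of the holonomy of $\pi_1 T_i$. Since $v_i=\tau_i(u_i)u_i$, the $i$-th equation reads $(p_i+q_i\tau_i(u_i))u_i=2\pi\sqrt{-1}$, and its leading coefficient $p_i+q_i\tau_i(0)$ has modulus equal to a fixed positive multiple of the cusp length $\ell(\zeta_i)$ of $\zeta_i$ on $\partial J=\partial J_{\mathrm{max}}$. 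A contraction-mapping (equivalently, implicit function) argument therefore produces a solution with $|u_i|\le c/\ell(\zeta_i)$ for a uniform constant $c$; choosing $C$ so that $c/C$ is below the radius of the coordinate neighborhood, the system is solvable whenever every $\ell(\zeta_i)>C$. The corresponding incomplete structure on $\mathrm{int}(J)$ has a metric completion obtained by adjoining, for each $i$, a closed geodesic $\gamma_i$ of complex length $r_iu_i+s_iv_i$ with $p_is_i-q_ir_i=1$, around which the cone angle is exactly $2\pi$; the completion is thus a smooth closed hyperbolic $3$-manifold, and adjoining $\gamma_i$ along the $i$-th end is the same as attaching a solid torus to $T_i$ with meridian $\zeta_i$, so it is diffeomorphic to $J(\zeta_1,\dots,\zeta_p)$. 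By Mostow rigidity it is the asserted hyperbolic structure.

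For the Gromov--Hausdorff statement, observe that as $\min_i\ell(\zeta_i)\to\infty$ the solutions satisfy $u_i\to 0$, so the holonomies converge on generators to $\rho_0$; the core geodesics have lengths $\Re(r_iu_i+s_iv_i)=O(\max_i|u_i|)\to0$ and their Margulis tubes open up into rank-two cusps. Choosing base points in the thick parts of $J_0$ and of the $J(\zeta_1,\dots,\zeta_p)$, one obtains on any prescribed compact subset of $J_0$ embeddings into $J(\zeta_1,\dots,\zeta_p)$ whose bilipschitz distortion tends to $1$, which is precisely pointed Gromov--Hausdorff convergence $J(\zeta_1,\dots,\zeta_p)\to J_0=\mathrm{int}(J)$. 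The main obstacle is the foundational input of the first step: that $[\rho_0]$ is a smooth point of the character variety of the predicted dimension $p$ and that the peripheral invariants $u_i$ give local holomorphic coordinates there. This rests on Calabi--Weil infinitesimal rigidity, the ``half lives, half dies'' count, and the integrability of infinitesimal deformations --- the technical heart of Thurston's treatment. Granting it, the surgery equations, the uniform bound $|u_i|\le c/\ell(\zeta_i)$ and hence the constant $C$, the analysis of the completion, and the Gromov--Hausdorff convergence are comparatively formal.
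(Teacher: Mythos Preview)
The paper does not prove this theorem at all: it is stated as background, attributed to Thurston \cite[Theorem 5.8.2]{Th1}, and immediately used without argument. So there is no ``paper's own proof'' to compare against; the authors simply cite the Hyperbolic Dehn Filling Theorem as a known result.

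Your sketch is a reasonable outline of the standard Thurston deformation argument (smoothness of the character variety at the complete structure via Calabi--Weil rigidity and the half-lives-half-dies count, holomorphic cusp parameters $u_i$, solving $p_iu_i+q_iv_i=2\pi\sqrt{-1}$ by a contraction/implicit-function step, identifying the metric completion with the Dehn filling, and bilipschitz convergence on thick parts). As a self-contained proof it would still need the foundational inputs you flag as the ``main obstacle'' to be filled in, but for the purposes of this paper none of that is expected: the statement is quoted, not proved.
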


 		\subsubsection{Seifert pieces}
 		A Seifert piece $J$ of a non-geometric prime closed $3$-manifold $N$ supports 
 		both the ${\Hi}^2\times{\R}$ geometry and the $\widetilde{{\rm SL}_2({\R})}$ geometry.
 		In this paper we are more interested in the latter, so we describe the structure
 		of $\widetilde{{\rm SL}_2(\R)}$ geometric manifolds in the following.
 				
We consider the group ${\rm PSL}(2;{\R})$  as the orientation
preserving isometries of the hyperbolic
  $2$-space ${\Hi}^2=\{z\in{\C}, \Im(z)>0\}$ with $i$ as a base point. In this way ${\rm PSL}(2;{\R})$ is a (topologically trivial) circle bundle over ${\Hi}^2$.
  Denote by $p\co\t{{\rm SL}_2(\R)}\to{\rm PSL}(2;{\R})$ the universal covering of ${\rm PSL}(2;{\R})$ with the induced metric. Then $\t{{\rm SL}_2(\R)}$
  is a  line bundle over ${\Hi}^2$. For any $\alpha\in{\R}$, denote by ${\rm sh}(\alpha)$ the element of $\t{{\rm SL}_2(\R)}$
  whose projection into ${\rm PSL}(2;{\R})$ is given by $\begin{pmatrix}
\cos(2\pi\alpha)&\sin(2\pi\alpha) \\
-\sin(2\pi\alpha)&\cos(2\pi\alpha)
\end{pmatrix}$. Then the set $\{{\rm sh}(n), n\in{\Z}\}$,
  is the kernel of $p$ as well as the center of $\t{{\rm SL}_2(\R)}$, acting by integral translation along the fibers of $\t{{\rm SL}_2(\R)}$.
  By extending this ${\Z}$-action on the fibers by the ${\R}$-action we get the whole identity component of the isometry group of $\t{{\rm SL}_2(\R)}$.
  To summarize we have the following diagram of central extensions
$$\xymatrix{
\{0\} \ar[r] \ar[d] & {\Z} \ar[r] \ar[d] & \t{{\rm SL}_2(\R)}\ar[r] \ar[d] & {\rm PSL}(2;{\R})\ar[r] \ar[d] & \{1\} \ar[d] \\
\{0\} \ar[r]  & {\R} \ar[r]  & {\rm Iso}_e\t{{\rm SL}_2(\R)}
\ar[r]  & {\rm PSL}(2;{\R})\ar[r]  & \{1\} }.$$ In particular the
group ${\rm Iso}_e\t{{\rm SL}_2(\R)}$ is generated by $\t{{\rm
SL}_2(\R)}$ and the image of ${\R}$ which intersect together in the
image of ${\Z}$, where each element $x$ on ${\R}$ is naturally identified with the
translation $\tau_x$ of length $x$. More precisely we state the following useful lemma which is easy to check.

\begin{lemma}\label{SL}
We have the identification ${\rm Iso}_e\t{{\rm SL}_2(\R)}={\R}\times_{\Z}\t{{\rm SL}_2(\R)}$:
where 
$(x,h)\sim({x'},h')$ if and only if there exists an integer
$n\in{\Z}$ such that ${x'}-x=n$ and $h'={\rm sh}(-n)\circ h$. % and it is easily checked that
%this group is homotopic to the circle.
\end{lemma}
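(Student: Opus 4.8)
The plan is to exhibit both sides as the same quotient of $\R\times\t{{\rm SL}_2(\R)}$. Using the inclusions $\R\hookrightarrow{\rm Iso}_e\t{{\rm SL}_2(\R)}$, $x\mapsto\tau_x$, and $\t{{\rm SL}_2(\R)}\hookrightarrow{\rm Iso}_e\t{{\rm SL}_2(\R)}$ coming from the diagram of central extensions above, I would first introduce
$$\phi\co\R\times\t{{\rm SL}_2(\R)}\longrightarrow{\rm Iso}_e\t{{\rm SL}_2(\R)},\qquad\phi(x,h)=\tau_x\circ h.$$
Because the bottom row of the diagram is a \emph{central} extension, the subgroup $\{\tau_x:x\in\R\}$ lies in the centre of ${\rm Iso}_e\t{{\rm SL}_2(\R)}$, so $\phi$ is a group homomorphism. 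Its surjectivity is precisely the generation statement recorded just above the lemma, namely that ${\rm Iso}_e\t{{\rm SL}_2(\R)}$ is generated by $\t{{\rm SL}_2(\R)}$ together with the image of $\R$.

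Next I would compute $\ker\phi$. Suppose $\phi(x,h)=\id$, i.e.\ $h=\tau_{-x}$ in ${\rm Iso}_e\t{{\rm SL}_2(\R)}$. Applying the projection ${\rm Iso}_e\t{{\rm SL}_2(\R)}\to{\rm PSL}(2;{\R})$, whose kernel is $\{\tau_x:x\in\R\}$, one finds that $h$ maps to the identity of ${\rm PSL}(2;{\R})$; since the right-hand square of the diagram commutes this means $p(h)=\id$, hence $h={\rm sh}(n)$ for some $n\in\Z$ because $\{{\rm sh}(n):n\in\Z\}=\ker p$. Commutativity of the left-hand square of the diagram says exactly that the element ${\rm sh}(n)$ of $\t{{\rm SL}_2(\R)}$ and the element $n\in\R$ have the same image $\tau_n$ in ${\rm Iso}_e\t{{\rm SL}_2(\R)}$. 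Therefore $\id=\tau_x\circ{\rm sh}(n)=\tau_{x+n}$, forcing $x=-n$, and
$$\ker\phi=\{(-n,{\rm sh}(n)):n\in\Z\}=\{(m,{\rm sh}(-m)):m\in\Z\}.$$

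Consequently $\phi$ descends to an isomorphism from $(\R\times\t{{\rm SL}_2(\R)})/\ker\phi$ onto ${\rm Iso}_e\t{{\rm SL}_2(\R)}$. Multiplying a pair $(x,h)$ by the kernel element $(m,{\rm sh}(-m))$ gives $(x+m,{\rm sh}(-m)h)$ (using that ${\rm sh}(-m)$ is central in $\t{{\rm SL}_2(\R)}$), so two pairs $(x,h)$ and $(x',h')$ define the same element of the quotient if and only if $x'-x=n\in\Z$ and $h'={\rm sh}(-n)h$ — which is the equivalence relation in the statement. This yields the asserted identification ${\rm Iso}_e\t{{\rm SL}_2(\R)}=\R\times_\Z\t{{\rm SL}_2(\R)}$. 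The argument is essentially routine bookkeeping; the one place I would take care is the sign ${\rm sh}(-n)$ versus ${\rm sh}(n)$ in the relation, which rests entirely on checking that the two natural maps $\Z\to{\rm Iso}_e\t{{\rm SL}_2(\R)}$ (through $\Z\hookrightarrow\R$ and through ${\rm sh}\co\Z\hookrightarrow\t{{\rm SL}_2(\R)}$) coincide, i.e.\ on the commutativity of the left square of the central-extension diagram.
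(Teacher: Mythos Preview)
Your argument is correct and is exactly the standard verification one would write down. The paper does not actually prove this lemma: it introduces it with ``More precisely we state the following useful lemma which is easy to check'' and gives no argument, relying on the preceding description of ${\rm Iso}_e\t{{\rm SL}_2(\R)}$ as generated by $\t{{\rm SL}_2(\R)}$ and the image of $\R$ meeting in the image of $\Z$. Your proof simply supplies the routine details behind that sentence, and your care with the sign (via the commutativity of the left square identifying ${\rm sh}(n)$ with $\tau_n$) is well placed.
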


 Let  $F_{g,n}$ be an
oriented  $n$-punctured surface of genus $g\geq 0$ with boundary
components $s_1,\ldots,s_n$ with $n\ge 0$. Then
$J'=F_{g,n}\times{\S}^1$ is oriented if ${\S}^1$ is oriented. Let
$h_i$ be the oriented ${\S}^1$-fiber on the torus $T_i=s_i\times h_i$. We say that $(s_i,h_i)$ is a \emph{section--fiber}  basis of $T_i$. Let $0\leq s\leq n$. Now attach $s$ solid tori $V_i$'s to the
boundary tori $T_i$'s of $J'$ such that the meridian of $V_i$ is identified
with the slope ${a_i}s_i+{b_i}h_i$ where $a_i>0,
(a_i,b_i)=1$ for $i=1,\ldots,s$. Denote the resulting manifold
by $\left(
g,n-s;\frac{b_1}{a_1},\cdots,\frac{b_s}{a_s}\right)$
which has the Seifert fiber structure extended from the circle
bundle structure of $J'$. Each orientable Seifert fibered space with
orientable base $F_{g,n-s}$ and with $\leq s$ exceptional fibers is
obtained in such a way.
If $J$ is closed, i.e. if $s=n$, then  define the Euler number of
the Seifert fibration by
$$e(J)=\sum_{i=1}^s \frac{b_i}{a_i}\in\mathbb{Q}$$
and  the Euler characteristic of the orbifold $O(J)$ by
$$\chi_{O(J)}=2-2g-\sum_{i=1}^s\left(1-\frac 1{a_i}\right)\in\mathbb{Q}.$$

From \cite{BG2} we know that a closed orientable $3$-manifold $J$ supports the $\widetilde {{\rm
SL}_2(\R)}$ geometry, i.e.~there is a discrete and faithful representation
$\psi: \pi_1J\to {\rm Iso} \t{{\rm SL}_2(\R)}$, if and only if
$J$ is Seifert fibered with non-zero Euler number $e(J)$ and
negative Euler characteristic $\chi_{O(J)}$ of the base orbifold.

A properly $\pi_1$-injectively immersed subsurface $j:S\looparrowright J$ of finite type
is said to be \emph{horizontal} if it can be properly homotoped to be transverse to the fiber at any point.
Otherwise it is said to be \emph{vertical}, and in this case, it is an annulus or a torus fibering over
a properly immersed path or loop in the base orbifold.

\subsection{Volume of representations of closed manifolds}
We recall three definitions of volume of representations.

\subsubsection{Via developing maps} 
Given a  semi-simple, connected Lie  group $G$ and a closed oriented
manifold $M^n$ of the same dimension as  the contractible space
$X^n=G/K$, where $K$ is a maximal compact subgroup of $G$, we can
associate, to each representation $\rho\co\pi_1M\to G$, a volume
${\rm vol}_G(M,\rho)$ in the following way.

First  fix a
$G$-invariant Riemannian metric $g_X$ on $X$, and denote by
$\omega_X$ the corresponding $G$-invariant volume form. Let $\t{M}$ denote the universal covering of $M$. We think of the elements $\t{x}$ of  $\t{M}$
as the homotopy classes of paths $\gamma\co[0,1]\to M$ with $\gamma(0)=x_{0}$ which are acted by $\pi_1(M,x_0)$ by setting $[\sigma].\t{x}=[\sigma.\gamma]$, where $.$ denotes the paths composition.

A developing map $D_{\rho}\co\t{M}\to X$ associated to $\rho$ is a
$\pi_1M$-equivariant map such that
for any $x\in \t{M}$ and $\alpha \in \pi_1M$, then
$$D_{\rho}(\alpha.x)=\rho(\alpha)^{-1}D_{\rho}(x)$$ where $\rho(\alpha)$ acts on $X$ as an isometry.
Such a map does exist and can be constructed explicitly as in
\cite{BCG}: Fix a triangulation $\Delta_M$ of $M$. Then its lift is a
triangulation $\Delta_{\t{M}}$ of $\t{M}$, which is $\pi_1M$-equivariant.
Then fix a fundamental domain $\Omega$ of $M$ in $\t{M}$ such that
the zero skeleton $\Delta^0_{\t{M}}$ misses the frontier of $\Omega$. Let
$\{x_1,\ldots,x_l\}$ be  the vertices of $\Delta^0_{\t{M}}$ in $\Omega$, and
let $\{y_1,\ldots,y_l\}$ be  any $l$ points in $X$.  We first set
$$D_{\rho}(x_i)=y_i, \,\, i=1 ,\ldots, l.$$
Next extend $D_{\rho}$ in an $\pi_1M$-equivariant way to $\Delta^0_{\t{M}}$:
For any vertex $x$ in $\Delta^0_{\t{M}}$, there is a unique  vertex
$x_i$ in $\Omega$ and $\alpha_x\in \pi_1M$ such that
$\alpha_x.x_i=x$, and we set
$D_{\rho}(x)=\rho(\alpha_x)^{-1}D_{\rho}(x_i)$. Finally we extend
$D_{\rho}$ to edges, faces, etc., and $n$-simplices of $\Delta_{\t{M}}$
by straightening the images to geodesics using the homogeneous
metric on the contractible space $X$.
This  map is unique up to equivariant homotopy. Then
$D_{\rho}^{\ast}(\omega_X)$ is a $\pi_1M$-invariant closed $n$-form
on $\t{M}$ and therefore can be thought of as a closed $n$-form on $M$. Thus define
$${\rm vol}_G(M,\rho)=\int_MD_{\rho}^{\ast}(\omega_X)=\sum_{i=1}^s\epsilon_i {\rm vol}_X(D_{\rho}(\t\Delta_i))$$
where $\{\Delta_1,\ldots,\Delta_s\}$ are the $n$-simplices of $\Delta_M$,
$\t \Delta_i$ is a lift of $\Delta_i$ and $\epsilon_i=\pm 1$ depending on whether $D_{\rho}|\t\Delta_i$ is preserving  or reversing orientation.

\subsubsection{Via continuous cohomology classes}\label{contcoh} 
Let $\mathfrak{g}$ and $\mathfrak{k}$ denote the Lie algebra of $G$ and $K$. Let
$o=\{K\}$ be the base point of $X=G/K$ and for any $g_1,\ldots,g_l\in G$
denote by $\Delta(g_1,\ldots,g_l)$ the geodesic $l$-simplex of $X$ with
vertices $\{o,g_1(o),\ldots,g_l\ldots g_2g_1(o)\}$.   There is a natural
homomorphism
$$H^{\ast}(\mathfrak{g},\mathfrak{k};{\R})=H^{\ast}(G{\rm -invariant\ differential\ forms\ on\ }X)\to H^{\ast}_{\rm cont}(G;{\R})$$
defined in \cite{Du-simplicial}  by $\eta\mapsto
\left((g_1,\ldots,g_l)\to\int_{\Delta(g_1,\ldots,g_l)}\eta\right)$ which
turns out to be an isomorphism by the Van Est Theorem \cite{V}.

Recall that for each representation $\rho\co\pi_1M\to G$ one can
associate a flat bundle over
 $M$ with fiber $X$ and group $G$ constructed as follows: $\pi_1M$ acts diagonally on the product
 $\t{M}\times X$  by the following formula
 \begin{eqnarray}
\sigma.(\t{x},g)=(\sigma.\t{x},\rho^{-1}(\sigma)g)
 \end{eqnarray}
 and we can form the quotient $\t{M}\times_\rho X=(\t{M}\times X)/\pi_1M$ which is the flat $X$-bundle over
 $M$ corresponding to $\rho$.

Then  for each $G$-invariant closed form $\omega$ on $X$,
$q^*(\omega)$ is a $\pi_1(M)$-invariant closed form on $\t{M}\times
X$, where $q: \t{ M}\times X\to X$ is the projection, which induces
a form $\omega'$ on $M\times _\rho X$. Then $s^*(\omega')$ is a
closed form on $M$, where $s: M\to M\times _\rho X$ is a section (since $X$ is
contractible, such a section exists  all such  sections are
homotopic).
Thus any representation  $\rho\co\pi_1M\to G$ leads to a natural homomorphism
$$\rho^*: H^{\ast}_{\rm cont}(G;{\R})=H^{\ast}(G{\rm -invariant\ differential\ forms\ on\ }X)\to  H^{\ast}(M;{\R})$$
induced by $\rho^*(\omega)=s^*{\omega}'$. The volume of $\rho$ is therefore defined by 
$${\rm vol}_G(M,\rho)=\int_M\rho^{\ast}(\omega_X)$$

  The equivalence between the two definitions is immediate since the
$\pi_1M$-equivariant map ${\rm Id\times D_\rho}\co\t{M}\to \t
M\times X$ descends to a section $M\to M\times _\rho X$.

\subsubsection{Via transversely projective foliations}\label{foliation}
This definition only makes sense for the Seifert volume.
Let $\mathfrak{F}$ be a co-dimension one foliation on a closed
smooth manifold $M$ determined by a 1-form $\omega$. Then by the
Froebenius Theorem one has $d\omega= \omega\wedge\delta$ for some
1-form $\delta$. It was observed by Godbillon and Vey \cite{GV} that
the 3-form $\delta \wedge
 d\delta$ is closed and the class $[\delta \wedge
 d\delta]\in H^3(M,{\R})$ depends only on the foliation $\mathfrak{F}$ (and not on the chosen form $\omega$). This cohomology class is termed
\emph{the Godbillon--Vey class} of the foliation  $\mathfrak{F}$ and denoted by
 $GV(\mathfrak{F})$.

 \begin{proposition}[{\cite[Proposition 1]{BG1}}]\label{Euler}  Suppose $\mathfrak{F}$
  is a horizontal flat foliation  on a circle  bundle ${\S}^1\to E \to M$ with structural group ${\rm PSL}_2(\R)$. Then
 $$\int_{\S^1} GV(\mathfrak{F})=4\pi^2\t{e}(E),$$
 where $\int_{\S^1}\co H^3(E)\to H^2(M)$ denotes the integration along the fiber and $\t{e}$ denotes the Euler class of the bundle.
 \end{proposition}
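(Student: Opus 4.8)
The plan is to compute a Godbillon--Vey representative directly from the flat connection and recognise its fibre integral as a curvature form. I will take $M$ to be a closed oriented surface (the orbifold case reduces to this by passing to a finite manifold cover, all the quantities being multiplicative); since then $H^2(M;\R)\cong\R$, it is enough to check the identity after pairing with $[M]$, i.e.\ to prove $\int_E GV(\mathfrak F)=4\pi^2\langle\t e(E),[M]\rangle$, where $p:E\to M$ is the projection. First I would record that a horizontal flat foliation with structural group ${\rm PSL}(2;\R)$ is the suspension foliation of a representation $\rho\co\pi_1M\to{\rm PSL}(2;\R)$ acting on the fibre $\S^1\cong\R\P^1={\rm PSL}(2;\R)/B$; thus $E=\t M\times_\rho\R\P^1$ and $\mathfrak F$ is the image in $E$ of the horizontal foliation of the flat principal bundle $P=\t M\times_\rho{\rm PSL}(2;\R)$ carrying its flat connection $A$. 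Reducing the structure group of $P$ to the maximal compact ${\rm PSO}(2)$ — which acts simply transitively on $\R\P^1$ — exhibits $E$ itself as a principal circle bundle over $M$; writing $A=\alpha^JJ+\alpha^1P_1+\alpha^2P_2$ for the Cartan decomposition $\mathfrak{sl}_2(\R)=\R J\oplus(\R P_1\oplus\R P_2)$, the form $\alpha^J$ is a connection form for this circle bundle.

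Next I would unpack flatness $dA+A\wedge A=0$ using the structure constants $[J,P_1]=-2P_2$, $[J,P_2]=2P_1$, $[P_1,P_2]=2J$, obtaining, in a local trivialisation $U\times\R\P^1$ in which $\alpha^J,\alpha^1,\alpha^2$ are $1$-forms on the surface $U$,
\begin{equation}
 d\alpha^J=-2\,\alpha^1\wedge\alpha^2,\qquad d\alpha^1=-2\,\alpha^J\wedge\alpha^2,\qquad d\alpha^2=2\,\alpha^J\wedge\alpha^1 .
\end{equation}
In the angular fibre coordinate $\theta$ on $\R\P^1\cong\R/\pi\Z$ the generators $J,P_1,P_2$ act as $\partial_\theta,\ -\sin(2\theta)\,\partial_\theta,\ \cos(2\theta)\,\partial_\theta$, so $\mathfrak F$ is cut out locally by $\omega=d\theta-\alpha^J+\sin(2\theta)\,\alpha^1-\cos(2\theta)\,\alpha^2$. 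A short computation with the relations above — in which every triple wedge of the $\alpha$'s drops out, being a $3$-form on the surface $U$ — gives $d\omega=\omega\wedge\delta$ with $\delta=2\cos(2\theta)\,\alpha^1+2\sin(2\theta)\,\alpha^2$, hence
\begin{equation}
 GV(\mathfrak F)=\delta\wedge d\delta=-8\,d\theta\wedge\alpha^1\wedge\alpha^2 .
\end{equation}
Here $\alpha^1\wedge\alpha^2$ is ${\rm PSO}(2)$-invariant and horizontal, so equals $p^*\Omega$ for a global $2$-form $\Omega$ on $M$; I would note that the local expressions patch (via a partition of unity) to the class $GV(\mathfrak F)\in H^3(E;\R)$, and then integrate along the fibre: since $\int_{\R\P^1}d\theta=\pi$, this yields $\int_{\S^1}GV(\mathfrak F)=-8\pi\,\Omega$ in $H^2(M;\R)$.

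Finally I would identify $\Omega$ with the Euler form. By the first relation above, $\Omega=\alpha^1\wedge\alpha^2=-\tfrac12\,d\alpha^J$ is $-\tfrac12$ times the curvature of the principal circle bundle $E\to M$; since its structure group ${\rm PSO}(2)$ is $\R/\pi\Z$, the Euler class is represented by $\tfrac1\pi\,d\alpha^J$, so $\Omega=-\tfrac{\pi}{2}\,\t e(E)$ and $\int_{\S^1}GV(\mathfrak F)=-8\pi\cdot(-\tfrac{\pi}{2})\,\t e(E)=4\pi^2\,\t e(E)$. The part that needs care is precisely this numerical constant: it appears only after tracking together the $\mathfrak{sl}_2(\R)$ structure constants (the $2$'s above and the $2\theta$ in the action on $\R\P^1$), the value $\int_{\R\P^1}d\theta=\pi$ rather than $2\pi$ (reflecting that the structure group is ${\rm PSL}(2;\R)$, not ${\rm SL}(2;\R)$), the normalisation $d\alpha^J=\pi\,\t e(E)$ of the Euler class of a ${\rm PSO}(2)$-bundle, and the orientation conventions that fix the overall sign; these must conspire to give exactly $4\pi^2$. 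As an independent sanity check I would specialise to the uniformising Fuchsian representation of a closed hyperbolic surface $\Sigma$, where $E$ becomes the unit tangent bundle $T^1\Sigma$ with $\langle\t e(E),[\Sigma]\rangle=\chi(\Sigma)$ and $\mathfrak F$ the weak--stable foliation of the geodesic flow, in accordance with the classical value of the Godbillon--Vey invariant of that foliation. The remaining ingredients — independence of $GV(\mathfrak F)$ from the chosen defining $1$-form, and correctness of the suspension description of $\mathfrak F$ — are standard.
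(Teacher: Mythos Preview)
The paper does not actually prove this proposition: it is quoted verbatim as \cite[Proposition 1]{BG1} and used as a black box. Your argument is essentially a clean rendition of the Brooks--Goldman computation (working in the Cartan frame $J,P_1,P_2$ of $\mathfrak{sl}_2(\R)$ and reading off the curvature of the reduced ${\rm PSO}(2)$-bundle), so there is no meaningful divergence of method to report.

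One point does need fixing. You take $M$ to be a closed surface at the outset, but the paper invokes the proposition immediately afterwards with $M$ a closed $3$-manifold (the flat circle bundle $M\times_\phi{\S}^1$ over a $3$-manifold). Your displayed formula for $GV(\mathfrak F)$ uses the $2$-dimensionality: in general the computation gives
\[
\delta\wedge d\delta\;=\;-8\,d\theta\wedge\alpha^1\wedge\alpha^2\;+\;8\,\alpha^J\wedge\alpha^1\wedge\alpha^2,
\]
and the second summand is exactly the ``triple wedge'' you discard. The repair is painless---this extra term is basic (it has no $d\theta$-factor), hence is annihilated by $\int_{{\S}^1}$, and the identification $-8\pi\,\alpha^1\wedge\alpha^2=4\pi^2\t e(E)$ in $H^2(M;\R)$ then goes through for arbitrary $M$ exactly as you wrote---but as stated your proof does not cover the case the paper actually needs. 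I would drop the surface hypothesis, keep the local computation as is, and note that only the $d\theta$-component survives fibre integration.
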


Let $M$ be a closed orientable 3-manifold and $\phi\co\pi_1M\to {\rm
PSL}_2(\R)$ be a representation with zero Euler class. Since ${\rm
PSL}_2(\R)$ acts on ${\S}^1$ then one can consider the corresponding
flat circle bundle $M\times_{\phi}{\S}^1$ over $M$ and the
associated horizontal $({\rm PSL}_2(\R),{\S}^1)$-foliation
$\mathfrak{F}_{\phi}$. Since the Euler class of $\phi$ is zero we
can choose a section $\delta $ of $M\times_{\phi}{\S}^1\to M$.
Brooks and Goldman showed that
$\delta^{\ast}GV(\mathfrak{F}_{\phi})$ only depends on $\phi$ (and
not on a chosen section $\delta$) \cite[Lemma 2]{BG1}. Then they defined  the \emph{Godbillon--Vey} invariant of
$\phi$ by setting
$$GV(\phi)=\int_M\delta^{\ast}GV(\mathfrak{F}_{\phi}).$$

%\begin{remark}
%Assume that $M$ is already endowed with a $({\rm
%PSL}_2(\R),{\S}^1)$-foliation $\mathfrak{F}$. Then there is a
%canonical way to define a flat ${\S}^1$-bundle $E$ over $M$ with
%structure group ${\rm PSL}_2(\R)$, a horizontal foliation
%$\mathfrak{F}_\phi$ on $E$, and a section $s: M\to E$ such that
%$\mathfrak{F}=s^*\mathfrak{F}_\phi$.
 % Then if $\phi$   denotes associated representation   we get (\cite{BG1} Lemma 1)
 %$$GV(\phi)=\int_MGV(\mathfrak{F})$$
%\end{remark}

For a given representation $\phi\co\pi_1M \to{\rm PSL}_2(\R)$, $\phi$
lifts to $\t\phi\co\pi_1M \to{\t{{\rm SL}_2(\R)}}$ if and only if
$\t{e}(\phi)=0$ in $H^2(M,{\Z})$.
The following fact has been verified in \cite{BG1}.

\begin{proposition}\label{CV=Vol} Let $M$ be a closed oriented
$3$-manifold, let $\phi\co\pi_1M\to {\rm PSL}_2(\R)$ be a representation
with zero Euler class  and fix a lift  $\t\phi\co\pi_1M \to
 \t{{\rm SL}_2(\R)}$ of $\phi$. Then
$$GV(\phi)={\rm vol}_{\t{{\rm SL}_2(\R)}}(M,\t\phi),$$
where $\t{{\rm SL}_2(\R)}$ is viewed as a semi-simple Lie group acting on itself by multiplication with corresponding homogeneous space $\t{{\rm SL}_2(\R)}$.
\end{proposition}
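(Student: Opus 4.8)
The plan is to realize both $GV(\phi)$ and ${\rm vol}_{\widetilde{{\rm SL}_2(\R)}}(M,\tilde\phi)$ as $\int_M D^{\ast}\omega_X$ for one and the same developing map $D\co\tilde M\to\widetilde{{\rm SL}_2(\R)}$, exploiting the coincidence between the structure equations of a transversely projective codimension-one foliation and the Maurer--Cartan equations of $\mathfrak{sl}_2(\R)$.

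\emph{Setup.} Viewing $\widetilde{{\rm SL}_2(\R)}$ as a semisimple Lie group acting on itself, the homogeneous space is $X=\widetilde{{\rm SL}_2(\R)}$ with trivial isotropy and $\omega_X$ the (suitably normalized) bi-invariant volume form; writing $\theta_0,\theta_1,\theta_2$ for the left-invariant $1$-forms dual to a standard $\mathfrak{sl}_2(\R)$-basis, $\omega_X$ is a constant multiple of $\theta_0\wedge\theta_1\wedge\theta_2$ and the $\theta_i$ satisfy the $\mathfrak{sl}_2$ structure equations, e.g. $d\theta_0=\theta_0\wedge\theta_1$, $d\theta_1$ proportional to $\theta_0\wedge\theta_2$, $d\theta_2=\theta_1\wedge\theta_2$. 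Fix a $\pi_1M$-equivariant developing map $D\co\tilde M\to\widetilde{{\rm SL}_2(\R)}$ for $\tilde\phi$; it exists and is unique up to equivariant homotopy since $X$ is contractible, and by definition ${\rm vol}_{\widetilde{{\rm SL}_2(\R)}}(M,\tilde\phi)=\int_M D^{\ast}\omega_X$.

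\emph{The Godbillon--Vey side.} Let $\R=\widetilde{{\S}^1}$ be the universal cover of ${\S}^1=\partial{\Hi}^2={\rm PSL}_2(\R)/B$, with $B$ a Borel subgroup, on which $\widetilde{{\rm SL}_2(\R)}$ acts by the lifted M\"obius action; let $\pi\co\widetilde{{\rm SL}_2(\R)}\to\R=\widetilde{{\rm SL}_2(\R)}/\widetilde B$ be the quotient by the preimage $\widetilde B$ of $B$. Since $\tilde e(\phi)=0$, the lift $\tilde\phi$ yields the flat $\R$-bundle $\xi=\tilde M\times_{\tilde\phi}\R\to M$ with suspension foliation $\widetilde{\mathfrak F}$, and the covering $\R\to{\S}^1$ induces a foliated local diffeomorphism $\xi\to E=M\times_\phi{\S}^1$ carrying $\widetilde{\mathfrak F}$ to $\mathfrak F_\phi$; the Godbillon--Vey form, being built from local de Rham data, is natural under it. The affine bundle $\xi$ admits a section, and $\pi\circ D$ descends to such a section $s\co M\to\xi$; composing with $\xi\to E$ gives a section $\delta$ of $E$ with $s^{\ast}GV(\widetilde{\mathfrak F})=\delta^{\ast}GV(\mathfrak F_\phi)$. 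By \cite[Lemma 2]{BG1} the integral is independent of the section, so $GV(\phi)=\int_M s^{\ast}GV(\widetilde{\mathfrak F})$.

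\emph{Comparison.} Because $\widetilde{\mathfrak F}$ is transversely modeled on $(\widetilde{{\rm SL}_2(\R)},\R)$, its transverse data is encoded by $1$-forms $\omega_0,\omega_1,\omega_2$ on the total space of $\xi$ with $\omega_0$ defining the foliation and satisfying the $\mathfrak{sl}_2(\R)$ structure equations; then $d\omega_0=\omega_0\wedge\omega_1$, a Godbillon--Vey pair is $(\omega_1,\omega_0)$ up to the standard ambiguity, and $GV(\widetilde{\mathfrak F})=[\omega_1\wedge d\omega_1]=[\pm\,\omega_0\wedge\omega_1\wedge\omega_2]$. These $\omega_i$ are the pullbacks of the Maurer--Cartan forms $\theta_i$ under an equivariant map $\tilde M\times\R\to\widetilde{{\rm SL}_2(\R)}$ extending $D$ along the fibers by means of the $\widetilde B$-action; restricting along the section cut out by $\pi\circ D$ recovers $D$ up to equivariant homotopy, whence $s^{\ast}(\omega_0\wedge\omega_1\wedge\omega_2)=D^{\ast}(\theta_0\wedge\theta_1\wedge\theta_2)$. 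Combined with the Setup this gives $GV(\phi)=\int_M D^{\ast}\omega_X$ once the normalization constant and sign are checked to be $1$, which I would confirm by evaluating both sides on an explicit closed $\widetilde{{\rm SL}_2(\R)}$-Seifert manifold of known Euler number and Seifert volume, using Proposition \ref{Euler}. The main obstacle is precisely this comparison step: producing the forms $\omega_i$ rigorously, matching them with $D^{\ast}\theta_i$ after restriction to the section, checking that the vertical ($\widetilde B$-)directions used to fill out the fiber do not alter the top-degree form pulled back to the $3$-manifold $M$, and controlling the normalization and orientation throughout. Everything else is formal; the conceptual content is just that the equations of a transversely projective foliation are the Maurer--Cartan equations of $\mathfrak{sl}_2(\R)$, which is why $GV=\mathrm{vol}$.
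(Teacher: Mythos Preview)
The paper does not give its own proof of this proposition: it simply records, just before the statement, that ``the following fact has been verified in \cite{BG1}'' and moves on. So there is no argument in the paper to compare yours against; the reference is to Brooks--Goldman.

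Your outline is in fact the Brooks--Goldman strategy: the Godbillon--Vey $3$-form of a transversely $({\rm PSL}_2(\R),{\S}^1)$-foliation is, on the nose, the pullback of the left-invariant volume form on $\widetilde{{\rm SL}_2(\R)}$ via an equivariant developing map, because the defining data $(\omega,\delta)$ of $GV$ are two of the three Maurer--Cartan components and the structure equations close up to give $\delta\wedge d\delta$ as a constant multiple of $\theta_0\wedge\theta_1\wedge\theta_2$. So conceptually you are on the right track, and the ``everything else is formal'' remark at the end is fair.

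That said, the proposal is not yet a proof, and you say so yourself. Two points. First, your plan to pin down the normalization and sign by evaluating on a single Seifert example presupposes that you have already established equality up to a \emph{universal} multiplicative constant independent of $(M,\phi)$; your sketch of the comparison step (building the $\omega_i$ on the total space, extending $D$ along fibres by the $\widetilde B$-action, and checking that vertical directions drop out upon restriction to the section) is exactly where that universality must be proved, and it is currently asserted rather than carried out. Second, the passage from $\mathfrak{F}_\phi$ on $E$ to $\widetilde{\mathfrak F}$ on $\xi$ and the claim that the Godbillon--Vey form is ``natural'' under the local diffeomorphism $\xi\to E$ is correct but deserves one line of justification (the GV cocycle is built from locally defined $1$-forms, hence is local). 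If you flesh out the comparison paragraph into an honest computation---write the Maurer--Cartan form in the basis you chose, pull it back by the explicit section, and read off $\delta\wedge d\delta$---the normalization will fall out directly and the appeal to an example becomes unnecessary.
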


\section{Chern--Simons theory and additivity principle}

In this section, we consider volume of representations from the perspective of Chern--Simons theory.
In particular, we prove the additivity principle (Theorem \ref{additive}).

%\subsection{General Review}
Throughout this section we refer to
\cite{CS} and \cite{KN}. In this part, all the objects we deal with
are smooth. Let $\pi\co P\to M$ denote a principal $G$-bundle over a
closed manifold $M$. We suppose that $G$ is a Lie group acting on the right on $P$ and we denote by $R_g$ the right action
$$P\ni x\mapsto x.g\in P$$ where $g$ in an element of $G$.
 Denote by $\mathfrak{g}$ the Lie algebra of $G$. Let $VP$ be the vertical subbundle of $TP$.

 Let $P_1$ and $P_2$ denote two principal  $G_1$, respectively, $G_2$-bundles  over manifolds $M_1$ respectively,  $M_2$. Following the formalism in \cite[p.~53]{KN}   a \emph{homomorphism of principal bundles} consists of a map  $f\co P_1\to P_2$ as well as a homomorphism $f'\co G_1\to G_2$ such that $f(x.g)=f(x).f'(g)$, where $x\in P_1$ and $g\in G_1$. We say that a bundle homomorphism induces the \emph{identity in the structural group} if $G_1=G_2=G$ and $f'$ is the identity map.

%\subsection{Differential forms taking values in a Lie algebra}
\subsection{Connections on principal bundles}
  We denote by $\Omega^k(P; \mathfrak{g})$ the set of differential $k$-forms taking values in $\mathfrak{g}$.  We define the exterior product of $\omega_1\in\Omega^k(P; \mathfrak{g})$ by   $\omega_2\in\Omega^l(P; \mathfrak{g})$ as an element $\omega_1\wedge\omega_2$ of $\Omega^{k+l}(P; \mathfrak{g}\otimes\mathfrak{g})$ by setting
$$\omega_1\wedge\omega_2(X_1,\ldots,X_{k+l})=$$ $$\frac{1}{(k+l)!}\sum_{\sigma\in\mathfrak{S}_{k+l}}{\rm sign}(\sigma)\omega_1(X_{\sigma(1)},\ldots,X_{\sigma(k)})\otimes\omega_2(X_{\sigma(k+1)},\ldots,X_{\sigma(l)}).$$

The Lie bracket $[ ., . ]$ in $\mathfrak{g}$ induces a map $\Omega^{k+l}(P; \mathfrak{g}\otimes\mathfrak{g})\to\Omega^{k+l}(P; \mathfrak{g})$ and we denote $[\omega_1,\omega_2]$ the image of $\omega_1\wedge\omega_2$ under this maps. Explicitly we get:
$$[\omega_1,\omega_2](X_1,\ldots,X_{k+l})=$$ $$\frac{1}{(k+l)!}\sum_{\sigma\in\mathfrak{S}_{k+l}}{\rm sign}(\sigma)[\omega_1(X_{\sigma(1)},\ldots,X_{\sigma(k)}),\omega_2(X_{\sigma(k+1)},\ldots,X_{\sigma(l)})].$$
The differential $d\co\Omega^k(P; \mathfrak{g})\to\Omega^{k+1}(P; \mathfrak{g})$ is defined by the Cartan formula
$$d\omega(X_1,\ldots,X_{k+1})=\frac{1}{k+1}\sum_{i=1}^{k+1}X_i.\omega(X_1,\ldots,\hat{X_i},\ldots,X_{k+1})+$$ $$\frac{1}{k+1}\sum_{i<j}(-1)^{i+j}\omega([X_i,X_j],X_1,\ldots,\hat{X_i},\ldots,\hat{X_j},\ldots,X_{p+1}).$$

The derivative at the identity $1$ of $G$ of the map $$G\ni g\mapsto x.g\in P$$ induces an isomorphism $\nu_x\co\mathfrak{g}\to V_xP\subset T_xP$ and we get the exact sequence
$$0\to\mathfrak{g}\stackrel{\nu_x}{\to} T_xP\stackrel{d\pi_x}{\to}T_{\pi(x)}M\to 0.$$
A horizontal subbundle $HP$ of $TP$ is a smooth distribution such that $T_xP=V_xP\oplus H_xP$ for any $x\in P$ that is $G$ equivariant: $H_{x.g}=dR_g(x)H_x$. This is given equivalently by the kernel  of an element $\omega\in\Omega^1(P;\mathfrak{g})$ such that for any $x\in P$

(1) $\omega_x\circ\nu_x={\rm Id}_{\mathfrak{g}}$ and

(2) $R_g^{\ast}\omega={\rm Ad}_{g^{-1}}(\omega)$.

An element of $\Omega^1(P;\mathfrak{g})$ satisfying (1) and (2) is
termed  \emph{a  connection of} $P$. Denote by $\c{A}(P) $ the space
of all conections on $P$. This space is naturally acted on by the
\emph{gauge group} denoted by  $\c{G}_P$  consisting of the
$G$-equivariant  bundle automorphisms of $P$.

The basic example is the group $G$ itself, viewed as a trivial
bundle over a point or more generally the trivialized bundle
$M\times G$ with the so-called Maurer--Cartan connection
$\omega_{{\rm M.C.}}=d(L_{g^{-1}}\circ\pi_2)$, where $L_g$ denotes
the left translation in $G$ and $\pi_2$  the projection of $P$ onto
$G$. This connection satisfies the Maurer--Cartan equation, namely
$$d\omega_{{\rm M.C.}}=-\frac{1}{2}[\omega_{{\rm M.C.}},\omega_{{\rm
M.C.}}].$$

Let us make a concrete computation for $G$. Let
$X_1,\ldots,X_n$ be a basis of $\mathfrak{g}$.   Since $\mathfrak{g}$ can be thought of as the space of left invariant vector fields in $G$, its dual $\mathfrak{g}^*$ is the space of left invariant differential 1-forms on $G$. Let
$\theta^1,\ldots,\theta^n$ denote the dual basis of $\mathfrak{g}^*$. Then
$$\omega_{{\rm M.C.}}=\theta^1\otimes X_1+\ldots+\theta^n\otimes X_n.$$ Let us write
the constants structure of $\mathfrak{g}$ which are given by the
formula $$[X_j,X_k]=\sum_ic^i_{jk}X_i.$$ Thus by  the Maurer--Cartan
equation we get the equalities
\begin{eqnarray}
d\theta^i=-\frac{1}{2}\sum_{j,k}c^i_{j,k}\theta^j\wedge\theta^k
\end{eqnarray}
In general,  for a given connection $\omega$ in a bundle $P$, the element
 \begin{eqnarray}
 F^{\omega}=d\omega+\frac{1}{2}[\omega,\omega]
 \end{eqnarray}
 is the \emph{curvature} of $\omega$ lying in $\Omega^2(P;\mathfrak{g})$ and measuring the integrability of the corresponding horizontal distribution.  When $F^{\omega}=0$ we say that the connection is flat. Denote by  $\c{FA}(P)$ the subset of $\c{A}(P)$ which consists of flat connections on $P$. This space is preserved by the gauge group action.

We recall the following basic fact that will be used very often in
this paper. To each flat connection $\omega$ one can associate a
representation $\rho\co\pi_1M\to G$ by lifting the loops of $M$ in
the leaves of the horizontal foliation given by integrating the
distribution $\ker\omega$.
%\begin{claim}\label{act}
 %If $p\co\t{M}\to M$
%denote the universal covering of $M$ then there exists a unique (up
%to isomorphism) principal $G$-bundle $\t{P}$ over $\t{M}$ and a
%homomorphism of $G$-bundles $F\co\t{P}\to P$ inducing the identity
%on $G$ and descending to $p\co\t{M}\to M$.

%Moreover, up to a bundle isomorphism, $\t{P}$ is the trivialized bundle $\t{M}\times G$ and $$F^{\ast}\omega=\omega_{{\rm
%M.C.}}$$
%\end{claim}
%\begin{proof}[Proof of Claim]
%First of all notice that since $\t{M}$ is simply connected then, up   to a bundle isomorphism, $\t{P}$ is the trivialized bundle $\t{M}\times G$. Denote by $F$ the quotient map  $\t{M}\times G\to\t{M}\times_{\rho}G$ defined by the formula
%$$(\t{x},g)\mapsto[(\t{x},g)]$$
%On the other hand $G$ acts  on $\t{M}\times G$ by $(\t{x},g).a=(\t{x},g.a)$  whereas  it  acts  on $P=\t{M}\times_{\rho} G$ by
%\begin{eqnarray}
 %[(\t{x},g)].a=[(\t{x},g.a)]
 %\end{eqnarray}
  %It is easily checked that the $G$-action is well-defined on $P$ so that $P/G=M$ and that  the bundle homomorphism induces the identity homomorphism on $G$ by formula (5.3). This proves Claim \ref{act}.

%\end{proof}
On the other hand $\omega$ can be recovered from $\rho$ by the
following construction. The fundamental group of $M$ acts on the
product $\t{M}\times G$ by the formula
$[\sigma].([\gamma],g)=([\sigma.\gamma],\rho([\sigma]^{-1}).g)$ and
the quotient $\t{M}\times_{\rho} G$ under this $\pi_1M$-action is
isomorphic to $P$ and the push forward of the vertical distribution
of $\t{M}\times G$ in $\t{M}\times_{\rho} G$ corresponds to $\omega$
in $P$. We get a natural map
$$I_P\co\c{B}(P)=\c{FA}(P)/\c{G}_P\hookrightarrow\c{R}(\pi_1M,G)/{\rm conjugation},$$
where $\c{R}(\pi_1M,G)$ is the set of representations of $\pi_1M$
into $G$ acted by the conjugation in $G$. %Notice that this map is usually non-surjective.

\subsection{Chern--Simons classes}
Given a  Lie group $G$, a polynomial of degree $l$ is a symmetric
linear map $f\co\otimes^l\mathfrak{g}\to {\mathbb K}$, where
${\mathbb K}$ denotes either the real or  the complex numbers field.
The group $G$ acts on $\mathfrak{g}$ by ${\rm Ad}$ and the
polynomials invariant under this action are called the
\emph{invariant polynomials of degree} $l$ and are denoted by
$I^l(G)$ with the convention $I^0(G)={\mathbb K}$. Denote $I(G)$ the
sum $\oplus_{l\in{\N}} I^l(G)$.

The \emph{Chern--Weil theory} gives a correspondence $W_P$ from
$I^l(G)$ to $H^{2l}(M;{\mathbb K})$ constructed in the following
way. Choose a connection $\omega$   in $P$ then for any $l\geq 1$ a
polynomial $f\in I^l(G)$ gives rise to a $2l$-form
${f}(\wedge^lF^{\omega})$ in $P$. It follows from the Chern--Weil
Theory that ${f}(\wedge^lF^{\omega})$ is closed and  is the
pull-back of a unique form on $M$ under $\pi\co P\to M$ denoted by
$\pi_{\ast}{f}(\wedge^lF^{\omega})$. Then $W_P(f)$ is by definition
the class of $\pi_{\ast}{f}(\wedge^lF^{\omega})$ in $H^{2l}(M)$. The
Chern--Weil Theorem claims that $W_P(f)$ does not depend on the
chosen connection $\omega$ and that $W_P$ is actually a
homomorphism.

Let $EG$ denote the \emph{universal principal} $G$-bundle and denote by $BG$ the classifying space of $G$. This means that any principal $G$-bundle $P\to M$ admits a bundle homomorphism $\xi\co P\to EG$ descending to the classifying map, still denoted $\xi\co M\to BG$, that is unique up to homotopy. There exists the universal Chern--Weil homomorphism $\t{W}\co I^l(G)\to H^{2l}(BG)$ such that $\xi^{\ast}\t{W}(f)=W_P(f)$.

The Chern--Simons invariants were derived from this construction by Chern and Simons who
observed that ${f}(\wedge^lF^{\omega})$, for $l\geq 1$, is actually exact in $P$
and a primitive is given explicitly in \cite{CS} by
\begin{eqnarray}
Tf(\omega)=l\int_0^1f(\omega\wedge(\wedge^{l-1}F^t))dt
\end{eqnarray}
where $F^t=tF^{\omega}+\frac{1}{2}(t^2-t)[\omega,\omega]$. The form
$Tf(\omega)$ is closed when $M$ is of dimension $2l-1$. For instance
when $l=2$ and $M$ is a $3$-manifold, plugging $F^t$ and (3.2) into
(3.3) we get a closed $3$-form on $P$, namely
\begin{eqnarray}
Tf(\omega)=f(F^{\omega}\wedge\omega)-\frac{1}{6}f(\omega\wedge[\omega,\omega])=f(d{\omega}\wedge\omega)+\frac{1}{3}f(\omega\wedge[\omega,\omega])
\end{eqnarray}
Considering $G$ as a principal bundle over the point this yields to $$Tf(\omega_{{\rm M.C.}})=-\frac{1}{6}f(\omega_{{\rm M.C.}}\wedge[\omega_{{\rm M.C.}},\omega_{{\rm M.C.}}]).$$ The $(2l-1)$-form $Tf(\omega_{{\rm M.C.}})$ is closed, bi-invariant and defines a class in $H^{2l-1}(G;{\R})$.
Let us denote by
$$I_0(G)=\{f\in I(G), Tf(\omega_{{\rm M.C.}})\in H^{2l-1}(G;{\Z})\}.$$
 The elements of $I_0(G)$ are termed \emph{integral polynomials}. If $f\in I_0(G)$ then there is a well defined functional

\begin{eqnarray}
 \mathfrak{cs}^*_{M}\co\c{A}_{M\times G}\to{\mathbb K}/{\Z}
 \end{eqnarray}
 defined as follows: since $P=M\times G$  is a trivial(ized)  we can consider, for any section $\delta$, the Chern--Simons invariant
 \begin{eqnarray}
 \mathfrak{cs}_M(\omega,\delta)=\int_M\delta^{\ast}Tf(\omega)
\end{eqnarray}
Since $f$ is an integral polynomial, the element $\mathfrak{cs}_M(\omega,\delta)$ is well defined modulo ${\Z}$ when the section changes. Then define $\mathfrak{cs}^{\ast}_M(\omega)$ to be   the class of $\mathfrak{cs}_M(\omega,\delta)$ in ${\mathbb K}/{\Z}$.

The fundamental classical examples  are $G={\rm SU}(2;{\C})$ and
$G={\rm SO}(3;{\R})$.
% where $f$ is the second Chern class and the first Pontrjagin class resp.

%The Lie group $SU(2;{\C})$ consists of the $2\times 2$-matrices $U$ with complex coefficients  such that $U^t\o{U}={\rm Id}$ and $\det(U)=1$. This means that $U=\begin{pmatrix}
%\alpha&\o{\beta} \\
%-{\beta}&\o{\alpha}
%\end{pmatrix}$ with $|\alpha|^2+|\beta|^2=1$, $\alpha, \beta\in{\C}$ and its Lie algebra $\mathfrak{sl}(2;{\C})$ is made of elements of type   $\begin{pmatrix}
%ix&-\o{y} \\
%y&-ix
%\end{pmatrix}$ where $x\in {\R}$ and $y\in{\C}$.
The Chern--Simons classes for the group ${\rm SU}(2;{\C})$ are based on the second Chern class $f=C_2\in I_0^2({\rm SL}(2;{\C}))$.
We recall that the Chern classes,  denoted by $C_1$, $C_2$ for ${\rm SU}(2;{\C})$, are the complex valued invariant polynomials
such that $${\det}\left(\lambda. I_2-\frac{1}{2i\pi}A\right)=\lambda^2+C_1(A)\lambda+C_2(A\otimes A),$$ when $A\in\mathfrak{sl}_2(\C)$. Thus after developing this equality we get
$$C_2(A\otimes A)=\frac{1}{8\pi^2}{\rm tr}(A^2),$$ so that we get the usual
formula (using (3.4))
\begin{eqnarray}
TC_2(\omega)&=\frac{1}{8\pi^2}{\rm
Tr}\left(F^{\omega}\wedge\omega-\frac{1}{6}\omega\wedge[\omega,\omega]\right)
\end{eqnarray}
\begin{eqnarray*}
=\frac{1}{8\pi^2}{\rm
Tr}\left(d{\omega}\wedge\omega+\frac{1}{3}\omega\wedge[\omega,\omega]\right)
\end{eqnarray*}

The Chern-Simons classes of the special orthogonal group $G={\rm
SO}(3;{\R})$ are based on the first Pontrjagin class $f=P_1\in
I_0^2({\rm SO}(3;{\R}))$ that is a   the real valued invariant
polynomial such that $${\det}\left(\lambda.I_3-\frac{1}{2\pi}A\right)=\lambda^3+P_1(A\otimes A)\lambda,$$
when $A\in\mathfrak{so}_3(\R)$.
%We recall that the Lie algebra $\mathfrak{so}_3(\R)$ consists of elements of the form
%$$\begin{pmatrix}
%0&\xi_1&\xi_2 \\
%-\xi_1&0&\xi_3\\
%-\xi_2&-\xi_3&0
%\end{pmatrix}$$
%where $\xi_1, \xi_2, \xi_3\in{\R}$.
Thus after developing this equality we get
$$P_1(A\otimes A)=-\frac{1}{8\pi^2}{\rm tr}(A^2).$$

\begin{example}
  When $M$ is an oriented Riemaniann closed $n$-manifold one can consider its associated ${\rm SO}(n;{\R})$-bundle ${\rm SO}(M)$  which consists of the  positive orthonormal unit frames endowed with the \emph{Levi Civita connection}.   When $M$ is of dimension $3$ it is well known that its is parallelizable so that there exist sections $\delta$ of  ${\rm SO}(M)\to M$. Therefore one can consider the Chern-Simons invariant of the Levi Civita connection on $M$ that will be denoted by $\mathfrak{cs}_{\rm L.C.}(M,\delta)$.
\end{example}

A natural question arises in the following situation. There is an
epimorphism $\pi_2\co{\rm SU}(2;{\C})\to{\rm SO}(3;{\R})$ that is
the $2$-fold universal covering. Thus any connection $\omega$ on the
trivialized ${\rm SU}(2;{\C})$-bundle over $M$ induces a connection
$\omega'$ on the corresponding   $ {\rm SO}(3;{\R})$-bundle over
$M$. How can we compute $TP_1(\omega')$ from $TC_2(\omega)$? The
answer is given in \cite[pp 543, end of Section 3]{KK} by recalling
that $\pi_2$ induces a homomorphism between the corresponding
classifying spaces  $$\pi_2^{\ast}\co H^4(B{\rm SO}(3;{\R}))\to
H^4(B{\rm SU}(2;{\C})),$$ such that
$$\pi_2^{\ast}\t{W}(P_1)=-4\t{W}(C_2).$$ Thus using the definition
and the Chern--Weil universal  homomorphism we get the equality
\begin{eqnarray}
\mathfrak{cs}_{M}(\omega',\delta')=-4\mathfrak{cs}_{M}(\omega,\delta)
\end{eqnarray}

where $\delta$ is a fixed section in the ${\rm SU}(2;{\C})$-bundle
over $M$ and $\delta'$ is the corresponding section in the $ {\rm
SO}(3;{\R})$-bundle over $M$. On the other hand since $G={\rm
SO}(3;{\R})$, respectively, ${\rm SU}(2;{\C})$, are the maximal compact
subgroup of ${\rm PSL}(2;{\C})$, respectively, ${\rm SL}(2;{\C})$, whose
quotients ${\rm PSL}(2;{\C})/{\rm SO}(3;{\R})$, respectively, ${\rm
SL}(2;{\C})/{\rm SU}(2;{\C})$ are contractible then it follows from
\cite[Chapter 15, Theorem 3.1]{Ho} and 
\cite[Proposition 7.2, p.~98]{Du-curvature} 
that the natural inclusion gives rise to isomorphisms
$H^{\ast}(B {\rm PSL}(2;{\C}))\to H^{\ast}(B {\rm SO}(3;{\R}))$ and
$H^{\ast}(B {\rm SL}(2;{\C}))\to H^{\ast}(B {\rm SU}(2;{\C}))$. We
have the following commutative diagram
$$\xymatrix{
H^*(B{\rm PSL}(2;{\C})) \ar[r]^{\simeq} \ar[d] & H^*(B {\rm SO}(3;{\R})) \ar[d]\\
H^*(B{\rm SL}(2;{\C})) \ar[r]^{\simeq} & H^*(B {\rm SU}(2;{\C})) }.$$
Hence we also get (fixing a trivialization, using (3.6), (3.7),
(3.8))
\begin{eqnarray}
 \mathfrak{cs}_{M}(\omega',\delta')&=-4\mathfrak{cs}_{M}(\omega,\delta)
 \end{eqnarray}
\begin{eqnarray*} =
-\frac{1}{2\pi^2}\int_M\delta^{\ast}{\rm
Tr}\left(F^{\omega}\wedge\omega-\frac{1}{6}\omega\wedge[\omega,\omega]\right)
\end{eqnarray*}
 where $\delta$ is a fixed section in the ${\rm
SL}(2;{\C})$-bundle over $M$ and $\delta'$ is the corresponding
section in the $ {\rm PSL}(2;{\C})$-bundle over $M$.

\subsection{Volume and Chern--Simons classes in Seifert geometry}
In this section we check Proposition \ref{vol} (1) keeping the same notation as in the introduction.
The  proof is inspired from \cite[p.~532]{BG2} and we  we will
follow faithfully their presentation. If $G={\rm Iso}_e(\t{{\rm
SL}_2({\R})})$ then the matrices 
$$X=\begin{pmatrix}
1&0 \\
0&-1
\end{pmatrix}\textrm{, }Y=\begin{pmatrix}
0&0 \\
1&0
\end{pmatrix}\textrm{, and }Z=\begin{pmatrix}
0&1 \\
0&0
\end{pmatrix},$$
together with the generator $T$ of   ${\R}$ form a basis of the Lie algebra $\mathfrak{g}$ of $G$.
Setting $W=Z-Y-T$ we get a new basis $\{X,Y,Z,W\}$ of $\mathfrak{g}$
with commutators relations
\begin{eqnarray}
[X,Y]=-2Y, [X,Z]=2Z,
\end{eqnarray}
\begin{eqnarray*}
[Y,Z]=[Y,W]=[Z,W]=-X, [X,W]=2Y+2Z
\end{eqnarray*}
 which determine the
coefficients in the Maurer--Cartan equations. Denote by
$\varphi_X,\varphi_Y,\varphi_Z,\varphi_W$ the dual basis of
$\mathfrak{g}^{\ast}$. The Maurer--Cartan form of $G$ is given by
$$\omega_{\rm M.C.}=\varphi_X\otimes X+\varphi_Y\otimes
Y+\varphi_Z\otimes Z+\varphi_W\otimes W.$$ Denote by $A$ a flat
connection on $M\times_{\rho}G$. By Section 3.1,  if $\t{M}$ denotes
the universal covering and if  $q\co \t{M}\times G\to G$ denotes the
projection, then  $A$ corresponds to the form
$\o{q^{\ast}(\omega_{\rm M.C.})}$, where $-\co\t{M}\times
G\to\t{M}\times_{\rho} G$ denotes the push-forward which makes sense
since $q^{\ast}(\omega_{\rm M.C.})$ is $\pi_1M$-invariant.   The
Chern--Simons class of the flat connection  $A$ is $T{\bf
R}(A)=\o{q^{\ast}T{\bf R}(\omega_{\rm M.C.})}$. Using  equations
(3.1) and (3.10), we calculate
\begin{eqnarray}
d\varphi_X= & \varphi_Y\wedge\varphi_Z+\varphi_Y\wedge\varphi_W+\varphi_Z\wedge\varphi_W\\
d\varphi_Y= & 2\varphi_X\wedge\varphi_Y-2\varphi_X\wedge\varphi_W\\
d\varphi_Z= & -2\varphi_X\wedge\varphi_Z-2\varphi_X\wedge\varphi_W\\
d\varphi_W= & 0
\end{eqnarray}
Notice that those equations also imply that
$2(\varphi_X\wedge\varphi_Y+\varphi_X\wedge\varphi_Z)=d(\varphi_Y-\varphi_Z)$
and therefore
$$T{\bf R}(\omega_{\rm M.C.})=\frac{2}{3}\varphi_X\wedge\varphi_Y\wedge\varphi_Z+\frac{1}{3}  d(\varphi_Y\wedge\varphi_W-\varphi_Z\wedge\varphi_W).$$
The end of the proof follows from the  commutativity of the diagram
below and from the Stokes formula, since
$\varphi_X\wedge\varphi_Y\wedge\varphi_Z$ represents the volume form
on $X=\t{{\rm SL}_2({\R})}$.
$$\xymatrix{
G \ar[r] & X \\
 \t{M}\times  G\ar[u]^{q_G} \ar[d]_{-} \ar[r]^{\t{\pi}} & \t{M}\times X \ar[u]_{q_X}  \ar[d]^{-}  \\
  M\times_\rho G \ar[r]^{\pi}  & M\times_\rho X \\
M \ar[u]^{\delta} \ar[ur]_{s} & }$$
This completes the proof of
Proposition \ref{vol} (1).

\subsection{Volume  and Chern--Simons classes in hyperbolic geometry}
We now check Proposition \ref{vol} (2).  The following construction is largely inspired from  \cite[pp.~553--556]{KK}, using a formula established by Yoshida in \cite{Yo}.

Denote by $p\co{\rm PSL}(2;{\C})\simeq{\rm Iso}_+{\Hi}^3\to{\Hi}^3$ the natural projection.
For short denote ${\rm PSL}(2;{\C})$ by $G$.
For each representation $\rho\co\pi_1M\to G$ admitting a lift into ${\rm SL}(2;{\C})$, we have the (trivial)
principal bundle $M\times _\rho G$ and the associated bundle
$M\times _\rho {\Hi}^3$. Denote by $A$ the flat connection over $M$
corresponding to $\rho$ and $\omega_{{\Hi}^3}$ the $G$-invariant volume form on
${\Hi}^3$ corresponding to  the hyperbolic metric.

The matrices $X=\begin{pmatrix}
1&0 \\
0&-1
\end{pmatrix}$, $Y=\begin{pmatrix}
0&0 \\
1&0
\end{pmatrix}$, $Z=\begin{pmatrix}
0&1 \\
0&0
\end{pmatrix}$ form a basis of the Lie algebra $\mathfrak{sl}(2;{\C})$ with commutators relations $$[X,Y]=-2Y\textrm{, }[X,Z]=2Z\textrm{, }[Y,Z]=-X.$$ Denote by $\varphi_X,\varphi_Y,\varphi_Z$ the dual basis of $\mathfrak{sl}^{\ast}(2;{\C})$. The Maurer--Cartan form of $G$ is  
$$\omega_{\rm M.C.}=\varphi_X\otimes X+\varphi_Y\otimes Y+\varphi_Z\otimes Z,$$
and
$$TP_1(\omega_{\rm M.C.})=\frac{1}{\pi^2}\varphi_X\wedge\varphi_Y\wedge\varphi_Z.$$
By the formula of Yoshida in \cite{Yo} we know that
$$iTP_1(\omega_{\rm M.C.})=\frac{1}{\pi^2}p^*\omega_{{\Hi}^3}+i\mathfrak{cs}_{\rm L.C.}({\Hi}^3)+d\gamma,$$
where $p^*\omega_{{\Hi}^3}$ is the pull-back of $\omega_{{\Hi}^3}$
under the projection $p: {\rm PSL}(2;{\C})\to{\Hi}^3$,
$\mathfrak{cs}_{\rm L.C.}({\Hi}^3)$ is the Chern--Simons 3-form of
the Levi Civita connection over ${\Hi}^3$ (see Example 3.1) with the
hyperbolic metric in its ${\rm SO}(3)$-frame  bundle ${\rm
PSL}(2;{\C})$   and $d\gamma$ is an exact real form. Consider
the following commutative diagram
$$\xymatrix{
G \ar[r]^{{p}} & {\Hi}^3 \\
 \t{M}\times  G\ar[u]^{q_G} \ar[d]_{-} \ar[r]^{{p}} & \t{M}\times {\Hi}^3 \ar[u]_{q_{{\Hi}^3}}  \ar[d]^{-}  \\
  M\times_\rho G \ar[r]^{p}  & M\times_\rho {\Hi}^3 \\
M \ar[u]^{\delta} \ar[ur]_{s} & }$$
Notice that the sections in the bottom triangle are obtained as follows. Since $M$ is a $3$-manifold then it follows from the obstruction theory that any principal bundle with simply connected group is trivial. Since $\rho\co\pi_1M\to G$ admits a lift into ${\rm SL}(2;{\C})$ $M\times_\rho G$ is trivial.  Denote by $\delta$ a section of  $M\times_\rho G\to M$. It induces, by $p\circ\delta=s$, a section of  $M\times_\rho {\Hi}^3\to M$.

Since all the maps are clear from the context, in the sequel, we will drop the index in the projections $q_G$ and $q_{{\Hi}^3}$ and we denote them just by $q$. Now the 3-form $\omega_{{\Hi}^3}$ induces a 3-form
$\o{q^*\omega_{{\Hi}^3}}$ on $M\times_\rho {\Hi}^3$ and
$$i\o{q^*TP_1(\omega_{\rm M.C.})}=\frac{1}{\pi^2}\o{q^*p^*\omega_{{\Hi}^3}}+i\o{q^*\mathfrak{cs}_{\rm L.C.}({\Hi}^3)}+\o{q^*d\gamma}$$
in $M\times_\rho G$, where the push-forward operation $\o{q^{\ast}(.)}$  indeed makes sense since $TP_1(\omega_{\rm M.C.}), p^*\omega_{{\Hi}^3}$ and $\mathfrak{cs}_{\rm L.C.}({\Hi}^3)$ are left invariant forms in $G$. Then 
$$i\mathfrak{cs}_M(A,\delta)=\frac{1}{\pi^2}\int_M\delta^*\o{q^*p^*\omega_{{\Hi}^3}}+
i\int_M\delta^*\o{q^*\mathfrak{cs}_{\rm L.C.}({\Hi}^3)}+\int_M\delta^*\o{q^*d\gamma}.$$
Since
$\delta^*\o{q^*p^*\omega_{{\Hi}^3}}=\delta^*p^*\o{q^*\omega_{{\Hi}^3}}=s^*\o{q^*\omega_{{\Hi}^3}}$
and $\int_M\delta^*\o{q^*d\gamma}=0$ by the Stokes Formula,  we have
$$i\mathfrak{cs}_M(A,\delta)=\frac{1}{\pi^2}\int_Ms^*\o{q^*\omega_{{\Hi}^3}}+
i\int_M\delta^*\o{q^*\mathfrak{cs}_{\rm L.C.}({\Hi}^3)}=\frac{1}{\pi^2}{\rm vol}(M,\rho)+i\mathfrak{cs}(M_{\rho};\delta),$$
where we denote $\int_M\delta^*\o{q^*\mathfrak{cs}_{\rm L.C.}({\Hi}^3)}$ by
 $\mathfrak{cs}(M_{\rho};\delta)$.
 We get eventually
 \begin{eqnarray}
\mathfrak{cs}_M(A,\delta)=\mathfrak{cs}(M_{\rho};\delta)-\frac{i}{\pi^2}{\rm vol}(M,\rho)
\end{eqnarray}

This completes the proof of
Proposition \ref{vol} (2).

\subsection{Normal form near toral boundary of  3-manifolds}
In this part we recall the machinery developed in \cite{KK}.
Let $M$ be a compact oriented $3$-manifold with toral
boundary $\b M$ endowed with a  preferred basis $s,h$ of $H_1(\b M;{\Z})$ (this implies that for each component $T_i$ of $\partial M$, there is a basis  $s_i,h_i$, but for simplicity, we omit the sub-index). 
%Notice that we study the case of manifold with connected toral boundary only to simplify the notations, for all the results stated in this section extend naturally to compact $3$-manifolds with non-connected toral boundary.
Let $\rho\co\pi_1M\to G$ be a representation where $G$ is either
${\rm PSL}(2;{\C})$ or $\t{{\rm
SL}(2;{\R})}$. We consider the space of flat connections $\c{FA}({P})$ where $P$ is the trivialized bundle $M\times G$. For representations into  $\t{{\rm
SL}(2;{\R})}$ the corresponding principal bundles  are always trivial whereas the representations $\rho$ into  ${\rm PSL}(2;{\C})$ leading to a trivial bundle are precisely those who admit a lift $\o{\rho}$ into ${\rm SL}(2;{\C})$. Moreover if follows from \cite{KK} and \cite{Kh} that after a conjugation, the representation $\rho|\pi_1T$ can be put in \emph{normal form}, which either \emph{hyperbolic}, \emph{elliptic} or \emph{parabolic}. Since the parabolic form will not be used in the explicit way we only recall  the definitions of those representations which are
\emph{elliptic} or \emph{hyperbolic} in the  ${\rm PSL}(2;{\C})$ case and \emph{elliptic} in the $\t{{\rm
SL}(2;{\R})}$ case in the boundary of $M$.   Then  by \cite{KK}, when $G={\rm PSL}(2;{\C})$, we
may assume, after conjugation, that there exist $\alpha,\beta\in{\C}$ such that
$$\rho(s)=\begin{pmatrix}
e^{2i\pi\alpha}&0 \\
0&e^{-2i\pi\alpha}
\end{pmatrix}\  {\rm and}\ \rho(h)=\begin{pmatrix}
e^{2i\pi\beta}&0 \\
0&e^{-2i\pi\beta}
\end{pmatrix};$$
when $G=\t{{\rm
SL}(2;{\R})}$, after conjugation, we may assume that  after  projecting to ${\rm PSL}(2;{\R})$ there exist $\alpha,\beta\in{\R}$ such that
$$s\mapsto\begin{pmatrix}
\cos(2\pi\alpha)&\sin(2\pi\alpha) \\
-\sin(2\pi\alpha)&\cos(2\pi\alpha)
\end{pmatrix}\ \textrm{ and}\ h\mapsto\begin{pmatrix}
\cos(2\pi\beta)&\sin(2\pi\beta) \\
-\sin(2\pi\beta)&\cos(2\pi\beta)
\end{pmatrix}.$$ In either case, if  $A$ denotes a connection on $P$ corresponding to $\rho$ then after a gauge transformation $g$ the connection $g*A$ is in normal form: $$g*A|T\times[0,1]=(i\alpha dx+i\beta dy)\otimes X.$$

%\begin{lemma}\label{cut-paste}
%Suppose $M=W_1\cup_{\c{T}}W_2$ is a closed oriented $3$-manifold which is a union of two manifolds with toral boundary  such that $\b W_1=\b W_2=\c{T}$. We suppose each component $T$ of $\c{T}$ is endowed with a basis for homology $(m_T,l_T)$ and we denote by $x_T=m_T\cap l_T$ a base point for each $T$.   Let $\rho_1\co\pi_1W_1\to G$ and $\rho_2\co\pi_1W_2\to G$ be two elliptic/hyperbolic representations where $G$ is either $\t{{\rm SL}(2;{\R})}$ or  ${\rm SL}(2;{\C})$. If for each $T$ the induced representations $\rho_1|\pi_1T$ and $\rho_2|\pi_1T$ are conjugated in $G$ then there exists a global representations $\rho\co\pi_1M\to G$ inducing $\rho_i$ over $W_i$, $i=1,2$.
%\end{lemma}

Let $M$ be a closed oriented  3-manifold and $\mathcal T$ be a union of finitely many tori  cutting  $M$ into 
$ J_1,\ldots, J_k$. For  each $T$ in $\c{T}$, we  endow  a  homology basis $(m_T,l_T)$ and  a base point $x_T=m_T\cap l_T$; 
and for simplicity we assume $T$ shared by $J_j$ and $J_l$, $j\ne l$ (this condotion can be reached in a finite cover, see Lemma 2.1). With the setting above, we  have the following cut and paste result according to  the correspondence between connections in normal form and representations for manifolds with toral boundary due to Kirk and Klassen (the similar fact has been used in \cite{DW}).

\begin{lemma}\label{cut-paste}
   Let $\rho_i\co\pi_1J_j \to G$ be a elliptic/hyperbolic representation, where $G$ is either $\t{{\rm SL}(2;{\R})}$ or  ${\rm PSL}(2;{\C})$,
  $i=1,...,k$. If for each $T$ the induced representations $\rho_j|\pi_1T $ and $\rho_l|\pi_1 T$ are conjugated in $G$, 
then there exists a global representations $\rho\co\pi_1M\to G$ inducing $\rho_i$ over $J_i$ up to conjugacy, $i=1,...,k$.
\end{lemma}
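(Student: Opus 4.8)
The plan is to reconstruct a global representation from the local ones by gluing flat connections in normal form along the cutting tori. First I would fix, for each piece $J_j$, a connection $A_j$ on the trivialized bundle $J_j\times G$ corresponding to $\rho_j$, and apply the normal-form machinery of Kirk--Klassen recalled in Section 3.5: after a gauge transformation we may assume that on a collar $T\times[0,1]$ of each boundary torus $T\subset\partial J_j$ the connection has the standard product form $(i\alpha_{j,T}\,dx+i\beta_{j,T}\,dy)\otimes X$ (or the $\t{{\rm SL}(2;{\R})}$ analogue), with respect to the chosen homology basis $(m_T,l_T)$. The hypothesis that $\rho_j|\pi_1T$ and $\rho_l|\pi_1T$ are conjugate in $G$ is exactly what is needed to arrange, after a further constant gauge transformation on one side, that the two restricted connections $A_j|_T$ and $A_l|_T$ are literally equal as connections on $T\times G$ in these collars. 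Here I would use that two elliptic (resp. hyperbolic) representations of $\Z^2$ that are conjugate in $G$ and both in diagonal/rotational normal form differ by a gauge transformation that is constant along $T$ — one must be mildly careful because an elliptic conjugating element need not be diagonal, so I would invoke the normalizer computation implicit in \cite{KK} to absorb the discrepancy.

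Once the collar connections agree on each $T$, the pieces $A_j$ patch together to a smooth flat connection $A$ on the trivialized bundle $M\times G$: on $M$ minus the collars we just take the $A_j$, and on each collar the two sides already coincide, so the glued form is smooth and flat. The holonomy of $A$ then defines a representation $\rho\co\pi_1M\to G$ via the correspondence $I_P$ of Section 3.1 between flat connections and conjugacy classes of representations. By construction the restriction of $\rho$ to each $\pi_1(J_j)\subset\pi_1(M)$ is the holonomy of $A_j$, hence conjugate to $\rho_j$ in $G$, which is the desired conclusion. An alternative, more algebraic route — which I would mention in a remark — is a van Kampen argument: present $\pi_1(M)$ as an iterated amalgamated product / HNN extension of the $\pi_1(J_j)$ over the $\pi_1(T)$'s, and define $\rho$ piecewise; the conjugacy of the two boundary restrictions supplies exactly the compatibility element (and, in the HNN case, the stable letter) needed for the universal property to produce a well-defined homomorphism on the amalgam.

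The main obstacle I anticipate is the bookkeeping of the gauge transformations near the tori: making the normal forms on the two sides of each $T$ \emph{simultaneously} compatible for all tori at once, without a later adjustment on one torus undoing an earlier adjustment on another. In the graph-manifold setting of \cite{DW} this is handled by choosing a spanning tree in the dual graph of the decomposition and propagating the gauge transformations outward from a root piece, so that each torus is adjusted exactly once; I would follow the same strategy here, treating the tree edges by amalgamation-type gluing and the remaining (non-tree) edges by HNN-type gluing, where the conjugacy hypothesis on that torus provides the stable letter. A secondary, minor point is that the statement only claims $\rho$ induces $\rho_i$ \emph{up to conjugacy} on each piece, which is all the holonomy/gluing construction naturally gives and is harmless for the applications (Theorem \ref{additivity} and the volume computations), so no further rigidity is needed.
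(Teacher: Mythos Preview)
Your proposal is correct and follows essentially the same route as the paper: pass to flat connections, put each $A_j$ into Kirk--Klassen normal form on collars of the boundary tori, use the conjugacy hypothesis (same eigenvalues) to make the two sides agree on each $T$, and take the holonomy of the glued flat connection. Your alternative van Kampen argument is exactly what the paper records separately in Remark \ref{also-cut-paste}.

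The one place you overcomplicate things is the anticipated ``bookkeeping'' obstacle. The paper does not need a spanning-tree propagation: for each piece $J_i$ it simply takes the local gauge transformations $g_{i,T}$ on the collars of \emph{all} boundary tori of $J_i$ at once and extends their disjoint union to a single gauge transformation $g_i\co J_i\to G$ by obstruction theory (the relevant $\pi_0,\pi_1,\pi_2$ vanish, using the $\mathrm{SL}(2;\C)$ lift in the $\mathrm{PSL}(2;\C)$ case). Since this is done independently on each piece and the resulting normal forms are determined by the shared eigenvalues, no later adjustment can undo an earlier one, and your spanning-tree mechanism is unnecessary.
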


\begin{proof} For each $T$ in $\c T$,  $\rho_i$ induces $\rho_i|_T:\pi_1(T\times[-1,1],x_T)\to G$, where $i=j,l$, $T\times[-1,1]$ is a regular neighborhood of $T$ with $T\times[-1,0]\subset J_j$ and $T\times[0,1]\subset J_l$.

Denote by $A_i$ be a flat connection over $J_i$ corresponding to $\rho_i$. 
Then ${A_i}|T\times[-1,1]$ can be put into normal form after  gauge-transformation. Specially, there exists $g_{i, T}\co T\times[-1,1]\to G$, $i=j,l$, such that 
$g_{i, T}{\ast}A_i|T\times[-1,1]=(i\alpha_{i, T} dx+i\beta_{i, T} dy)\otimes X$. 
By obstruction theory one can extend $$\coprod_{T\in\partial J_i}g_{i, T}\to G$$ to gauge transformations $g_i\co J_i\to G$. 

Since for each torus $T$,  $\rho_j|_T$ and $\rho_l|_T$ are conjugated, they have the same eigenvalues,  therefore the connections $g_j*A_j$ and $g_l*A_l$ match on $T\times[-1,1]$. So their union define a flat and smooth connection $C$ over $M$ and therefore a representation $\rho$ of $\pi_1M$ into 
$G$.
\end{proof}

%\begin{proof}
%Indeed  they both induce representations still denoted by $\rho_1$ and $\rho_2$ of $\pi_1(T\times[-1,1],x_T)$ in $G$ where $\c{T}\times[-1,1]$ is a regular neighborhood of $\c{T}$ with $\c{T}\times[-1,0]\subset W_1$ and $\c{T}\times[0,1]\subset W_2$.

%Denote by $A_1$ and $A_2$ two flat connections over $W_1$ and $W_2$ corresponding to $\rho_1$ and $\rho_2$. Then ${A_1}|T\times[-1,1]$ and ${A_2}|T\times[-1,1]$ can be put into normal form after  gauge-transformation. Namely, there exists $g_T\co T\times[-1,1]\to G$  such that 
%$g_T{\ast}A_1|T\times[-1,1]=(i\alpha_T dx+i\beta_T dy)\otimes X$
%and $h_T\co T\times[-1,1]\to G$ such that
%$h_T{\ast}A_2|T\times[-1,1]=(i\alpha'_Tdx+i\beta'_Tdy)\otimes X$. 
%By obstruction theory one can extend $$\coprod_{T\in\c{T}}g_T, \coprod_{T\in\c{T}}h_T\co\c{T}\times[-1,1]\to G$$ to gauge transformations $g\co W_1\to G$ and $h\co W_2\to G$. 

%Eventually if the representations are conjugated when restricted to each torus $T$ then $\rho_1|\pi_1T$ and $\rho_2|\pi_1T$ have the same eigenvalues  then the connections $g*A_1$ and $h*A_2$ match on $\c{T}\times[-1,1]$ and their union define a flat and smooth connection $C$ over $M$ and therefore a representation $\rho$ of $\pi_1M$ into 
%$G$. This proves the lemma.
%\end{proof}

We quote the  following result  stated in \cite[Lemma 3.3]{KK} with $G={\rm SL}(2;{\C})$ and in \cite[Theorem 4.2]{Kh} with $G=\t{{\rm SL}(2;{\R})}$, that will be used latter :
\begin{proposition}\label{KKh}
Let $A$ and $B$ denote two flat connections in normal form over an oriented $3$-manifold with toral boundary. If $A$ and $B$ are equal near the boundary and if they are gauge equivalent then

i)  $\mathfrak{cs}_M(A,\delta)=\mathfrak{cs}_M(B,\delta)$ when $G=\t{{\rm SL}(2;{\R})}$   and,

ii)  $\mathfrak{cs}_M(A,\delta)-\mathfrak{cs}_M(B,\delta)\in{\Z}$ when $G={\rm PSL}(2;{\C})$.
\end{proposition}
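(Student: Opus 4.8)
The plan is to reduce everything to the invariance properties of the Chern--Simons functional under gauge transformations that are already essentially contained in the constructions of Section 3.3 (Seifert case) and Section 3.4 (hyperbolic case). First I would set up notation: let $A$ and $B$ be flat connections in normal form on a compact oriented $3$-manifold $W$ with toral boundary $\partial W = T_1\cup\cdots\cup T_p$, agreeing in a collar $\partial W\times[0,1]$, and let $\varphi\in\c{G}_W$ be a gauge transformation with $\varphi\ast A = B$. Since $A$ and $B$ agree near $\partial W$, the gauge transformation $\varphi$ restricted to a smaller collar $\partial W\times[0,1/2]$ is a map into the stabilizer of the normal-form connection, which (for the diagonal/rotation normal forms written above) is the image of a fixed one-parameter subgroup; in particular $\varphi|_{\partial W}$ is homotopic, through gauge transformations preserving the boundary value of $A$, either to the constant gauge transformation or to one representing a generator of $\pi_1(G)$ on some boundary tori. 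I would first treat the case $\varphi|_{\partial W} = \id$.

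When $\varphi$ is the identity near $\partial W$, the standard Chern--Simons transformation formula gives
\begin{equation}
\mathfrak{cs}_W(B,\delta) - \mathfrak{cs}_W(A,\delta) = \int_W \delta^\ast\big(Tf(\varphi\ast A) - Tf(A)\big),
\end{equation}
and the difference $Tf(\varphi\ast A) - Tf(A)$ is, up to an exact form whose integral vanishes by Stokes (the boundary terms cancel because $\varphi$ is trivial on $\partial W$), the pullback under $\varphi$ (viewed as a map $W\to G$ supported away from $\partial W$, hence factoring through $W/\partial W$) of the bi-invariant form $Tf(\omega_{\rm M.C.})$ representing a class in $H^3(G;\mathbb{K})$. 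For $G = \t{{\rm SL}_2(\R)}$ this class is the volume form $\tfrac{2}{3}\varphi_X\wedge\varphi_Y\wedge\varphi_Z$ plus an exact piece (exactly the computation in Section 3.3); since $\t{{\rm SL}_2(\R)}$ is contractible, $\varphi\colon W/\partial W \to G$ pulls it back to an exact form, so the integral is $0$, giving (i). For $G = {\rm PSL}(2;{\C})$, by the normalization $\pi_2^\ast\t W(P_1) = -4\t W(C_2)$ recalled in Section 3.2, the relevant class $TP_1(\omega_{\rm M.C.})$ is integral on $H_3(G;\ZZ)$ (it is essentially $-4$ times the generator of $H^3({\rm SU}(2);\ZZ)\cong\ZZ$ pushed into ${\rm PSL}(2;\C)$, whose $H_3$ is $\ZZ$), so $\int_{W/\partial W}\varphi^\ast TP_1(\omega_{\rm M.C.}) \in \ZZ$, yielding (ii).

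The remaining step, which I expect to be the main obstacle, is to show that a gauge transformation with $\varphi\ast A = B$ and $A=B$ near $\partial W$ may always be adjusted, without changing $\mathfrak{cs}_W(\cdot,\delta)$ mod the allowed ambiguity, to one that is the identity near the boundary. The issue is that $\varphi|_{\partial W}$ need only preserve $A|_{\partial W}$, not equal $\id$; one must understand $\pi_0$ and $\pi_1$ of the stabilizer of a normal-form connection on a torus inside $\c{G}$, and check that the ``winding'' contributions of $\varphi$ along the boundary tori either are killed by modifying $\varphi$ by a gauge transformation extending over $W$, or contribute an integer (in the ${\rm PSL}(2;\C)$ case) or zero (in the $\t{{\rm SL}_2(\R)}$ case) to the Chern--Simons difference. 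This is precisely the content of \cite[Lemma 3.3]{KK} and \cite[Theorem 4.2]{Kh}, so my plan is to import their boundary analysis verbatim: decompose $\varphi$ near $\partial W$ into a ``diagonal'' part lying in the maximal torus of $G$ (which extends over a collar and contributes, at worst, an integer in the complex case via the first Chern class of the induced line bundle, and contributes $0$ in the real case since $\t{{\rm SL}_2(\R)}$ has no torsion in the relevant cohomology) and a part that is trivial near $\partial W$, to which the previous paragraph applies. Assembling the two cases and invoking the $\mod\ZZ$ (resp.\ exact) invariance completes the proof.
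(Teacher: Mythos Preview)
The paper does not actually prove this proposition: it is quoted verbatim from \cite[Lemma 3.3]{KK} (for ${\rm SL}(2;{\C})$) and \cite[Theorem 4.2]{Kh} (for $\t{{\rm SL}(2;{\R})}$), with the single additional remark that the ${\rm PSL}(2;{\C})$ statement (ii) follows from the ${\rm SL}(2;{\C})$ result in \cite{KK} via identity~(3.9). Your outline is therefore considerably more detailed than what the paper supplies, and it is a reasonable sketch of the argument behind those references: the contractibility of $\t{{\rm SL}_2(\R)}$ kills the Wess--Zumino term in case (i), the integrality of $P_1\in I_0^2$ forces it into $\ZZ$ in case (ii), once the gauge transformation is normalised to the identity near the boundary; and you correctly flag the boundary-gauge reduction as the genuine technical content and defer it to the same sources the paper cites.

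One small difference worth noting: the paper obtains (ii) by lifting to ${\rm SL}(2;{\C})$ and invoking (3.9), whereas you argue integrality of $TP_1(\omega_{\rm M.C.})$ directly at the ${\rm PSL}(2;{\C})\simeq{\rm SO}(3;\R)$ level. Your parenthetical justification (``$-4$ times the generator of $H^3({\rm SU}(2);\ZZ)$ pushed into ${\rm PSL}(2;\C)$'') is phrased a bit loosely, since cohomology pulls back rather than pushes forward along $SU(2)\to SO(3)$; but the conclusion is correct and already built into the paper's assertion that $P_1\in I_0^2({\rm SO}(3;\R))$, so nothing is at stake. There is no substantive divergence from the paper's treatment.
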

The second statement follows from  \cite[Lemma 3.3]{KK} using
identity (3.9).
\begin{remark}\label{auto} As a consequence of  Proposition \ref{KKh}, if   $A$ and $B$ are flat connections on a solid torus that are equal near the boundary then the associated representations are automatically conjugated so that the conclusion of the proposition applies.
\end{remark}

 \subsection{Additivity principle}

 Fix a closed oriented  3-manifold $M$ and denote by $[M]$ its
 orientation class. Let $\mathcal T$ be a union of finitely many tori  cutting  $M$ into
 $ J_1,\ldots, J_k$.  For each $T\in \mathcal T$, suppose $T$ is  shared by $J_i$ and $J_j$. 
 Denote by $[J_i, \partial J_i]$
 the induced orientations classes so that
  the induced orientations on $\partial J_i$ and $\partial J_j$
 are opposite on $T$, and 
 we have $$[M]=\sum_{i=1}^k[(J_i,\b J_i)].$$

Fix a regular neighborhood $W(T)=[0,1]\times T$ such that
$T=\{1/2\}\times T$, $J_i\cap W(T)=[0,1/2]\times T$ and  $J_j\cap
W(T)=[1/2,1]\times T$. Let $A$ denote a flat connection over $M$.
Applying the same arguments as in \cite{KK} we may assume that  $A|W(T)$
is in normal form. Then by linearity of the integration
$$\mathfrak{cs}_{M}(A)=\sum_{i=1}^k\mathfrak{cs}_{J_i}(A|J_i).$$
Denote by $V$ the solid torus  with meridian
$m$.
Denote by $c$ a slope in $T$ and for each $T\in\mathcal T$, we perform a Dehn
filling to $J_i$ identifying $c$ with $m$ and denote by
$\hat{M}_i=J_i\cup\left(\cup_{c\subset \partial J_i} V_c\right)$ the resulting closed oriented manifold.
Suppose each $A|M_i$
smoothly extend to  flat connections over $\hat{J}_i$ denoted by $\hat{A}_i$ for $i\in \{1,\ldots,k\}$. 
This is to say that for any representation $\rho$ corresponding to $A$ then $[c]\in\ker\rho$.  By the linearity we have

$$\mathfrak{cs}_{\hat{J}_i}(\hat{A}_i)=\mathfrak{cs}_{J_i}(A|J_i)+\sum_{c\subset \partial J_i}\mathfrak{cs}_{V_c}(\hat{A}_i|V_c)\qquad (i).$$

Since the extensions from $J_i$ and $J_j$ over their own  $V_c$, based on the
normal form on $[0,1]\times T$, are the same on the $T$ direction but
opposite on the $[0,1]$ direction, then using Proposition \ref{KKh} and Remark \ref{auto} we have
$$\mathfrak{cs}_{V_c}(\hat{A}_i|V_c)+\mathfrak{cs}_{V_c}(\hat{A}_j|V_c)=0\qquad (*).$$

Summing up $(i)$ from $1$ to $k$, then apply (*), we get 
$$\mathfrak{cs}_{M}(A)=\sum_{i=1}^k\mathfrak{cs}_{\hat J_i}(A|\hat J_i).$$
Then applying Proposition \ref{vol} (1.2) and  (1.3) in the introduction to the former
equality we get the so-called additivity principle: 

\begin{theorem}\label{additive}
            Let $M$ is an oriented closed $3$-manifold with JSJ tori $T_1,\cdots,T_r$ and JSJ pieces 
            $J_1,\cdots,J_k$,
            and let $\zeta_1,\cdots,\zeta_r$ be slopes on $T_1,\cdots, T_r$, respectively.
           
            Suppose that $G$ is either ${\rm Iso}_e\t{{\rm SL}_2(\R)}$ or ${\rm PSL}(2;{\C})$, and that
                $$\rho:\pi_1(M)\to G$$
            is a representation vanishing on the slopes $\zeta_i$,  and that
                $\hat\rho_i:\pi_1(\hat{J_i})\to G$
            are the induced representations, where $\hat{J}_i$ is the Dehn filling of $J_i$ along
            slopes adjacent to its boundary, with the induced orientations. Then:
                %$$\Vol_X(\rho)\,=\,\Vol_X(\hat\rho_1)+\cdots+\Vol_X(\hat\rho_s).$$
			$${\rm vol}_G(M,\rho)={\rm vol}_G({\hat J}_1,\hat{\rho}_1)+ {\rm vol}_G({\hat J}_2,\hat{\rho}_2)+
			\ldots+ {\rm vol}_G({\hat J}_k,\hat{\rho}_k).$$ 
% for $G$ to be either 
%$\t{{\rm SL}_2(\R)}$ or ${\rm PSL}(2;{\C})$,  and where $\hat{\rho}_i$ is
%the extension of $\rho|\pi_1M_i$ to $\pi_1\hat{M_i}$, $i=1,\ldots,k$.
\end{theorem}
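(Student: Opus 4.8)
The plan is to reduce Theorem~\ref{additive} to the additivity of Chern--Simons invariants under cutting along tori in normal form, which is essentially set up in the paragraphs immediately preceding the statement; the task is to assemble those pieces carefully and then convert the Chern--Simons identities into volume identities by Proposition~\ref{vol}. First I would fix a flat $G$-connection $A$ on $M=M\times_\rho G$ corresponding to $\rho$ (note that when $G=\t{{\rm SL}_2(\R)}$ the bundle is automatically trivial, while for $G={\rm PSL}(2;{\C})$ I must first check that the hypothesis of Theorem~\ref{additive} forces $\rho$ to lift to ${\rm SL}(2;{\C})$, equivalently that $M\times_\rho G$ is trivial --- this should follow from $M$ being a closed oriented $3$-manifold together with the remark in Section~3.4 that principal bundles with simply connected structure group over $3$-manifolds are trivial, applied after noting the obstruction in $H^2(M;\Z/2)$; I should state this explicitly rather than sweep it under the rug). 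Choosing regular neighborhoods $W(T)=[0,1]\times T$ of each JSJ torus $T$ and invoking the Kirk--Klassen normal form (Section~3.5), I may assume $A|W(T)$ is in normal form $(i\alpha\,dx+i\beta\,dy)\otimes X$; then linearity of the integral defining $\mathfrak{cs}$ gives $\mathfrak{cs}_M(A)=\sum_{i=1}^k\mathfrak{cs}_{J_i}(A|J_i)$.

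Next I would perform the Dehn fillings. For each boundary torus of each $J_i$ carrying a slope $\zeta$, the hypothesis that $\rho$ vanishes on $\zeta$ (i.e.\ $[\zeta]\in\ker\rho$) means precisely that the flat connection $A|J_i$ extends smoothly over the glued solid torus $V_\zeta$ to a flat connection $\hat A_i$ on $\hat J_i$. Linearity again gives $\mathfrak{cs}_{\hat J_i}(\hat A_i)=\mathfrak{cs}_{J_i}(A|J_i)+\sum_{\zeta\subset\partial J_i}\mathfrak{cs}_{V_\zeta}(\hat A_i|V_\zeta)$. The crucial cancellation is that for a torus $T$ shared by $J_i$ and $J_j$, the two extensions over the same solid torus agree in the $T$-direction (same normal-form data $\alpha,\beta$) but the induced orientations on the $[0,1]$-factor are opposite; by Proposition~\ref{KKh} together with Remark~\ref{auto} --- which says any two flat connections on a solid torus agreeing near the boundary are gauge equivalent, hence have equal (resp.\ $\Z$-congruent) Chern--Simons invariants --- the contributions $\mathfrak{cs}_{V_\zeta}(\hat A_i|V_\zeta)+\mathfrak{cs}_{V_\zeta}(\hat A_j|V_\zeta)$ vanish (in $\R$ for $\t{{\rm SL}_2(\R)}$, and in $\R/\Z$ for ${\rm PSL}(2;{\C})$). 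Summing the $\hat J_i$ identities over $i=1,\dots,k$ and applying this cancellation yields $\mathfrak{cs}_M(A)=\sum_{i=1}^k\mathfrak{cs}_{\hat J_i}(\hat A_i)$.

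Finally I would translate this into the volume statement via Proposition~\ref{vol}. In the $\t{{\rm SL}_2(\R)}$ case, $\mathfrak{cs}_M(A,\delta)=\tfrac23{\rm vol}_G(M,\rho)$ exactly (and likewise on each $\hat J_i$, using that the $\hat J_i$ are closed and their bundles admit sections), so the identity $\mathfrak{cs}_M(A)=\sum\mathfrak{cs}_{\hat J_i}(\hat A_i)$ is literally ${\rm vol}_G(M,\rho)=\sum_i{\rm vol}_G(\hat J_i,\hat\rho_i)$ after cancelling the $\tfrac23$. In the ${\rm PSL}(2;{\C})$ case, $\Im\,\mathfrak{cs}=-\tfrac1{\pi^2}{\rm vol}_G$, and since volume is genuinely real-valued (not mod $\Z$), taking imaginary parts of the mod-$\Z$ identity still gives an honest equality of real numbers; the $\R/\Z$ ambiguities live in the real part and are harmless here. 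I expect the main obstacle to be the bookkeeping around orientations and base points in the normal-form gluing --- making precise why the solid-torus contributions from the two sides of a torus are negatives of each other, and ensuring the normal form can be chosen compatibly at every JSJ torus simultaneously. A secondary subtlety worth a sentence is justifying that the partition of $\mathfrak{cs}_M$ over pieces requires the connection to be in normal form on the collars (so that $\delta^\ast Tf(A)$ genuinely restricts piecewise), which is exactly the role of the Kirk--Klassen machinery recalled in Section~3.5.
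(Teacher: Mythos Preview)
Your proposal is correct and follows the paper's proof essentially line for line: put $A$ in normal form on collar neighborhoods via Kirk--Klassen, split $\mathfrak{cs}_M(A)$ over the pieces by linearity, extend over the filling solid tori using $\rho(\zeta_i)=1$, cancel the paired solid-torus contributions by Proposition~\ref{KKh} and Remark~\ref{auto}, and then invoke Proposition~\ref{vol} to pass from Chern--Simons to volumes. The lifting issue you flag for $G=\mathrm{PSL}(2;\C)$ is a genuine subtlety that the paper itself glosses over --- your instinct to make it explicit is good, though note that the obstruction in $H^2(M;\ZZ/2)$ to lifting does \emph{not} vanish for an arbitrary closed oriented $3$-manifold, so the fix is not as automatic as your parenthetical suggests.
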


We end this section by a simple lemma which will be used later and which is based on computation already developed in \cite{KK}.

\begin{lemma}\label{Almost trivial representation}
Suppose that $G$ is either 
${\rm Iso}_e\t{{\rm SL}_2(\R)}$ or ${\rm PSL}(2;{\C})$ and that $M$ is a closed oriented 3-manifold. If $\rho:\pi_1M\to G$ has image either infinite cyclic or finite, then  ${\rm
vol}_G({ M}, {\rho})=0$.
\end{lemma}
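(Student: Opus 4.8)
The plan is to reduce the statement to the vanishing of the Chern--Simons invariant for such $\rho$, using Proposition \ref{vol}. If the image of $\rho$ is finite, the claim follows from the covering property (Theorem \ref{basic property of volume of presentation} (4)): the representation factors through a finite group $Q=\mathrm{image}(\rho)$, hence through the finite covering $\hat M\to M$ corresponding to $\ker\rho$, and the pulled-back representation $\hat\rho\co\pi_1\hat M\to G$ is trivial, so ${\rm vol}_G(\hat M,\hat\rho)=0$ by obvious reasons (a trivial representation has a constant developing map, so $D^{\ast}\omega_X=0$); then $0={\rm vol}_G(\hat M,\hat\rho)=|\mathrm{deg}(\hat M\to M)|\cdot{\rm vol}_G(M,\rho)$, forcing ${\rm vol}_G(M,\rho)=0$.

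The remaining case is image infinite cyclic, and here I would argue directly with connections. Let $\gamma\in G$ generate the image, so $\rho$ factors as $\pi_1M\twoheadrightarrow\ZZ\stackrel{1\mapsto\gamma}{\longrightarrow}G$. Since $\ZZ$ is free, $\rho$ lifts to $\t{{\rm SL}_2(\R)}$ (resp.\ ${\rm SL}(2;{\C})$), so the principal bundle $P=M\times_\rho G$ is trivial and admits a section $\delta$; thus Proposition \ref{vol} applies and it suffices to show $\mathfrak{cs}_M(A,\delta)=0$ in the Seifert case and $\Im(\mathfrak{cs}_M(A,\delta))=0$ in the ${\rm PSL}(2;{\C})$ case, for $A$ the flat connection associated to $\rho$. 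The point is that $A$ is, up to gauge, \emph{pulled back from a $1$-manifold}: the homomorphism $\pi_1M\to\ZZ$ is $f_\ast$ for a map $f\co M\to S^1$, and $\rho$ is the composite of $f_\ast$ with $\ZZ\to G$; correspondingly the flat connection $A$ is gauge equivalent to $f^\ast B$ for a flat $G$-connection $B$ on the trivial $G$-bundle over $S^1$. Then $TP_1(A)$ (resp.\ $T{\bf R}(A)$) is, up to an exact form coming from the section, equal to $f^\ast$ of the corresponding Chern--Simons $3$-form on $S^1\times G$; but any $3$-form pulled back from the $1$-manifold $S^1$ via $f$ (or rather, from $S^1\times G$ along $f\times\mathrm{id}$, after noting the form in question is left-invariant on the $G$-factor, hence descends) vanishes for dimensional reasons as soon as it involves enough $df$'s --- more carefully, writing the normal-form computation, the $3$-form $\delta^\ast TP_1(A)$ becomes a multiple of $dx\wedge dx\wedge(\cdots)$ or otherwise has a repeated pullback factor, so its integral over $M$ is $0$.

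The cleanest way to make the last paragraph rigorous, and what I would actually write, is to invoke the normal-form machinery of Section 3.5 together with Lemma \ref{cut-paste} / the additivity computation: a cyclic representation can be conjugated so that, on any solid-torus or $T^2\times I$ region, the connection is $(i\alpha\,dx+i\beta\,dy)\otimes X$ with $(\alpha,\beta)$ determined by where the two generators go under $\ZZ\to G$; in fact since the image is cyclic one can arrange $A$ globally to be a \emph{diagonal, constant-coefficient} connection $\eta\otimes X$ where $\eta$ is a closed real $1$-form on $M$ representing (a multiple of) the pullback of the generator of $H^1(S^1;\RR)$. For such $A$ one computes directly from formula (3.4) that $TP_1(A)=\mathrm{const}\cdot\eta\wedge d\eta + d(\cdots) = d(\cdots)$ since $\eta$ is closed, so $\int_M\delta^\ast TP_1(A)\in\ZZ$ and its imaginary part is $0$ (resp.\ it is $0$ outright in the Seifert case where $\varphi_X$-type terms appear with a $d$ in front). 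The main obstacle is bookkeeping: justifying that a cyclic $\rho$ can be put in this globally diagonal normal form (handling the parabolic/non-diagonalizable boundary behavior that does not arise here because the image is cyclic, hence abelian and --- being infinite cyclic in a Lie group --- generated by a single element which we may put in a one-parameter subgroup), and then tracking that the section $\delta$ contributes only an integer ambiguity via Proposition \ref{KKh}. Once that normal form is in hand, the vanishing is immediate from $d\eta=0$.
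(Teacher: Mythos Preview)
Your treatment of the finite-image case is exactly the paper's: pass to the cover corresponding to $\ker\rho$, get the trivial representation, and use multiplicativity under coverings.

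For the infinite cyclic case your approach is genuinely different from the paper's, and mostly correct, but with a wrinkle worth flagging. The paper does \emph{not} put the connection in a global one-parameter normal form. Instead it uses only that $G$ is path-connected: choose any path $\gamma_t$ from $e$ to the generator $\gamma$ of the image, obtain a path of cyclic representations $\rho_t$ with $\rho_0$ trivial and $\rho_1=\rho$, and assemble the corresponding flat connections $A_t$ into a connection $\mathbb{A}$ on $M\times[0,1]$. Since each $A_t$ is flat, $F^{\mathbb{A}}\wedge F^{\mathbb{A}}=0$, so $d\,Tf(\mathbb{A})=0$ and Stokes gives $\mathfrak{cs}_M(A_1)=\mathfrak{cs}_M(A_0)=0$; then Proposition~\ref{vol} finishes. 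Your route --- write $A=\eta\otimes Y$ with $\eta$ closed and compute $Tf(A)=f(d\eta\wedge\eta)\cdot f(Y\otimes Y)+\tfrac13 f(\eta\wedge 0)=0$ pointwise --- is also valid, and arguably more explicit, but it costs you the extra hypothesis that the generator $\gamma$ lies on a one-parameter subgroup. You assert this by saying the parabolic case ``does not arise,'' which is false: an infinite cyclic image can perfectly well be generated by a parabolic element of ${\rm PSL}(2;\CC)$. What saves you is that $\exp$ is in fact surjective onto both target groups here (for ${\rm PSL}(2;\CC)$ every element is conjugate to a diagonal or unipotent element, each visibly on a one-parameter subgroup; for ${\rm Iso}_e\widetilde{{\rm SL}_2(\R)}=\R\times_{\Z}\widetilde{{\rm SL}_2(\R)}$ one uses the $\R$-factor to adjust the central part), so a suitable $Y\in\mathfrak{g}$ with $\exp Y=\gamma$ always exists --- but you should say this rather than claim the issue is vacuous. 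Also, your appeal to Proposition~\ref{KKh} is misplaced: that result concerns manifolds with toral boundary, whereas here $M$ is closed and the gauge independence you need is already contained in Proposition~\ref{vol}. The paper's deformation argument sidesteps all of this by needing only connectedness of $G$, which is why it is the cleaner choice.
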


\begin{proof}
Suppose first the image $\rho(\pi_1M)$ is a cyclic group generated by $g$. Since $G$ is path connected,
there is a path connecting the unit $e$ and $g$ which provides a path of representation $\rho_t:\pi_1M\to G$ 
such that 
$\rho_1=\rho$ and $\rho_0$ is the trivial representation. 

Consider the associated path of flat connections $A_t$. This path
defines a connection $\mathbb{A}$ on the product $M\times[0,1]$ that is no longer flat but whose curvature
$F^{\mathbb{A}}$ satisfies the equation $F^{\mathbb{A}}\wedge
F^{\mathbb{A}}=0$ (this latter point follows from the fact that
$F^{A_t}=0$ for any $t$). Hence it follows from the construction of
the Chern--Simons invariant combined with the Stokes Formula that
$\mathfrak{cs}_{M}(A_1)=\mathfrak{cs}_{M}(A_0)=0$.
Hence  ${\rm
vol}_G( M,\rho)=0$ by Propsotion  \ref{vol}.

Suppose then the image  $\rho(\pi_1M)$ is a finite group $\Gamma$. Let $p: \tilde M\to M$ be the finite cover corresponding 
to  the unit of $\Gamma$, then we have the induced trivial representation $\rho\circ p_*: \pi_1\tilde M \to G$. Clearly ${\rm
vol}_G(\tilde M,\rho\circ p_*)=0$. Then by  ${\rm
vol}_G(\tilde M,\rho\circ p_*)=|\Gamma|{\rm
vol}_G(M,\rho)$ and therefore  ${\rm
vol}_G( M,\rho)=0$. 
%Mind that
%Remark \ref{simple} still applies to $\eta$.
\end{proof}
%In this subsection, we prove a simplified version of the additivity principle that
        %will be sufficient for our application. Let $G$ be a connected semisimple Lie group,
        %and $K$ be a maximal compact
        %subgroup. As before, let $X=G\,/\,K$ be the homogeneous space with a fixed choice
        %of a $G$-invariant volume form $\ud\Vol_X$.

        %We adopt the following notations for Dehn fillings.
        %Let $N$ be an orientable compact $3$-manifold with $\partial N\,=\,T_1\sqcup\cdots\sqcup T_r$
        %$a union of tori.
        %Suppose $\zeta_i\subset T_i$ are slopes on the boundary components.
        %We denote the Dehn filling of $N$ along all the $\zeta_i$ as $\hat{N}=\hat{N}(\zeta_1,\cdots,\zeta_r)$.
        %This means that $\hat{N}$ is the closed orientable $3$-manifold
        %obtained by gluing a solid torus $D^2\times S^1$ on each boundary torus $T_i$ of $N$,
        %so that each $\zeta_i$ bounds the meridian disk of the solid torus.
        %If
         % $$\rho:\pi_1(N)\to G$$
        %is a representation that kills every $\zeta_i$,
        %then $\rho$ naturally descends to a representation:
        % $$\hat\rho:\pi_1(\hat{N})\to G.$$

\section{Przytycki--Wise subsurfaces and separability}\label{PW}

	Sections \ref{PW} and \ref{Sec-virtualExtensionOfRepresentations} are prepared for 
	the construction part of the proof of Theorem \ref{virt-non-zero}.

	This section is inspired by
	recent work of P.~Przytycki and D.~Wise on surface subgroups
    of mixed $3$-manifolds \cite{PW-graph,PW-mixed,PW3}.
    We first review the merging trick
    which will be used repeatedly in the constructions of finite covers.
    Then we introduce the partial PW subsurfaces and the parallel cutting condition.
    We show that parallel-cutting partial PW subsurfaces with virtually prescribed
    boundary exist under very general conditions (Theorem \ref{virtualPartialPW}), 
    and that any parallel-cutting partial PW subsurface can virtually
    be arranged in nice position with respect to the JSJ tori (Theorem \ref{corridorCover}).
    These results are interesting on their own right 
    from the perspective of geometric topology, and they
    should be extendable to certain more natural contexts. 
    Besides techniques from Przytycki--Wise,
    the proofs essentially employ results of Wise \cite{Wise-long} 
    and Rubinstein--Wang \cite{RW} as well.
    
    For notational convenience and to avoid repetition we always, from now, consider mixed $3$-manifolds that contain no essential Klein
    bottles so that the JSJ decomposition coincide with the geometric decomposition.
    This causes no loss of generality as we are interested in virtual properties (Lemma \ref{not-shared}).

 	\subsection{Merging finite covers}
        In the study of virtual properties of mixed $3$-manifolds, we often need to construct
        a finite cover of a $3$-manifold from given covers of geometric pieces. This is possible
        via a procedure called \emph{merging}. 

        \begin{definition}
            Let $M$ be a compact orientable irreducible $3$-manifold
            with (possibly empty) incompressible toral boundary.
            For a positive integer $m$,
            we say that a finite cover $\tilde{M}$ is \emph{JSJ $m$-characteristic},
            if every elevation $\tilde{T}$ of a JSJ torus or
            of a boundary torus $T\subset\partial{M}$
            is the $m$-characteristic cover of $T$,
            namely, which means that every slope of $\tilde{T}$
            covers a slope of $T$ with degree $m$.
        \end{definition}

        \begin{proposition}\label{mergeFiniteCovers}
            Let $M$ be a compact orientable irreducible $3$-manifold
            with (possibly empty) incompressible toral boundary.
            Suppose $J'_1,\cdots,J'_s$
            are finite covers of all the JSJ pieces $J_1,\cdots,J_s$
            of $M$, respectively. Then there is a positive integer $m_0$, satisfying
            the following. For any positive integral multiple $m$ of $m_0$, there is
            a regular %(characteristic) 
            finite cover $\tilde{M}$ of $M$, which is JSJ $m$-characteristic,
            such that any elevation $\tilde{J}_i$ of a JSJ piece $J_i$
            is a cover of $J_i$ that factors through $J'_i$.
        \end{proposition}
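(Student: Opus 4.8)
The plan is to build $\tilde M$ in two stages: first arrange compatibility over the JSJ tori by replacing each $J'_i$ by a further regular cover with controlled boundary behaviour, and then glue these pieces together by fibre-product over the graph of groups dual to the JSJ decomposition.

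\medskip

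First I would pass, for each $i$, from $J'_i$ to a regular finite cover $J''_i\to J_i$ that factors through $J'_i$; such a regular cover exists because $\pi_1 J_i$ is residually finite (it is the fundamental group of a Haken $3$-manifold), so the intersection of the conjugates of the finite-index subgroup $\pi_1 J'_i\le\pi_1 J_i$ is again finite-index, normal, and contained in $\pi_1 J'_i$. Next I would control the boundary tori. For a JSJ torus $T$ of $M$ adjacent to $J_i$ and $J_j$, the elevations of $T$ to $J''_i$ (resp. $J''_j$) are finite covers of $T\cong \mathbb{T}^2$; by replacing $J''_i$ and $J''_j$ by still deeper regular covers one can force every such elevation to be the $m$-characteristic cover $T_m\to T$ (the cover corresponding to $m\mathbb{Z}\times m\mathbb{Z}\le\mathbb{Z}\times\mathbb{Z}$), for any prescribed $m$ divisible by a suitable $m_0$. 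The point is that the subgroup of $\pi_1 T$ determined by an elevation is commensurable with all the others and all contain some $m_0\mathbb{Z}\times m_0\mathbb{Z}$; intersecting with all conjugates inside $\pi_1 J''_i$ and taking $m$ any multiple of $m_0$ makes every elevation equal to $T_m$. Record this $m_0$ as the integer in the statement.

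\medskip

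With the boundary behaviour standardized, I would now perform the merging. For each pair $(J_i, T)$ with $T\subset\partial J_i$ a JSJ torus, fix the elevation data: a set of boundary tori of $J''_i$ each mapping to $T$ as $T_m\to T$. Since the two sides present the identical cover $T_m\to T$ over every JSJ torus, one can glue copies of the $J''_i$ along their boundary tori matching these $T_m$'s so as to produce a connected manifold mapping to $M$; concretely, form the fibre product of the pointed covers over the Bass--Serre tree of the JSJ splitting, or equivalently intersect inside $\pi_1 M$ the (finitely many) conjugates of the subgroups $\pi_1 J''_i$ together with the subgroup $\langle m\mathbb{Z}\times m\mathbb{Z}\text{ over each JSJ torus}\rangle$-compatible data to obtain a finite-index normal subgroup $\Gamma\le\pi_1 M$. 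Let $\tilde M\to M$ be the corresponding regular cover. By construction each elevation of a JSJ torus or boundary torus is exactly $T_m$, so $\tilde M$ is JSJ $m$-characteristic, and each elevation $\tilde J_i$ of $J_i$ has $\pi_1\tilde J_i$ contained in a conjugate of $\pi_1 J''_i\le\pi_1 J'_i$, hence $\tilde J_i\to J_i$ factors through $J'_i$ after conjugation. Normality of $\Gamma$ (take the normal core if necessary, which preserves all the above properties since they are stable under passing to deeper covers) gives the regularity clause.

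\medskip

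The main obstacle is the second stage, making the covers \emph{match exactly} over every JSJ torus simultaneously: a priori the various elevations of a single torus $T$ coming from $J''_i$ need not all be isomorphic covers of $T$, and even if each is $T_m$ the gluing homeomorphism identifying the two section--fibre framings must be arranged consistently. This is precisely where one uses that the $m$-characteristic cover $T_m\to T$ is \emph{canonical} — it is invariant under the full mapping class group of $T$, so no choice of framing is needed and both sides present literally the same covering space — which is why one insists on $m$-characteristic rather than an arbitrary finite cover of $T$. Verifying that the fibre-product construction then genuinely yields a connected cover (rather than a disjoint union) and that the degree over each torus is uniformly $m$ is the technical heart; I expect this to be a bookkeeping argument on the graph of groups, not a conceptual difficulty, once the canonicity of $T_m$ is in hand.
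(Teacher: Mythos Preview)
Your outline captures the right architecture (normalize each piece, then glue), and you correctly identify that the canonicity of the $m$-characteristic cover $T_m\to T$ is what makes the merging step work. However, there is a genuine gap in your second paragraph.

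You assert that ``by replacing $J''_i$ and $J''_j$ by still deeper regular covers one can force every such elevation to be the $m$-characteristic cover $T_m\to T$'', and justify this by ``intersecting with all conjugates''. Taking a normal core certainly makes all boundary elevations of a given $T$ isomorphic to one another, but there is no reason the resulting subgroup of $\pi_1(T)\cong\ZZ^2$ should be of the form $m\ZZ\times m\ZZ$; it will typically be an arbitrary finite-index sublattice. The real problem is: given a cusped hyperbolic piece $J$ and a finite cover $J'\to J$, find for every sufficiently divisible $m$ a further regular cover whose cusps are \emph{exactly} $m$-characteristic. Residual finiteness alone does not give you this control over the peripheral structure; you need to be able to prescribe the boundary behaviour independently at each cusp. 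The paper handles this by invoking Wise's omnipotence theorem for cusped hyperbolic $3$-manifolds (via \cite[Lemma 3.2]{PW3}), which is a deep consequence of the special cube complex machinery. For Seifert pieces the analogous step is elementary (use the product structure and abelian covers), but for hyperbolic pieces it is not, and your argument does not supply it.

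A secondary issue: in your merging step you propose to ``intersect inside $\pi_1 M$ the conjugates of the subgroups $\pi_1 J''_i$'', but these are vertex groups and have infinite index in $\pi_1 M$, so this does not produce a finite-index subgroup. The paper instead takes, for each piece, a number of copies proportional to the lcm of the degrees and glues them combinatorially along matching $T_m$'s (the Luecke-style argument), then passes to the normal core at the end; once the $m$-characteristic boundary condition is in place this really is bookkeeping, as you suspected, but the description you gave is not literally correct.
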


        \begin{proof}
            First observe that  if $M'$ is a JSJ $m$-characteristic finite cover of $M$,
            then there is a further JSJ $m$-characteristic regular finite cover $\tilde{M}$
            of $M$ that factors through $M'$. To see this, we may choose base points and assume $M'$ corresponds
            to a finite-index subgroup $\pi'$ of the pointed fundamental group $\pi$
            of $M$. Then the cover $\tilde{M}$ corresponding to the normal core
            $\tilde{\pi}=\cap_{g\in\pi}(g^{-1}\pi'g)$ of $\pi'$
            is clearly a regular finite cover of $M$.
             To see that it is JSJ $m$-characteristic, note that for 
            any torus subgroup $\tilde{P}\leq\tilde\pi$ that
            represents a JSJ or boundary torus $\tilde{T}$ of $\tilde{M}$, we have  $\tilde P=P\cap\tilde\pi$ for some
            JSJ or boundary torus $T$ of $M$. Since $P\cap g^{-1}\pi'g$ for any ${g\in\pi}$
            is the $m$-characteristic subgroup of $P$, and the $m$-characteristic subgroup of $P$ is unique, denoted as $P_m$,
            then $$\tilde P=P\cap\tilde\pi=P\cap (\cap_{g\in\pi}(g^{-1}\pi'g))=\cap_{g\in\pi} (P\cap(g^{-1}\pi'g))=\cap_{g\in\pi}  P_m=
            P_m$$ 
            is the $m$-characteristic subgroup
            of $P$ as well. In other words,
            any JSJ or boundary torus $\tilde{T}$ of 
            $\tilde{M}$ is also an $m$-characteristic cover
            of a JSJ or boundary torus $T$ of $M$.
            
            Furthermore, we may reduce the proof to the case when $M$ is either hyperbolic or Seifert fibered.
            In fact, assuming that we have proved that case, then applying the lemma to
            each $J_i$ allows us to take the positive integer $m_0(J_i)$ corresponding to each $J_i$.
            Let $m_0$ be the least common multiple of all $m_0(J_i)$.
            Then for any multiple $m$ of
            $J_i$, there is a further regular finite cover $\tilde{J}_i$ of $J_i'$,
            which is JSJ $m$-characteristic over $J_i$.
            Now (see \cite{Lu} 384-385) let $d_i$ be the degree of $\tilde{J}_i$ over $J_i$, and let $D$ be the least common multiple
            of all $d_i$. We take $\frac{D}{d_i}$ copies of each $\tilde{J}_i$. For any $T$ in $\b J_i$, there
            will be exactly $\frac{D}{d_i}\cdot\frac{d_i}{m}=\frac{D}{m}$ elevations
            for each side of $T$, and they are all $m$-characteristic
            over $T$. Thus we may glue these copies naturally along boundary, and a connected component
            $\tilde{M}$ will be a finite cover of $M$ which is JSJ $m$-characteristic. The observation at the beginning of the proof
            allows us to pass to a further regular finite JSJ $m$-characteristic cover of $M$.

            It remains to prove the result when $M$ is either hyperbolic or Seifert fibered.
            If $M$ is hyperbolic, the conclusion is implied by the omnipotence
            for cusped hyperbolic $3$-manifolds due to Wise \cite[Corollary 16.15]{Wise-long} (cf.~
            \cite[Lemma 3.2]{PW3}).
            Indeed, in this case, a cover $M'$ is given and there is no JSJ torus
            inside $M$.  Denote the boundary tori of $\partial M$ by $\{T\}$ and $\partial M'$ by $\{T'\}$. 
            By \cite[Lemma 3.2]{PW3}, there are finite covers $\{T''\}$ of  $\{T'\}$ such that 
            for any further finite cover $\{\tilde{T}\}$ of $\{T''\}$ 
            there is a finite cover $\tilde{M}$ of $M'$ so that the restriction on the boundary is the cover $\{\tilde T\}=\partial {\tilde M}\to \{T'\}$.  
             Therefore, we may pick a positive integer $m_0$
            and  $\{\tilde{T}\}$ above so that the composition  $\{\tilde{T}\}\to \{T\}$ is  $m_0$-characteristic, 
            and then the composition $\tilde M\to M$ is a desired cover.
            If $M$ is Seifert fibered, the conclusion can be seen directly. Indeed, in this case,
            suppose $S^1\to M\to\mathcal{O}$ is the Seifert fibration over the base orbifold $\mathcal{O}$.
            Let $m_0$ be the maximal order of torsion elements of $H_1(\mathcal{O};\ZZ)$. Since $M$ contains no
            essential Klein bottles, $H_1(M;\ZZ)\cong H_1(S^1;\ZZ)\oplus H_1(\mathcal{O};\ZZ)$. Thus,
            the cover $\tilde{M}_0$ corresponding to the kernel of $\pi_1(M)\to H_1(M;\ZZ_{m_0})$ is
            a regular finite cover of $M$ that is $m_0$-characteristic on the boundary. For any positive
            multiple $m$ of $m_0$, we may take $\tilde{M}$ to be the cover corresponding to the kernel
            of $\pi_1(\tilde{M}_0)\to H_1(M;\ZZ_{m/m_0})$. Note that $\tilde{M}_0$ is homeomorphic to
            a product of $S^1$ with an orientable compact surface, $\tilde{M}$ is a regular finite cover of $M$
            which is $m$-characterisitc on the boundary. This means $m_0$ is as desired.
        \end{proof}

         \begin{remark}  
			In the proof of  Proposition \ref{mergeFiniteCovers}, if the cover $\tilde{M}$ is chosen to be 
			corresponding to the subgroup $\tilde{\pi}=\cap_{\alpha\in \mathrm{Aut}(\pi')}\alpha(\pi')$, where $\mathrm{Aut}(\pi')$
            is the automorphism group of $\pi'$,
            it will be a characteristic finite cover of $M$, and similarly we can verify 
            that this cover is JSJ $m$-characteristic.
        \end{remark}

        \begin{definition}[{Cf.~\cite[Definition 3.3]{PW3}}]\label{semicover}
            For a compact orientable irreducible $3$-manifold $M$
            with (possibly empty) incompressible toral boundary, a \emph{semicover}
            of $M$ with respect to the JSJ decomposition is a compact orientable irreducible
            $3$-manifold $N$ together with an immersion $\mu:N\to M$, so that restricted to
            each component of $\partial N$, $\mu$ covers
            a JSJ torus of $M$.
        \end{definition}

        \begin{corollary}[{Cf.~\cite[Proposition 3.4]{PW3}}]\label{virtualExtensionOfSemicovers}
            With the notations above, if $\mu:N\to M$ is a semicover, then there is a JSJ $m$-characteristic  regular finite cover
            $\tilde{M}$ of $M$ in which a finite cover $\tilde{N}$ of $N$ is embedded as a chunk
            or a regular neighborhood of a JSJ torus, and
            the semicover $\tilde{N}\to M$ induced from $\mu$ is isotopic to the composition
            $\tilde{N}\hookrightarrow\tilde{M}\to M$.
        \end{corollary}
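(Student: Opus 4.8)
The plan is to deduce this from Proposition~\ref{mergeFiniteCovers}, once the semicover $N$ has been decomposed into pieces that are finite covers of the JSJ pieces of $M$. First I would arrange, after a homotopy of $\mu$ adapted to the JSJ decomposition (as in \cite{PW3}), that $\mu^{-1}(\mathcal{T}_M)$ is a union of incompressible tori, where $\mathcal{T}_M$ denotes the union of JSJ tori of $M$; this union contains $\partial N$ and cuts $N$ into pieces $N_1,\dots,N_t$. Since $\mu$ is an immersion between $3$-manifolds, its restriction to each $N_j$ is a proper local homeomorphism onto a JSJ piece $J_{i(j)}$ of $M$, hence a finite covering $\mu_j\co N_j\to J_{i(j)}$. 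Passing to a double cover of $M$ and a corresponding cover of $N$ as in Lemma~\ref{not-shared}, I may also assume that each torus of $\mu^{-1}(\mathcal{T}_M)$ separates two distinct pieces.

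Next, imitating the argument inside the proof of Proposition~\ref{mergeFiniteCovers}, I would replace each $N_j$ by a further finite cover which is JSJ $m$-characteristic over $J_{i(j)}$ for one common integer $m$: when $J_{i(j)}$ is hyperbolic this uses the omnipotence of cusped hyperbolic $3$-manifold groups of Wise \cite[Corollary~16.15]{Wise-long} (cf.~\cite[Lemma~3.2]{PW3}) to realize the prescribed boundary behavior on a finite cover, and when $J_{i(j)}$ is Seifert fibered one takes the cover corresponding to the kernel of $\pi_1(N_j)\to H_1(N_j;{\mathbb Z}_m)$. After replacing $N$ by a finite cover I may assume that all of its pieces are already $m$-characteristic over the corresponding $J_i$. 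For each JSJ piece $J_i$ of $M$, choose a connected regular finite cover $J_i'$ of $J_i$ that factors through every piece of $N$ lying over $J_i$; Proposition~\ref{mergeFiniteCovers} then yields, for a suitably divisible $m$, a JSJ $m$-characteristic regular finite cover $\tilde M$ of $M$ in which every elevation of $J_i$ is a cover factoring through $J_i'$, and hence covers each of those pieces.

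It remains to locate a finite cover $\tilde N$ of $N$ as a chunk of $\tilde M$. Since $\tilde M$ is regular, the elevations of a given JSJ piece are mutually isomorphic as covers and each dominates the relevant pieces of $N$, while the JSJ $m$-characteristic condition on $\tilde M$ together with the $m$-characteristic condition on the boundary pieces of $N$ makes the relevant boundary tori match up combinatorially. I would then select, running over the dual graph of the JSJ decomposition of $N$, a compatible family of elevations of the $J_i$ in $\tilde M$ together with gluing maps of their boundary tori realizing the gluing pattern of $N$; the union $\tilde N$ of this family is the desired chunk, and the inclusion $\tilde N\hookrightarrow\tilde M\to M$ is isotopic to the induced semicover by construction. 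Possibly $N$ must be replaced once more by a finite cover so that the covering degrees of its pieces over each $J_i$ can be balanced against the constant degree of $\tilde M\to M$. When $N$ is instead a cover of a regular neighborhood of a single JSJ torus, the same argument applies with a one-vertex dual graph.

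The hard part is this last step: ensuring that one can choose the elevations in $\tilde M$ so that their boundary identifications realize precisely the gluing pattern of $N$, and not merely some abstractly isomorphic collection of pieces. This is exactly where the flexibility furnished by omnipotence in the hyperbolic pieces, the regularity of $\tilde M$, and the freedom to pass to a finite cover of $N$ are all used, as in the proof of \cite[Proposition~3.4]{PW3}.
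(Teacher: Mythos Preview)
Your outline follows the paper's approach through the first half: decompose $N$ along preimages of JSJ tori into finite covers $N_j\to J_{i(j)}$, take common covers $J_i'$ of the pieces over each $J_i$, and apply Proposition~\ref{mergeFiniteCovers} to obtain a JSJ $m$-characteristic regular cover $M''$ of $M$ whose JSJ pieces factor through the $J_i'$. After that, however, your argument diverges from the paper's and contains a genuine gap.

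The paper does \emph{not} try to select elevations in $\tilde M$ piece by piece and match boundary identifications by hand. Instead, it takes an arbitrary elevation $\mu'':N''\to M''$ of $\mu$ and observes that, by the choice of $J_i'$, each JSJ piece of $N''$ is forced to map \emph{homeomorphically} onto a JSJ piece of $M''$ (the elevation is $\pi_1$-surjective on pieces since $\pi_1(J_i'')\subset\mu_*\pi_1(J_*)$). This reduces the problem to the induced map of dual graphs $\Lambda(N'')\to\Lambda(M'')$, which is now a combinatorial local isometry. The key step you are missing is that $\pi_1(\Lambda(M''))$ is a finitely generated free group, hence LERF, so there is a finite cover of $\Lambda(M'')$ in which an elevation of $\Lambda(N'')$ embeds as a subgraph; pulling this back gives a JSJ $1$-characteristic cover $\tilde M\to M''$ in which $N''$ (or a further elevation) embeds as a chunk.

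Your proposed mechanism --- omnipotence, regularity, and balancing degrees by passing to covers of $N$ --- does not address the actual obstruction, which lives in the cycles of the dual graph of $N$. If $\Lambda(N)$ has a cycle, selecting elevations along a spanning tree and then trying to close up the remaining edges can fail: the two boundary tori you wish to identify may land on distinct components of $\partial\tilde J_i$, or even in distinct elevations. Omnipotence governs boundary behavior within a single hyperbolic piece, not the global graph combinatorics; regularity of $\tilde M$ gives transitivity on elevations of a fixed $J_i$ but not compatibility across a cycle of gluings. The LERF-of-free-groups argument is precisely what resolves this, and it is short once identified. Your side remark about passing to $m$-characteristic covers of each $N_j$ separately is also unnecessary and potentially problematic, since such covers need not reassemble into a cover of $N$.
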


        \begin{proof}
            This has essentially been proved in \cite[Proposition 3.4]{PW3}, and we derive it
            from Proposition \ref{mergeFiniteCovers}.
            For each JSJ piece $J_i$ of $M$, if $J_i$ is  covered by a JSJ piece of  $N$, 
            let $J_i'$ be a common finite cover of all JSJ pieces of      $N$ that
            isotopically cover $J$, otherwise let $J_i'=J_i$.
             By Proposition \ref{mergeFiniteCovers}, we can obtain a JSJ 
             $m$-characteristic  regular finite cover $p'':M''\to M$, such that any elevation ${J}_i''$ of a JSJ piece $J_i$
            factors through $J'_i$.
            Let $\mu'':N''\to M''$ be any elevation of $\mu$, where $N''$ covers $N$.
            Suppose $J_*''$ is an elevation of a JSJ-piece $J_*$ of $N$, then $J_*''$ covers a JSJ-piece $J_i''$ of $M''$ under $\mu''$,
             where $J_i''$ is an elevation of  a JSJ-piece $J_i$ of $M$. By our construction we have $p''_*(\pi_1(J_i''))\subset \mu(\pi_1(J_*))$
             and therefore $\mu_*''|:  \pi_1(J_*'')\to \pi_1(J_i'')$ is surjective and $\mu''|:  J_*''\to J_i''$ is a homeomorphism.
            That is $\mu''$ is an embedding restricted to each JSJ piece of $N''$.
            
            Notice that the virtual embedding property is preserved after passing to a finite covering.
            Now either $N''$ is isotopic to a regular neighborhood
            of a JSJ torus of $M''$, or every JSJ piece of $N''$
            is mapped to  a unique JSJ piece of $M''$ by a homeomorphism.
            In the latter case, it follows that the induced map on the
            dual graph of the JSJ decompositions $\Lambda(N'')\to\Lambda(M'')$ is a canonical
            combinatorial local isometry, which is $\pi_1$-injective. Because
            $\pi_1(\Lambda(N''))$ is a free group of finite rank, and hence is LERF, 
             there is a finite cover $\tilde\Lambda$ of $\Lambda(M'')$,
            in which an elevation of $\Lambda(N'')$ is embedded as a complete subgraph.
            Therefore, we have  a regular finite JSJ $1$-characteristic covering $\tilde M\to M''$ so that any elevation  
            $\tilde \mu:\tilde N\to \tilde M$ of $\mu''$ is an embedding. 
            Therefore we have a JSJ $m$-characteristic covering $\tilde M\to M$ so that any elevation  
            $\tilde \mu:\tilde N\to \tilde M$ of $\mu$ is an embedding. 
            As we discussed at the beginning proof of  Proposition \ref{mergeFiniteCovers},
            by passing to a further finite covering, we may assume that  
            $\tilde M\to M$  is regular. 
		\end{proof}

   \subsection{PW subsurfaces and partial PW subsurfaces}

        \begin{definition}\label{PWsubsurface}
            Let $M$ be a closed orientable irreducible
            mixed $3$-manifold containing no essential Klein bottles.
            A \emph{Przytycki--Wise subsurface} of $M$,
            or simply a \emph{PW subsurface},
            is an immersed closed orientable   surface
                $$j:\,S\looparrowright M$$
            in minimal general position with respect to the JSJ decomposition,
            satisfying the following:
            \begin{itemize}
            \item $j$ is $\pi_1$-injective;
            \item for each maximal graph-manifold chunk $Q$ of $M$,
            each component of $j^{-1}(Q)$ is virtually embedded in $Q$; and
            \item for each hyperbolic piece $J$ of $M$, each component
            of $j^{-1}(J)$ is geometrically finite in $J$.
            \end{itemize}
            We may also regard any JSJ torus as a basic PW subsurface.
        \end{definition}

        In our discussion, it will usually be convenient to regard the unpointed
        fundamental group $\pi_1(M)$ of a $3$-manifold $M$ as the group
        of deck transformations on its universal cover $\widehat{M}$. Then,
        by a \emph{PW surface subgroup}, we mean the stabilizer of an
        elevation in $\widehat{M}$ of a PW subsurface,
        which depends on the choice of the elevation.

        Recall that a subset $W$ of a group $G$ is said to be \emph{separable} if it
        is closed in the profinite topology. More precisely, this means that for any
        $h\in G$ not contained in $W$, there is a finite quotient $\bar{G}$ in which
        $\bar{h}\not\in\bar{W}$. %For two subgroups $H',H''$ of $G$, an $(H',H'')$
        %double cosets of $G$ is a subset $H'gH''$ for any $g\in G$.

          %  Include the given PW subsurfaces into a sufficiently large
            %finite collection $\mathcal{S}$ of PW subsurfaces satisfying the
            %Strong Separation Property
            %\cite{PW-mixed}. This gives rise to a virtually special cube complex $X$
           % whose fundamental group is isomorphic to $\pi_1(M)$. Moreover, it
            %has finitely many hyperplanes naturally corresponding to the elements of
            %$\mathcal{S}$. Then the lemma follows from \cite[Corollary 9.20]{HW}.
      
 The following lemma is a consequence of [PW2, Strong separation property] and [HW, Corollary 9.20].

        \begin{lemma}\label{PWseparability}
            Let $M$ be a closed orientable irreducible mixed $3$-manifold $M$.
            Then every PW surface subgroup of $\pi_1(M)$ is separable.%, and every double coset
            %of two PW surface subgroups of $\pi_1(M)$ with nontrivial intersection
            %is separable.
        \end{lemma}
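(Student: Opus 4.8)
The plan is to derive Lemma~\ref{PWseparability} directly from the two results cited just above its statement, namely the strong separation property of Przytycki--Wise and \cite[Corollary 9.20]{HW}, so the proof is mostly a matter of assembling known facts and checking that the PW subsurface hypotheses match the hypotheses of those results. First I would fix an elevation $\widehat{S}$ of a PW subsurface $j\co S\looparrowright M$ in the universal cover $\widehat{M}$, and let $H=\operatorname{Stab}(\widehat{S})\leq\pi_1(M)$ be the corresponding PW surface subgroup. The goal is to show $H$ is closed in the profinite topology on $\pi_1(M)$, equivalently that for every $g\in\pi_1(M)\setminus H$ there is a finite quotient separating $g$ from $H$.

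The key structural input is that $\pi_1(M)$ is virtually special in the sense of Haglund--Wise: by Agol's theorem \cite{AGM} together with Wise's work \cite{Wise-long} (invoked already in Section~\ref{bh} of the excerpt), $M$ has a finite cover $\widetilde{M}$ whose fundamental group acts freely and cocompactly on a CAT(0) cube complex with special quotient. Passing to $\widetilde{M}$ is harmless for separability questions, since $H$ is separable in $\pi_1(M)$ if and only if $H\cap\pi_1(\widetilde{M})$ is separable in $\pi_1(\widetilde{M})$ (separability passes up and down finite-index subgroups). So I would replace $M$ by such a cover and assume $\pi_1(M)$ is the fundamental group of a special cube complex. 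The next step is to show that the PW surface subgroup $H$ is a \emph{virtually compact special} and, crucially, \emph{relatively quasiconvex} subgroup: this is exactly where the three bullet conditions in Definition~\ref{PWsubsurface} are used --- virtual embedding in maximal graph-manifold chunks and geometric finiteness in hyperbolic pieces together guarantee, via the work of Przytycki--Wise on mixed $3$-manifolds \cite{PW-mixed,PW3}, that $H$ is quasiconvex with respect to the induced cubulation (the strong separation property of \cite{PW-mixed} is precisely the statement that such subsurface subgroups are separated by the wall structure). Once $H$ is known to be a quasiconvex subgroup of a (virtually) compact special group, \cite[Corollary 9.20]{HW} applies verbatim to give that $H$ is separable in $\pi_1(M)$.

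The main obstacle --- and the step that genuinely requires care rather than citation --- is verifying the relative quasiconvexity hypothesis in the mixed (as opposed to purely hyperbolic or purely graph) setting: one must check that a subsurface which is virtually embedded in each graph chunk and geometrically finite in each hyperbolic piece assembles into a globally quasiconvex subgroup of the cubulated $\pi_1(M)$, rather than merely being quasiconvex piece by piece. This is where the ``partial PW'' and parallel-cutting machinery of Przytycki--Wise enters: the point of their strong separation property is that the walls dual to the cube complex can be chosen to restrict compatibly to the walls in each JSJ piece, so local quasiconvexity does glue. I would therefore spell out that $\widehat{S}$, being in minimal general position with respect to the JSJ decomposition, meets the JSJ-plane structure in $\widehat{M}$ in a pattern controlled piecewise by the virtual embedding / geometric finiteness conditions, and cite \cite[Strong separation property]{PW-mixed} to conclude the global wall-separation, hence quasiconvexity, hence (by \cite{HW}) separability. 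The case where $S$ is a JSJ torus is immediate since torus subgroups of $3$-manifold groups are separable by standard facts, so it can be dispatched in one sentence.
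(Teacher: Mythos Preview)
Your approach is the one the paper intends: the paper gives no proof at all beyond the sentence ``The following lemma is a consequence of [PW2, Strong separation property] and [HW, Corollary 9.20]'', so you are supplying exactly the details the authors omit, via the same two citations.

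One genuine correction, though. You write that $\pi_1(M)$ is virtually special ``by Agol's theorem \cite{AGM} together with Wise's work \cite{Wise-long}''. That is wrong here: Agol's theorem says that word-hyperbolic groups acting properly cocompactly on CAT(0) cube complexes are virtually special, but $M$ is a \emph{mixed} $3$-manifold, so $\pi_1(M)$ contains $\ZZ^2$ subgroups from the JSJ tori and is not word-hyperbolic. The passage you cite from Section~\ref{bh} is about separability of surfaces inside a single hyperbolic piece, not about the whole manifold. The virtual specialness of $\pi_1(M)$ in the mixed case is precisely the main theorem of Przytycki--Wise \cite{PW-mixed}; their cubulation is built from walls that are (elevations of) PW subsurfaces, and the ``strong separation property'' is what makes that cubulation proper and cocompact. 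So the logic should read: \cite{PW-mixed} gives a virtually special cubulation of $\pi_1(M)$ in which a PW surface subgroup $H$ is (commensurable with) a hyperplane stabilizer, hence acts cocompactly on a convex subcomplex; then Haglund--Wise's canonical completion and retraction \cite[Corollary 9.20]{HW} gives separability of such subgroups, with no hyperbolicity assumption on the ambient group needed. Your phrasing in terms of ``relative quasiconvexity'' is fine as intuition, but the clean statement you want from \cite{HW} is about locally isometrically embedded compact subcomplexes of special cube complexes, not about quasiconvex subgroups of hyperbolic groups.
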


        We introduce the notion of partial PW subsurfaces.

        \begin{definition}
            Let $M$ be an orientable closed irreducible mixed $3$-manifold.
            A \emph{partial PW subsurface} is a triple $(S,R,j)$ satisfying the
            following:
            \begin{itemize}
                \item $j:S\looparrowright M$ is a PW subsurface of $M$;
                \item $R\subset S$ is a connected compact essential subsurface; and
                \item every component of $\partial R$ is immersed into a JSJ torus under $j$.
            \end{itemize}
            We often ambiguously say that $j:R\looparrowright M$ is a partial PW subsurface,
            with the triple $(S,R,j)$ implicitly assumed. The \emph{boundary}
            of the partial PW subsurface is the boundary of $R$.
        \end{definition}

        \begin{definition}
            Let $M$ be an orientable closed irreducible mixed $3$-manifold
            containing no essential Klein bottles.
            Let $J_0$ be a JSJ piece, and $T_0$ be a JSJ torus adjacent to $J_0$,
            and $\zeta_0$ be a slope on $T_0$.
            A partial PW subsurface $j:R\looparrowright M$
            is said to be \emph{virtually bounded by $\zeta_0$ outside $J_0$}, if
            the boundary $\partial R$ of $R$ is nonempty,
            covering $\zeta_0$ under $j$, and if
            the interior $\mathring{R}$ of $R$ misses $J_0$ under $j$.
            In this case, the \emph{carrier chunk} $X(R)\subset M$
            of $R$ is the unique minimal chunk that contains $R$,
            and the \emph{carrier boundary} of $X(R)$ is
            the component $T_0\subset\partial X(R)$.
        \end{definition}

        \begin{definition}\label{parallelCutting}
            We say that a partial PW subsurface $j:R\looparrowright M$ is
            \emph{parallel cutting} if for every JSJ torus
            $T\subset M$, all components of $j^{-1}(T)$ in $R$
            cover the same slope of $T$.
        \end{definition}

    \subsection{Virtual existence of partial PW subsurfaces}

        \begin{theorem}\label{virtualPartialPW}
            Let $M$ be an orientable closed irreducible mixed $3$-manifold 
            containing no essential Klein bottles. 
            Let $\zeta_0$ be a slope on
            a JSJ torus $T_0$ adjacent to a JSJ piece $J_0$.
            Then for some finite cover $\tilde{M}$ of $M$ together with
            an elevation $(\tilde{J}_0,\tilde{T}_0,\tilde\zeta_0)$ of the triple $(J_0,T_0,\zeta_0)$,
            there exists
            a parallel-cutting partial PW subsurface $\tilde{R}\looparrowright \tilde{M}$,
            bounded virtually by $\tilde\zeta_0$ outside $\tilde{J}_0$.
        \end{theorem}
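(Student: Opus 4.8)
The plan is to grow $\tilde R$ outward from $\tilde T_0$ along the JSJ graph, away from $\tilde J_0$, assembling it from surface pieces constructed separately in the hyperbolic and Seifert parts of $M$ and then welded together in a finite cover. First I would invoke Lemma \ref{not-shared} to pass to a finite cover and replace $(M,J_0,T_0,\zeta_0)$ by an elevation, so that every JSJ torus is adjacent to two distinct JSJ pieces and every Seifert piece is a product $F\times S^1$ with $F$ of positive genus; this is harmless, the conclusion being a virtual statement. Let $J_1\ne J_0$ be the JSJ piece on the side of $T_0$ opposite $J_0$.

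The core is a pair of building-block lemmas producing, in each JSJ piece and for each admissible slope on one of its boundary tori, an essential connected subsurface with controlled boundary. For a hyperbolic piece $J$ with a boundary torus $T$ and slope $\eta$, I would Dehn-fill the other cusps to obtain a one-cusped hyperbolic manifold and, using that $\pi_1(J)$ is virtually compact special so that geometrically finite (relatively quasiconvex) subgroups are separable by Wise \cite{Wise-long} and Agol \cite{AGM}, extract a geometrically finite $\pi_1$-injective immersed subsurface of $J$ with one boundary curve covering $\eta$ on $T$ and the remaining boundary on the other tori, each covering a single slope; geometric finiteness persists because the surface subgroups stay relatively quasiconvex. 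For a Seifert piece $J=F\times S^1$, a boundary torus $T$ and a slope $\eta$ on $T$ with non-zero vertical component, I would take a Rubinstein--Wang horizontal surface \cite{RW}: an immersed $\pi_1$-injective surface transverse to the fibers whose boundary has a single slope on each boundary torus, one covering $\eta$ on $T$, and which is virtually embedded in $J$ by the Rubinstein--Wang separability criterion; the other boundary slopes can be kept vertically non-degenerate so that the recursion continues.

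Next I would organize these pieces into a finite graph of surfaces over a subtree of the JSJ graph rooted at the edge $T_0$ with slope $\zeta_0$: the root vertex surface lies in $J_1$, and at each further vertex one plugs in a building-block surface whose incoming boundary slope matches the slope already produced on the shared torus; no edge incident to $J_0$ other than $T_0$ is ever used, which is what keeps the total surface out of $\tilde J_0$. Since the JSJ graph is finite and all boundary slopes are single, after passing to a common finite cover of all the JSJ pieces and invoking the merging machinery (Proposition \ref{mergeFiniteCovers}, Corollary \ref{virtualExtensionOfSemicovers}) the pieces glue across the JSJ tori -- matching slopes and numbers of boundary components on both sides -- into a partial PW subsurface $\tilde R\looparrowright\tilde M$ in a finite cover, with all of $\partial\tilde R$ on an elevation $\tilde T_0$ covering $\zeta_0$ at a single slope $\tilde\zeta_0$ and with $\mathring{\tilde R}$ missing $\tilde J_0$. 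Completing $\tilde R$ to a closed PW subsurface $\tilde S\supset\tilde R$ -- possible after a further cover by the abundance of PW subsurfaces crossing a prescribed slope (Przytycki--Wise \cite{PW-mixed,PW3} together with PW-separability, Lemma \ref{PWseparability}) -- exhibits $(\tilde S,\tilde R,j)$ as a partial PW subsurface virtually bounded by $\tilde\zeta_0$ outside $\tilde J_0$.

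The main obstacle is the \emph{parallel-cutting} requirement of Definition \ref{parallelCutting}: over each JSJ torus $T$ the welded surface must meet $T$ in virtually parallel curves, and as emphasized in the introduction this is not a formal consequence of double separability. I expect to enforce it by demanding from the outset that every building-block surface carry a single boundary slope on every torus it meets, and then, during the welding, by passing to JSJ $m$-characteristic covers (Proposition \ref{mergeFiniteCovers}) and using the Rubinstein--Wang separability criterion \cite{RW} in the graph chunks and Wise's relatively quasiconvex separability \cite{Wise-long} in the hyperbolic pieces to unify the intersection curves simultaneously over the whole JSJ graph, all while not re-entering $\tilde J_0$. Controlling all these slopes at once is the technical heart of the argument.
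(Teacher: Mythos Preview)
Your recursive scheme has a genuine termination gap. In a Seifert piece $F\times S^1$ a horizontal subsurface must meet \emph{every} boundary torus, so each step of your recursion creates new boundary on all the side tori of the current piece; you never say how to close these off, and propagating further along a subtree of the JSJ graph only pushes the problem outward (and your hyperbolic building block, with ``remaining boundary on the other tori'', has the same defect). The paper avoids this with two devices you do not mention. First, in a hyperbolic piece one can take a geometrically finite properly immersed subsurface whose boundary lies entirely on a \emph{single} boundary torus, covering a prescribed slope there (\cite[Proposition~3.11]{PW-mixed}); so if the piece $J_+$ across $T_0$ from $J_0$ is hyperbolic, one takes $\tilde R\subset J_+$ with $\partial\tilde R$ all on $T_0$ and stops --- no recursion at all, and parallel cutting is trivial. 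Second, if $J_+$ is Seifert, the paper does not spread over a subtree but passes to a cover in which the ambient maximal graph chunk $Q$ has the \emph{antennas property} of \cite[Proposition~3.1]{PW-graph}: there is a \emph{linear} chain $J_+=J_1,\dots,J_n$ of Seifert pieces reaching $\partial Q$. One builds a horizontal surface along this single path; on each side torus (between an antenna piece and a non-antenna Seifert neighbour) the boundary slope is arranged to be the neighbour's fiber and is capped there by a vertical annulus --- a U-turn with both ends on that one torus --- so nothing propagates sideways; at the far end one caps in the adjacent hyperbolic piece with a single-torus geometrically finite surface as above.

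With this design the parallel-cutting condition is automatic, not a ``technical heart'' to be enforced afterwards: a horizontal surface meets each torus in parallel copies of one slope, the antenna is linear so no torus is approached from two directions, each vertical annulus sits on one torus at one slope, and each hyperbolic cap meets one torus. Your plan to impose parallel cutting a posteriori via $m$-characteristic covers, Rubinstein--Wang, and Wise's separability is not how the paper proceeds here; those separability inputs enter at the next stage (Theorem~\ref{corridorCover}), not for the existence of $\tilde R$.
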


        \begin{proof}
            We need to strengthen some arguments in
            the work of Przytycki--Wise \cite{PW-graph,PW-mixed}.
            Below is an outline of the construction.
            Note that in Case 2
            one needs a little extra argument.

           % Passing to a double cover of $M$ if necessary,
           % we may assume without loss of generality
            By Lemma \ref{not-shared},
            we may assume that every JSJ torus of $M$ is adjacent to two distinct
            pieces. We will rewrite $J_0$ as $J_-$,
            and write $J_+\subset M$ be the other JSJ piece adjacent to $T_0$.
            The discussion falls into three cases according to the types
            of the pieces $J_\pm$.

            \medskip\noindent\textbf{Case 1}. If $J_\pm$ are both hyperbolic,
            by \cite[Proposition 3.11]{PW-mixed},
            we may construct two geometrically finite
            subsurfaces $R_\pm$, $\pi_1$-injectively,
            properly immersed in $J_\pm$,
            such that $\partial R_\pm$ are nonempty and
            cover $\zeta_0$. The merging trick allows us
            construct $S$.
            More precisely, we pass to a possibly
            disconnected finite cover $\tilde{R}_\pm$
            of $R_\pm$, so that they have the same number
            of boundary components, and such that
            all components of $\partial \tilde{R}_\pm$
            cover $\zeta_0$ with the same unsigned degree, \cite[Lemma 3.14]{PW-mixed}.
            Then $S$ can be obtained by arbitrarily matching
            up the  boundary components and taking a connected component
            of the result. In this case, we do not need to pass to a further cover of $M$,
            so we take $\tilde{M}$ to be $M$, and $(\tilde{J}_0,\tilde{T}_0,\tilde{\zeta}_0)$
            to be $(J_0,T_0,\zeta_0)$. Set $\tilde{S}=S$, $\tilde{j}:\tilde{S}\looparrowright M$ the immersion
            and $\tilde{R}$  a copy of $\tilde{R}_+$.  The partial PW subsurface in $\tilde{M}$
            can be picked as $(\tilde{S},\tilde{R},\tilde{j})$.
%            and the semi PW subsurface
%           $(S,R,j)$ is the same as $(\tilde{S},\tilde{R},\tilde{j})$.

            \medskip\noindent\textbf{Case 2}. If $J_\pm$ are both Seifert fibered,
            we need to recall the antennas property for graph manifolds,
            introduced in the proof of \cite[Proposition 3.1]{PW-graph}.

            For simplicity, let $N$ be a graph manifold
            with nonempty boundary that satisfies Proposition \ref{not-shared}.
            Then in our notations,
            we say that $N$ has the \emph{antennas property}, if
            for any two adjacent JSJ pieces $J_0$, $J_1$,
            there is a chunk $A$ of $N$, called an \emph{antenna},
            which is the union of consecutively adjacent
            distinct pieces $J_0,J_1,\cdots,J_n$ (more precisely, $J_i\cap J_j$ is a JSJ torus if $|j-i|=1$, and is empty otherwise),
            such that $J_n$ contains a boundary component of $N$.

            In our discussion, we consider the maximal graph-manifold chunk $Q\subset M$
            containing $T_0$ as a JSJ torus. It is implied by the proof of
            \cite[Proposition 3.1]{PW-graph} that there is a finite cover $\tilde{M}$
            of $M$, such that any elevation $\tilde{Q}$ has the antennas property.
            % For an elevation $Q'\subset M'$, we may
            %obtain a further cover $\tilde{M}$,
            %where an elevation $\tilde{Q}$ has the antennas property.
            %More precisely, we take a vertical torus $V_i$ for each JSJ piece
            %$J'_i\subset Q'$, then $\ZZ_2$ pairing with all the
            %$[V_i]\in H_2(M';\ZZ_2)$
            %induces a homomorphism $\pi_1(M')\to H_1(M';\ZZ_2)
            %\to \ZZ^k_2$, where $k$ is the number of JSJ pieces of $Q'$.
            %We take $\tilde{M}$ to be the regular finite cover of $M'$ corresponding to
            %the kernel. This construction is exactly that of
            %\cite[Proposition 3.1]{PW-graph} restricted to $Q'$,
            %so any elevation $\tilde{Q}\subset \tilde{M}$ has the antennas property.

            Note that $J_\pm\subset Q$. Take elevations $\tilde{J}_\pm\subset \tilde{Q}$
            of $J_\pm$ adjacent along an
            elevation $\tilde{T}_0$ of $T_0$, and
            take an elevation $\tilde{\zeta}_0\subset \tilde{T}_0$
            of $\zeta_0$. For simplicity, still denote $\tilde J_{\pm}$ and so on by  $J_{\pm}$ and so on in this and further coverings.
            We take two antennas $A_\pm$, starting
            with $J_0=J_\mp$ and $J_1=J_\pm$, respectively. Passing to a finite
            cover of $\tilde{M}$ induced by a cover of its dual graph if necessary,
            we may assume $A_+$ and $A_-$ have no common JSJ piece other than $J_\pm$,
            so we call $B=A_+\cup A_-$ a bi-antennas throught $\tilde{T}_0$.
            We may further assume the dual graph of $\tilde{Q}$ has no cycle of at most
            three edges, then there is no JSJ piece of $\tilde{Q}$
            adjacent to two JSJ pieces of $B$.

            Denote $B_\pm$ the two parts of $B$ separated by $\tilde T_0$.
            We proceed to construct a properly embedded, incompressible, and boundary-incompressible
            subsurface $\tilde{E}^*$ such 
            that $\tilde{E}^*$ intersects $\tilde{T}_0$ in slopes parallel to $\tilde\zeta_0$,
            and that $\tilde{T}_0$ cuts $\tilde{E}^*$ into two parts $\tilde{E}^*_\pm\subset \tilde Q$.
            This follows the construction in \cite[Proposition 3.1]{PW-graph},
            as outlined below.
                
             Start with $\tilde\zeta_0$ and try to extend a surface $\tilde E_\pm\subset B_\pm$. Due to the symmetry,
            we just discuss the extension on $B_+$. 
            Suppose first $\tilde\zeta_0$ is not a fiber of $J_1$ up to isotopy.
            One can find a horizontal properly embedded 
            incompressible subsurface $\tilde{E}_+$
            of  $B_+$,
            such that $\tilde{E}_+$ intersects $\tilde{T}_0$ in parallel
            copies of $\tilde{\zeta}_0$.
            Moreover, if a component of $\partial \tilde{E}_+$ does not
            lie on $\partial \tilde{Q}$,
            one can also make sure that it lies on a JSJ torus inside $\tilde{Q}$
            parallel to the adjacent fiber. Take a pair of
            oppositely oriented parallel copies $\tilde{E}_+^\uparrow$ and $\tilde{E}^\downarrow_+$ of $\tilde{E}_+$.
            For each component $c\subset\partial\tilde{E}_+$ that lies in a JSJ torus inside $\tilde{Q}$,
            one can find a properly embedded, boundary-essential vertical
            annulus in the adjacent piece bounding $c^\uparrow\cup c^\downarrow$,
            and glue this vertical annulus to
            $\tilde{E}^\uparrow_+$ and $\tilde{E}^\downarrow_+$ along the boundary,
            correspondingly.
            Since we assumed that no piece of $\tilde{Q}$ is adjacent to
            two JSJ pieces of $B$, the result is a properly embedded, incompressible, and
            boundary-incompressible subsurface $\tilde{E}^*_+$ as desired.
            Suppose then $\tilde\zeta_0$ is a fiber of $J_1$. Then two copies of $\tilde\zeta_0$ with opposite  direction
            bound an essential vertical annulus in $J_1$ as we just discussed, which will be our $\tilde E_+= \tilde E^*_+$.

            In general, $\tilde{E}^*$ is not closed, and $\partial\tilde{E}^*$ intersects
            $\partial\tilde{Q}$ in parallel slopes on each component that it reaches.
            Since $\tilde{Q}$ is a maximal graph-manifold chunk of $\tilde{M}$,
            for each such component as above, there
            is a geometrically finite, $\pi_1$-injectively
            immersed proper subsurface of the adjacent hyperbolic piece whose
             boundary finitely covers the corresponding slope.
            Performing the merging trick again, we obtain a PW subsurface
            $\tilde{j}:\tilde{S}\looparrowright\tilde{M}$,
            so that the part of $\tilde{S}$ inside $\tilde{Q}$ covers $\tilde{E}^*$.
            Note that the virtual embeddedness of $\tilde{S}$ in graph-manifold chunks
            follows from \cite[Lemma 3.6]{PW-mixed}.
            In particular, $\tilde{S}$ intersects $\tilde{T}_0$
            along covers of $\tilde\zeta_0$.
            Moreover, $\tilde{S}$ is cut by $\tilde{T}_0$
            into two parts $\tilde{S}_\pm$, and the part of $\tilde{S}_\pm$ inside $\tilde{Q}$
            covers $\tilde{E}^*_\pm$, respectively. We pick   a connected component
            of $\tilde{S}_+$ for $\tilde{R}$.

            Now the triple $(\tilde{S},\tilde{R},\tilde{j})$ defines a partial PW
            subsurface of $\tilde{M}$ with respect to $(\tilde{J}_0,\tilde{T}_0,\tilde\zeta_0)$.
%           It descends via the covering map to be a semi PW subsurface $(S,R,j)$
%           of $M$ with respect to $(J_0,T_0,\zeta_0)$.

            \medskip\noindent\textbf{Case 3}. Suppose one of $J_\pm$ is hyperbolic and the other
            is Seifert fibered. This case is a mixture of the previous two cases, and
            the construction is very similar, so we omit the details.
            In fact, this case was also
            covered by the construction of \cite[Section 3]{PW-mixed}, although
            not explicitly stated.
        \end{proof}

    \subsection{Virtual existence of corridor surfaces}
    
    The surfaces provided in the following theorem, later serving as 
    corridor surfaces, will be  crucial in proving Theorem \ref{virt-non-zero}. 

    \begin{theorem}\label{corridorCover}
    	Let $M$ be an orientable closed irreducible mixed $3$-manifold
        and let $\zeta_0$ be a slope on a JSJ torus $T_0$ adjacent to
        a JSJ piece $J_0\subset M$.
       	Suppose $R\looparrowright M$ is a parallel-cutting partial PW subsurface bounded virtually by
        $\zeta_0$ outside $J_0$. Then there exists a regular finite cover $\tilde{X}$
        of the carrier chunk $X(R)$ in which every elevation of $R$ is embedded, intersecting any
        elevation of the carrier boundary $T_0$ in at most one slope.
	\end{theorem}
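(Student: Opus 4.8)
The plan is to prove the assertion one geometric piece at a time inside the carrier chunk $X=X(R)$ and then to assemble the local covers by the merging construction of Proposition~\ref{mergeFiniteCovers}. After replacing $X$ by a double cover if necessary, we invoke Lemma~\ref{not-shared} to assume that every JSJ torus of $X$ is two-sided and adjacent to two distinct pieces, and that every Seifert piece is a product. Write $X$ as the union of its maximal graph-manifold chunks $Q_1,\dots,Q_p$ and its hyperbolic pieces $J_1,\dots,J_q$. The parallel-cutting hypothesis says that for every JSJ torus $T$ of $X$, and for the carrier boundary $T_0$, all components of $j^{-1}(T)$ lying in $R$ are parallel copies of one slope of $T$. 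The property we aim to realise, after passing to a finite cover of each piece, is: every component of the restriction of $R$ to that piece is embedded, and it meets each adjacent JSJ torus --- in particular $T_0$ --- in at most one circle; moreover we will want the local covers chosen so that this persists under the further covers produced by the merging.

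On a hyperbolic piece $J$, each component $R'$ of $R\cap J$ is a geometrically finite properly immersed subsurface, so $\pi_1(R')$ is a relatively quasiconvex subgroup of the relatively hyperbolic pair $(\pi_1 J,\{\pi_1 T\})$. By Wise~\cite{Wise-long}, $\pi_1 J$ is QCERF and relatively quasiconvex subgroups can be separated from cosets of the peripheral subgroups; by $\pi_1$-injectivity and properness, only finitely many double cosets $\pi_1(R')\,\gamma\,\pi_1(R')$ account for self-intersections of the elevations of $R'$, and only finitely many elements govern how distinct boundary circles of $R'$ on a torus $T$ are distributed among the elevations of $T$. Separating all of these in a finite quotient yields a finite cover $J'\to J$ in which the elevations of $R'$ are embedded and cross each boundary torus in at most one circle. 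The delicate point is that, for two distinct boundary circles of $R'$ on $T$, the transition element relating the elevations of $T$ on which they lie must be non-peripheral --- otherwise those two circles would lie on a common elevation of $T$ in every cover; this is where the minimal general position of $S$ enters, since a peripheral transition would yield an isotopy of $S$ reducing its intersection with $T$.

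On a maximal graph-manifold chunk $Q$, each component $R'$ of $R\cap Q$ is a $\pi_1$-injective properly immersed subsurface meeting every JSJ torus of $Q$ and every boundary torus of $Q$ in parallel slopes, by parallel cutting. This is exactly the setting to which the separability criterion of Rubinstein--Wang~\cite{RW} applies: it furnishes finite covers of $Q$ in which the elevations of $R'$ are embedded and cross every boundary torus, as well as every internal JSJ torus of $Q$, in at most one circle, with non-peripherality of the transition elements again guaranteed by minimal position. When $R'$ happens to be a vertical annulus with both ends on $T_0$, essentiality forces its base arc to be essential in the base surface, so the transition element is non-peripheral and the separation is still possible.

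Finally, apply Proposition~\ref{mergeFiniteCovers} to the family of local covers $J'_i\to J_i$ and $Q'_i\to Q_i$: it produces a regular, JSJ $m$-characteristic finite cover $\tilde X\to X$ in which every elevation of a piece factors through the chosen local cover. Each elevation $\tilde R$ of $R$ in $\tilde X$ is then built from embedded local surface pieces, each crossing every adjacent torus in at most one circle; because these match along the internal JSJ tori, each of which is crossed in a single circle, $\tilde R$ is embedded and meets every elevation of $T_0$ in at most one slope, which is the conclusion. I expect the genuine obstacle to be the local separation statement --- that two parallel intersection circles of $R$ on a JSJ torus can be virtually pushed onto distinct elevations of that torus --- together with the bookkeeping needed to keep this property alive through the merging. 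This is the ``key issue'' singled out in the introduction: double separability of $\pi_1(S)$ from $\pi_1(T)$ alone is insufficient, and it is the conjunction of parallel cutting with minimal general position, fed into the criterion of Rubinstein--Wang for the graph chunks and the relatively quasiconvex separability of Wise for the hyperbolic pieces, that resolves it.
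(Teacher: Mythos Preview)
Your plan has a genuine gap in the assembly step, and the local input you claim from Rubinstein--Wang is stronger than what that criterion actually provides.

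On the graph-manifold side, \cite{RW} only tells you that a horizontal component $F$ of $R\cap Q$ is virtually \emph{embedded}; it does not say that an embedded elevation meets each boundary torus of the cover in a single circle. In fact the paper's Lemma~\ref{horizontalParallelCutting} shows the opposite phenomenon: once $\tilde{F}$ is embedded, the number $|\pi_0(\tilde T\cap\tilde F)|$ is an invariant $\mu_T(F)$ depending only on $T$ and $F$, and this integer need not be $1$. No further regular cover of $Q$ alone will reduce it. Your appeal to minimal general position does not help here, since the multiple intersection circles on $T$ come from genuinely distinct boundary components of $F$ on $T$, not from removable annuli.

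Even granting your local statements, the merging argument does not give the conclusion. After Proposition~\ref{mergeFiniteCovers} produces $\tilde X$, a single connected elevation $\tilde R$ can meet a given elevation $\tilde J$ of a piece in \emph{several} components (corresponding to different components of $R\cap J$, or to several elevations of the same one, joined through other pieces of $\tilde X$). Each of these may cross the adjacent $\tilde T_0$ once, so $\tilde R\cap\tilde T_0$ still has many components. The sentence ``because these match along the internal JSJ tori, each of which is crossed in a single circle'' hides exactly this: you have controlled each \emph{local} piece, not the number of local pieces of $\tilde R$ landing in a given $\tilde J$. This is the global obstruction the introduction warns about.

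The paper sidesteps both issues by a different mechanism. It first glues torus covers $\tilde T_0^m(c)$ onto $\partial R$ to form the $2$-complex $Y_m(R)$, thickens this to a $3$-manifold $\mathcal Y^*_m$ semicovering $X(R)$, and virtually embeds $\mathcal Y^*_m$ piecewise using \cite[Theorem 4.1]{PW3} and Wise's separability on the hyperbolic side, together with Lemma~\ref{horizontalParallelCutting} on the Seifert side. This yields Proposition~\ref{YmR}: a regular cover in which every elevation $\tilde T_0$ meets the union $\tilde{\mathcal R}$ of elevations of $R$ inside one $\tilde Y_m$ in exactly $r$ directly parallel circles, with $r$ independent of $\tilde T_0$. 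Only then is the count reduced from $r$ to $1$, and not by separability at all, but by passing to the further cover defined by the kernel of the homological pairing $H_1(X';\ZZ)\to\ZZ_r$ against $[\mathcal R']$, symmetrised over the deck group. That last mod-$r$ cover is the step your argument is missing.
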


	The major part of the proof of  Theorem \ref{corridorCover} is the following  weaker version Proposition \ref{YmR}, 
	which is stated in a rather complicated form so that  
	the application of  Przytycki--Wise results, the merging process, and the intersection counting  become more explicit
	in the proof.  To state Proposition \ref{YmR},  we need some terminologies.
  
    For any positive integer $m$, we construct a $2$-complex $Y_m(R)$ immersed in $X(R)$ as follows.
    For each component $c\subset\partial R$, let $\tilde{T}_0^m(c)$ be
    a copy of a cover of $T_0$, in which there are exactly $m$ distinct elevations of $\zeta_0$,
    each of them covering $\zeta_0$ with the same degree as $c$.
    We glue $\tilde{T}_0^m(c)$ with $R$ naturally by identifying an elevation of $\zeta_0$ with
    $c\subset\partial R$. After the gluing for each boundary component of $R$, we get a $2$-complex
    	$$Y_m(R)\,=\,R\,\cup\,\bigcup_{c\subset\partial R}\,\tilde{T}_0^m(c),$$
    and there is a natural immersion
 	   $$Y_m(R)\looparrowright X(R),$$
    which is the immersion of the partial PW subsurface restricted to $R$, and is the covering of the
    boundary torus $T_0$ restricted to the tori $\tilde{T}_0^m(c)$.
    Note that $Y_m(R)$ is naturally defined in the sense that if we chose a different cover
    $\tilde{T}_0^m(c)$ or a different elevation of $\zeta_0$ for the gluing, the resulted immersion
    would differ only by a homeomorphism of $Y_m(R)$. If $R$ is oriented, so is $\partial R$ and its covers.
    
    \begin{proposition} \label{YmR}
    	With  the notations above and the oriented surface $R$,
        there exists an integer $m_0>0$ such that for any positive integral multiple $m$ of $m_0$, there is a regular finite cover $\tilde{X}$
        of $X(R)$ in which every elevation $\tilde Y_m(R)$ of $Y_m(R)$ is embedded, 
        and moreover $\tilde {\mathcal R}\cap\tilde T_0$ consists of  $r$  directed parallel circles 
        in $\tilde T_0$ induced 
        from  $\partial R$,  where $\tilde {\mathcal R}$ is the union of elevations of $R$ contained in $\tilde Y_m(R)$, $\tilde T_0$ 
        is an elevation of $T_0$ contained in $\tilde Y_m(R)$, and  $r$ is a positive integer.
    \end{proposition}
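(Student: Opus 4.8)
The plan is to assemble $\tilde{X}$ out of compatible finite covers of the JSJ pieces of the carrier chunk $X(R)$ in which the surface pieces of $R$ become embedded with controlled boundary behaviour, to glue these together using the merging machinery of Proposition \ref{mergeFiniteCovers}, and then to read off the asserted intersection pattern on $\tilde{T}_0$, which will come essentially for free once everything embeds. By Lemma \ref{not-shared} I may first replace $M$ by a finite cover and assume it contains no essential Klein bottle and no shared JSJ piece. Note that the immersion $Y_m(R)\looparrowright X(R)$ restricts to the $\pi_1$-injective partial PW immersion on $R$, factors through the incompressible inclusion $T_0\hookrightarrow X(R)$ on each torus piece $\tilde{T}_0^m(c)$, and hence should be treated by separating its manifold pieces individually and then merging, rather than by separating a single subgroup of $\pi_1(X(R))$.

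The core step is to work inside $X(R)$ one JSJ piece at a time. Fix a JSJ piece $J$ of $X(R)$ and consider the components of $j^{-1}(J)$ lying in $R$, each a properly immersed $\pi_1$-injective subsurface. If $J$ is hyperbolic, each such component is geometrically finite by Definition \ref{PWsubsurface}, so its fundamental group is relatively quasiconvex and hence separable in $\pi_1(J)$ by Wise \cite{Wise-long} (see Section \ref{bh}); this gives a finite cover of $J$ in which these subsurfaces are embedded, pairwise disjoint, and meet each boundary torus of $J$ in embedded circles of prescribed degrees. If $J$ lies in a maximal graph-manifold chunk $Q$ of $M$, the components of $j^{-1}(Q)$ are a priori only virtually embedded, and here I need the sharper statement that, after a finite cover of $Q$, the surface pieces are embedded \emph{and} meet every JSJ torus of $Q$ in a family of parallel copies of a single slope. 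This is exactly where the parallel-cutting hypothesis (Definition \ref{parallelCutting}) is essential: since every component of $j^{-1}(T)$ in $R$ covers one and the same slope of each JSJ torus $T$, the relevant subgroups satisfy the hypotheses of the Rubinstein--Wang separability criterion \cite{RW} for graph submanifolds, yielding a finite cover of $Q$ with the desired embedded, parallel-cutting behaviour along all its JSJ tori. I expect this step --- embedding the surface pieces inside the graph-manifold chunks while forcing their traces on every interior JSJ torus to be virtually parallel, and keeping the boundary degrees compatible for the subsequent gluing (cf.~\cite[Lemma 3.14]{PW-mixed}) --- to be the main obstacle.

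With these local covers in hand, I would apply Proposition \ref{mergeFiniteCovers} together with Corollary \ref{virtualExtensionOfSemicovers}. Take $m_0$ to be the least common multiple of the degrees of the components of $\partial R$ over $\zeta_0$, of the integers produced by Proposition \ref{mergeFiniteCovers} for the pieces of $X(R)$, and of the degrees of the local covers above; choose each $\tilde{T}_0^m(c)$ within its natural homeomorphism class so that the unique $m$-characteristic cover of $T_0$ covers it (possible since the degree of $c$ over $\zeta_0$ divides $m_0$, hence $m$). For any positive multiple $m$ of $m_0$ we then obtain a JSJ $m$-characteristic regular finite cover $\tilde{X}\to X(R)$ through which every elevation of a JSJ piece of $X(R)$ factors through its chosen local cover. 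Consequently every elevation of $R$ is embedded in $\tilde{X}$, its boundary circles on the elevations of the JSJ tori are pairwise disjoint and embedded and --- inside graph-manifold chunks --- parallel, and the torus pieces of the elevations $\tilde{Y}_m(R)$ of $Y_m(R)$ are precisely the boundary tori of $\tilde{X}$ lying over $T_0$, so they too are embedded and are attached along the prescribed elevations of $\zeta_0$. Hence every elevation $\tilde{Y}_m(R)$ is embedded in $\tilde{X}$; replacing $\tilde{X}$ by its normal core over $X(R)$ if necessary preserves this and makes $\tilde{X}$ regular.

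It remains to count intersections. Fix an elevation $\tilde{Y}_m(R)=\tilde{\mathcal{R}}\cup(\text{elevations of }T_0)$ and an elevation $\tilde{T}_0$ of $T_0$ inside it. Since $T_0\subset\partial X(R)$ and the interior of $R$ misses $J_0$, the embedded surface $\tilde{\mathcal{R}}$ meets $\tilde{T}_0\subset\partial\tilde{X}$ only along part of its boundary, so $\tilde{\mathcal{R}}\cap\tilde{T}_0$ is a finite disjoint union of embedded circles, each covering the single slope $\zeta_0$; such circles are automatically isotopic, i.e.\ parallel, in the torus $\tilde{T}_0$, with directions induced from the orientation of $\partial R$. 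Finally, connectedness of $\tilde{Y}_m(R)$ together with $\partial R\neq\emptyset$ forces the number $r$ of these circles to be positive, which is the conclusion of Proposition \ref{YmR}.
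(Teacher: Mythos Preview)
Your plan has a genuine gap at the step where you pass from embedding the surface pieces of $R$ to embedding elevations of the $2$-complex $Y_m(R)$. The object to be embedded is not $R$ but $R$ together with the attached tori $\tilde{T}_0^m(c)$, and the paper explicitly warns that $Y_m(R)\looparrowright X(R)$ need not be $\pi_1$-injective, so separating the surface pieces and the torus pieces separately and then claiming the whole complex embeds does not work directly. Your sentence ``the torus pieces of the elevations $\tilde{Y}_m(R)$ of $Y_m(R)$ are precisely the boundary tori of $\tilde{X}$ lying over $T_0$'' is not justified: an elevation of $\tilde{T}_0^m(c)$ in $\tilde{X}$ is some connected cover of $\tilde{T}_0^m(c)$ mapping to a boundary torus of $\tilde{X}$, and it is a homeomorphism onto that torus only if the covering degrees have been arranged to match, which you have not done. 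More seriously, even if the torus embeds, you must ensure that the elevation of $R$ meets it in exactly one boundary curve, not several; nothing in your piecewise separation controls how many boundary components of an elevated $R$ land on a single elevated torus.

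The paper's cure is to thicken $Y_m$ to a compact $3$-manifold $\mathcal{Y}^*_m$ whose JSJ pieces are of two sorts: cyclic covers $\mathcal{Q}^*_{F'}$ of degree $m\cdot\mu_{T_0}(F')/k'_{T_0}$ dual to the horizontal parts (this is where Lemma \ref{horizontalParallelCutting}, and hence the Rubinstein--Wang criterion, is actually used --- to show the intersection count $\mu_T(F')$ is well defined), and pieces containing a vertical or geometrically finite subsurface $V'$ \emph{together with its adjacent tori}, called $Y^*(V')$. The separability of $Y^*(V')$ in a hyperbolic $J'$ is not the separability of $V'$ alone; it comes from \cite[Theorem 4.1]{PW3} and requires each $m(c')$ to be sufficiently large, and \emph{this} is the genuine source of the threshold $m_0$, not merely an LCM of boundary degrees. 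Your proposal to separate only the geometrically finite surface pieces misses this, and your choice of $m_0$ carries no such ``sufficiently large'' content.

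Finally, the ``moreover'' clause asserts that the number $r$ of circles in $\tilde{\mathcal R}\cap\tilde{T}_0$ is the \emph{same} positive integer for every elevation $\tilde{T}_0\subset\tilde{Y}_m(R)$, and that these circles are \emph{directed} parallel (all cover the same oriented component of $\partial R$). Your last paragraph checks only that the intersection is nonempty and consists of parallel slopes on a torus; the constancy of $r$ and the uniform direction both come from the specific way each $\tilde{T}_0^m(c)$ is glued to a single component $c\subset\partial R$, together with the explicit degree formula $r=[\tilde{T}_0:T_0]/(m\,[\tilde\zeta_0:\zeta_0])$, and this needs to be argued.
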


	To prove Proposition \ref{YmR}, we need the following  Lemma \ref{horizontalParallelCutting}.

    There is a canonical
    (possibly disconnected) compact essential subsurface of $R$,
    called the \emph{horizontal part}.
    It is the union of all subsurfaces of $R$ that are properly horizontally 
    immersed  in Seifert fibered pieces, glued up along the cut curves
    where any two are adjacent.
    Note that every complementary component of the union of the horizontal
    part and the cut curves
    is either a vertical cylinder immersed in a Seifert fibered piece,
    or a geometrically finite cusped subsurface immersed in a hyperbolic piece.

    \begin{lemma}\label{horizontalParallelCutting}
    	For every component $F$ of the horizontal part of $R$, there is a unique
    	graph-manifold chunk $Q_F\subset X(R)$ in which $F$ is properly horizontally
    	immersed. Moreover, suppose $\tilde{Q}_F$ is a regular finite cover of $Q_F$
    	in which any elevation of $F$ is embedded,
    	then for any elevation $\tilde{T}$ of
    	a component $T\subset\partial Q_F$ and any elevation $\tilde{F}$ of
    	$F$, the number of components of $\tilde{T}\cap \tilde{F}$ depends only on $T$ and $F$.
	\end{lemma}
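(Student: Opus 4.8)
The plan is to establish the two assertions separately. The existence and uniqueness of $Q_F$ is bookkeeping with the definition of the horizontal part; the counting statement is a covering-space argument resting on the rigidity of embedded horizontal surfaces and on the regularity of the cover.

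For the first assertion, recall that $F$ is a union of subsurfaces $F_i=F\cap J_i$, each properly horizontally immersed in a Seifert piece $J_i\subset X(R)$, glued along the cut curves where two of them are adjacent; each such cut curve lies on a JSJ torus of $M$ and is transverse to the Seifert fibre of both pieces it separates, so $F$ genuinely passes through $J_i$ and through $J_{i+1}$. I would set $Q_F=\bigcup_i J_i$. Since $F$ is connected, $Q_F$ is a connected graph-manifold chunk of $X(R)$, and $F$ is properly horizontally immersed in it: every boundary curve of $F$ lies either on $\partial R$, hence on $T_0\subset\partial Q_F$, or on a JSJ torus along which $F$ abuts a vertical cylinder or a hyperbolic-piece subsurface of $R$, and such a torus bounds $Q_F$. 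Any chunk $Q$ in which $F$ is properly horizontally immersed must contain each $J_i$, as $F$ has interior image points there; thus $Q_F\subseteq Q$, and $Q_F$ is the unique minimal such chunk.

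For the ``moreover'', I first fix invariants independent of the cover. By parallel cutting there is a single slope $\sigma=\sigma(T)$ on $T$ covered by every component of $F\cap T$; since a horizontal surface meets the Seifert fibre of the adjacent piece coherently, an orientation of $F$ induces coherent orientations on all components of $F\cap T$, so $\partial[F]|_T=\pm E\,[\sigma]$ in $H_1(T;\ZZ)$ for a well-defined integer $E=E(F,T)>0$ (positivity because $F$ meets the adjacent Seifert piece and the boundary of a horizontal surface surjects onto the base boundary circle dual to $T$). Let now $p:\tilde Q_F\to Q_F$ be a regular finite cover, with deck group $G$, in which every elevation of $F$ is embedded. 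As $\tilde F$ is embedded and horizontal in $\tilde Q_F$, the set $\tilde F\cap\tilde T$ consists of $r:=|\tilde F\cap\tilde T|$ pairwise parallel essential simple closed curves on the torus $\tilde T$, of a single slope $\tilde\sigma$; pushing the relative fundamental class forward gives $p_\ast(\partial[\tilde F]|_{\tilde T})=\deg(\tilde F/F)\,\partial[F]|_T$, hence the bookkeeping identity $r\cdot\deg(\tilde\sigma/\sigma)=\deg(\tilde F/F)\cdot E$. Because the cover is regular, $\tilde F\to F$ is itself a regular cover whose degree equals the order of the image of $\pi_1 F$ in $G$, the same for every elevation of $F$, and similarly $\deg(\tilde T/T)$ is the same for every elevation of $T$; so everything comes down to showing that $r$ (equivalently $\deg(\tilde\sigma/\sigma)$) is independent of the choices.

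To this end I would use regularity of $p$ twice: $G$ acts transitively on the elevations of $F$ and, separately, on the elevations of $T$, so $r$ is constant on $G$-orbits of pairs $(\tilde T,\tilde F)$; and $\mathrm{Stab}_G(\tilde F)$ acts on $\tilde F$ as its full (regular) deck group, hence transitively on the boundary circles of $\tilde F$ lying over any fixed component of $F\cap T$, carrying each such circle to a circle on another elevation of $T$. Since the boundary of a horizontal surface surjects onto every boundary circle of the base orbifold of each Seifert piece it meets, this forces the boundary circles of $\tilde F$ over $T$ to be equidistributed among the relevant elevations of $T$; counting their total number $\sum_{c}\deg(\tilde F/F)/|\langle\bar c\rangle|$, a quantity depending only on $F$, $T$ and the cover, then pins $r$ down independently of $\tilde T$ and $\tilde F$, and comparison of two admissible covers to a common regular refinement in which $\tilde F$ stays embedded removes the dependence on $\tilde Q_F$ via the same identity. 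The main obstacle is exactly this equidistribution step --- controlling how a single embedded horizontal $\tilde F$ distributes its boundary over the elevations of $T$; here one must combine the local structure of horizontal surfaces near a Seifert boundary with the transitivity coming from regularity, passing if necessary to a further JSJ-characteristic regular cover as in Proposition~\ref{mergeFiniteCovers} (which alters none of the relevant quantities) and invoking the rigidity of horizontal surfaces in graph manifolds underlying the Rubinstein--Wang separability criterion \cite{RW}.
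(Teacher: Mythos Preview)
Your argument for the first assertion is essentially right in spirit, but you never invoke the parallel-cutting hypothesis, and it is actually needed: you must rule out the possibility that a boundary curve $c$ of $F$ lies on a JSJ torus $T$ both of whose adjacent pieces are Seifert pieces met by $F$ (say $F$ abuts a vertical annulus across $T$ at $c$, while elsewhere $F$ has a horizontal piece in the other adjacent Seifert piece). Parallel cutting forces all components of $R\cap T$ to cover the same slope, so either every adjacent piece of $R$ in the far Seifert piece is horizontal (and then $c$ would be interior to $F$, not boundary) or every one is vertical; this is precisely how the paper argues that $\partial F$ lands on $\partial Q_F$.

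For the second assertion your proof has a genuine gap. The bookkeeping identity
\[
r\cdot\deg(\tilde\sigma/\sigma)\;=\;\deg(\tilde F/F)\cdot E
\]
is incorrect for an individual elevation $\tilde T$: the relation $p_\ast(\partial[\tilde F])=\deg(\tilde F/F)\,\partial[F]$ holds only after summing over \emph{all} elevations of $T$, so what you actually obtain is $\sum_{\tilde T}r(\tilde T)\cdot\deg(\tilde\sigma/\sigma)=\deg(\tilde F/F)\cdot E$, which does not determine the individual $r(\tilde T)$. Your attempted repair via ``equidistribution'' is exactly where the content lies, and you correctly identify it as the main obstacle but do not prove it: transitivity of $\mathrm{Stab}_G(\tilde F)$ on components over a single $c\subset F\cap T$ does \emph{not} imply transitivity on the set of elevations $\tilde T$ that meet $\tilde F$, and your total-count formula again only constrains a sum. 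Invoking Rubinstein--Wang at the end as a black box does not close the gap; one must use the actual content of their criterion. The paper's proof does this explicitly: it fixes $\tilde F$, writes $|\pi_0(\tilde T\cap\tilde F)|=i(\tilde f,\tilde F)/i(\tilde f,\tilde s)$, and reduces to showing that $i(\tilde f,\tilde F)$ is independent of the elevation $\tilde f$ of the adjacent Seifert fibre. This is then a dual-graph computation: along any edge-path the ratio telescopes into a product of local dilation factors $\lambda_e=i(f_{\mathrm{ter}(e)},s_e)/i(f_{\mathrm{ini}(e)},s_e)$, and the Rubinstein--Wang criterion for virtual embeddability of horizontal surfaces says precisely that $\prod_k\lambda_{e_k}=1$ over every cycle. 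That cycle condition is the missing ingredient in your argument and is what makes the count independent of $\tilde T$.
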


    \begin{proof} 
    	The first claim is direct:
        let $Q_F$ be the minimal subchunk of $X(R)$  containing the image of $F$, which is unique.
        Since $R$ is parallel cutting, it is clear that $\partial F$ cannot sit in any JSJ
        torus in the interior of $Q_F$, so $F$ is properly horizontally immersed in $Q_F$.
        Below we devote  to the proof the "moreover" part.

            Since $\tilde{Q}_F$ is a regular cover of $Q_F$,
            it suffices to fix an elevation $\tilde{F}$ and show
            that the number of components of $\tilde{T}\cap\tilde{F}$ is constant
            for all elevations $\tilde{T}$ of  $T$.
            Let $f\subset T$ be a Seifert fiber of the adjacent JSJ piece of $Q_F$.
            As $R$ is parallel cutting, there is also a slope $s\subset T$
            covered by all the components of $\partial F$ that are immersed in $T$.
            For any elevation $\tilde{T}$ of $T$,
            we may pick elevations $\tilde{f},\tilde{s}\subset\tilde{T}$ of $f$ and $s$, respectively.

            The (geometric) intersection numbers $i(\tilde{f},\tilde{s})$ and $i(f,s)$ are related
            by the formula
                $$i(\tilde{f},\tilde{s})\,=\,i(f,s)\cdot\frac{[\tilde{f}:f]\,[\tilde{s}:s]}{[\tilde{T}:T]},\qquad (4.1)$$
            where $[-:-]$ denotes the covering degree. 
            This follows from 
            $p^{-1}(f)=\frac{[\tilde T:T]}{[\tilde f:f]}\tilde f$, $p^{-1}(s)=\frac{[\tilde T:T]}{[\tilde s:s]}\tilde s$ and
            $\,i(p^{-1}(f),p^{-1}(s))=[\tilde T:T]\,i(f,s)$,  where $p:\tilde T\to T$ is the discussed covering. 
            
            Since $\tilde{F}$ is horizontally embedded,
            $\tilde{Q}_F$ fibers over the circle with fiber $\tilde{F}$, each components of $\tilde{T}\cap\tilde{F}$ must be a copy of $\tilde s$.
            Hence the number of components of $\tilde{T}\cap\tilde{F}$ satisfies
                $$|\pi_0(\tilde{T}\cap\tilde{F})|\,=\,\frac{i(\tilde{f},\tilde{F})}{i(\tilde{f},\tilde{s})}.$$
            Thus it suffices
            to show $i(\tilde{f},\tilde{F})$ is constant for all elevations $\tilde{f}$ of $f$.
            
            By a calculation similar to formular (4.1) we have 
            $$i(\tilde{f},\tilde{F})=i(\tilde{f},\tilde{\partial F})=i(\tilde{f}, p^{-1}(s))=i({f},s)[\tilde f:f] \qquad (4.2).$$

            Let $\Lambda$ and $\tilde\Lambda$ be the dual graph associated
            to the JSJ decompositions of $Q_F$ and $\tilde{Q}_F$. Note that there is a natural
            combinatorial map $\tilde\Lambda\to\Lambda$ induced by the covering.
            For any vertex $\tilde{v}$ of $\tilde\Lambda$,
            we write the corresponding JSJ piece of $\tilde{Q}_F$ as $\tilde{J}_{\tilde{v}}$, and the ordinary Seifert fiber
            of $\tilde{J}_{\tilde{v}}$ as $\tilde{f}_{\tilde{v}}$; for any edge $\tilde{e}$ of $\tilde\Lambda$,
            we write the corresponding JSJ torus as $\tilde{T}_{\tilde{e}}$, and the slope of $\tilde{T}_{\tilde{e}}$
            parallel to the components of $\tilde{T}_{\tilde{e}}\cap\tilde{F}$ as $\tilde{s}_{\tilde{e}}$.
            The notations for $\Lambda$ are similar. As $R$ is parallel cutting,
            for any directed edge $e$ of $\Lambda$, the ratio
                $$\lambda_{e}=\frac{i(f_{\mathrm{ter}(e)},s_e)}{i(f_{\mathrm{ini}(e)},s_e)}$$
            is a positive rational number
            depending only on $F$ and $T_e$. Here $\mathrm{ini}(e)$, $\mathrm{ter}(e)$
            denotes the initial vertex and the terminal vertex of $e$, respectively.
            Suppose $\tilde{e}_1,\cdots,\tilde{e}_n$
            is a sequence of edges of $\tilde\Lambda$, consecutively joining the sequence of vertices
            $\tilde{v}_0,\cdots,\tilde{v}_n$ of $\tilde\Lambda$. We write $v_k$, $e_k$ for the
            image of $\tilde{v}_k$, $\tilde{e}_k$ under $\tilde\Lambda\to\Lambda$, respectively.
            From the formula (4.2), we have:
            \begin{eqnarray*}
                \frac{i(\tilde{f}_{\tilde{v}_n},\tilde{F})}{i(\tilde{f}_{\tilde{v}_0},\tilde{F})}&=&
                \prod_{k=1}^{n}\frac{i(\tilde{f}_{\tilde{v}_k},\tilde{F})}{i(\tilde{f}_{\tilde{v}_{k-1}},\tilde{F})}\\
                &=&
                \prod_{k=1}^{n}\,
                \frac{i(f_{v_k},s_{e_k})}{i(f_{v_{k-1}},s_{e_k})}\cdot
                \frac{[\tilde{f}_{\tilde{v}_k}:f_{v_k}]}{[\tilde{f}_{\tilde{v}_{k-1}}:f_{v_{k-1}}]}\\
                &=&
                \frac{[\tilde{f}_{\tilde{v}_n}:f_{v_n}]}{[\tilde{f}_{\tilde{v}_{0}}:f_{v_{0}}]}\cdot
                \prod_{k=1}^{n}\lambda_{e_k}.
            \end{eqnarray*}
            In particular, if $\tilde{f}$ and $\tilde{f}'$ are two elevations of the Seifert fiber $f$ on
            a given component $T\subset\partial Q_F$, we may pick a path as above so that $\tilde{f}_{\tilde{v}_0}=
            \tilde{f}$ and $\tilde{f}_{\tilde{v}_n}=\tilde{f}'$. Since $v_0=v_n$ and $\tilde{Q}_F$ is a regular cover,
            $[\tilde{f}_{\tilde{v}_n}:f_{v_n}]=[\tilde{f}_{\tilde{v}_{0}}:f_{v_{0}}]$.
            Thus it suffices to show $\lambda_{e_1}\cdots
            \lambda_{e_n}=1$ for any cycle $e_1,\cdots,e_n$ of $\Lambda$.

            To see this, note that $F$ is properly horizontally immersed in $Q_F$. If $e$ is a directed edge of $\Lambda$,
            then any component of $j^{-1}(J_{\mathrm{ini}(e)})\subset F$ is adjacent to a component of
            $j^{-1}(J_{\mathrm{ter}(e)})\subset F$. Therefore, if $e_1,\cdots,e_n$ is a cycle of $\Lambda$,
            then starting with any component $C\subset j^{-1}(J_{v_0})$, we may find a path $\gamma:[0,1]\looparrowright F$
            so that $\gamma(0)$ lies in $C$, consecutively intersects $T_{e_1},\cdots,T_{e_n}$, and $\gamma(1)$ lies
            in a component $C'\subset j^{-1}(J_{v_0})$. Since there are only finitely many components of $j^{-1}(J_{v_0})$,
            we may join a number of such $\gamma$'s as above to obtain
            a loop $S^1\looparrowright F$ which goes around the cycle for a positive number of times, say $r$ times.
            Because $F\looparrowright Q_F$ is a proper horizontal immersion which is a virtual embedding,
            it follows from the criterion
            of Rubinstein--Wang \cite[Theorem 2.3]{RW} that
                $$\left(\lambda_{e_1}\cdots\lambda_{e_n}\right)^r\,=\,1.$$
            Therefore, $\lambda_{e_1}\cdots\lambda_{e_n}=1$ and this completes the proof.
        \end{proof}
        
        \begin{proof}[{Proof of Proposition \ref{YmR}}] 
        
        Since one cannot directly claim that $Y_m(R)$ is $\pi_1$-injectively immersed in $X(R)$ 
        (indeed this is not necessarily true in general), 
        we will  prove the proposition by the following strategy:
            First  for virtually all positive integer $m$, by using the results of Przytycki and Wise, as well as  
            Lemma \ref{horizontalParallelCutting},
            we can virtually embed $Y_m(R)$ into a compact $3$-manifold
            $\mathcal{Y}^*_m$ and  make sure that $\mathcal{Y}^*_m\looparrowright X(R)$ is virtually embedded when restricted on
              each JSJ piece.  
             And finally we will apply the merging trick (Proposition \ref{mergeFiniteCovers}) to get the global embedding in the Proposition \ref{YmR}.
            For simplicity, we write $X$ for $X(R)$ and $Y_m$ for $Y_m(R)$.

            For the triple $(S,R,j)$
            associated to the partial PW subsurface $R\looparrowright M$, there is a regular finite
            cover of $M$ in which any elevation of $S$ is embedded, by the separability
            of PW subsurfaces (Lemma \ref{PWseparability}). Hence we may assume $X'$ is a regular
            finite cover of $X$ in which any elevation of $R$ is embedded. Let
                $$R'\subset X'$$
            be an elevation of $R$.
            Take a compact regular neighborhood of the horizontal part of $R'$,
            and for each cut curve or boundary curve not adjacent to the horizontal
            part, take a compact regular neighborhood of it,
            and make sure these regular neighborhoods are mutually disjoint.
            Let $\mathcal{F}'\subset R'$ be the union of these regular neighborhoods.
            For each component $F'\subset\mathcal{F}'$, let $\mathcal{Q}'_{F'}\subset X'$
            be a compact regular neighborhood of a chunk, or of a JSJ or boundary torus,
            so that $F'$ is properly embedded in $\mathcal{Q}'_{F'}$.
            Hence $\mathcal{Q}'_{F'}$ is a bundle over the circle with  fibre $F'$, denoted as $(F', \phi_{F'})$, where $\phi_{F'} : F'\to F'$ 
            is the gluing map, which is a periodic on each reducible piece in the sense of Nielsen-Thurston.

            By Lemma \ref{horizontalParallelCutting} and some straighforward verification,
            for a JSJ or boundary torus $T\subset X$ and
            a component $F'\subset\mathcal{F}'$, any elevation $T'$ of $T$ intersects $F'$
            if and only if $T'\subset\mathcal{Q}'_{F'}$; and furthermore,
            such $T'$ intersects $F'$ in a number $\mu_T(F')>0$ of components, depending
            only of $T$ and $F'$ (indeed, only on the subsurface of $R$ that $F'$ covers).
            As $R$ is parallel cutting,
            let $s\subset T$ be the slope covered by the components of $F'$.
            Note that $s$ is $\zeta_0$ if $T$ is $T_0$.
            Let
                $$k'_T=\frac{[T':T]}{[s':s]}$$
            be the number of elevations of $s$ in any elevation of $T$,
            which is well defined since $X'$ is a regular
            finite cover of $X$.
            Let
                $$m'_0>0$$
            be the product of the least common multiple of all $k'_T$ and the least common multiple
            of all $\mu_T(F')$.
            For any positive multiple $m$ of $m'_0$,
            let $p_*: \mathcal{Q}^*_{F'}\to \mathcal{Q}'_{F'}$ be the cyclic covering dual to $F'$, of degree
                $$d=[\mathcal{Q}^*_{F'}:\mathcal{Q}'_{F'}]\,=\,m\cdot\frac{\mu_{T_0}(F')}{k'_{T_0}},$$
            or just $m$ if $\mathcal{Q}^*_{F'}$ contains no elevations of $T_0$.
           
            Clearly $\mathcal{Q}^*_{F'}=(F', \phi_{F'}^d)$.
           Since $d$ is a multiple of each $\mu_{T_0}(F')$, 
           each component of $\partial F'$ (also cutting curves) is invariant under $\phi_{F'}^d$.
           Therefore each component of $\partial\mathcal{Q}^*_{F'}$ contains one and only one   component of $\partial F'$.
           %and in particular
            %any two components of $\partial F'$ that cover $\zeta_0$ lie on a distinct
            %component of $\partial\mathcal{Q}^*_{F'}$ that covers $T_0$.
           Note that $p^{-1}_*(F')$ has $d$ components in $\mathcal{Q}^*_{F'}$ and each one is a lift of $F'$;
           moreover  for each $T_0'$ covering $T_0$, $p^{-1}_{*}(T'_0)$ has $\mu_{T_0}(F')$ components.
           Let $ {p_0} : T_0' \to T_0$ be the discussed covering, then ${p_0}^{-1}(\zeta_0)$ has $k'_{T_0}$ components  in $T_0'$, therefore 
           ${(p_*\circ p_0)}^{-1}(\zeta_0)$ has $dK'_{T_0}$ components on $\partial \mathcal{Q}^*_{F'}$. 
            It follows that in every component
            of $\partial\mathcal{Q}^*_{F'}$ that covers $T_0$, there are exactly
            $\frac{dK'_{T_0}} {\mu_{T_0}(F')}=m$ elevations of $\zeta_0$,

            Let $\mathcal{W}'_{R'}\subset X'$ be a compact regular neighborhood of $R'$,
            and let $\mathcal{W}^*_{F'}\subset\mathcal{Q}^*_{F'}$ be a compact regular neighbood
            of a fiber $F'$. Since $\mathcal{W}'_{R'}\cap\mathcal{Q}'_{F'}$ is naturally homeomorphic
            to $\mathcal{W}^*_{F'}$, we may take a copy of $\mathcal{W}'_{R'}$ and glue it with
            a copy of $\mathcal{Q}^*_{F'}$ by identifying $\mathcal{W}'_{R'}\cap\mathcal{Q}'_{F'}$
            and $\mathcal{W}^*_{F'}$, for all components $F'\subset\mathcal{F}'$.
            The result is a compact $3$-manifold 
            	$$\mathcal{Y}^*_m\,=\,\mathcal{W}_{R'}'\,\cup\,\bigcup_{F'\subset\mathcal{F}'}\,\mathcal{Q}^*_{F'}$$
            with boundary, and there is a natural immersion
                $$\varphi:\,\mathcal{Y}^*_m\looparrowright X',$$
                which sends  $W_{R'}'$ to $W_{R'}'\subset X'$ by the identity and each  $\mathcal{Q}^*_{F'}$ to $\mathcal{Q}'_{F'}\subset X'$
                via the given covering.
            Moreover, the union of the copy $R'\subset\mathcal{Y}^*_m$ and
            all components of $\partial\mathcal{Q}'_{F'}$
            that cover $T_0$ is an embedded $2$-complex
                 $$Y^*_m\subset\mathcal{Y}^*_m.$$
            Moreover, $Y^*_m$ naturally covers $Y_m$ with degree $[R':R]$, and the hanging tori $\tilde{T}_0^m(c)$
            all lift. In other words,
            $Y_m$ is virtually embedded in $\mathcal{Y}^*_m$, naturally in the sense that
            the map  $Y^*_m\looparrowright X'$ induced from the immersions of $Y_m$ and of
            $\mathcal{Y}^*_m$ are the same up to homeomorphism of $Y^*_m$.
            
            We are going to show that $\varphi:\,\mathcal{Y}^*_m\looparrowright X'$ is a virtual embedding. To do this 
            we first show that the restriction on each JSJ piece of $\mathcal{Y}^*_m$ is a virtual embedding.
            
            The JSJ tori of $\mathcal{Y}^*_m$ are exactly the JSJ or boundary tori of
            all $\mathcal{Q}^*_{F'}$. The JSJ pieces of $\mathcal{Y}^*_m$ are the JSJ pieces
            of $\mathcal{Q}^*_{F'}$, and the pieces containing the components of
            $R'\setminus\mathcal{F'}$. 
            To describe the latter type
            more precisely, consider any connected compact subsurface $V'\subset R'$,
            which is properly immersed in a JSJ piece of $X'$ and vertically or geometrically finite.
            For each such $V'$, there is a unique component of
            $R'\setminus\mathcal{F}'$ contained in $V'$, which is
            isotopic to the interior of $V'$. Then the unique JSJ piece
            of $\mathcal{Y}^*_m$ containing $V'$
            is the piece bounded by all the JSJ tori
            adjacent to $\partial V'$, and this piece deformation retracts to
            the union of $V'$ and all its adjacent JSJ tori, denoted by $Y^*(V')$.
            In fact, $Y^*(V')$ can be described in a similar fashion as that of $Y_m(R)$.
            For each component $c'\subset\partial V'$ that covers a slope in a torus $T\subset X$,
            we glue a copy of a cover $\tilde{T}^{m(c')}(c')$ of $T$ to $V$ along $c$ in a similar way
            as  $\tilde{T}^m_0(c)$. Explicitly, since there
            is a unique $F'\subset\mathcal{F}'$ containing $c'$,
                $$m(c')\,=\,[\mathcal{Q}^*_{F'}:\mathcal{Q}'_{F'}]\cdot\frac{k'_T}{\mu_T(F')},$$
            which is a positive integer by our choice of $m'_0$.

            Suppose $V'\subset R'$ is a vertical or geometrically finite subsurface as above,
            and $K^*\subset\mathcal{Y}^*_m$ is the unique JSJ piece containing $V'$. As we have seen,
            the inclusion of the $2$-complex $Y^*(V')\hookrightarrow K^*$ is a homotopy equivalence.
            Let $J'\subset X'$ be the JSJ piece in which $K^*$ is immersed into, so that there is an induced
            immersion $\varphi|:Y^*(V')\looparrowright J'$. If $J'$ is hyperbolic, then
            by \cite[Theorem 4.1]{PW3}, $\varphi$ restricted to $Y^*(V')$ is $\pi_1$-injective
            and relatively quasiconvex
            if all $m(c')$ above are sufficiently large; and in this case,
            it is a consequence of the relative quasiconvex separability due to
            Wise \cite[Theorem 16.23]{Wise-long}
            (cf.~\cite[Corollary 4.2]{PW3}) that $\pi_1(Y^*(V'))$ is indeed separable.
            Then we may find a finite cover of $J'$ in which the elevations of $Y^*(V')$,
            and hence elevations of $K^*$, are embedded. 
            
            If $J'$ is Seifert fibered, then $J'$ is a product and $Y^*(V')$ is
            just the union of a properly immersed (boundary-essential) vertical annulus
            together with covers of the tori
            that are adjacent to.
            If all $m(c')$ are sufficiently large,
            one can easily see that $\pi_1(Y^*(V'))$ is embedded in $\pi_1(J')$ and
            is separable. Then we may again find a finite cover of $J'$ in which elevations of $Y^*(V')$,
            and hence elevations of $K^*$, are embedded. From the formula of $m(c')$ above,
            it is clear that $m(c')$ can be arbitrarily as large as desired
            if $m$ is sufficiently large.
            Therefore, we may pick
                $$m_0>0$$
            to be a sufficiently large multiple of $m'_0$, so that any multiple of $m$
            is sufficiently large to ensure that the $\pi_1$-injectivity and separability
            of $Y^*(V')$ work.

            Note that if $K^*$ is a JSJ piece of $\mathcal{Y}^*_m$ that is contained in some $\mathcal{Q}^*(F')$,
            it covers a JSJ piece $J'$ of $X'$.
            
            Therefore, we have shown that for every JSJ piece $K^*\subset\mathcal{Y}^*_m$,
            there is a  JSJ piece $J'$ of $X'$ that contains the immersed image of $K^*$, and moreover 
            we have an embedding $\phi'': K''\to J_{K^*}$ which covers $\varphi |: K^*\to J'$.
            Now for each $J_i'\subset X'$, if $J'_i$ contains the image of  a JSJ piece $K^*\subset\mathcal{Y}^*_m$,
            let $J^*_i$ be the common finite cover of all those $J_{K^*}$, 
            and otherwise set $J^*_i=J'_i$.
             By Proposition \ref{mergeFiniteCovers}, we have a regular finite JSJ $l$-characteristic covering $p'': X''\to X'$
             such that each covering $J''\to J'_i$ factor through $J^*_i$.
             
             Let $\varphi'': \mathcal{Y}''_m\looparrowright X''$ be any elevation of  $\varphi:\,\mathcal{Y}^*_m\looparrowright X'$.
            Since the virtual embeddedness is preserved under passage
            to further covers, $\varphi''|$ on each JSJ piece is
            an  embedding.  It follows that 
            the induced map on the dual graph $\Lambda(\mathcal{Y}''_m)\to\Lambda(X'')$
            is a combinatorial local embedding, which is $\pi_1$-injective. Because
            $\pi_1(\Lambda(X''))$ is a free group, and hence is LERF, 
            it has a regular
            finite cover in which any elevation of $\Lambda(\mathcal{Y}''_m)$ is an embedded subgraph.
            Therefore, we have  a regular finite JSJ 1-characteristic covering $p''': X'''\to X''$ so that 
            any elevation $\varphi''': \mathcal{Y}'''_m\looparrowright X'''$  of  $\varphi:\,\mathcal{Y}^*_m\looparrowright X'$
            is an embedding. 
            As we discussed in Proposition \ref{mergeFiniteCovers} and its remark, 
            by passing to a further finite cover $\tilde X$, we can assume that 
            the JSJ $l$-characteristic covering  $\tilde X\to X'$
            is characteristic in the usual sense, 
            which implies that the covering $\tilde X\to X'\to X$ is a finite regular covering. 

            In conclusion, for any positive multiple $m$ of the $m_0$ we have chosen,
            there is a regular finite cover $\tilde{X}$ of $X$, in which any elevation
            of $\mathcal{Y}^*_m$, and hence any elevation $\tilde Y$ of $Y_m$, is embedded.

            Now we are going to prove the ``moreover part'':        We fix an orientation of $R$. Let $m$ be a positive integer ensured by
            the first half of Proposition \ref{YmR}, so that $Y_m(R)\looparrowright X(R)$ is
            a virtual embedding. We assume $\tilde X$ is a regular finite cover of $X(R)$ constructed 
            in Proposition \ref{YmR} in which any elevation $\tilde Y$ of $Y_m(R)$ is embedded. 
            
            Note that there is a copy
            of $R$ contained in $Y_m(R)$, so for any elevation $\tilde Y\subset \tilde X$,
            there is a collection of mutually disjoint, embedded elevations of $R$, with naturally induced
            orientations.
            We fix an elevation $\tilde Y$ of $Y$, and
            let $\tilde{ \mathcal{R}}\subset \tilde Y$ be the union of elevations of $R$ contained in $\tilde Y$.
            
            Since $\zeta_0$ has  $ \frac{[\tilde T_0:T_0]}{[\tilde \zeta_0:\zeta_0]}$ elevations
             in any elevation $\tilde T_0$ of $T_0$, and  $\zeta_0$ has $m$ elevations in $T_0^m(c)\subset Y_m(R)$ for each component $c$
            of $\partial R$, it follows that $c\subset T_0^m(c)\subset Y_m(R)$ has $r= \frac{[\tilde T_0:T_0]}{m[\tilde \zeta_0:\zeta_0]}$ 
            elevations
            in $\tilde T_0$. Since $R\subset Y_m(R)$ meets $T_0^m(c)$ exactly on $c$, it follows that 
             for any elevation $\tilde T_0$ of $T_0$
            contained in $\tilde Y$,
            there are exactly $r$ components
            of $\partial\tilde{\mathcal{R}}\cap \tilde T_0$.
            Furthermore, it is clear
            from the construction of $Y_m(R)$
            that for any $\tilde T_0\subset \tilde Y$, all components
            of $\partial\tilde {\mathcal{R}}\cap \tilde T_0$ cover the same
            component of $\partial R$, and in particular, they are
            directly parallel on $\tilde T_0$
            with the direction induced from $\partial\mathcal{R}$.
            Note also that for any $\tilde T_0$ not contained in $\tilde Y$,
            $\tilde T_0\cap\tilde {\mathcal{R}}$ is the empty set.
           
            This completes  the proof of Proposition \ref{YmR}.
		\end{proof}
            
        \begin{proof}[{Proof of Theorem \ref{corridorCover}}]
        	We start from the conclusion of Proposition \ref{YmR}.
            To match the notations, still denote by $X'$, $\mathcal R'$, $Y'$ and $T'_0$ the spaces  $\tilde X$, $\tilde {\mathcal{R}}$, $\tilde Y$ and $\tilde T_0$ obtained in Proposition \ref{YmR}, where everything is oriented.

            The oriented properly embedded subsurface ${\mathcal R}'$
            represents a class $[{\mathcal R}']\in H_2(X',\partial X';\ZZ)$.
            Then the homological pairing with $[\mathcal R']$
            induces a quotient homomorphism
                $$l_{\mathcal{R}'}:\,
                 H_1(X';\,\ZZ)  \stackrel{[\mathcal R']}{\longrightarrow}  \ZZ \to \ZZ_r.$$

           Denote the group of deck transformations of the covering
            of $X'\to X$ as $\Gamma_{X'}$ and 
            by taking the direct sum of all $\tau^*(l_{\mathcal{R}'})$, where $\tau$ runs
            over $\Gamma_{X'}$ we define a homomorphism of integral modules:
                $$L=\oplus_{\tau\in \Gamma_{X'}} {\tau^*}(l_{\mathcal R'}) 
                =\oplus_{\tau\in \Gamma_{X'}}l_{\tau(\mathcal R')} :\,H_1(X';\ZZ)\,\to\,\ZZ_r^{\oplus \mathrm{Gal}(X')}.$$
            The kernel of the homomorphism of groups
                $$\kappa': \pi_1(X')\longrightarrow H_1(X';\ZZ)\stackrel{L}{\longrightarrow} \ZZ_r^{\oplus\mathrm{Gal}(X')}$$
            is invariant under the deck transformation group $\Gamma_{X'}$, thus
            it follows that $\mathrm{Ker}(\kappa')\subset \pi_1(X')\subset \pi_1(X)$ is a normal subgroup,
            so it defines a regular finite cover
                $$\kappa:\tilde{X}\to X,$$
            which factors through $X'$.

            It remains to verify that every elevation $\tilde{R}\subset\tilde{X}$ of $R$ intersects
            any elevation $\tilde{T}_0\subset\tilde{X}$ of
            $T_0$ in at most one components. Since $\tilde{X}$ is a regular cover, we may assume $\tilde{R}$
            is an elevation of a component $R'\subset\mathcal{R}'$. Thus, an elevation $\tilde{T}_0$
            of $T_0$ intersects $\tilde{R}$ if and only if it covers an elevation ${T}'_0\subset X'$
            of $T_0$ contained in $Y'$. 
               If there were at least two components of $\tilde{T}_0\cap\tilde{R}$ then
            we could pick two points $\tilde{x},\tilde{y}$ on two distinct components,
            and there would be a directed loop $\tilde\alpha$
            formed by two consecutive directed paths
            $\tilde\alpha_{\tilde{T}_0}\subset \tilde{T}_0$ and
            $\tilde\alpha_{\tilde{R}}\subset \tilde{R}$, both joining $\tilde{x}$ and $\tilde{y}$. 
            Because $\tilde{R}$ and $\tilde{T}_0$ cover $R'$ and $T'_0$, respectively,
            and $\tilde R\subset \tilde X$ is a two-sided proper embedded surface, 
            we may perturb $\tilde\alpha$ a bit so that $\tilde \alpha_{\tilde{R}}$ is projected into $X'$ missing the interior of $\mathcal{R}'$.
            Because the algebraic intersection number of  $\tilde R$ and $\tilde \alpha$  
            is always an integer    multiple of $r$,       
                        it follows that the path $\tilde\alpha_{\tilde{T}_0}$
            is immersed under the
            covering into $T'_0$,  and has the algebraic intersection number with  the components of $R'\cap T'_0$ an integral multiple of $r$.
             Because there are exactly $r$
            components of $R'\cap T'_0$, directly parallel on $T'_0$,
           this means that $\tilde{x}$ and $\tilde{y}$ are projected to the same component
            of $T'_0\cap R'$. Up to homotopy, we may assume they are the same, so
            $\tilde{\alpha}_{\tilde{T}_0}$ is the lift of a closed path $\alpha'_{T'_0}\looparrowright T'_0$.
            However, since $\mathcal{R'}$ intersects any elevation $T'_0$ in either the empty set or
            exactly $r$ directly parallel components, $L$ vanishes on $H_1(T'_0;\ZZ)$ for any $T'_0$.
            In other words, every $T'_0$ lifts into $\tilde{X}$.
            This means that the closed path $\alpha'_{T'_0}$ lifts
            into $\tilde{T}_0$ as well, so $\tilde{x}$ and $\tilde{y}$ are the same. This contradicts
            the assumption that they lie on distinct components of $\tilde{R}\cap\tilde{T}_0$.
            We conclude that $\tilde{X}$ is the regular finite cover as desired.
            \end{proof}

\section{Virtual extension of representations}\label{Sec-virtualExtensionOfRepresentations}
	In this section, we construct virtual extension 
	of a representation $\rho_0:\pi_1(J_0)\to \mathscr{G}$ 
	of a JSJ piece $J$ of a mixed $3$-manifold $M$
	assuming that the representation $\rho_0$ has nontrivial kernel 
	on $\pi_1(T)$ for each torus $T\subset \partial J_0$ (Theorem \ref{virtualExtensionRep}).
	For the sake of generality, we abstract a property of the target group $\mathscr{G}$ 
	called class invertibility (Definition \ref{classInvertible}),
	with which we can ``flip'' $\rho_0$ up to conjugation.
	In particular, ${\rm PSL}(2;\C)$ and ${\rm Iso}_e\t{{\rm SL}_2(\R)}$
	are both class invertible (Lemma \ref{geometricClassInv}).

	\begin{definition}\label{classInvertible}
        Let $\mathscr{G}$ be a %connected Lie
        group,
        and  $\{\,[A_i]\,\}_{i\in I}$ be a collection
        of conjugacy classes of abelian subgroups.
        By a \emph{class inversion} with respect to $\{\,[A_i]\,\}_{i\in I}$,
        we mean an outer automorphism
        $[\nu]\in\mathrm{Out}(\mathscr{G})$,
        such that for any representative abelian subgroup $A_i$
        of each $[A_i]$, there is a representative automorphism
        $\nu_{A_i}:\mathscr{G}\to\mathscr{G}$ of $[\nu]$
        that preserves $A_i$, taking every
        $a\in A_i$ to its inverse.
        We say $\mathscr{G}$ is \emph{class invertible}
        with respect to $\{\,[A_i]\}_{i\in I}$,
        if there exists class inversion.
        We often ambiguously call any collection
        of representative abelian subgroups $\{\,A_i\,\}_{i\in I}$
        a class invertible collection, and call any representative automorphism
        $\nu$ a class inversion.
    \end{definition}

    \begin{theorem}\label{virtualExtensionRep}
        Let $\mathscr{G}$ be a
%       connected Lie
        group, and
        $M$ be an irreducible orientable closed mixed $3$-manifold.
        For a geometric piece $J_0\subset M$,
        suppose a representation
            $$\rho_0:\pi_1(J_0)\to\mathscr{G}$$
        satisfies the following:
        \begin{itemize}
        \item for every boundary torus $T\subset \partial J_0$, $\rho_0$ has nontrivial kernel
        restricted to $\pi_1(T)$; and
        \item for all boundary tori $T\subset\partial J_0$, $\rho_0(\pi_1(T))$
        form a class invertible collection of abelian subgroups of $\mathscr{G}$.
        \end{itemize}
        Then there exist a finite
        regular cover
            $$\kappa:\tilde{M}\to M,$$
        and a representation
            $$\tilde{\rho}:\pi_1(\tilde{M})\to\mathscr{G},$$
        satisfying the following:
        \begin{itemize}
        \item for one or more elevations $\tilde{J}_0$ of $J_0$,
        the restriction of $\tilde{\rho}$ to $\pi_1(\tilde{J}_0)$ is,
        up to a class inversion, conjugate
        to the pull-back $\kappa^*(\rho_0)$; and
        \item for any elevation $\tilde{J}$ other than the above, of any geometric piece
        $J$, the restriction of $\tilde{\rho}$ to $\pi_1(\tilde{J})$ is cyclic,
        possibly trivial.
        \end{itemize}
    \end{theorem}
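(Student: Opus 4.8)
The strategy is to assemble $\tilde M$ and $\tilde\rho$ out of the pieces supplied by the surface-separability machinery of Section~\ref{PW}. Fix the geometric piece $J_0$ and, for each boundary torus $T\subset\partial J_0$, a slope $\zeta_T$ in the nontrivial kernel of $\rho_0|_{\pi_1(T)}$. By Theorem~\ref{virtualPartialPW}, after passing to a finite cover I may assume that for each such $T$ there is a parallel-cutting partial PW subsurface $R_T\looparrowright M$ bounded virtually by $\zeta_T$ outside $J_0$; Theorem~\ref{corridorCover} then upgrades this, in a further finite cover of the carrier chunk $X(R_T)$, to an \emph{embedded} elevation of $R_T$ meeting each elevation of $T_0=T$ in at most one slope. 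I then merge these local covers of the carrier chunks (Proposition~\ref{mergeFiniteCovers}) together with a suitable cover of $J_0$ itself, using Corollary~\ref{virtualExtensionOfSemicovers} to realize the chunks as embedded submanifolds of one regular finite cover $\tilde M\to M$. This produces the desired decomposition of $\tilde M$ into: Part~1, one or more elevations $\tilde J_0$ of $J_0$; Part~2, the ``corridors'' $X$, each containing a corridor surface $R$ with exactly one circular doorsill $\partial R\cap\tilde T$ in the relevant boundary torus $\tilde T$ of an adjacent $\tilde J_0$; Part~3, everything else.

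Next I build the representation $\tilde\rho$ piece by piece. On each $\tilde J_0$ (Part~1), I want $\tilde\rho$ to be the pull-back $\kappa^*(\rho_0)$, possibly composed with a class inversion $\nu$ — the freedom to flip is exactly what forces us into Definition~\ref{classInvertible}, since corridors attached on two sides of a $\tilde T$ impose opposite orientations on the doorsill, hence opposite eigenvalues of $\rho_0$ on $\pi_1(\tilde T)$. On each component of Part~3 I set $\tilde\rho$ trivial. On each corridor $X$ I use the homological pairing: the oriented doorsill surface $R$ gives a class $[R]\in H_2(X,\partial X;\ZZ)\cong H^1(X;\ZZ)$, hence a homomorphism $\pi_1(X)\to H_1(X;\ZZ)\xrightarrow{[R]}\ZZ\to\mathscr{G}$, where $\ZZ\to\mathscr G$ is a chosen one-parameter subgroup (the integral translation in ${\rm Iso}_e\widetilde{\mathrm{SL}_2(\R)}$, or a loxodromic/parabolic one-parameter subgroup in ${\rm PSL}(2;\C)$). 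The crucial point is that this corridor representation, restricted to the doorsill torus $\tilde T$, sends the $\zeta_T$-direction to the identity (because $[R]$ is Poincaré-dual to $\partial R$ which is parallel to $\zeta_T$, and $\zeta_T\in\ker\rho_0$) and sends the complementary direction to a generator of $\ZZ\subset\mathscr{G}$; comparing eigenvalues (after passing to an even further cover via colored chunks/colored merging, Lemma~\ref{coloredMerging}, to make the pairing numbers match) one checks the boundary restrictions agree up to conjugacy with those of $\kappa^*\rho_0\circ\nu^{\pm1}$ on the $\tilde J_0$ side and with the trivial representation on the Part~3 side. Finally I glue: since all the local representations are elliptic/hyperbolic in normal form near each cutting torus and pairwise conjugate there, Lemma~\ref{cut-paste} (equivalently Remark~\ref{also-cut-paste}) assembles them into a global $\tilde\rho:\pi_1(\tilde M)\to\mathscr G$ inducing the prescribed local representations up to conjugacy, and Lemma~\ref{glueRep} manages the remaining conjugacy bookkeeping. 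On a Part~3 piece or a corridor, the image of $\tilde\rho$ is trivial or infinite cyclic respectively, which gives the second bullet of the conclusion.

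The main obstacle — and the reason the theorem is hard — is the compatibility of the local representations on the cutting tori. Two separate issues must be reconciled simultaneously. First, the \emph{intersection-counting} problem: a corridor surface $R$ must hit each relevant JSJ torus in exactly one doorsill, and this is precisely what Theorem~\ref{corridorCover} (via Proposition~\ref{YmR} and Lemma~\ref{horizontalParallelCutting}, hence Rubinstein--Wang in graph chunks and Wise's quasiconvex separability in hyperbolic pieces) is designed to deliver, but one must carefully track the covering degrees $[\tilde T:T]$, $[\tilde\zeta:\zeta]$ and the multiplicities $\mu_T(F')$ so that the boundary slopes, \emph{with orientation}, line up. Second, the \emph{eigenvalue-matching / color} problem: even with the right number of doorsills, the normal-form parameters $(\alpha,\beta)$ coming from $\rho_0$ on the $\tilde J_0$ side must coincide, modulo the ambient conjugacy, with those coming from the $\ZZ\to\mathscr G$ pairing on the corridor side — this is where the class-invertibility of $\mathscr G$ (Lemma~\ref{geometricClassInv}) and the colored-merging refinement (Lemma~\ref{coloredMerging}) are indispensable, and where the bulk of the technical work lies. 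Verifying that ${\rm PSL}(2;\C)$ and ${\rm Iso}_e\widetilde{\mathrm{SL}_2(\R)}$ actually admit class inversions with respect to the relevant abelian (peripheral) subgroups is a short but essential separate check (Lemma~\ref{geometricClassInv}).
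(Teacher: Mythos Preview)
Your proposal is correct and follows essentially the same architecture as the paper's proof: build colored corridor chunks from Theorems~\ref{virtualPartialPW} and~\ref{corridorCover}, merge them with signed copies of $J_0$ and null pieces via Lemma~\ref{coloredMerging}, define $\tilde\rho$ on corridors by the homological pairing with the corridor surface, and glue via Lemma~\ref{cut-paste}/Remark~\ref{also-cut-paste} as in Lemma~\ref{glueRep}. One clarification: the map $\ZZ\to\mathscr{G}$ on a corridor is not a freely ``chosen one-parameter subgroup'' but is dictated by $\rho_0$, sending $1$ to $\rho_0(\mu_\sharp[\gamma^*])$ (or its $\nu$-image) for a slope $\gamma^*$ dual to the doorsill---this is exactly what makes the boundary matching automatic, and the colored merging is needed only to align the $\pm$ signs, not to adjust pairing numbers.
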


    The rest of this section is devoted to the proof of Theorem \ref{virtualExtensionRep}.
    In Subsection \ref{Subsec-coloredMerging}, we construct a cover of $M$
    by merging colored chunks. In Subsection \ref{Subsec-virtualRepresentation},
    each colored chunk will be endowed naturally with a representation,
    up to conjugation. Then the colored merging gives rise
    to a virtual representation as desired.
    There will be three colors $0$ (null), $+1$ (positive), and $-1$ (negative),
    to be assigned to the boundary components of the chunks in our construction.
    To keep in mind, the null color
    will mean that the restricted representation to the boundary is trivial,
    and the signed colors will mean that
    the restricted representation to the boundary is nontrivial
    and the sign indicates whether a class inversion will be applied.

    \subsection{Colored chunks and colored merging}\label{Subsec-coloredMerging}
        Let $M$ be an orientable closed irreducible mixed $3$-manifold containing no essential
        Klein bottles. Let
            $$J_0\subset M$$
        be a selected JSJ piece of $M$. The boundary
        of $J_0$ is a disjoint union of tori:
            $$\partial J_0\,=\,\bigsqcup_{i=1}^{s}\,T_i,$$
        and for each $T_i$, let
            $$\zeta_i\subset T_i$$
        be a selected slope. Fix a direction for each $\zeta_i$.
        With this data, we construct a regular finite
        cover $\tilde{M}$ of $M$ by merging colored
        chunks as follows.

        By Theorem \ref{virtualPartialPW}, for each $T_i\subset M$, there is
        a finite cover $M'_i$ of $M$, and an elevation $(J_{0,i}',T_i',\zeta_i')$
        of the triple $(J_0,T_i,\zeta_i)$, and there is a parallel-cutting partial
        PW subsurface $R'_i\looparrowright M'_i$ virtually bounding
        $\zeta'_i$ outside $J_{0,i}'$. By Theorem \ref{corridorCover}, there is
        a regular finite cover $X''_i$ of the carrier chunk $X(R'_i)\subset M'_i$,
        in which any elevation of $R'_i$ is properly embedded, intersecting each elevation
        of the carrier boundary $T'_i$ in at most one slope.
        Pick an elevation $R^*_i\subset X''_i$ of $R'_i$, and let $X^*_i\subset X''_i$ be the
        carrier chunk of $R^*_i$, namely, the minimal chunk containing $R^*_i$.
        Taking a copy of $X^*_i$ together with $R^*_i$, we call the (abstract) duple
        $(X^*_i,R^*_i)$
        a \emph{corridor chunk} associated to the sloped boundary $(T_i,\zeta_i)$.
        The properly embedded subsurface $R^*_i$ is called the \emph{corridor surface}
        of $X^*_i$, and the \emph{corridor boundary} $\partial^*X^*_i$ of $X^*_i$ is the
        union of boundary components that intersect $R^*_i$. Note that every corridor boundary
        component is an elevation of $T_i$.
        To color the boundary of $X^*_i$, we pick an orientation of $R'_i$.
        For any corridor boundary component $T^*\subset\partial^*X^*_i$,
        $R^*_i$ intersects $T^*$ in exactly one slope $c^*$,
        and $c^*$ has one direction induced from the direction
        of $\zeta_i$, and another direction induced from the orientation of $R^*_i$.
        We color any corridor boundary component
        $T^*$ by $+1$ if these two induced directions of $c^*$ agree,
        or by $-1$ otherwise. We color any non-corridor boundary
        component of $X^*_i$ by $0$. The result is a
        \emph{colored corridor chunk}
            $$(X^*_i,R^*_i).$$
        Note that $X^*_i$ has the same number
        of positively colored boundary components and negatively colored components.
%       and we write the color decomposition of the corridor boundary as
%           $$\partial^*X^*_i\,=\,\partial^+X^*_i\sqcup\partial^-X^*_i.$$
%%      If $\tilde{X}^*_i$ is a finite cover of $X^*_i$, the union $\tilde{\mathcal{R}}^*_i
%%      \subset\tilde{X}^*_i$ of all elevations of $R^*_i$ is naturally oriented and properly
%%      embedded.

        A copy of $J_0$ with boundary components colored all by $+1$, or all by $-1$, is called
        a \emph{positively colored $J_0$ piece}, or a \emph{negatively colored $J_0$ piece}, respectively.

        A copy of any JSJ piece $J\subset M$ (possibly $J_0$)
        with all boundary components colored by $0$ is called a \emph{null colored JSJ piece}.

        By an \emph{elevated colored chunk}, we mean a finite cover
        of any of the following:
        \begin{itemize}
            \item a positively or negatively colored $J_0$ piece
            \item a colored corridor chunk associated to a $(T_i,\zeta_i)$
            \item a null colored JSJ piece from $M$
        \end{itemize}
        together with the naturally induced boundary coloring.

        \begin{lemma}\label{coloredMerging}
            With the notations above, there exists a regular finite cover $\tilde{M}$
            of $M$, obtained by gluing elevated colored chunks along boundary
            tori matching the coloring. Moreover, $\tilde{M}$ contains at least one
            elevated positively colored $J_0$ piece.
        \end{lemma}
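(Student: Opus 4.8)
The plan is to build $\tilde M$ by a colored version of the merging procedure in Proposition~\ref{mergeFiniteCovers}, applied to the collection of elevated colored chunks listed before the statement of Lemma~\ref{coloredMerging}. The key point is that these chunks must be glued along their boundary tori so that colors match ($+1$ to $+1$, $-1$ to $-1$, $0$ to $0$), and the result must be a genuine (regular) finite cover of $M$. So the first step is to assemble a finite pool of building blocks: take one positively colored $J_0$ piece, one negatively colored $J_0$ piece, for each $(T_i,\zeta_i)$ one colored corridor chunk $(X^*_i,R^*_i)$, and, for every JSJ piece $J\subset M$, one null colored copy of $J$. By the remarks preceding the lemma, each corridor chunk $X^*_i$ has equally many $+1$- and $-1$-colored boundary components, each of which is an elevation of $T_i$; the positively (resp.\ negatively) colored $J_0$ piece has $s$ boundary components, the $i$-th an elevation of $T_i$ colored $+1$ (resp.\ $-1$); and each null colored $J$ covers $J$ with all boundary tori colored $0$.

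Next I would apply the merging/bookkeeping argument of Proposition~\ref{mergeFiniteCovers}, but keeping track of colors. First pass to further finite covers of all the building blocks so that over every JSJ torus $T$ of $M$, every elevation appearing on a chunk boundary is the same $m$-characteristic cover of $T$ for a common $m$ (divisible by the $m_0$ from Proposition~\ref{mergeFiniteCovers} and large enough for all the chunks involved); this makes all boundary tori over a given $T$ abstractly homeomorphic, hence gluable. Then count, for each JSJ torus $T_i$ adjacent to $J_0$ and each JSJ torus $T$ interior to a corridor or to a null piece, how many boundary elevations of each color are produced, and take enough copies of each building block (as in the ``$\tfrac{D}{d_i}$ copies'' device in the proof of Proposition~\ref{mergeFiniteCovers}) so that the total number of free $+1$-colored boundary elevations over $T_i$ equals the total number of free $-1$-colored ones, and likewise the $0$-colored elevations over every interior torus can be paired up. Here the corridor chunks serve precisely as ``adaptors'': a corridor chunk attached to $J_0$ across $T_i$ absorbs a $+1$ (or $-1$) boundary torus of $J_0$ and exposes $+1$ or $-1$ boundary tori of $T_i$ on its far side, which then get capped off by null colored pieces (and further corridor/null pieces recursively, exactly as in the construction of the finite cover; the carrier chunk $X^*_i$ together with the rest of $M$ is glued up by null colored JSJ pieces). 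After choosing multiplicities so every boundary torus is matched to another of the same color, glue, and take a connected component $\tilde M_0$; by construction the composition $\tilde M_0 \to M$ is a finite cover, and it contains at least one positively colored $J_0$ piece because we included one and it has all its boundary tori matched. Finally, as in the opening paragraph of the proof of Proposition~\ref{mergeFiniteCovers}, pass from $\tilde M_0$ to its normal core $\tilde M$ to make the cover regular; since passing to a further finite cover only multiplies the building blocks, $\tilde M$ is still assembled from elevated colored chunks with matching colors and still contains an elevated positively colored $J_0$ piece.

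\textbf{The main obstacle} is the color-matching count on the boundary tori over each $T_i$. Unlike the colorless merging of Proposition~\ref{mergeFiniteCovers}, where any two boundary elevations of the same torus may be glued, here a $+1$ torus can only meet a $-1$ torus across the same JSJ torus of $M$, so one must verify that after taking suitable $m$-characteristic covers and suitable numbers of copies of each chunk, the supply of $+1$- and $-1$-colored free boundary elevations over each $T_i$ can be made equal \emph{simultaneously} for all $i$ — and similarly that the $0$-colored elevations over interior tori balance. The resolution is that each colored corridor chunk already has balanced $\pm1$ boundary (as noted before the lemma), each colored $J_0$ piece contributes exactly one boundary elevation of its global sign to each $T_i$, and the null colored pieces (which one is free to include in arbitrarily many copies, over arbitrarily deep covers) soak up all the $0$-colored tori; so one positively and one negatively colored $J_0$ piece together with enough corridor and null pieces always close up, after replacing each block by a common further $m$-characteristic cover so that the tori are combinatorially compatible. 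I would also need to double-check (routine, from Definition~\ref{semicover} and Corollary~\ref{virtualExtensionOfSemicovers}) that each building block, being a chunk immersed in $M$ meeting JSJ tori in covers of JSJ tori, does embed after passing to the finite cover, so that the glued object is an honest cover rather than merely an immersed complex.
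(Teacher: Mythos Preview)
Your approach is essentially the same as the paper's, but the paper organizes the argument more cleanly in two steps, and this avoids the part you flag as the ``main obstacle.'' First, the paper glues \emph{only} the colored $J_0$ pieces and the corridor chunks: after passing to $m$-characteristic elevations $\hat J_0$, $\hat X^*_i$ (via Corollary~\ref{virtualExtensionOfSemicovers} and Proposition~\ref{mergeFiniteCovers}, as you suggest), it takes $K=\mathrm{lcm}(k_1,\dots,k_s)$ copies of positively colored $\hat J_0$, $K$ copies of negatively colored $\hat J_0$, and $l_iK/k_i$ copies of each $\hat X^*_i$, where $k_i$ is the number of $+1$-colored boundary components of $\hat X^*_i$ and $l_i$ the number of elevations of $T_i$ in $\hat J_0$. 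This makes the $\pm 1$ counts over each $T_i$ match exactly, and a connected component $N$ of the glued object is a semicover of $M$ whose boundary $\partial N$ consists precisely of the null-colored tori. Second, the paper applies Corollary~\ref{virtualExtensionOfSemicovers} once to $N$: this produces a regular finite cover $\tilde M$ of $M$ in which a finite cover $\tilde N$ of $N$ embeds as a chunk, and the complement $\tilde M\setminus\tilde N$ is then automatically a union of elevated null-colored JSJ pieces. So the null pieces never have to be glued in by hand, and your worry about simultaneously balancing the $0$-colored tori and checking that the glued object is an honest cover is absorbed entirely into that one invocation of Corollary~\ref{virtualExtensionOfSemicovers}.

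One correction on the gluing convention: ``matching the coloring'' means $+1$ to $+1$, $-1$ to $-1$, $0$ to $0$, not $+1$ to $-1$ as you state. Over $T_i$, the $+1$-colored boundary tori on the $J_0$-side come from positively colored $\hat J_0$ pieces, while the $+1$-colored boundary tori on the other side come from the corridor chunks $\hat X^*_i$; these are the two families being paired. Your counting sketch (``one positively and one negatively colored $J_0$ piece together with enough corridor and null pieces always close up'') does not work as stated, since one copy of $\hat J_0$ contributes $l_i$ elevations of $T_i$ while one copy of $\hat X^*_i$ contributes $k_i$ of each sign, and there is no reason for $l_i$ to divide $k_i$; this is exactly why the paper takes $K$ copies of each $\hat J_0$ rather than one.
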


        \begin{proof}
            Because every colored chunk is naturally a semicover of $M$ (Definition \ref{semicover}),
            applying Corollary \ref{virtualExtensionOfSemicovers} and Proposition \ref{mergeFiniteCovers},
            there exists a positive integer $m$, so that
            for each colored chunk there is an elevated colored chunk semicovering $M$ and 
            inducing the $m$-characteristic covering on the   boundary. 
            Suppose $\hat{J}$ is such a cover of $J$ for any
            JSJ piece $J\subset M$, and $\hat{X}^*_i$ is such a cover for any $X^*_i$.
                   
            Suppose $\hat{X}^*_i$ has $k_i$ positively colored boundary components, and 
            hence $k_i$
            negatively colored boundary components. 
            Suppose $\hat{J}_0$ has $l_i$ boundary components
            that are elevations of $T_i$. Let $K$ be the least common multiple of all $k_i$.
            We take $K$ copies of positively colored $\hat{J}_0$, $K$ copies of negatively colored $\hat{J}_0$,
            and $\frac{l_iK}{k_i}$ copies of each $\hat{X}^*_i$. Then for each $i=1,\ldots,s$, the number of positively colored
            elevations of $T_i$ match from both sides  and the same holds for negatively colored elevations of $T_i$.
            Thus we may glue these copies along their boundary, matching the coloring, and pick one component of the result
            to obtain a semicover $N$ of $M$. 
   
            Note that $\partial N$ is the union of all null colored tori.
            By Corollary \ref{virtualExtensionOfSemicovers},
            there is a finite cover $\tilde{N}$ of $N$ which embeds into a regular finite cover $\tilde{M}$.
            We decompose $\tilde{N}$ by elevations of the elevated colored chunks that compose $N$,
            and regard any JSJ piece of $\tilde{M}$ not contained in $\tilde{N}$ as an elevated null colored
            JSJ piece. Then $\tilde{M}$ is as desired.
        \end{proof}

    \subsection{Constructing the virtual representation}\label{Subsec-virtualRepresentation}
        We use the construction from the previous subsection 
        to find a virtual extension of the representation
        in the assumption of Theorem \ref{virtualExtensionRep}.
As in the assumption of Theorem \ref{virtualExtensionRep}, let
        $M$ be an orientable closed mixed $3$-manifold, and
        suppose
            $$\rho_0:\pi_1(J_0)\to\mathscr{G}$$
        is a representation of the fundamental group of a geometric piece $J_0$
        in a %connected
        Lie group $\mathscr{G}$, which restricted to each boundary component
        has nontrivial kernel. We also suppose
        that the images of $\rho$ restricted to the boundary components yield
        a class invertible collection of abelian subgroups of $\mathscr{G}$.
        Fix a representative automorphism of a class inversion
            $$\nu:\mathscr{G}\to\mathscr{G}$$
        with respect to this collection.

      %We assume that $M$ further satisfies the conclusion of Lemma \ref{slopeInKer}. 
        Let $T_1,\cdots,
        T_s$ be the components of $\partial J_0$, and let
            $$\zeta_i\subset T_i$$
        be a slope killed by $\rho_0$, for each $1\leq i\leq s$.
        By the construction of Subsection \ref{Subsec-coloredMerging},
        there is a regular finite cover
            $$\kappa:\,\tilde{M}\to M,$$
        obtained by gluing elevated colored chunks matching the coloring (Lemma \ref{coloredMerging}).

        \begin{lemma}\label{glueRep}
            With the notations above, there is a representation
                $$\tilde\rho:\,\pi_1(\tilde{M})\to\mathscr{G},$$
            satisfying the following:
            \begin{itemize}
            \item for each elevated positively colored $J_0$ piece $\tilde{J}_0\subset\tilde{M}$,
            $\tilde\rho$ restricted to $\pi_1(\tilde{J}_0)$ is conjugate to $\kappa^*(\rho_0)$;
            \item for each elevated negatively colored $J_0$ piece $\tilde{J}_0\subset\tilde{M}$,
            $\tilde\rho$ restricted to $\pi_1(\tilde{J}_0)$ is conjugate to $\nu\circ\kappa^*(\rho_0)$;
            \item for each elevated colored corridor chunk $\tilde{X}^*_i\subset\tilde{M}$,
            $\tilde\rho$ restricted to $\pi_1(\tilde{X}^*_i)$ is cyclic; and
            \item for any elevated null colored JSJ piece of $\tilde{J}\subset\tilde{M}$,
            $\tilde{\rho}$ restricted to $\pi_1(\tilde{J})$ is trivial.
            \end{itemize}
        \end{lemma}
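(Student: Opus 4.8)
The plan is to build $\tilde\rho$ piece by piece on the elevated colored chunks that compose $\tilde M$, and then to glue these local representations using the cut-and-paste principle (Lemma \ref{cut-paste}, or rather its fundamental-group analogue, since here we work purely group-theoretically with an arbitrary target group $\mathscr G$). First I would define the local representation on each kind of chunk. On an elevated positively colored $J_0$ piece $\tilde J_0$, take $\tilde\rho|_{\pi_1(\tilde J_0)}$ to be $\kappa^*(\rho_0)$, and on an elevated negatively colored $J_0$ piece, take it to be $\nu\circ\kappa^*(\rho_0)$; since $\rho_0$ kills the slope $\zeta_i$ on each $T_i$, both of these restrict to a cyclic (indeed trivial on $\zeta_i$) representation on each boundary torus, with image $\kappa^*(\rho_0)(\pi_1(T_i))$ respectively $\nu$ applied to it. On an elevated null colored JSJ piece, take $\tilde\rho$ to be trivial. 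On an elevated colored corridor chunk $\tilde X^*_i$ with corridor surface $\tilde R^*_i$, use the homological pairing with $[\tilde R^*_i]$: since by Theorem \ref{corridorCover} the surface $\tilde R^*_i$ meets every corridor boundary component in at most one slope, the composite $\pi_1(\tilde X^*_i)\to H_1(\tilde X^*_i;\ZZ)\xrightarrow{[\tilde R^*_i]}\ZZ\to\mathscr G$ — where the last map sends $1$ to a fixed element $g_i\in\mathscr G$ to be chosen — is a cyclic representation whose restriction to a corridor boundary component $T^*$ sends the doorsill circle to $g_i^{\pm1}$ (sign according to the color of $T^*$) and the complementary generator to the identity.

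Next I would arrange the conjugacy matching on the gluing tori. The key point is that on a corridor boundary torus $T^*$ of color $+1$ glued to a boundary torus of a positively colored $\tilde J_0$, the $\tilde J_0$-side representation restricted to $\pi_1(T^*)$ is cyclic with the slope $\zeta_i$ killed, generated by the image under $\kappa^*(\rho_0)$ of a dual generator $h_i$; so I would choose $g_i=\kappa^*(\rho_0)(h_i)$, which forces the two sides to induce literally the same cyclic representation on $\pi_1(T^*)$ after choosing compatible bases (here one uses that $\partial \tilde R^*_i\cap T^*$ is exactly the doorsill, so the surface class pairs correctly). On a corridor boundary torus of color $-1$ glued to a negatively colored $\tilde J_0$, the $\tilde J_0$-side is $\nu\circ\kappa^*(\rho_0)$, and since $\nu$ is a class inversion it preserves $\kappa^*(\rho_0)(\pi_1(T_i))$ and inverts every element; this matches the $-1$ coloring of the corridor side, which contributes $g_i^{-1}$ on the doorsill — so again the two sides agree up to conjugacy in $\mathscr G$. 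Finally, on a null colored torus both sides are trivial, so they trivially agree. Having matched the boundary representations up to conjugacy on every gluing torus, I would invoke the cut-and-paste assembly (the group-theoretic version of Lemma \ref{cut-paste}, which just amalgamates the $\pi_1$'s of the pieces over the $\pi_1$'s of the tori, possibly after conjugating each local representation so that the matchings become literal equalities) to produce the global $\tilde\rho:\pi_1(\tilde M)\to\mathscr G$ with the four stated properties.

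The main obstacle I anticipate is the bookkeeping of the conjugacy matching on the corridor boundary tori, specifically: ensuring that the basis of $H_1(T^*;\ZZ)$ used for the homological-pairing construction on the corridor side is compatible with the section–fiber (or meridian–longitude) basis used on the $\tilde J_0$ side, so that the doorsill circle really is identified with the slope dual to $\zeta_i$ and the surface class $[\tilde R^*_i]$ pairs to $\pm1$ with it. This is where the "at most one slope" conclusion of Theorem \ref{corridorCover} is essential — it guarantees that $\tilde R^*_i\cap T^*$ is a single circle parallel to $\zeta_i$, so the pairing is unambiguous — but one still has to check the orientation conventions line up so that color $+1$ genuinely corresponds to $g_i$ rather than $g_i^{-1}$. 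A secondary subtlety is that $\mathscr G$ need not be abelian or have any algebraic structure beyond being a group, so all matchings must be done up to conjugacy and one must be careful that the finitely many conjugations needed to make the matchings simultaneously literal can be performed consistently; this is exactly the content of the amalgamation step, and the fact that each gluing torus is shared by exactly two chunks (which we may assume by Lemma \ref{not-shared}) means the conjugations can be absorbed one torus at a time without conflict.
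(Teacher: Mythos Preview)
Your overall plan---define $\tilde\rho$ on each type of elevated colored chunk and glue via the graph-of-groups/cut-and-paste principle---is the paper's plan. The gap is in your treatment of the corridor chunks. You define the cyclic representation on the \emph{elevated} chunk $\tilde X^*_i$ by pairing with a single elevated corridor surface $\tilde R^*_i$. But $\tilde X^*_i$ is a further finite cover of the base corridor chunk $X^*_i$, and a single connected elevation $\tilde R^*_i$ of $R^*_i$ need not meet every colored boundary torus of $\tilde X^*_i$; at any such torus your pairing gives $0$, so your representation would be trivial there and fail to match the nontrivial $\kappa^*(\rho_0)$ (or $\nu\circ\kappa^*(\rho_0)$) coming from the adjacent $\tilde J_0$ piece. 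The paper avoids this by defining the cyclic representation $\rho_i=\alpha_i\circ\phi_i$ down on $X^*_i$ itself---where by construction $R^*_i$ meets \emph{every} corridor boundary component in exactly one slope---and then pulling back along $\tilde\kappa_i:\tilde X^*_i\to X^*_i$. (Equivalently: pair with the full preimage $\tilde\kappa_i^{-1}(R^*_i)$, not a single component.)

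There is a second, related point you underestimate. You write ``choose $g_i=\kappa^*(\rho_0)(h_i)$'' for ``a'' dual generator $h_i$, but a priori different corridor boundary tori $T^*_i\subset\partial X^*_i$ can yield different dual slopes $\gamma^*_i$, hence different candidate generators in $\mathscr G$. The paper's verification that $\alpha_{T^*_i}$ is independent of the choice of $T^*_i$ up to conjugacy is not mere orientation bookkeeping: it uses that the $T^*_i$ are permuted by deck transformations of the \emph{regular} cover $X''_i$ over $X'(R'_i)$, so the various $\gamma^*_i$ have the same image in $T_i$ under the semicover $\mu_i$. This is the structural fact that makes a single $\alpha_i:\ZZ\to\mathscr G$ well-defined and compatible at every colored boundary torus simultaneously. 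Once you move the construction down to $X^*_i$ and insert this regularity argument, the rest of your outline (including the $\pm$-coloring/$\nu$-inversion matching and the null-colored case) goes through as you describe.
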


        \begin{proof}    By  Lemma \ref{cut-paste} (see also Remark \ref{also-cut-paste}), 
        we need only to construct the local representations with given properties so that they  
        agree on each boundary component up to congugcy.         
       
            In the statement of Lemma \ref{glueRep}, the representations restricted to elevated
            colored $J_0$ pieces and to elevated null colored pieces
            describe themselves. We explain the representation for
            elevated colored corridor chunks as follows.

            Suppose $(X^*_i, R^*_i)$ is a corridor chunk associated to
            $(T_i,\zeta_i)$. We write the canonical semicovering from $X^*_i$
            to $M$ as
                $$\mu_i:\,X^*_i\to M.$$
            Remember that in Subsection \ref{Subsec-coloredMerging},
            we have fixed an orientation of the corridor surface $R^*_i$ for convenience.
            The oriented properly embedded subsurface ${R}^*_i$
            represents a class $[R^*_i]\in H_2(X^*_i,\partial X^*_i;\ZZ)
            \cong H^1(X^*_i;\ZZ)$.
            Then homological pairing with $[R^*_i]$
            induces a quotient homomorphism
                $$\phi_i:\,
                \pi_1(X^*_i)\longrightarrow H_1(X^*_i;\,\ZZ)
                \stackrel{[R^*_i]}{\longrightarrow} \ZZ.$$
            For any positively or negatively
            colored elevation $T^*_i\subset\partial X^*_i$
            of $T_i$, since $R^*_i$ meets $T^*_i$ in exactly one slope,
            $\phi_i$ surjects into $\ZZ$ when restricted to $\pi_1(T^*_i)$.
            Suppose $\gamma^*_i\subset T^*_i$ is a directed slope so that
            $\phi_i([\gamma^*_i])$ equals $1$ in $\ZZ$. We define
            a representation $\alpha_{T^*_i}:\,\ZZ\to\mathscr{G},$
            by assigning $\alpha_{T^*_i}(1)$ to
            be either
            $(\rho_0\circ(\mu_i)_\sharp)([\gamma^*_i])$ or $(\nu\circ\rho_0\circ(\mu_i)_\sharp)([\gamma^*_i])$,
            according to $T^*_i$ being positively or negatively colored, respectively.
            Note that $\alpha_{T^*_i}$ is well defined up to conjugacy of $\mathscr{G}$.
            By the construction of $(X^*_i,R^*_i)$, and in the notations
            of Subsection \ref{Subsec-coloredMerging},
            $X^*_i$ is the carrier chunk of $R^*_i$
            in a regular finite cover $X''_i$
            of the carrier chunk $X'(R'_i)\subset M'_i$, so
            any two $T^*_i$'s differ only by a deck transformation
            of $X''_i$ over $X'(R'_i)$. It follows that $\alpha_{T^*_i}$ up to conjugacy
            is independent of the choice of $T^*_i$. In other words, we have a representation:
                $$\alpha_i:\,\ZZ\to\mathscr{G},$$
            defined by any $\alpha_{T^*_i}$.
            We define
                $$\rho_i:\,\pi_1(X^*_i)\to\mathscr{G}$$
            as $\alpha_i\circ\phi_i$, up to conjugacy.
            Finally, for an elevated corridor chunk $\tilde{X}^*_i\subset\tilde{M}$,
            with the defining covering
                $$\tilde\kappa_i:\,\tilde{X}^*_i\to X^*_i,$$
            we define
                $$\tilde\rho:\,\pi_1(\tilde{X}^*_i)\to\mathscr{G}$$
            as $\rho_i\circ(\tilde\kappa_i)_\sharp$, up to conjugacy.

            We must check that
            the representation $\tilde\rho|_{\pi_1(\tilde{X}^*_i)}$
            agrees with the  adjacent representations up to conjugacy along the boundary.
            Note that $\tilde{X}^*_i$ is only adjacent to
            elevated null colored JSJ pieces and elevated colored $J_0$
            pieces.
            If $\tilde{T}\subset\partial\tilde{X}^*_i$ is
            null colored, this means that under $\kappa_i$,
            $\tilde{T}$ covers a boundary torus
            of $X^*_i$ that misses $\partial R^*_i$. Then $\tilde\rho$ is
            trivial restricted to $\tilde{T}$, and it agrees with the trivial
            representation $\tilde\rho$ on the adjacent elevated null colored
            piece. If $\tilde{T}\subset\partial\tilde{X}^*_i$ is positively
            or negatively colored, it follows from the definition of $\alpha_i$
            that $\rho_i$ restricted to each $\pi_1(T^*_i)$ is conjugate to
            the restriction of $\rho_0$ or $\nu\circ\rho_0$ according to the coloring.
            Thus $\tilde\rho|_{\pi_1(\tilde{X}^*_i)}$ is also
            conjugate to the restriction
            of $\rho_0$ or $\nu\circ\rho_0$ to $\pi_1(\tilde{T})$ according to the
            coloring, since $\tilde\rho|_{\pi_1(\tilde{X}^*_i)}$ is the pull back
            of $\rho_i$ via the subgroup inclusion $(\tilde\kappa_i)_\sharp$.

            Because every elevated positively or negatively colored $J_0$ piece is
            only adjacent to corridor chunks, and because $\tilde\rho$ trivially
            agrees along a torus adjacent to two elevated null colored pieces,
            we have verified that the $\tilde\rho$  we have defined on the elevated
            color chunks of $\tilde{M}$ agree up to conjugacy on the tori that
            they glue up along. We conclude 
            %by Lemma \ref{cut-paste}
            that there is a representation $\tilde{\rho}:\pi_1(\tilde{M})\to\mathscr{G}$, as desired.
        \end{proof}

		Lemma \ref{glueRep} implies Theorem \ref{virtualExtensionRep}, so we have completed the proof
		of Theorem \ref{virtualExtensionRep}.
		
	\begin{remark} \label{also-cut-paste}	 
	%Because $\tilde{M}$ is obtained by
            %gluing up the colored $J_0$-elevation pieces, the corridor
            %chunks and other pieces, $\pi_1(\tilde{M})$ is obtained by amalgamations
            %and HNN extensions of the fundamental groups
            %of the corresponding subgroups, up to isomorphism.
           
            Note our  $\pi_1(\tilde{M})$ isomorphic the fundamental group of
            the graph-of-groups induced by the obvious graph-of-spaces decomposition, canonical
            up to choosing base points of vertex spaces and paths to base points of adjacent
            edge spaces, and up to choosing a base point of $\tilde{M}$ and
            paths to the base points of vertex spaces, cf.~\cite{Serre}.

            In general, we can glue up representations on vertex groups as long as they agree
            on the edge groups up to conjugacy. This is a consequence of the following
            facts. If $\Gamma=\Gamma_1*_H\Gamma_2$ is an amalgamation of groups, and
            if $\rho_i:\Gamma_i\to\mathscr{G}$, where $i=1,2$,
            are representations such that $\rho_1|_H$ are conjugate to $\rho_2|_H$, then there is
            a representation $\rho:\Gamma\to\mathscr{G}$. More precisely, suppose $\rho_1|_H=\sigma_h\circ\rho_2|_H$,
            where $\sigma_h$ is the conjugation of $h\in\mathscr{G}$, then $\rho$ can be defined by
            taking $\rho_1$ on $\Gamma_1$ and $\sigma_h\circ\rho_2$ on $\Gamma_2$. Similarly,
            if $\Gamma=\Gamma_0*_H$ is an HNN extension with stable letter $t$, and
            if $\rho_0:\Gamma_0\to\mathscr{G}$ is a representation such that $\rho_0|_H$
            is conjugate to $\rho_0|_{H^t}$, say by $\sigma_h$, then there is a representation
            $\rho:\Gamma\to\mathscr{G}$, for example, defined by taking $\rho_0$ on $\Gamma_0$ and
            $\rho(t)=h$.
            \end{remark}

\section{Volume computation}

	In this section, we prove Theorem \ref{virt-non-zero} and Proposition \ref{Seifert volumes of Seifert manifolds}
	using the techniques developed in the previous sections.

	\subsection{Virtually positive volume of representations}  We apply Theorem \ref{virtualExtensionRep} to
	prove Theorem \ref{virt-non-zero}.
%  Let's first recall
%the Thurston's Hyperbolic Dehn surgery theorem.
% Let $M$ denote a compact, orientable $3$-manifold whose boundary
%consists of tori $T_1,\ldots,T_p$ and whose interior admits a complete
%(finite volume) hyperbolic structure. We denote by $M_\text{max}$ the interior of $M$ with a
%system of maximal cusps removed. Now we identify $M$ with
%$M_\text{max}$, then $\partial M$ has a Euclidean metric induced
%from the hyperbolic metric and each closed Euclidean geodesic in
%$\partial M$ has the induced length. 
%The so-called (Gromov) $2\pi$-lemma
%claims that   if  the geodesic
%corresponding to the slope $\zeta_i$ on $T_i$ has length $> 2\pi$, then the 3-manifold obtaind from $M$ by Dehn filling along $\zeta_i$ is still hyperbolic  (see \cite{BH} for
%example).  So we state
%
%\begin{theorem}\label{surgered}
%Let $M$ be compact oriented $3$-manifold  with  toral boundary
%$T_1\cup\ldots\cup T_p$ whose interior admits a complete
%hyperbolic structure. And we identify $M$ with $M_{max}$.
%
%Then there is a real number $C>0$ such that  the
% the closed 3-manifold  $M(\zeta_1,\ldots,\zeta_n)$ obtained by Dehn filling each $T_i$ along $\zeta_i$ is still a complete hyperbolic
%manifold if each slope $\zeta_i$ has then length $>C$.
% \end{theorem}
        The lemma below verifies class inversion properties of
        ${\rm PSL}(2;\C)$ and ${\rm Iso}_e\t{\rm SL_2(\R)}$.
        Moreover, for the discussion about representation volumes,
        we are particularly interested in whether the class inversions
        can be realized by conjugation using 
        orientation preserving isomorphisms of the geometric space.

        \begin{lemma}\label{geometricClassInv}
            \ %inserting line break
            \begin{enumerate}
            \item ${\rm PSL}(2;\C)$ is
            class invertible with respect to all its cyclic subgroups,
            and a class inversion can be
            realized by an inner automorphism of ${\rm PSL}(2;\C)$,
            which is orientation preserving acting on $\Hi^3$;
            \item ${\rm Iso}_e\t{\rm SL_2(\R)}$ is class invertible with
            respect to its center $\RR$, and a class inversion can
            be realized by an inner automorphism of ${\rm Iso}\t{{\rm SL}_2(\R)}$,
            which is orientation preserving acting on $\t{{\rm SL}_2(\R)}$.
            \end{enumerate}
        \end{lemma}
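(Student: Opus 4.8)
The plan for part (1) is to reduce it to the elementary fact that every $g\in{\rm PSL}(2;\C)$ is conjugate to $g^{-1}$ inside ${\rm PSL}(2;\C)$ itself. Lifting to ${\rm SL}(2;\C)$ and using that $\C$ is algebraically closed, $g$ is conjugate either to some $\mathrm{diag}(\lambda,\lambda^{-1})$ or to $\pm\left(\begin{smallmatrix}1&1\\0&1\end{smallmatrix}\right)$; in the first case $\left(\begin{smallmatrix}0&1\\-1&0\end{smallmatrix}\right)$ conjugates it to its inverse, and in the second $\mathrm{diag}(i,-i)$ does. Conjugating these back, for every $g\in{\rm PSL}(2;\C)$ I get $h_g\in{\rm PSL}(2;\C)$ with $h_ggh_g^{-1}=g^{-1}$; then conjugation by $h_g$ preserves the cyclic group $\langle g\rangle$ and sends each of its elements to its inverse. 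All such conjugations are inner, hence represent the trivial class $[\mathrm{id}]\in\mathrm{Out}({\rm PSL}(2;\C))$, which is therefore a class inversion with respect to the collection of all cyclic subgroups; and since ${\rm PSL}(2;\C)={\rm Iso}_+\Hi^3$, every $h_g$ acts orientation-preservingly on $\Hi^3$. That settles (1).

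For part (2) the plan is to produce one orientation-preserving isometry $\iota$ of $\t{{\rm SL}_2(\R)}$ whose conjugation action, restricted to ${\rm Iso}_e\t{{\rm SL}_2(\R)}$, inverts the central $\R$. Conjugation by $g_0=\mathrm{diag}(1,-1)\in{\rm GL}(2;\R)$ is an automorphism $\sigma$ of ${\rm PSL}(2;\R)$ (the one inducing $z\mapsto-\bar z$ on $\Hi^2$); as $\t{{\rm SL}_2(\R)}$ is the universal cover of ${\rm PSL}(2;\R)$, $\sigma$ lifts to a group automorphism $\iota$ of $\t{{\rm SL}_2(\R)}$. I would then read off all the needed properties from $d\iota_e$, which is the restriction to $\mathfrak{sl}_2(\R)$ of $X\mapsto g_0Xg_0^{-1}$: in the basis $\{X,\,Y+Z,\,Z-Y\}$ — orthogonal for the $\t{{\rm SL}_2(\R)}$-metric, with $Z-Y$ spanning the fibre (rotation) direction — one computes $d\iota_e=\mathrm{diag}(1,-1,-1)$. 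Hence $\iota$ is an isometry of $\t{{\rm SL}_2(\R)}$, it is orientation preserving (the determinant is $+1$, so $\iota^{*}\omega_X=\omega_X$), it reverses the fibre direction, so $\iota({\rm sh}(t))={\rm sh}(-t)$ and in particular it inverts the centre $\{{\rm sh}(n):n\in\Z\}$ of $\t{{\rm SL}_2(\R)}$, and it does not lie in ${\rm Iso}_e\t{{\rm SL}_2(\R)}$, since that subgroup is generated by left translations and fibre translations, all of which preserve the fibre orientation.

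Finally I would promote $\iota$ to the desired automorphism of ${\rm Iso}_e\t{{\rm SL}_2(\R)}={\R}\times_\Z\t{{\rm SL}_2(\R)}$. Viewing $\iota\in{\rm Iso}\t{{\rm SL}_2(\R)}$, conjugation $c_\iota$ preserves the normal subgroup ${\rm Iso}_e\t{{\rm SL}_2(\R)}$; it acts on the left-translation subgroup by $L_h\mapsto L_{\iota(h)}$ and on the fibre-translation subgroup by $R_{{\rm sh}(x)}\mapsto R_{\iota({\rm sh}(x))}=R_{{\rm sh}(-x)}$. By Lemma \ref{SL} the central $\R$ in ${\rm Iso}_e\t{{\rm SL}_2(\R)}$ is exactly this fibre-translation subgroup, so $c_\iota|_{{\rm Iso}_e\t{{\rm SL}_2(\R)}}$ preserves $\R$ and inverts each of its elements; as $\R$ is central it is its own conjugacy class of abelian subgroups, hence $c_\iota|_{{\rm Iso}_e\t{{\rm SL}_2(\R)}}$ is a class inversion with respect to $\{[\R]\}$, realized by the inner automorphism $c_\iota$ of ${\rm Iso}\t{{\rm SL}_2(\R)}$ with $\iota$ orientation preserving on $\t{{\rm SL}_2(\R)}$. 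This gives (2). The step I expect to be the crux is the $d\iota_e$ computation in the right basis: it must simultaneously be orthogonal (so that $\iota$ is an isometry), of determinant $+1$ (so that $\iota$ is orientation preserving), and reverse the fibre line (so that it inverts the centre); the compatibility of these three requirements is precisely what makes the ``orientation preserving'' clause possible, and it forces $\iota$, which must also reverse the base orientation of the Seifert fibration, to lie outside the identity component, where an inner automorphism would fix the centre.
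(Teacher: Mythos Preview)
Your proof is correct and follows essentially the same approach as the paper. For part (1), both you and the paper reduce to the fact that every element of ${\rm PSL}(2;\C)$ is conjugate to its inverse; you supply the Jordan-form verification, while the paper simply cites the fact. For part (2), the paper's argument is a terse structural one: ${\rm Iso}\t{{\rm SL}_2(\R)}$ has two components, any $\nu$ in the non-identity component conjugates $r\in\R$ to $-r$, and the ${\rm SL}_2$-geometry admits no orientation-reversing isometries, so $\nu$ is automatically orientation preserving. You instead construct a specific such $\nu$ (your $\iota$, lifting conjugation by $\mathrm{diag}(1,-1)$) and verify all required properties via the computation of $d\iota_e$ in the basis $\{X,\,Y+Z,\,Z-Y\}$; this is the same element the paper implicitly invokes, just made explicit. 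Your self-contained computation has the virtue of not requiring the reader to already know that ${\rm Iso}\t{{\rm SL}_2(\R)}$ has exactly two components and no orientation-reversing elements, at the cost of a bit more work.
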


        \begin{proof}
            The first statement follows from the fact
            that every element of $\mobgp$ is conjugate to its inverse in $\mobgp$.
            To see the second statement, note that ${\rm Iso}\t{\rm SL_2(\R)}$ has two components.
            For any $\nu$ in the
            non-identity component, conjugating ${\rm Iso}_e\t{\rm SL_2(\R)}$ by $\nu$ sends
            any $r\in\RR$ to $-r\in\RR$, so it is a class inversion for $\RR$.
            Recall that there are no orientation reversing isometries in the ${\rm SL}_2$-geometry. 
        \end{proof}

        \begin{proof}[{Proof of Theorem \ref{virt-non-zero}}]
            We first show the hyperbolic volume case. Suppose $M$ contains at least one
            hyperbolic piece $J_0$. It suffices to prove the theorem when $M$ is mixed. We take sufficiently long slopes,
            one in each component of $\partial J_0$, making sure that
            Dehn fillings along these slopes yield a closed hyperbolic $3$-manifold $\bar{J}_0$ of finite volume.
            Let $\rho_0:\pi_1(J_0)\to{\rm PSL}(2;\C)$
            be the representation factoring through the the Dehn filling and the discrete faithful representation
            of $\pi_1(\bar{J}_0)$. By Theorem \ref{virtualExtensionRep}, we can virtually extend $\rho_0$
            to $\rho:\pi_1(\tilde{M})\to{\rm PSL}(2;\C)$. Moreover, it follows
            from the conclusion of Theorem \ref{virtualExtensionRep} and
            the additivity principle (Theorem \ref{additivity}) and Lemma \ref{Almost trivial representation} that
            only some elevations of $J_0$ could
            contribute to the hyperbolic representation volume of $\tilde{M}$.
            By Lemma \ref{geometricClassInv}(1) the volume of
            all these elevations is a positive multiple  of the hyperbolic volume of $\bar{J}_0$.
            Thus the hyperbolic representation volume of $\tilde{M}$ is positive.

            It remains to show the Seifert volume case. Since the theorem  is known
            for graph manifolds \cite{DW} and and  for
            geometric manifolds \cite{BG1},
            we may again assume $M$ to be mixed. By the assumption, $M$ also
            contains a Seifert geometric piece $J_0$.
            The rest of the argument is almost the same as the previous case,
            except that: 
            we start by picking a slope $\zeta_i\subset T_i$ which intersects the Seifert fiber $t_i\subset T_i$ exactly once 
            for each component $T_i$ of $\partial J_0$, moreover those $\zeta_i$ can be chosen so that the Dehn filling $\bar{J}_0$ of
            $J_0$ has a nontrivial Euler class. Then we can choose $[t_i]$ to be the $\gamma_i^*$ in the proof of Lemma \ref{glueRep} and 
            applying Theorem \ref{virtualExtensionRep}, Lemma \ref{geometricClassInv}(2),
            and Theorem \ref{additivity},  we will find a finite cover $\tilde{M}$
            with positive Seifert volume.
             \end{proof}

\subsection{Volumes of representations of Seifert manifolds}
Now we  will prove Proposition \ref{Seifert volumes of Seifert manifolds}.

Let $N$ be a closed oriented $\t{{\rm SL}_2(\R)}$-manifold whose
base $2$-orbifold is an orientable,  hyperbolic
$2$-orbifold $\c{O}$ with positive genus $g$ and  $p$ singular points. Then, keeping the same notation as in section 2.3, we have a
presentation
%\begin{eqnarray*}
$$\pi_1N=\l \alpha_1,\beta_1,\ldots,\alpha_g,\beta_g,s_1,\ldots,s_p,h :  $$
$$s_1^{a_1}h^{b_1}=1,\ldots, s_p^{a_p}h^{b_p}=1, [\alpha_1,\beta_1]\ldots[\alpha_g,\beta_g]=s_1\ldots s_p\r$$
%\end{eqnarray*}
with the condition $e=\sum_ib_i/a_i\not=0$.
The following result
of Eisenbud--Hirsch--Neumann \cite{EHN}, which extends the result of
Milnor--Wood \cite{Mi,Wo} from circle bundles to Seifert manifolds, is very
useful for our purpose.

\begin{theorem}[{\cite[Theorem 3.2 and Corollary 4.3]{EHN}}]\label{Eisenbud-Hirsch-Neumann} 
	Suppose $N$ is a closed orientable Seifert manifold with
	a regular fiber $h$ and  base of genus $>0$.
	\begin{enumerate}
		\item There is a $({\rm PSL}_2(\R),{\S}^1)$
		horizontal foliation on $N$ if and only if there is a representation
		$\t{\phi} : \pi_1(N)\to \widetilde{{\rm SL}_2(\R)}$ such that
		$\t{\phi} (h)= {\rm sh}(1)$;
		\item 
		Suppose $N=(g,0;a_1/b_1,\ldots, a_n/b_n)$, then there is a 
		$({\rm PSL}_2(\R),{\S}^1)$ horizontal foliation on $N$ if and only if
			$$\sum \llcorner{ {b_i}/{a_i}}\lrcorner \le -\chi(F_g); \,\,\, \sum \ulcorner{b_i}/{a_i}\urcorner \ge\chi(F_g)$$
		
	\end{enumerate}
\end{theorem}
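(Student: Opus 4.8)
The plan is to reproduce the proof of Eisenbud--Hirsch--Neumann \cite[Theorem 3.2, Corollary 4.3]{EHN}, organized around the Milnor--Wood inequality for $\widetilde{{\rm SL}_2(\R)}$. Throughout I use the presentation of $\pi_1(N)$ recalled above, together with the central extension $1\to\mathbb{Z}\langle h\rangle\to\pi_1(N)\to\pi_1^{\mathrm{orb}}(\mathcal{O})\to 1$, where the base $2$-orbifold $\mathcal{O}$ is $F_g$ equipped with cone points of orders $a_1,\ldots,a_n$; the hypothesis that the base has genus $g\geq 1$ will be used crucially, since only then does the surface relator contain genuine commutators.

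\emph{Step 1 (foliations versus representations).} A $({\rm PSL}_2(\R),S^1)$ horizontal foliation $\mathfrak{F}$ on $N$ is, by definition, transverse to the Seifert fibers and modelled transversally on the standard action of $\mathrm{PSL}_2(\R)$ on $S^1$; equivalently it is a flat $S^1$-bundle structure on $N\to\mathcal{O}$ for which the fibers are transversals. Following Milnor's dictionary between flat bundles and holonomy representations, one develops the transverse structure over the universal cover to obtain a holonomy homomorphism $\pi_1(N)\to\mathrm{PSL}_2(\R)$; since a regular fiber $h$ wraps the $S^1$-fiber exactly once, it is recorded by the deck transformation $\mathrm{sh}(1)$ generating $\mathbb{Z}=\ker(\widetilde{{\rm SL}_2(\R)}\to\mathrm{PSL}_2(\R))$, so the holonomy lifts to $\tilde\phi\colon\pi_1(N)\to\widetilde{{\rm SL}_2(\R)}$ with $\tilde\phi(h)=\mathrm{sh}(1)$. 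Conversely, given such a $\tilde\phi$, it descends to $\phi\colon\pi_1^{\mathrm{orb}}(\mathcal{O})\to\mathrm{PSL}_2(\R)$, and the hypothesis that $\tilde\phi(h)$ is the generator of the center (not a proper multiple) says exactly that the associated flat $S^1$-bundle over $\mathcal{O}$ is isomorphic to $N$ as a Seifert bundle; its tautological horizontal foliation is then the desired $\mathfrak{F}$. I expect this step to be essentially formal.

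\emph{Step 2 (necessity of the inequalities).} Given $\tilde\phi$ with $\tilde\phi(h)=\mathrm{sh}(1)$, feed it the relators. From $s_i^{a_i}h^{b_i}=1$ one gets $\tilde\phi(s_i)^{a_i}=\mathrm{sh}(-b_i)$, so $\tilde\phi(s_i)$ is elliptic, conjugate to $\mathrm{sh}(-b_i/a_i)$, with translation number $\mathrm{rot}(\tilde\phi(s_i))=-b_i/a_i$. From the surface relator one gets, in $\widetilde{{\rm SL}_2(\R)}$,
$$\prod_{i=1}^{g}[\tilde\phi(\alpha_i),\tilde\phi(\beta_i)]=\prod_{j=1}^{n}\tilde\phi(s_j),$$
so a product of $g$ commutators must equal a product of $n$ elliptics with prescribed translation numbers $-b_j/a_j$. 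Now combine two estimates: the translation number is a homogeneous quasimorphism of defect at most $1$ on $\widetilde{\mathrm{Homeo}^+(S^1)}$, hence on $\widetilde{{\rm SL}_2(\R)}$, which bounds how far $\mathrm{rot}(\prod_{i}[\tilde\phi(\alpha_i),\tilde\phi(\beta_i)])$ can stray from $0$ --- sharpened by Milnor--Wood to the open interval $(1-2g,\,2g-1)$ --- while a companion estimate bounds how far $\mathrm{rot}(\prod_j\tilde\phi(s_j))$ can stray from $\sum_j(-b_j/a_j)$. Matching the two and unwinding floors and ceilings yields $\sum_i\lfloor b_i/a_i\rfloor\le -\chi(F_g)$; the other inequality is the same statement applied to $N$ with reversed orientation, which flips each $b_i$ and interchanges floor with ceiling, giving $\sum_i\lceil b_i/a_i\rceil\ge\chi(F_g)$.

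\emph{Step 3 (sufficiency) and the main obstacle.} For the converse I would build $\tilde\phi$ by hand: choose each $\tilde\phi(s_j)$ in the conjugacy class of $\mathrm{sh}(-b_j/a_j)$, with elliptic centers positioned so that $\prod_j\tilde\phi(s_j)$ has any translation number in the range permitted by the companion estimate; then, using that a product of $g$ commutators in $\widetilde{{\rm SL}_2(\R)}$ attains every translation number in $(1-2g,\,2g-1)$, solve $\prod_i[\tilde\phi(\alpha_i),\tilde\phi(\beta_i)]=\prod_j\tilde\phi(s_j)$ for $\tilde\phi(\alpha_i),\tilde\phi(\beta_i)$. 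The two numerical inequalities are precisely the compatibility conditions under which these constructions can be made to meet. The routine parts are Step 1 and the soft quasimorphism estimate in Step 2; the main obstacle is the sharp end of Milnor--Wood together with the explicit realization --- equivalently, pinning down the image of the Euler class map $\mathrm{Hom}(\pi_1^{\mathrm{orb}}(\mathcal{O}),\mathrm{PSL}_2(\R))\to H^2(\pi_1^{\mathrm{orb}}(\mathcal{O});\mathbb{Z})$ and translating it into the stated floor/ceiling conditions, which is genuine hyperbolic-geometry bookkeeping. This last part is exactly what is carried out in \cite{EHN}, and I would import it rather than redo the computation.
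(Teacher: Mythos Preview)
The paper does not prove this theorem; it is quoted verbatim from \cite[Theorem 3.2 and Corollary 4.3]{EHN} and used as a black box in the proof of Proposition \ref{SvSm}. So there is no ``paper's own proof'' to compare against.

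Your outline is essentially the original argument of Eisenbud--Hirsch--Neumann, which is the appropriate thing to sketch here. A few remarks on accuracy: in Step 2, the Milnor--Wood range for a product of $g$ commutators in $\widetilde{{\rm SL}_2(\R)}$ is the \emph{closed} interval $[1-2g,\,2g-1]$, not the open one, and this matters since the inequalities in the statement are non-strict. Also, the ``companion estimate'' you allude to is precisely \cite[Lemma 2.1]{EHN} on the behaviour of $\llcorner m(\cdot)\lrcorner$ and $\ulcorner M(\cdot)\urcorner$ under products, which is what converts the quasimorphism bound into the floor/ceiling form; it is worth naming it explicitly since the present paper invokes exactly that lemma elsewhere (proof of Lemma \ref{trivial}). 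Your honest admission that Step 3 would be imported from \cite{EHN} rather than redone is consistent with how the paper itself treats the result.
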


In order to prove Proposition \ref{Seifert volumes of Seifert
manifolds} we will check the following proposition which describes
those representations leading to a non zero volume. For each element
$(a, b)\in {\R}\times\t{{\rm SL}_2(\R)}$, its image in
${\R}\times_{\Z}\t{{\rm SL}_2(\R)}$ will be denoted as $\o{(a, b)}$.

\begin{proposition}\label{SvSm}
A representation $\rho: \pi_1(N)\to {\rm Iso}_e\t{{\rm
SL}_2(\R)}={\R}\times_{\Z}\t{{\rm SL}_2(\R)}$ has non-zero volume if and only if
there are integers $n,n_1,\ldots,n_p$ subject to the conditions
\begin{eqnarray}
\sum \llcorner{ {n_i}/{a_i}}\lrcorner -n\le 2g-2 \,\,\, {\rm and}  \,\,\,\sum \ulcorner{n_i}/{a_i}\urcorner-n \ge
2-2g
\end{eqnarray}
such that
\begin{eqnarray}
\rho(s_i)=\o{\left(\frac{n_i}{a_i}-\frac{b_i}{a_i}\frac{1}{e}\left(\sum_i\left(\frac{n_i}{a_i}\right)-n\right),g_i{\rm sh}\left(\frac{-n_i}{a_i}\right)g^{-1}_i\right)}
\end{eqnarray}
where $g_i$ is an element of $\t{{\rm SL}_2(\R)}$ and
\begin{eqnarray}
\rho(h)=\o{\left(\frac{1}{e}\left(\sum_i\left(\frac{n_i}{a_i}\right)-n\right),1\right)}
\end{eqnarray}
whose volume is given by
\begin{eqnarray}
{\rm
vol}(N,\rho)=4\pi^2\frac{1}{|e|}\left(\sum_i\left(\frac{n_i}{a_i}\right)-n\right)^2
\end{eqnarray}
Moreover the $\rho$-image of $\alpha_1,\beta_1, \ldots ,
\alpha_g,\beta_g$ can be chosen to lie in $\t{{\rm SL}_2(\R)}$.
\end{proposition}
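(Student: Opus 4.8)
The plan is to reduce the analysis of $\rho\co\pi_1N\to G={\R}\times_{\Z}\t{{\rm SL}_2(\R)}$ to the Godbillon--Vey description of the Seifert volume together with the Milnor--Wood type inequalities of Eisenbud--Hirsch--Neumann, following closely the method of Brooks--Goldman \cite{BG2}. Let $\bar\rho\co\pi_1N\to{\rm PSL}_2(\R)$ be the composition of $\rho$ with the projection $G\to{\rm PSL}_2(\R)$. Since ${\R}$ is central in $G$ one has $\rho([\alpha,\beta])=\o{(0,[x,y])}$ whenever $\rho(\alpha)=\o{(a,x)}$ and $\rho(\beta)=\o{(b,y)}$, so the ${\R}$--components of $\rho(\alpha_j),\rho(\beta_j)$ play no role in the surface relation; deforming them to $0$ produces a representation with the same images of the $s_i$ and of $h$, hence the same volume (the volume being locally constant by Theorem \ref{basic property of volume of presentation}(1)), and with $\rho(\alpha_j),\rho(\beta_j)\in\t{{\rm SL}_2(\R)}$. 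This yields the ``moreover'' clause, and I assume this normalization henceforth.

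The first step is to pin down $\bar\rho$. If $\bar\rho$ has elementary image then that image is virtually abelian, hence, arguing as in the proof of Lemma \ref{Almost trivial representation} (deforming within a connected virtually abelian subgroup to the trivial representation), ${\rm vol}_G(N,\rho)=0$. So for nonzero volume $\bar\rho$ must be non-elementary; then $\bar\rho(h)$, being central in the non-elementary group $\bar\rho(\pi_1N)\leq{\rm PSL}_2(\R)$, is trivial, and $\bar\rho$ factors through $\pi_1^{{\rm orb}}(\c O)=\pi_1N/\l h\r$. By Lemma \ref{SL} there is then a unique $\tau\in{\R}$ with $\rho(h)=\o{(\tau,1)}$, and ${\rm vol}_G(N,\rho)\ne0$ forces $\tau\ne0$ (otherwise $\rho$ factors through the $2$--orbifold group and the pulled--back $3$--form vanishes). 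The relations $s_i^{a_i}h^{b_i}=1$ give $\bar\rho(s_i)^{a_i}=1$, so each $\bar\rho(s_i)$ is elliptic of finite order and, after a conjugation, is the projection of ${\rm sh}(-n_i/a_i)$ for an integer $n_i$ (well defined mod $a_i$, a choice being fixed); lifting this equality to $G$ and imposing $s_i^{a_i}h^{b_i}=1$ forces the ${\R}$--component of $\rho(s_i)$, giving the displayed formula for $\rho(s_i)$ once $\tau e=\sum_i n_i/a_i-n$ is known.

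The heart of the argument is carried out in tandem. To determine $\tau$ and the constraints on the $n_i$, note that $\bar\rho$ kills the same relators on the $s_i$ as the Seifert manifold $N'=\left(g,0;n_1/a_1,\dots,n_p/a_p,-n/1\right)$ does on its generators, where $n\in\Z$ records the Euler contribution of the chosen lifts of the $\alpha_j,\beta_j$; by Theorem \ref{Eisenbud-Hirsch-Neumann} applied to $N'$, the existence of such a $\bar\rho$ is equivalent to $N'$ admitting a $({\rm PSL}_2(\R),{\S}^1)$ horizontal foliation, i.e.\ to the inequalities
\begin{equation*}
\sum_i\llcorner n_i/a_i\lrcorner-n\le 2g-2,\qquad \sum_i\ulcorner n_i/a_i\urcorner-n\ge 2-2g ;
\end{equation*}
a cocycle computation comparing the Euler cocycle of $1\to\Z\l h\r\to\pi_1N\to\pi_1^{{\rm orb}}(\c O)\to1$ with $\bar\rho^{*}$ of the Euler class of $1\to{\R}\to G\to{\rm PSL}_2(\R)\to1$ then gives $\tau=\frac1e\left(\sum_i n_i/a_i-n\right)$. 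To compute the volume, observe that the flat $G$--connection of $\rho$ equips $N$ with a transversely $\t{{\rm SL}_2(\R)}$--structure whose volume form, under $\t{{\rm SL}_2(\R)}\to{\Hi}^2$, is the product of the hyperbolic area form with the ${\R}$--fibre form; integrating it along the Seifert fibre $h$, along which $\rho$ acts by the translation $\tau$, and combining Proposition \ref{Euler} on the fibre with the Godbillon--Vey description of the Seifert volume as in \cite{BG1,BG2} on the base (after passing to a finite cover that is a circle bundle and dividing by the covering degree), yields
\begin{equation*}
{\rm vol}_G(N,\rho)=4\pi^2\,|e|\,\tau^2=\frac{4\pi^2}{|e|}\Big(\sum_i\frac{n_i}{a_i}-n\Big)^{2}.
\end{equation*}
Conversely, given integers $n,n_1,\dots,n_p$ satisfying the two inequalities with $\sum_i n_i/a_i\ne n$, Theorem \ref{Eisenbud-Hirsch-Neumann} supplies $\rho(\alpha_j),\rho(\beta_j)\in\t{{\rm SL}_2(\R)}$ realizing the surface relation, and together with the displayed formulas for $\rho(s_i)$ and $\rho(h)$ this defines a representation of $\pi_1N$ of the asserted nonzero volume; Proposition \ref{Seifert volumes of Seifert manifolds} then follows by collecting these values.

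I expect the main obstacle to be the volume computation when $\bar\rho$ has nonzero Euler class over the base orbifold, so that Proposition \ref{CV=Vol} does not apply verbatim: one must disentangle the ${\R}$--translation part $\tau$ from the ${\rm PSL}_2(\R)$--part, keep track of the rational orbifold Euler number through the integration along the Seifert fibre, and reconcile the resulting expression with the floor/ceiling bookkeeping of the Eisenbud--Hirsch--Neumann inequalities that define the integer $n$.
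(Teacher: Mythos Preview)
Your overall structure---force $\rho(h)$ into the centre, read off the form of $\rho(s_i)$ from the relations $s_i^{a_i}h^{b_i}=1$, extract the Eisenbud--Hirsch--Neumann inequalities from the surface relation, then compute the volume---is exactly the paper's. The derivations of the displayed formulas for $\rho(s_i)$, $\rho(h)$ and the inequalities are essentially the same, though the paper writes out the linear system directly rather than packaging it via an auxiliary Seifert manifold $N'$. Two minor discrepancies: your reduction ``$\bar\rho$ elementary $\Rightarrow$ virtually abelian $\Rightarrow$ ${\rm vol}=0$'' invokes Lemma~\ref{Almost trivial representation}, but that lemma only treats cyclic or finite image, so an extra step is needed; the paper instead cites the cohomological-dimension argument of \cite{BG1,BG2} directly for $\rho(h)=\overline{(\tau,1)}$. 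Your ``moreover'' argument (kill the $\RR$-components of $\rho(\alpha_j),\rho(\beta_j)$ by a deformation that leaves the commutators unchanged) is valid and arguably cleaner than the paper's appeal to Theorem~\ref{Eisenbud-Hirsch-Neumann} for that clause.

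The genuine gap is the volume computation, and you correctly identify it. Your heuristic---integrate the volume form along the Seifert fibre to pick up the translation length $\tau$, then invoke Godbillon--Vey on the base---does not go through as stated, precisely because Proposition~\ref{CV=Vol} requires the Euler class of $\bar\rho$ to vanish, and ``integrating along the fibre'' is not a well-defined operation on the pulled-back $3$-form until one has arranged a product-like structure. The paper sidesteps all of this by a tower of covers that manufactures a situation where the fibre hits exactly ${\rm sh}(1)$. First pass to a smooth circle bundle $\tilde N\to N$ of fibre degree $1$, so $\tilde e=(\deg p_1)e$; the relation $\tilde t^{\,\tilde e}=\prod[\tilde\alpha_j,\tilde\beta_j]$ then forces $\tilde e\,\tau=\tilde n\in\Z$. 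Next unwind the fibre $\tilde e$ times to get $\hat N$ with $\hat e=1$; now $\hat\rho$ lands in $\t{{\rm SL}_2(\R)}$ and sends the fibre $\hat t$ to ${\rm sh}(\tilde n)$. Finally observe that $\hat\rho$ extends to a representation $\rho^*$ of a circle bundle $N^*$ with $e^*=\tilde n$ and $\rho^*(h^*)={\rm sh}(1)$. At this last stage Theorem~\ref{Eisenbud-Hirsch-Neumann} together with Proposition~\ref{Euler} gives ${\rm vol}(N^*,\rho^*)=4\pi^2\tilde n$ on the nose, and unwinding the covering degrees yields ${\rm vol}(N,\rho)=4\pi^2 e\,\tau^2$. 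This cover-until-the-fibre-hits-${\rm sh}(1)$ manoeuvre is the missing ingredient in your sketch; it replaces the delicate Euler-class bookkeeping you anticipate by an elementary arithmetic of degrees.
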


\begin{proof}
The condition ${\rm vol}(N,\rho)\ne 0$ implies that $\rho(h)=
\o{(\zeta, 1)}\in G={\R}\times_{\Z}\t{{\rm SL}_2(\R)}$ by \cite[p.~663]{BG1} and 
\cite[p.~537]{BG2}, using a cohomological-dimension argument and the definition in paragraph 2.2. Suppose $\rho(s_i) = \o{(z_i,
x_i)}$. Then $s_i^{a_i}h^{b_i}=1$ implies that $$\o{({a_i}z_i,
x^{a_i})} \o{({b_i} \zeta,1)}=\o{({a_i}z_i+{b_i}\zeta, x^{a_i})}=1.$$
Then there is an $n_i\in{\Z}$ such that (see Remark \ref{SL})
\begin{eqnarray}
{a_i}z_i+{b_i}\zeta_i \textrm{ in}\, \R \textrm{ and }
x_i \textrm{ is conjugate in $\t{{\rm SL}_2(\R)}$ to } {\rm sh}\left(-\frac{n_i}{a_i}\right).
\end{eqnarray}
Since $[\alpha_1,\beta_1]\ldots[\alpha_g,\beta_g]=s_1\ldots s_p$ and since the
product of commutators in ${\R}\times_{\Z}\t{{\rm SL}_2(\R)}$ must
lie  in $\t{{\rm SL}_2(\R)}$ this implies that
$$\o{(z_1+\ldots+z_p, x_1\ldots x_p)}=\o{\left(0,
\prod_{j=1}^g[\rho(\alpha_j),\rho(\beta_j)]\right)}.$$
Then there is an $n\in{\Z}$ such that
\begin{eqnarray}
z_1+\ldots+z_p \textrm{ and } \prod_{j=1}^g[\rho(\alpha_j),\rho(\beta_j)]=x_1\ldots x_p{\rm sh}(n)
\end{eqnarray}
Equalities  (6.6) and (6.5),  imply
 condition (6.1) in Proposition \ref{SvSm} using  Theorem \ref{Eisenbud-Hirsch-Neumann} and
its proof in \cite{EHN}.
By (6.5) and (6.6), we can calculate directly
\begin{eqnarray}
z_i=\frac{n_i}{a_i}-\frac{b_i}{a_i}\zeta,\,\,\,  \zeta= \frac 1e \left(\sum_{i=1}^p\frac{n_i}{a_i}-n\right)
\end{eqnarray}
Plugging (6.5),  (6.6) and (6.7) into $\rho(h)= (\zeta, 1)$ and $\rho(s_i)
= (z_i, x_i)$, we obtain (6.2) and (6.3) in Proposition
\ref{SvSm}.
Then the  ``moreover'' part of Proposition \ref{SvSm} also follows from Theorem
\ref{Eisenbud-Hirsch-Neumann}.

Let's now compute the volume of such a representation. Let $p_1: \t
N \to N$ be a covering from  a circle bundle $\t N$ over $\t F$ to
$N$ so that the fiber degree is 1.  Then we have $$\t e=e(\t
N)=(\mathrm{deg} p_1) e.$$ Let $\t t$ be the fiber of $\t N$ and $\t
\rho =\rho|\pi_1\t N$. Then $\left(\t{t}\right) ^{\t e}=\prod_{j=1}^{\t
g}[\t{\alpha_j},\t{\beta_j}]$ in $\pi_1\t N$, and therefore $\t \rho
(\left(\t{t}\right) ^{\t e})= (\o{\t e\zeta, 1})\in Z(G)\cap \t{{\rm
SL}_2(\R)}$, since the image of the fiber must be in the center and
the image of the product of commutators must lie in $\t{{\rm
SL}_2(\R)}$. Hence $\t e \zeta= \t n \in{\Z}$.

Let $p_2: \hat N\to \t N$ be the covering along the fiber direction
of degree $\t e$, and then $\hat e = e(\hat N)=1$. Then $\hat \rho =
\t \rho|$ sends actually $\pi_1\hat N$  into  $\t{{\rm SL}_2(\R)}$
and the fibre $\hat t$ of $\hat N$ is sent to ${\rm sh}(\t{n})$. Finally there is a covering $p_*: \hat N\to N^*$ along the fiber
direction of degree $\t n$,  where $N^*$ is a circle bundle over
a hyperbolic surface $F$ with $e^*=e(N^*)=\t n$. It is apparent that
$\hat \rho$ descends to $ \rho^*: \pi_1 N^*\to \t{{\rm SL}_2(\R)}$
such that $\rho^*(h^*)=\rm sh(1)$, where $h^*$ denotes the
${\S}^1$-fiber of $N^*$. According to Theorem
\ref{Eisenbud-Hirsch-Neumann}, there is $({\rm PSL}_2(\R),{\S}^1)$-horizontal foliation on $N^*$, and according to Proposition
\ref{Euler}, ${\rm vol}(N^*,\rho^*)=4\pi^2e^*=4\pi^2\t n$, and then
$${\rm vol}(\hat{N},\hat \rho)=4\pi^2\t n^2=4\pi^2\t e^2\zeta^2.$$
Note that $$\mathrm{deg} p_1 \mathrm{deg} p_2= \frac{\t e}  e\times \t e=
\frac {\t e^2} e.$$ By those facts  we reach (6.4) as
below:
$${\rm vol}(N,\rho)=\frac{{\rm vol}(\hat{N},\hat \rho)}{\mathrm{deg} p_1 \mathrm{deg} p_2}=\frac{4\pi^2\t e^2\zeta^2}{\frac {\t e^2} e}= 4\pi^2 {e}{\zeta^2}=\frac{4\pi^2}{|e|}\left(\sum_{i=1}^p\frac{n_i}{a_i}-n\right)^2.$$
\end{proof}

\begin{remark} Suppose in Proposition \ref{SvSm} that $n_i=a_ik_i+r_i$, where $0\le r_i<a_i$. If we choose $n=2-2g+\sum_ik_i$ and $n_i=(k_i+1)a_i-1$ then
the corresponding representation $\rho_0$ is faithful, discrete and reaches the maximal volume
giving rise to the well known formula
$${\rm vol}(N,\rho_0)=\frac{4\pi^2\chi_{O(N)}^2}{|e(N)|}.$$
\end{remark}

\section{Volumes of representations do not have the covering property}
In this section, we prove Theorem \ref{zero} and therefore Corollary \ref{non-cover}.
They follow immediately from the two propositions of this section.

\subsection{Non-trivial graph manifolds with vanishing Seifert volume}

 \begin{proposition}\label{zero-1}
 There are infinitely many non-trivial graph manifolds with zero Seifert volume.
\end{proposition}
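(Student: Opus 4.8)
The plan is to exhibit an explicit infinite family of graph manifolds $M$ for which ${\rm vol}\left(M,{\rm Iso}_e\widetilde{\rm SL}_2(\mathbb{R})\right)=\{0\}$, using the additivity principle (Theorem \ref{additivity}) together with the description of Seifert volumes and the Milnor--Wood type inequalities of Eisenbud--Hirsch--Neumann (Theorem \ref{Eisenbud-Hirsch-Neumann}). First I would recall that by Remark \ref{relation with $D(M,N)$} and Theorem \ref{basic property of volume of presentation}, a nonzero element of the Seifert volume set forces a ``significant'' representation $\rho:\pi_1 M\to {\rm Iso}_e\widetilde{\rm SL}_2(\mathbb{R})$ with positive volume. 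By the additivity principle, after passing to the relevant Dehn fillings $\widehat{J}_i$ of the JSJ pieces along the slopes killed by $\rho$, the volume of $\rho$ is the sum of the volumes of the induced representations $\widehat\rho_i$ on the closed Seifert manifolds $\widehat{J}_i$; since volume is an integral-class quantity, a vanishing total volume is forced as soon as each summand must vanish.

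The key step is a careful choice of the graph manifold so that \emph{every} slope on \emph{every} JSJ torus that can be killed by a representation leads to a Dehn-filled piece whose Seifert volume is zero. Concretely, I would build $M$ by gluing Seifert pieces $J_i = F_{g_i,n_i}\times S^1$ so that the gluing maps are arranged to make the fiber of one piece a non-fiber slope on the neighboring piece (the standard ``flip'' in a non-trivial graph manifold), and so that the only slopes on the JSJ tori that bound in homology / can be killed are the fibers of some piece. Then the Dehn filling $\widehat{J}_i$ along such slopes is a Seifert manifold whose base orbifold and Euler number can be computed directly from the gluing data; by choosing the $F_{g_i,n_i}$ and the Euler numbers appropriately one arranges that each $\widehat{J}_i$ either has zero Euler number (hence is not of $\widetilde{\rm SL}_2(\mathbb{R})$-type and has zero Seifert volume), or violates the Milnor--Wood inequalities of Theorem \ref{Eisenbud-Hirsch-Neumann}, forcing ${\rm vol}(\widehat{J}_i,\widehat\rho_i)=0$ by Proposition \ref{SvSm}. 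Summing, ${\rm vol}(M,\rho)=0$ for all $\rho$, so ${\rm vol}\left(M,{\rm Iso}_e\widetilde{\rm SL}_2(\mathbb{R})\right)=\{0\}$. Taking, say, $g_i$ or the Euler numbers to range over infinitely many values produces infinitely many pairwise non-homeomorphic such $M$, and a Milnor--Wood-based Euler-class obstruction shows they are genuinely non-trivial graph manifolds.

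The main obstacle I anticipate is the bookkeeping of which slopes on the JSJ tori can actually lie in the kernel of a volume-carrying representation, and then controlling the Seifert invariants (base orbifold, singular fibers, Euler number) of the resulting Dehn fillings uniformly across the infinite family. In particular one must rule out the possibility that some exotic choice of killed slopes on different tori conspires to produce a Dehn filling with nonzero Seifert volume whose contributions do not cancel; this is where the precise form of Theorem \ref{Eisenbud-Hirsch-Neumann} and the rationality constraints in Proposition \ref{Seifert volumes of Seifert manifolds} must be invoked carefully. A clean way to finish is to pick $M$ with a single JSJ torus and two Seifert pieces whose gluing exchanges fiber and section directions, so that the only candidate killed slope on the torus is a fiber of one side, which is then a non-fiber on the other side; the two Dehn fillings are explicit small Seifert manifolds whose volumes can be read off, and one checks that their volumes are equal and oppositely oriented, hence sum to zero, for every admissible representation. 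I would then vary the genus of the base surfaces to get the infinite family, and verify non-triviality via the fact that the two pieces have distinct fibers.
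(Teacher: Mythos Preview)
Your approach has a genuine gap. The additivity principle (Theorem~\ref{additivity}) applies only to representations that \emph{already kill some slope} on every JSJ torus, and you never establish this. For a product piece $J=F_{g,1}\times S^1$ with fiber $h$ and section $s$, the image $\rho(h)$ may lie in the center $\RR\subset{\rm Iso}_e\widetilde{\rm SL}_2(\R)$ while $\rho(s)$ is a nontrivial element of $\widetilde{\rm SL}_2(\R)$ (as a product of commutators); then $\rho|_{\pi_1T}$ need not have any element in its kernel, and Theorem~\ref{additivity} simply does not apply. Your sentence ``the only candidate killed slope is a fiber of one side'' confuses the question of \emph{which} slopes can be killed with the question of whether \emph{any} slope must be. The subsequent claim that ``the two Dehn fillings are explicit small Seifert manifolds whose volumes are equal and oppositely oriented, hence sum to zero'' is also unjustified: filling $F_{g,1}\times S^1$ along its fiber produces $\#_{2g}(S^2\times S^1)$, not a Seifert manifold; and there is no cancellation mechanism in Theorem~\ref{additivity} --- the piece volumes simply add.

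The paper bypasses all of this by a completely different, representation-rigidity argument. It takes Motegi's manifolds $M=E_1\cup_\varphi E_2$ built from two torus-knot exteriors glued so that fiber goes to meridian; Motegi shows every homomorphism $\pi_1M\to{\rm PSL}(2;\C)$ has abelian image, and $H_1(M;\Z)$ is finite cyclic. A short lemma (if the projection $\overline\Gamma\subset{\rm PSL}(2;\R)$ is abelian then $\Gamma\subset{\rm Iso}_e\widetilde{\rm SL}_2(\R)$ is abelian) then forces every $\rho:\pi_1M\to{\rm Iso}_e\widetilde{\rm SL}_2(\R)$ to have finite image, so ${\rm vol}_G(M,\rho)=0$ by Lemma~\ref{Almost trivial representation}. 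No additivity, no Dehn fillings, no Milnor--Wood bookkeeping. Your strategy is closer in spirit to the paper's proof of Proposition~\ref{zero-2} for the hyperbolic volume, where one \emph{can} force a killed slope from the centralizer structure of ${\rm PSL}(2;\C)$; but the analogous argument for ${\rm Iso}_e\widetilde{\rm SL}_2(\R)$ breaks down precisely because its center is a whole line.
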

We begin with an elementary lemma in $\t{{\rm SL}_2(\R)}$ geometry. Recall Lemma 2.3.

\begin{lemma}\label{trivial}
Let $\Gamma$ be a subgroup of $G$ and denote by $\o{\Gamma}$ its
projection onto ${\rm PSL}(2;{\R})$. If $\o{\Gamma}$ is abelian then
so is $\Gamma$.
\end{lemma}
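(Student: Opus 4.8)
The plan is to exploit the structure of $G = {\rm Iso}_e\t{{\rm SL}_2(\R)} = \R \times_\Z \t{{\rm SL}_2(\R)}$ from Lemma \ref{SL}, together with the central extension diagram
$$\{0\} \to \Z \to \t{{\rm SL}_2(\R)} \to {\rm PSL}(2;\R) \to \{1\}.$$
First I would observe that the projection $\Gamma \to \o{\Gamma}$ is the restriction of the homomorphism $G \to {\rm PSL}(2;\R)$ whose kernel is the center $\R$ of $G$ (via the bottom row of the diagram in Section 2.3). So for any $\gamma_1, \gamma_2 \in \Gamma$, the commutator $[\gamma_1,\gamma_2]$ maps to $[\o{\gamma_1},\o{\gamma_2}] = 1$ in ${\rm PSL}(2;\R)$ since $\o\Gamma$ is abelian, hence $[\gamma_1,\gamma_2]$ lies in the center $\R \subset G$. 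This immediately gives that $\Gamma$ is two-step nilpotent; the work is to upgrade this to abelian.

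The key step is to show the commutator actually vanishes. I would lift to $\t{{\rm SL}_2(\R)}$: write each $\gamma_i = \o{(x_i, h_i)}$ with $x_i \in \R$, $h_i \in \t{{\rm SL}_2(\R)}$. The commutator in $\R \times_\Z \t{{\rm SL}_2(\R)}$ is $\o{(0, [h_1,h_2])}$ since the $\R$-coordinate is central and cancels. Now $\o{h_1}$ and $\o{h_2}$ commute in ${\rm PSL}(2;\R)$, so $[h_1,h_2]$ lies in the center $\{{\rm sh}(n) : n \in \Z\}$ of $\t{{\rm SL}_2(\R)}$. The point is that $[h_1,h_2]$ must in fact equal ${\rm sh}(0) = 1$: the commutator map $\t{{\rm SL}_2(\R)} \times \t{{\rm SL}_2(\R)} \to \t{{\rm SL}_2(\R)}$ is continuous, and restricted to the preimage of a commuting pair in ${\rm PSL}(2;\R)$, one can deform $h_1, h_2$ continuously to the identity within their fibers (the fibers are connected, being copies of $\R$), along which the commutator stays in the discrete center $\{{\rm sh}(n)\}$ and hence is constant; at the identity it is $1$, so it is always $1$. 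Therefore $[\gamma_1,\gamma_2] = 1$ in $G$, and $\Gamma$ is abelian.

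Alternatively — and perhaps more cleanly — I would argue using the Euler class / translation number: the bounded Euler class obstruction shows that two elements of $\t{{\rm SL}_2(\R)}$ whose images in ${\rm PSL}(2;\R)$ commute must themselves commute, because $\t{{\rm SL}_2(\R)}$ is the universal central extension and a commuting pair in the quotient lifts to a commuting pair iff the relevant class in $H^2$ vanishes, which it does for the cyclic (abelian) subgroup generated by the pair. Either route reduces to the same elementary fact: \emph{in $\t{{\rm SL}_2(\R)}$, if $\o a$ and $\o b$ commute then $a$ and $b$ commute}.

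The main obstacle is making the "continuity forces the commutator to be trivial" argument rigorous without circularity: one must be careful that the path of lifts stays genuinely in the fiber over a \emph{fixed} commuting pair, or more simply, that one can path-connect $(h_1,h_2)$ to $(1,1)$ inside the closed subgroup $p^{-1}(\o\Gamma)$ — but $\o\Gamma$ abelian does not make $p^{-1}(\o\Gamma)$ connected. The correct fix is to work with the abelian subgroup $\langle \o a, \o b\rangle$ generated by a single pair, note its preimage in $\t{{\rm SL}_2(\R)}$ is a central extension of an abelian group by $\Z$, and invoke that such an extension of a (torsion-free or general) abelian group sitting inside the \emph{perfect} group $\t{{\rm SL}_2(\R)}$ is abelian — this is where one uses the specific geometry (e.g.\ that the rotation subgroup, the only compact-type elements, have connected centralizers). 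I expect this case analysis on the type of the elements (elliptic/parabolic/hyperbolic, via their images in ${\rm PSL}(2;\R)$) to be the crux, though each individual case is short.
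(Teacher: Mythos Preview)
Your reduction matches the paper's: writing $\gamma_i=\o{(x_i,h_i)}$ one gets $[\gamma_1,\gamma_2]=\o{(0,[h_1,h_2])}$, and since the images commute in ${\rm PSL}(2;\R)$ one has $[h_1,h_2]={\rm sh}(k)$ for some $k\in\Z$; the task is to show $k=0$.

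Your main argument has a genuine error. The fibers of $p:\t{{\rm SL}_2(\R)}\to{\rm PSL}(2;\R)$ are copies of $\Z$, not $\R$: the kernel of $p$ is the discrete center $\{{\rm sh}(n):n\in\Z\}$. So you cannot ``deform $h_1,h_2$ continuously to the identity within their fibers''. You may be thinking of the projection $G\to{\rm PSL}(2;\R)$, whose kernel is $\R$, but moving within that fiber only alters the central $\R$-coordinate and leaves the commutator unchanged, so it gets you nowhere. Your Euler-class sketch is also off: the extension need not split over an abelian subgroup (over an elliptic $S^1$ the preimage is the non-split $\Z\to\R\to S^1$), even though the total space is abelian; splitting is stronger than what you need. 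The case analysis you mention at the end \emph{does} work and is short: two commuting non-identity elements of ${\rm PSL}(2;\R)$ lie in a common one-parameter subgroup $A$, and $p^{-1}(A)$ is abelian (isomorphic to $\R\times\Z$ when $A$ is hyperbolic or parabolic, and to $\R$ when $A$ is elliptic).

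The paper takes a different, more hands-on route. It views $x,y\in\t{{\rm SL}_2(\R)}$ as lifts to $\R$ of orientation-preserving circle homeomorphisms and invokes the translation-number estimates of \cite[Lemma 2.1]{EHN}: with $m(y)=\min_z(y(z)-z)$ and $M(y)=\max_z(y(z)-z)$, conjugation-invariance of $\llcorner m\lrcorner$ and $\ulcorner M\urcorner$ combined with $xyx^{-1}={\rm sh}(k)\cdot y$ forces both $k\le 0$ and $k\ge 0$. This avoids any structure theory of centralizers at the cost of importing a lemma from circle dynamics; your case-analysis route, once carried out, is arguably more self-contained.
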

\begin{proof}
This follows from \cite[Lemma 2.1]{EHN}. Let $g=\o{(\xi,x)}$ and
$h=\o{(\eta,y)}$ be two elements of $\Gamma$. Then note that
$[g,h]=\o{(0,[x,y])}=[x,y]$  is actually a commutator in $\t{{\rm
SL}_2(\R)}$. If $\o{\Gamma}$ is abelian then $[g,h]$ belongs to
${\R}\cap\t{{\rm SL}_2(\R)}={\Z}$ and there exists an integer $k$
such that $[x,y]={\rm sh}k$.

Recall that $x$ can be seen as a homeomorphism of the real line and
using the notations of \cite{EHN} we set $m(z)=\min_{z\in{\R}}
x(z)-z$ and $M(z)=\max_{z\in{\R}} x(z)-z$. Notice that since $x$ is
a lifting of an orientation preserving homeomorphism of the circle
then $x(z+1)=x(z)+1$ and these $\min$ and $\max$ can be considered
only on $[0,1]$ so that the definition makes sense.

Besides  by \cite[Lemma 2.1(5)]{EHN}  $\llcorner
m(xyx^{-1})\lrcorner=\llcorner m(y)\lrcorner$ and $\ulcorner
M(xyx^{-1})\urcorner=\ulcorner M(y)\urcorner$. Since $xyx^{-1}={\rm
sh}(k).y$ we have by \cite[Lemma 2.1(4)]{EHN} and the first equality
$$k+m(y)\leq m({\rm sh}(k)y)=m(xyx^{-1}),$$ and then
$$k+\llcorner m(y)\lrcorner\leq\llcorner m({\rm sh}(k)y)\lrcorner=\llcorner m(xyx^{-1})\lrcorner=\llcorner m(y)\lrcorner,$$
and by  \cite[Lemma 2.1(4)]{EHN} and the second equality
$$k+M(y)\geq M({\rm sh}(k)y)=M(xyx^{-1}),$$ and then
$$k+\ulcorner M(y)\urcorner\geq\ulcorner M({\rm sh}(k)y)\urcorner=\ulcorner M(xyx^{-1})\urcorner=\ulcorner M(y)\urcorner.$$
This forces $k=0$ and therefore $[g,h]=[x,y]=1$. This proves the
lemma.
\end{proof}
\begin{proof}[Proof of Proposition \ref{zero-1}]
The proof follows from a construction in Motegi \cite{Mo}. We recall it. Let
$(p_1,q_1)$ and $(p_2,q_2)$ be two pairs of co-prime integers and
consider $E_1$ and $E_2$ the orientable Seifert manifolds over a
$2$-disk with two exceptional fibres whose fundamental groups are
given by
$$\pi_1E_1=\l c_1,c_2,t_1, \ [c_1,t_1]=[c_2,t_1]=1, c_1^{p_1}=t_1^{r_1}, c_2^{q_1}=t_1^{s_1}\r,$$
and
$$\pi_1E_2=\l d_1,d_2,t_2, \ [d_1,t_2]=[d_2,t_2]=1, d_1^{p_2}=t_2^{r_2}, d_2^{q_2}=t_2^{s_2}\r.$$
These are the exterior of two torus knots whose meridians are
denoted by $m_1$ and $m_2$. Notice that $E_i$ is Euclidean if and only if
$p_i=q_i=2$ and otherwise it is an ${\Hi}^2\times{\R}$-manifold.
The couples $(m_1,t_1)$ and $(m_2,t_2)$ provide a basis of $H_1(\b
E_1;{\Z})$ and of $H_1(\b E_2;{\Z})$ and Motegi constucted a closed
graph manifold $M$ from $E_1$ and $E_2$  via an orientation
reversing identification $\varphi\co\b E_1\to\b E_2$ sending $t_1$
to $m_2$ and $m_1$ to $t_2$.

In \cite[Section 3]{Mo} Motegi checked that $H_1(M;{\Z})$ is
isomorphic to $\z{(p_1p_2q_1q_2-1)}$ and that any representation of
$\pi_1M$ into ${\rm PSL}(2;{\C})$ is abelian. Hence for any representation  $\rho\co\pi_1M\to G$, $\o{\rho(\pi_1M)} \subset  {\rm PSL}(2;{\R})\subset  {\rm PSL}(2;{\C})$ must be abelian.
By Lemma \ref{trivial}  $\rho(\pi_1M)\subset  G$ must be
abelian and since $H_1(M;{\Z})$ is finite then so is the image $\rho(\pi_1M)\subset  G$. 
%Taking $\ker\rho$ we get a finite covering $p\co\t{M}\to M$
%within an induced representation $\t{\rho}$ with trivial image and
%satisfying $0={\rm vol}(\t{M},\t{\rho})={\rm deg}p\times{\rm
%vol}(M,\rho)$. 
This proves ${\mathrm{SV}}(M)=0$ by Lemma \ref{Almost trivial representation}. To complete the proof of
the proposition notice that $M$ is a non-trivial graph manifold if and only if
$|H_1(M;{\Z})|=p_1p_2q_1q_2-1>15$.
\end{proof}

\subsection{Mixed 3-manifolds with vanishing hyperbolic volume}

\begin{proposition}\label{zero-2} There are infinitely many  3-manifolds
$N$   with non-vanishing $||N||$ but ${\rm vol}(N,{\rm
PSL}(2;{\C}))=\{0\}$.
\end{proposition}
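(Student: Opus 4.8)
The plan is to exhibit an explicit infinite family of mixed (hence irreducible, non-geometric, with $||N|| > 0$) $3$-manifolds on which every representation $\pi_1 N \to {\rm PSL}(2;\mathbb{C})$ has zero volume. The key point is to decouple the two halves of the statement. The inequality $||N|| > 0$ will be automatic: if $N$ contains at least one hyperbolic JSJ piece, then by Theorem \ref{basic property of volume of presentation}(3) and the relation between the Gromov norm and the hyperbolic pieces (as recalled in the introduction), $||N|| > 0$. So the real content is to force ${\rm vol}(N, {\rm PSL}(2;\mathbb{C})) = \{0\}$ in the presence of a hyperbolic piece. By the additivity principle (Theorem \ref{additivity}), together with Lemma \ref{Almost trivial representation}, it suffices to arrange that whenever $\rho : \pi_1 N \to {\rm PSL}(2;\mathbb{C})$ kills a slope on each JSJ torus, the induced representation on the Dehn-filled hyperbolic piece $\hat{J}$ has cyclic or finite image — and, more robustly, to arrange that $\rho$ restricted to $\pi_1 J$ is forced to be elementary for \emph{every} representation $\rho$, not just those vanishing on slopes.

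First I would recall the Hoffman--Matignon result \cite{HM} and Motegi's example \cite{Mo} cited in the introduction: the idea is to build $N$ by gluing a suitable knot exterior (supplying a hyperbolic piece) to a graph-manifold piece along a torus, with the gluing chosen so that the peripheral structure is ``rigid.'' Concretely, one wants a hyperbolic knot $K$ in $S^3$ (or in a small Seifert-fibered space) and a Dehn-filling slope such that the only $\mathrm{PSL}(2;\mathbb{C})$-characters of $\pi_1 N$ are reducible; Hoffman--Matignon's work on exceptional Dehn fillings and Motegi's homological computation together give control of $H_1(N;\mathbb{Z})$ and of the character variety. The second step is the character-variety argument: show that any $\rho : \pi_1 N \to {\rm PSL}(2;\mathbb{C})$ is reducible, hence (after conjugation) upper-triangular, hence has abelian image in the Borel; then since $H_1(N;\mathbb{Z})$ is finite by construction, the image $\rho(\pi_1 N)$ is a finite abelian group, so ${\rm vol}_{{\rm PSL}(2;\mathbb{C})}(N,\rho)=0$ by Lemma \ref{Almost trivial representation}. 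Varying the gluing data (e.g., the framing parameters or the choice of torus knot exteriors, exactly as in Motegi's construction over $\mathbb{Z}/(p_1p_2q_1q_2 - 1)$) produces infinitely many non-homeomorphic such $N$, while keeping at least one hyperbolic piece so that $||N|| > 0$.

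The main obstacle I anticipate is the character-variety computation: showing that \emph{every} $\mathrm{PSL}(2;\mathbb{C})$ representation of the glued manifold is reducible. This is not automatic from reducibility of the pieces' representations; one needs the gluing torus to ``obstruct'' irreducible extensions, which is where the arithmetic of the gluing map and results like Hoffman--Matignon on boundary slopes and $A$-polynomials enter. An alternative, possibly cleaner, route is to pick $N$ so that $\pi_1 N$ has \emph{finite} abelianization and every hyperbolic piece is glued along a torus in such a way that the peripheral $\mathbb{Z}^2$ maps to a fixed non-parabolic abelian subgroup forced by the other side — then the additivity principle does the rest, because the filled hyperbolic piece receives only a cyclic representation and contributes zero volume. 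I would present the argument via this second route if the character variety of the specific example can be pinned down from the cited literature; otherwise I would fall back on a direct analysis of $\pi_1 N = \pi_1 E_1 *_{T} \pi_1 E_2$ with one $E_i$ a hyperbolic knot exterior, mimicking Motegi's presentation-level computation to conclude that all $\mathrm{PSL}(2;\mathbb{C})$-representations are abelian.
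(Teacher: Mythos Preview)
Your proposal has a genuine gap: you are trying to adapt Motegi's strategy from Proposition~\ref{zero-1} (force every representation to be abelian, hence finite, hence zero volume) to the hyperbolic setting, but this is not what the paper does and it is not clear it can be carried out. A mixed manifold always carries nontrivial ${\rm PSL}(2;\mathbb C)$-representations coming from the hyperbolic piece, and there is no reason to expect these to be globally reducible just because the Seifert side is simple; the character-variety computation you flag as the ``main obstacle'' is in fact the whole difficulty, and neither \cite{Mo} nor \cite{HM} addresses it. You have also misread what \cite{HM} supplies: Hoffman--Matignon produce one-cusped hyperbolic manifolds $M_2$ admitting \emph{two} reducible Dehn fillings (each a connected sum of lens spaces), not any statement about the character variety or $A$-polynomial of the glued manifold.

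The paper's argument sidesteps the representation-theoretic difficulty entirely. Take $M_1=F\times S^1$ with $F$ of positive genus and one boundary component, and glue to such an $M_2$ so that the section--fiber basis $(s,h)$ of $\partial M_1$ matches the two exceptional filling slopes $(\mu,\lambda)$ of $M_2$. The crucial point is a dichotomy forced by the product structure of $M_1$: for \emph{any} $\rho:\pi_1N\to{\rm PSL}(2;\mathbb C)$, either $\rho(h)=1$, or else $\rho(h)\ne1$ has abelian centralizer, whence $\rho(\pi_1M_1)$ is abelian (since $h$ is central) and $\rho(s)=1$ (since $s$ is null-homologous in $M_1$). Thus some $\zeta\in\{s,h\}$ is always killed, and additivity gives ${\rm vol}(N,\rho)={\rm vol}(M_1(\zeta),\hat\rho_1)+{\rm vol}(M_2(\varphi(\zeta)),\hat\rho_2)$. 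The first term vanishes because $M_1(\zeta)$ is Seifert; the second vanishes because $M_2(\varphi(\zeta))$ is a connected sum of lens spaces, so has zero simplicial volume and hence ${\rm HV}=0$ by Theorem~\ref{basic property of volume of presentation}(3). No control over $\hat\rho_2$ is needed at all---it may well be irreducible. You should replace your reducibility strategy with this filling-slope dichotomy.
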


\begin{proof} We first
begin by constructing a closed mixed 3-manifold with one hyperbolic piece
adjacent to one  Seifert piece whose hyperbolic volume vanishes.  Let
$M_1$ denote $F\times{\S}^1$ where $F$ is a surface with positive
genus and connected boundary. There is a natural section-fiber basis
$(s,h)\subset \partial M_1$. On the other hand, it follows  from
\cite{HM} that  there are infinitely many one cusped, complete,
finite volume hyperbolic manifolds $M_2$ endowed with a basis
$(\mu,\lambda)\subset \partial M_2$ such that both $M_2(\lambda)$
and $M_2(\mu)$ have zero simplicial volume (because they are
actually connected sums of lens spaces). Denote by $\varphi\co\b
M_1\to \b M_2$ the homeomorphism defined by $\varphi(s)=\mu$ and
$\varphi(h)=\lambda^{-1}$. Let $M_{\varphi}=M_1\cup_{\varphi}M_2$.
Then $M_{\varphi}$ is a mixed manifold. Denote $\c{T}_{M_{\varphi}}$ by $T$.

Let  $\rho\co\pi_1M_{\varphi}\to{\rm PSL}(2;{\C})$ be any
representation and denote by $A$ the resulting connection over
$M_{\varphi}$. Notice that either $\rho(s)$ or $\rho(h)$ is trivial.
Indeed  if $\rho(h)\not=1$, its centralizer $Z(\rho(h))$ must be
abelian in ${\rm PSL}(2;{\C})$. Since $h$ is central in $\pi_1M_1$,
this means that  $\rho(\pi_1M_1)$ is abelian. Since $s$ is
homologically   zero in $M_1$,   then $\rho(s)=1$.

Let $\zeta$ be either $s$ or $h$ so that  $\rho(\zeta)=1$. After
putting $A$ in normal form with respect to $T$, denote by $A_1$ and
$A_2$ the flat connections over $M_1$ and $M_2$ respectively. Since
$\rho(\zeta)$  is trivial then $A_1$ and $A_2$ do extend over
$M_1(\zeta)$ and $M_2(\zeta)$ to flat connections $\hat{A}_1$ and
$\hat{A}_2$,  and thus
$$\mathfrak{cs}_{M_{\varphi}}(A)=\mathfrak{cs}_{{M}_1(\zeta)}(\hat{A}_1)+\mathfrak{cs}_{{M}_2(\zeta)}(\hat{A}_2).$$
Eventually taking the imaginary part we get
\begin{eqnarray}
{\rm vol}(M_\varphi,\rho)={\rm vol}({M}_1(\zeta),\hat{\rho}_1)+ {\rm
vol}({M}_2(\zeta),\hat{\rho}_2)
\end{eqnarray}
 where $\hat{\rho}_i$
denotes the extension of $\rho|\pi_1M_i$ to $\pi_1M_i(\zeta)$. Since
both ${\rm vol}({M}_1(\zeta),\hat{\rho}_1)$ and  ${\rm
vol}({M}_2(\zeta),\hat{\rho}_2)$ do vanish,  the proof of
Proposition \ref{zero-2} is complete.
\end{proof}

%\begin{remark}
%It is worth mentioning that inequality $(6.1)$ still holds replacing
%$M_2$ by any one cusped oriented complete hyperbolic manifold. In
%this more general case we have ${\rm
%vol}({M}_2(\zeta),\hat{\rho}_2)<{\rm vol}M_2$ and therefore, the
%volume of the hyperbolic piece of $M_{\varphi}$ is never reached by
%the representations of $\pi_1M_{\varphi}$ into ${\rm PSL}(2;{\C})$.
%\end{remark}

%\section{Getting volume with covering property from representations}

\section{Conclusions}

	In conclusion, given a geometrically meaningful representation
	of a $3$-manifold group, 
	Chern--Simons theory can be applied to compute the associated volume. 
	On the other hand, recent results about separability of surface subgroups
	are powerful in constructing interesting
	virtual representations of $3$-manifold groups.
	However, a shortcoming of our approach seems to be that we are not
	able to control the degree of the cover that we need to pass to,
	so, for instance, we do not have lower bound 
	estimations of the growth of virtual volumes of representations.
	
	We propose two further problems.
	
	\begin{problem}
		Estimate the growth of virtual hyperbolic volume and virtual Seifert volume.
	\end{problem}
	
	Since the hyperbolic volume is bounded by the simplicial volume, it has at most linear growth
	as ${\mathrm{HV}}(\tilde{M})\,/\,[\tilde{M}:M]$ is bounded by $\mu_3||M||$. However,
	we do not know whether Seifert volume has at most linear growth as well.
	
	\begin{problem}
		Is the Seifert volume of a closed prime $3$-manifold 
		virtually positive if it has positive simplicial volume?
	\end{problem}
	
	The open case is when the $3$-manifold has only hyperbolic pieces 
	in its geometric decomposition.

\end{document}